\def\blue{\color{blue}}
\def\red{\color{red}}
\numberwithin{equation}{section}
\newtheorem{theorem}{Theorem}[section]
\newtheorem{lemma}[theorem]{Lemma}
\newtheorem{corollary}[theorem]{Corollary}
\newtheorem{proposition}[theorem]{Proposition}
\newtheorem{conjecture}[theorem]{Conjecture}
\newtheorem*{theorem*}{Theorem}
\theoremstyle{definition}
\newtheorem{definition}[theorem]{Definition}
\theoremstyle{remark}
\newtheorem{remark}[theorem]{Remark}
\newtheorem{example}[theorem]{Example}
\def\N{\mathbb{N}}
\def\Z{\mathbb{Z}}
\def\Q{\mathbb{Q}}
\def\R{\mathbb{R}}
\def\P{\mathbb{P}}
\def\E{\mathbb{E}}
\def\FF{\mathscr{F}}
\def\SSS{\mathscr{S}}
\def\RRR{\mathscr{R}}
\def\NN{\mathcal N}
\newcommand{\calF}{\mathcal{F}}
\newcommand{\calU}{\mathcal{U}}
\newcommand{\calT}{\mathcal{T}}
\newcommand{\calG}{\mathcal{G}}
\newcommand{\calB}{\mathcal{B}}
\renewcommand{\phi}{\varphi}
\renewcommand{\epsilon}{\varepsilon}
\newcommand{\1}{{\text{\Large $\mathfrak 1$}}} 
\newcommand{\ind}[1]{{\text{\Large $\mathfrak 1$}_{\left\{#1\right\}}}}
\newcommand{\floor}[1]{{\left\lfloor #1 \right\rfloor}}
\newcommand{\comp}{\raisebox{0.1ex}{\scriptsize $\circ$}}
\newcommand{\var}{\operatorname{var}}
\renewcommand{\limsup}{\varlimsup}
\renewcommand{\liminf}{\varliminf}
\newcommand{\dd}{\mathrm{d}}
\newcommand{\eqdist}{\overset{\mathrm d}{=}}          
\newcommand{\e}{e}
\def\gl{\mathsf{L}}
\def\gr{\mathsf{R}}
\renewcommand\vec{\overrightarrow}
\def\K{\mathbb{K}}
\def\CC{\mathscr{C}}
\def\Ai{\operatorname{Ai}}
\def\fG{\mathfrak G}
\def\fH{\mathfrak H}
\def\crit{\text{\rm crit}}
\def\esssup{\operatorname{esssup}}
\def\fm{\mathfrak m}
\def\fM{\mathfrak M}
\def\fG{\mathfrak G}
\def\fH{\mathfrak H}
\def\WW{\mathcal W}
\def\+.{+ \text{\tiny \raisebox{0.5mm}{$\bullet$}}}
\def\convd{\xrightarrow{\text{\rm d}}}
\tikzstyle directed=[postaction={decorate,decoration={markings,
    mark=at position .65 with {\arrow{latex}}}}]
\newcommand{\configS}{\mathbb{S}}
\begin{document}

\newdateformat{UKvardate}{\THEDAY\ \monthname[\THEMONTH] \THEYEAR}
\UKvardate

\title{\Large \bf Last passage percolation and limit theorems
in Barak-Erd\H{o}s directed random graphs and related models}
\author{\sc Sergey Foss \and \sc Takis Konstantopoulos \and
\sc Bastien Mallein \and \sc Sanjay Ramassamy}
\date{\today}
\maketitle

\begin{abstract}
We consider directed random graphs, the prototype of which being the
Barak-Erd\H{o}s graph $\vec G(\Z, p)$, and study the way that long
(or heavy, if weights are present) paths grow.
This is done by relating the graphs to certain particle systems
that we call Infinite Bin Models (IBM).
A number of limit theorems are shown.
The goal of this paper is to present results along with techniques
that have been used in this area.
In the case of $\vec G(\Z, p)$ the last passage percolation constant $C(p)$
is studied in great detail. It is shown that $C(p)$ is analytic
for $p>0$, has an interesting asymptotic expansion at $p=1$ and that
$C(p)/p$ converges to $e$ like $1/(\log p)^2$ as $p \to 0$.
The paper includes the study of IBMs as models on their own as well
as their connections to stochastic models of branching processes
in continuous or discrete time with selection. Several proofs herein
are new or simplified versions of published ones.
Regenerative techniques are used where possible, exhibiting random sets
of vertices over which the graphs regenerate.
When edges have random weights we show how the last passage percolation
constants behave and when central limit theorems exist. When the underlying
vertex set is partially ordered, new phenomena occur, e.g., there
are relations with last passage Brownian percolation.
We also look at weights that may possibly take negative values
and study in detail some special cases that require combinatorial/graph
theoretic techniques that exhibit some interesting non-differentiability
properties of the last passage percolation constant.
We also explain how to approach the problem
of estimation of last passage percolation constants by means of perfect
simulation.
\end{abstract}

\hypersetup{linkcolor=Brown}
\tableofcontents

\section{Introduction}
The well-known Erd\H{o}s-R\'enyi graph
\cite{BOLbook}
admits a loopless directed version
where an edge is oriented according to an a priori order on the
set of vertices (see Figure~\ref{fig:beImg}). We call this a Barak-Erd\H{o}s graph due to
the 1984 paper \cite{BE84} by Amnon Barak and Paul Erd\H{o}s
that studied the size of the
maximal subset of vertices with the property that no two
of them are connected by a directed path and showed that it grows
like the square root of the number of vertices of the graph.
One of the most well-studied questions regarding of the  Barak-Erd\H{o}s graph
and related models is the maximum path length
or the maximum path weight if edges and vertices are given random weights.
As such, the question is closely related
to last passage percolation (LPP) problems appearing in statistical
physics dealing with maximum weight paths in random environments.

Motivations for such a quantity
come from performance evaluation of computer systems \cite{GEL86,ISONEW94},
from biology \cite{NEWCOH86,COHNEW91,CBN90} and from physics \cite{Itoh,IK12}.
In the latter field, especially in mathematical ecology, one is
interested in the survival of a species based on information of food chains.
The graphs we study are also models of food chains.
In computer networking applications, the maximum time for an information
packet to reach a destination can also be approached by random directed graphs.
Likewise, the growth of a network can also be modeled
via directed random graphs.
What is interesting is the connections of the Barak-Erd\H{o}s graph with
models in random geometry, in the theory of random matrices (originating
from the study of Schr\"{o}dinger's equation in a very complex ``random''
potential), and in statistical physics. The class of models exhibiting
convergence to Tracy-Widom distribution is growing and, in this paper,
we point out yet another one that falls in this category.
Therefore, we believe that the reader interested in theoretical
or applied research will find something of interest in this paper.
We strive to explain some connections, as above, but also present concrete
results with full proofs in most cases.

\begin{figure}[ht]
\centering
\begin{tikzpicture}
  \draw[>=latex,directed] (1,0) to[bend left] (4,0);
  \draw[>=latex,directed] (1,0) to[bend left] (7,0);
  \draw[>=latex,directed] (2,0) to[bend left] (5,0);
  \draw[>=latex,directed] (2,0) to[bend left] (6,0);
  \draw[>=latex,directed] (5,0) to[bend left] (7,0);
  \draw [thick, color = red,>=latex,directed] (1,0) to (2,0);
  \draw [thick, color = red,>=latex,directed] (2,0) to (3,0);
  \draw [thick, color = red,>=latex,directed] (3,0) to (4,0);
  \draw [thick, color = red,>=latex,directed] (4,0) to[bend left] (6,0);
  \draw [thick, color = red,>=latex,directed] (6,0) to (7,0);
  \draw [color = red] (1,0) node{$\bullet$};
  \draw [color = red] (2,0) node{$\bullet$};
  \draw [color = red] (3,0) node{$\bullet$};
  \draw [color = red] (4,0) node{$\bullet$};
  \draw [color = red] (7,0) node{$\bullet$};
  \draw [color = red] (6,0) node{$\bullet$};
  \draw (5,0) node{$\bullet$};
  \draw (1,0) node[below]{1};
  \draw (2,0) node[below]{2};
  \draw (3,0) node[below]{3};
  \draw (4,0) node[below]{4};
  \draw (5,0) node[below]{5};
  \draw (6,0) node[below]{6};
  \draw (7,0) node[below]{7};
\end{tikzpicture}
\caption{Realisation of a Barak-Erd\H{o}s graph with seven edges. For every pair $i < j$, a directed edge from $i$ to $j$ is present with probability $p$, independently from the presence of every other edge. The main objective of this survey is to describe the asymptotic properties of the longest path in this graph (here marked in red).}
\label{fig:beImg}
\end{figure}
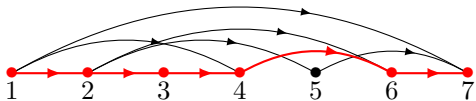

This paper offers a survey of results on the Barak-Erd\H{o}s graph
and related models.
Starting from a relatively simple static model,
we will see how it relates to discrete and continuous time
particle systems and Markov processes and, in particular,
to the Infinite Bin Model (introduced in \cite{FK03}) that has also appeared
in several papers, often in disguise \cite{ALDPIT}, and often arising as a
byproduct of other random models.
We shall also explore connections with branching processes
and random walks. In particular, we will see the emergence
of a continuous time branching random walk that is often known
as a Poisson-weighted infinite random tree \cite{AS} or Poisson
cascade model \cite{IK12}
in the statistical physics literature. The growth of the longest
path will be explained and various analytical properties of it
will be studied. In particular, we will see how the rate of convergence
relates to questions around the F-KPP equation \cite{BD09}.

We will deal with several stochastic models and notation will be introduced
little by little. For now, given an ordered (or partially ordered) set $V$,
let us define $\vec G(V,p)$ to be a random graph
on a set of vertices $V$ such that each edge $(i,j)$, where $i$ is smaller
than $j$ in the order of $V$, exists with probability $p$,
independently from edge to edge. Having said that, we shall have
the occasion to make $p$ depend on the edge and we shall discuss
situations where independence is replaced by invariance under translations.

The paper offers a survey of results aiming at exposing the main ideas.
We often (but not always) give proofs, sometimes sketches of them.
Our aim is not to provide an exhaustive bibliographical survey but rather
an exposition of results, ideas, and main proof techniques,
sometimes compromising with a simpler than a more general model.

The first part of Section \ref{sec:BEIBM} deals with a random directed graph
on $\Z$ where the edge probabilities are not even independent but, rather,
stationary and ergodic, in some sense. The aim is to show right from the start
that the maximum length of all paths from $1$ to $n$ satisfies a law of
large numbers (LLN), that is, it has a deterministic linear growth rate,
denoted by the letter $C$, and referred to as the last passage percolation
constant;
this is due to a subadditive ergodic theorem. Throwing in some
independence assumptions (but still remaining at a level more general than that
of $\vec G(\Z,p)$) shows that a regenerative structure can be obtained:
the random graph can be split into independent pieces that occur at
a computable rate. The set of the end-vertices of these
pieces is called skeleton of the graph. For $\vec G(\Z, p)$, the rate
$\lambda(p)$ of the skeleton (plotted in Figure  \ref{fig:lambda})
equals $\phi(1-p)^2$, where $\phi(x)=(1-x)(1-x^2)
\cdots$, a well-known function that bears Euler's name and has a wealth of
combinatorial and number-theoretic interpretations.

The second part of Section \ref{sec:BEIBM} explains how to
grow $\vec G(\Z,p)$ little by little.
If we let $\vec G_n$ be $\vec G (\{1,\ldots,n\},p)$
then the sequence $(\vec G_n)$ is mapped into a Markovian particle system $(X_n)$,
where $X_n$ can be thought of as a configuration of particles on $\Z$
(a balls-in-bins model) that we call Infinite Bin Model (IBM).
This was introduced in \cite{FK03} where it was shown that it converges
in distribution to a stationary state, say $X_\infty$, a particle
configuration supported on the whole of $\Z$. Paper \cite{FK03}
was mostly concerned with an extension of Borovkov's theory of
renovating events \cite{BOR78,BOR80,BORFOS92,FOS83}. This approach enabled
facilitate good explicit bounds for $C=C(p)$, the LPP constant for $\vec G(\Z,p)$.

Rather than repeating the arguments of \cite{FK03} we use the IBM derived from
$\vec G(\Z,p)$ as a motivation for the more general IBM$(\mu)$ that is
introduced in Section \ref{sec:ibm}: particles are placed on
integers so that there is always a front, that is, a largest bin after
which there are no particles. A particle is selected according to the
distribution $\mu$ and the configuration changes by
placing a daughter particle one position to its right.
This simple particle system and was introduced and studied
in detail in \cite{MR16,MR19}. The techniques and results of these papers
are exposed in Sections \ref{sec:ibm}, \ref{sec:analytic} and \ref{sec:sparse}.
A particular instance of the general IBM was considered by Aldous and
Pitman \cite{ALDPIT}.
The IBM$(\mu)$ travels to the right at asymptotic speed $v_\mu$
and this is shown in Section \ref{sec:ibm}.
If $\mu$ is geometric random variable with parameter $p$ then $v_\mu=C(p)$.
The view of the IBM in this part of the survey is that of a symbolic
dynamical system. Successive draws of integers from $\mu$ are
viewed as words from the alphabet of positive integers.
Such words can be ``$k$-coupling'' in that the content of
the $k$ rightmost bins forgets the initial configuration.
These are used to show, by coupling, the existence
of a stationary version of the IBM$(\mu)$.
By delving into the structure of specific sets
of words, new expressions for $v_\mu$ can be obtained.
Specializing to the $\mu$=geometric$(p)$ case, Section \ref{sec:analytic}
uses these expressions to obtain sharp upper and lower bounds
for $C(p)$, which are sequences rational functions that converge
to $C(p)$ uniformly over $\epsilon \le p \le 1$ for all $\epsilon >0$.
Moreover, it is shown that $C(p)$ is analytic away from $p=0$
and its power series expansion at $p=1$ has integer coefficients
that admit some combinatorial interpretation.

We then examine the behavior of $C(p)$ in a neighborhood of $0$.
One one hand, we have that $C(p)/p \to e$, as $p \to 0$,
that is $C'(0)=e$. On the other hand, $C''(0)$ does not exist.
In fact, the convergence of $C(p)/p$ to $e$ is very slow.
It was shown in \cite{MR16} that
$C(p)/p = e-\frac12 \pi^2 e (\log p)^{-2} (1+o(1))$, as $p \to 0$.
This is explained in Section \ref{sec:sparse} using somewhat different proofs.
We refer to this as ``Brunet-Derrida behavior'' as this slow convergence
phenomenon appeared in the physics literature \cite{BD09} in the following form.
Consider the classical F-KPP partial differential
equation \cite{FIS37,KPP37} arising in the modeling of reaction-diffusion
systems. This has a traveling wave solution with asymptotically constant
speed $v_\infty$, say.
An $N$-particle stochastic approximation to it is described by a certain
model that moves with constant speed $v_N$, say. It was first observed
in \cite{BD09} that $v_N \approx v_\infty - c (\log N)^{-2}$, and this was
later proved rigorously in \cite{BeG10}.
The similarity of the two results is not fortuitous. Indeed, the IBM
is compared to a branching random walk with selection, that is,
by killing particles. Results for the speed and rate of convergence
were obtained in \cite{Mal18b} and these can be used to establish
the rate of convergence of $C(p)/p\to e$.
We use the so-called Poisson-weighted infinite tree (PWIT) of Aldous and Steele
\cite{AS} which, if interpreted time-wise, is a Markovian branching process
of immortal particles that reproduce in continuous time.
We then produce a novel embedding of the IBM in the PWIT (or, rather,
a coupling between the two) which is used to obtain the rate of convergence.
In the last part of Section \ref{sec:sparse} we also take a first look at
LPP on random graphs with geometry and present, in passing, some results
on shortest paths as well for $\vec G(\{1,\ldots,n\},p_n)$.
We also note that \cite{NEWM92} proved among other results,
using branching processes,
that if $L_n$ is the maximum length of all paths in $\vec G(\{1,\ldots,n\},p_n)$
with $p_n \to 0$ and $n p_n \to \infty$, then $L_n/np_n \to e$, as $n \to \infty$,
in probability.

In Section \ref{secreg} we move on to graphs $\vec G(\Z,p_k)$ where
the probability that an edge between $i$ and $i+k$ exists equals $p_k$,
$k \in \N$. We take a closer look at the skeleton $\mathscr S$
and the regeneration properties, exhibiting a construction of elements
of $\mathscr S$ that allows us to study moment properties.
In particular, we show that the distance between successive points
has a $p$-th moment if and only if $\sum_{k=1}^\infty k^p Q_k<\infty$,
where $Q_k = (1-p_1) \cdots (1-p_k)$.
We thus obtain necessary and sufficient conditions for a
central limit theorem for the quantity $L_{n}$ in terms of the $p_k$.
In particular, a CLT always holds when the $p_k$ are identical.
The results of \ref{secreg} have been obtained in \cite{DFK12}.

Disclaimer: the term CLT (Central Limit Theorem) in this article
will refer to a limit obtained by considering deviations from
an average behavior of a random sequence, regardless of whether
the limit is Gaussian or not.

In Section \ref{sec:posets} we consider the graph $\vec G(\Z \times I, p)$,
where $I$ is a partially ordered set, say a finite set $I=\{1,\ldots,M\}$.
Then order $\Z \times I$ in component-wise fashion and place
an edge directed from $(u,i)$ to $(v,j)$ with probability $p$
if $(u,i)$ is below $(v,j)$.
More general conditions are studied in  \cite{DFK12}.
We show, in particular, that if $L_n$ is
the maximum of all paths in $[1,n] \times I$ then
a CLT holds but the limit is not Gaussian if $I$ has at least 2 points.
A functional central limit theorem for the sequence $(L_{[nt]}, t \ge_0)$
of processes establishes convergence to the Brownian LPP process
whose marginal has a distribution proportional to the largest eigenvalue
of a random $M\times M$ GUE matrix.
When $M=\infty$, we have, in particular, the graph $\vec G(\Z \times \Z, p)$.
It was shown in \cite{KT13} that a certain scaling of $L_{[nt]}$ yields
convergence, in distribution to the Tracy-Widom law $F_2$.
The proofs here are technical and we only outline the results
and refer the reader to \cite{KT13} for details.
We point out that in the finite $I$ case there is a way to obtain
a skeleton for the graph (by taking the intersection of $|I|$ skeleton sets),
whereas in the infinite $I$ case this is not possible.

Section \ref{sec:wei} takes a look at a version of
the Barak-Erd\H{o}s graph when random weights are introduced.
The material is taken from \cite{FMS14} and \cite{FK18}.
Even though negative weights on both edges and vertices can be allowed,
we focus only in the positive weights case in order to make ideas clear.
We measure the weight of a path by the sum of the weights of its edges
(and, if vertices have weights too, we add those weights as well; see
\cite{FK18}). If $u$ is a random variable representing an edge weight,
then we show that $\E u^2$ is required for the law of large numbers,
that is, the convergence of $W_n/n$, where $W_n$ is the maximum weight
of all paths from $1$ to $n$, to a constant $C=C(F)$ that depends on the
distribution $F$ of $u$.
For the CLT, we need $\E u^3 < \infty$.
When $\E u^2=\infty$ some new phenomena occur because $W_n$ grows faster than
linearly. When we put the graph on $(1/n) \Z$ we show convergence to a certain
random graph whose vertices are constructed by means of i.i.d.\ uniform
random variables.

When weights are introduced one can ask the question of the behavior
of $C(F)$. Deep properties of it have been investigated when
$F=\delta_p + (1-p) \delta_{-\infty}$.
(the case of the standard Barak-Erd\H{o}s graph)
and exposed in earlier sections. Continuity of $C(F)$ for a large set
of distributions $F$ has been investigated in a recent paper by Terlat
\cite{terlat}. In this section we focus exclusively on very simple
weight distributions with $2$ atoms: $F=p \delta_1 + (1-p) \delta_x$.
That is, every pair $(i,j)$, with $i<j$, of integers is given a weight
that has distribution $F$, independently.
What can we say about $C(p,x) \equiv C(p \delta_1 + (1-p) \delta_x)$
as a function of $x$?
We refer to this graph as ``random charged graph'' because we allow $x$
to be negative (and hence a charge rather than weight).
We still want to maximize total charge. Paths with negative charge
exist, however, $C(p,x)>0$.
The results in this section have been obtained in \cite{FKP18}
and show some interesting behavior: whereas $C(p,x)$ is a convex increasing
function of $x$, it is not everywhere differentiable.
A number of combinatorial arguments allow us to establish that $C(p,x)$ is
nondifferentiable if and only if $x$ is a negative rational or
equal to $n$ or $1/n$ for some positive integer $n \ge 2$.
Due to lack of space, the section only offers an outline of the results.

In Section \ref{sec:perfectSimu} we ask how to obtain more
information about $C(F)$ experimentally, that is, by simulation.
When $F=\delta_p + (1-p) \delta_{-\infty}$
(the standard Barak-Erd\H{o}s graph) we can employ
Markovian methods (MCMC). But we want to do better and devise a perfect
simulation method, that is, a way to perfectly (and not approximately)
simulate a random variable whose expectation is $C(F)$.
To deal with the general $F$ case, we first assume that that $F$
is supported on a semi-infinite interval $(-\infty, 1]$, say, such
that it places positive mass to any left neighborhood of $1$.
Using this assumption, we generalize the IBM particle system
to something that we call Max Growth System (MGS) that is a Markovian
process in a space of point measures (configurations of particles)
on the real line. We then construct renovation events, use them
to construct a stationary process, and then extract a random variable
that can be perfectly simulated and which has expectation $C(F)$.
Based on this, we offer a method for experimenting with various
weight distributions. We only ran simulations in a simple case, and
even present the algorithm for it.

We conclude the paper by an overview and some open problems.

\section{From the Barak-Erd\H{o}s graph to the infinite bin model}
\label{sec:BEIBM}
Consider a loopless directed graph $\vec G$ on the set $\Z$ of integers
whose edges are oriented in a way compatible with the ordering of
the integers: if $\{i,j\}$ is an edge then it is oriented from
$\min(i,j)$ to $\max(i,j)$.
Fix two integers $i, j$ such that $j-i=n > 0$.
There are four maximal quantities of interest:
\begin{equation}
\label{LR}
\begin{split}
L^{\gl, \gr}_{i,j}
& := \text{ the maximal length of all paths from $i$ to $j$ ;}
\\
L^\gl_{i,j} & :=
\text{ the maximal length of all paths from $i$ to some $j'\le j$;}
\\
L^\gr_{i,j} & :=
\text{ the maximal length of all paths from some $i' \ge i$ to $j$;}
\\
L_{i,j} & :=
\text{ the maximal length of all paths from some $i' \ge i$ to some $j' \le j$.}
\end{split}
\end{equation}
(Superscripts $\gl$, $\gr$ indicate left-tied, right-tied paths, respectively.)
Clearly, the first quantity is the smallest and the last the largest,
while the other two are in-between.

\subsection{Ergodic arguments}
\label{ergosec}
If $\vec G$ is the Barak-Erd\H{o}s graph $\vec G(\Z,p)$,
it will be seen that all these quantities satisfy the same
strong law of large numbers (SLLN).
But it is easier to see that the largest of these quantities satisfies
a SLLN, as a consequence of Kingman's
subadditive ergodic theorem \cite{Kingman}.
This has nothing to do with independence {\em per se}
and this becomes more general in the context of the following lemma.
Instead of insisting that the edge-defining random variables
are i.i.d.\ we merely assume stationarity and ergodicity.
In what follows, we shall consider a collection $\alpha$ of random variables
$\alpha_{i,j}$, indexed by pairs $(i,j)$ of integers with $i<j$,
and taking values in $\{1,-\infty\}$.
We shall then speak of the random graph $\vec G(\Z,\alpha)$
with whose set edges is
\begin{equation}
\label{GZxi}
\{(i,j): i< j, \alpha_{i,j}=1\}.
\end{equation}
Choosing $-\infty$ rather than $0$ is convenient because
if we take any sequence $i_0 < i_1 < \cdots < i_\ell$ of integers,
for some $\ell \in \N$, then the quantity
$(\alpha_{i_0, i_1} + \alpha_{i_1,i_2} + \cdots + \alpha_{i_{\ell-1},i_\ell})^+$
takes values $0$ or $\ell$;
it takes value $\ell$ if and only if $(i_0, i_1, \ldots, i_\ell)$
forms a path in $\vec G(\Z,\alpha)$.
Using this trick, we can easily express the maximal lengths \eqref{LR}
as maxima of these quantities over deterministic increasing sequences of integers.
For example,
$L^{\gl,\gr}_{1,3} = \max\{\alpha_{1,3}^+, \,(\alpha_{1,2}+\alpha_{2,3})^+\}$.
By saying that a probability measure $\P$ is defined on the canonical space
$\Omega$
we mean that $\P$ is defined on the set $\Omega$ consisting of
all collections $\alpha = (\alpha_{i,k})_{i,k \in \Z}$.

\begin{lemma}
\label{lemmix}
Let $\alpha=(\alpha_{i,j}, i< j, i,j \in \Z)$, be a collection of random variables
with values in $\{1, -\infty\}$ with distribution $\P$ on its
canonical space $\Omega$.
Define $\theta: \Omega \to \Omega$ by \footnote{Note that $\theta$
is a bijection from $\Omega$ onto itself with both $\theta$ and $\theta^{-1}$
measurable when $\Omega$ is given its natural product $\sigma$-algebra.
Let $\theta^0$ be the identity.
Then $\theta^n$, $n \in \Z$, is a group.
We say that $(\theta, \P)$ is stationary if $\P(\theta A)=\P(A)$ for all
measurable $A$. In this case, we say that it is ergodic if every set $A$
such that $\theta A = A$ a.s., actually has $\P(A)$ equal to $0$ or $1$.}
\begin{equation}
\label{thetadef}
(\theta \alpha)_{i,j} = \alpha_{i+1, j+1}.
\end{equation}
Assume that $(\theta, \P)$ is stationary and ergodic.
Let
\[
L_{i,j} := \max_{\substack{i\le i_0<i_1<\cdots < i_\ell\le j\\\ell \in \N}}
(\alpha_{i_0, i_1} + \alpha_{i_1,i_2} + \cdots + \alpha_{i_{\ell-1},i_\ell})^+.
\]
Then there is a deterministic $C$ such that
\[
C=\lim_{n\to \infty}L_{0,n}/n  \quad \text{as $n \to \infty$ $\P$-a.s.\ and
in $L^1$},\quad C=\inf_n \E L_{0,n}/{n}.
\]
\end{lemma}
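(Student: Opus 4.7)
The plan is to apply Kingman's subadditive ergodic theorem, after a small adjustment that handles edges straddling a cut point. First I would establish a near-subadditivity for $L$. Fix $i<j<k$ and take any increasing sequence $i\le i_0<\cdots<i_\ell\le k$ whose weight sum is positive, so every $\alpha_{i_m,i_{m+1}}=1$ and the sequence is a genuine path in $\vec G(\Z,\alpha)$. Let $m^\ast$ be the largest index with $i_{m^\ast}\le j$. The prefix $(i_0,\ldots,i_{m^\ast})$ lies in $[i,j]$ and the suffix $(i_{m^\ast+1},\ldots,i_\ell)$ lies in $[j,k]$; only the straddling edge $(i_{m^\ast},i_{m^\ast+1})$ is lost in the decomposition, giving
\[
L_{i,k}\le L_{i,j}+L_{j,k}+1.
\]
Setting $V_{i,j}:=L_{i,j}+1$ (morally, the maximum number of vertices on a path in $[i,j]$), this becomes exact subadditivity $V_{i,k}\le V_{i,j}+V_{j,k}$.

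Next I would check the remaining hypotheses of Kingman's theorem for the family $(V_{s,t})_{s<t}$. Because $V_{i,j}$ is a measurable function of $(\alpha_{r,s})_{i\le r<s\le j}$, the identity $V_{i,j}(\alpha)=V_{0,j-i}(\theta^i\alpha)$ holds, so the family is stationary under $\theta$ and inherits ergodicity from $(\theta,\P)$. Since $\alpha_{r,s}\in\{1,-\infty\}$, any path in $[0,n]$ has at most $n+1$ vertices, so $V_{0,n}\le n+1$ and $\E V_{0,n}<\infty$. Kingman's subadditive ergodic theorem then produces a deterministic $D$ with
\[
\frac{V_{0,n}}{n}\xrightarrow[n\to\infty]{\text{a.s., }L^1} D,\qquad D=\inf_n\frac{\E V_{0,n}}{n}=\lim_n\frac{\E V_{0,n}}{n}.
\]

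Finally, since $L_{0,n}/n=V_{0,n}/n-1/n$, the same limit holds for $L_{0,n}/n$ almost surely and in $L^1$, and we denote this common limit by $C$. The $L^1$ convergence gives $\E L_{0,n}/n\to C$, and the infimum representation follows from Fekete's lemma applied to the subadditive sequence $n\mapsto\E V_{0,n}$, the asymptotically vanishing $1/n$ shift being absorbed on passage to the infimum. The only subtle point in the whole argument is the loss of a single edge at the cut: pure subadditivity $L_{i,k}\le L_{i,j}+L_{j,k}$ genuinely fails, as seen when only the straddling edge $(0,k)$ is present and $L_{0,k}=1$ while $L_{0,j}+L_{j,k}=0$ for every intermediate $j$. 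The shift $V=L+1$ is exactly what is needed to eliminate this defect, after which the argument is a routine deployment of Kingman's theorem.
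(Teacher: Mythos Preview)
Your argument is essentially the paper's: establish $L_{i,k}\le L_{i,j}+L_{j,k}+1$ by splitting a maximal path at the cut point, pass to $V=L+1$ for exact subadditivity, and apply Kingman's subadditive ergodic theorem. One small caveat on your last line: the $1/n$ shift is \emph{not} absorbed on passage to the infimum---Kingman yields $C=\inf_n(\E L_{0,n}+1)/n$, and for instance in the Barak--Erd\H{o}s model with small $p$ one has $\E L_{0,1}/1=p<C(p)\sim ep$, so $\inf_n\E L_{0,n}/n$ can be strictly smaller than $C$; this is a minor imprecision shared with the paper's own statement of the lemma rather than a flaw in the method.
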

\begin{proof}
Noticing that $L_{i,j}$ is the maximum length of all paths in $\vec G(\Z,\alpha)$
with endpoints between $i$ and $j$ (consistent with the last of \eqref{LR})
we have
\[
L_{i,k} \le L_{i,j} + L_{j,k}+1, \quad i<j<k,
\]
for if we consider a maximum length path between two vertices on $[i,k]$
then its length is at most the length of its restriction on
$[i,j]$ plus the length of its restriction on $[j,k]$ plus 1 if $j$
is not a vertex of the maximum length path.
The stationarity and ergodicity of $(\theta, \P)$ together
with the last inequality shows that the $L_{i,j}+1$ satisfy the assumptions of
Kingman's subadditive ergodic theorem \cite{Kingman} and so $\lim_{n \to \infty}
L_{i,i+n}/n$ exists $P$-a.s.\ and in $L^1$ and equals $C=\inf_n \E L_{0,n}/n$.
\end{proof}

\begin{remark}
It will turn out that all four quantities in \eqref{LR} have the same growth
rate as the largest of them. This is not entirely obvious at this moment
because, for example, attempting to establish that
$\lim_{n \to \infty} L^{\gl,\gr}_{0,n}/n$ exists a.s., one might be tempted to
use the obvious superadditivity
\[
L^{\gl,\gr}_{i,k} \geq L^{\gl,\gr}_{i,j} + L^{\gl,\gr}_{j,k},
\]
But, according to the extension of the subadditive ergodic theorem
of Liggett, see \cite[Theorem 2.6]{LIGG}
this would require that $\E L_{i,j}^- < \infty$ which is false here.
The fact that $C^{\gl,\gr} = C^{\gl} = C^{\gr} =C$ is discussed below;
see Corollary \ref{aLL}.
\end{remark}

Returning to the graph $\vec G(\Z, \alpha)$, whose edge set
is as in \eqref{GZxi},  let us define
\[
i \leadsto j \iff
\text{ there is a path in $\vec G(\Z, \alpha)$ from $i$ to $j$}
\]
and identify a certain random subset of $\Z$, that we shall refer to as
the {\em skeleton of the graph}, as follows.
For each $j \in \Z$ let
\begin{equation}
\label{AAA}
A_j=\{\alpha \in \Omega:\, \text{for all $i \in \Z$
there is a path in $\vec G(\Z, \alpha)$ from $\min(i,j)$ to $\max(i,j)$}\}.
\end{equation}
The {\em skeleton} $\SSS$ is the random set of all $j$ such that $A_j$ occurs:
\begin{equation}
\label{skeleton}
\SSS=\SSS(\alpha) = \{j \in \Z:\, \alpha \in A_j\}.
\end{equation}
The elements of $\SSS$ are called {\em skeleton points} or skeleton
vertices of the graph $\vec G(\Z,\alpha)$.
Notice that $\theta A_j = A_{j+1}$ for all $j \in \Z$.
Hence, if $(\theta, \P)$ is stationary we have
$\P(A_0)=\P(A_n)$ for all $n \in \Z$
and the random sets $\SSS\comp \theta^n$ have all the same law.
\begin{definition}[rate of skeleton]
Assume that $(\theta, \P)$ is stationary. Then the quantity
\begin{equation}
\label{SKELRATE}
\lambda:=\P(A_0)
\end{equation}
is referred to as the {\em rate} or {\em density} of the skeleton $\mathscr S$.
\end{definition}

\begin{lemma}
\label{lemoll}
Assume that $(\theta, \P)$ is stationary and ergodic.
Then
$\SSS^+:=\SSS \cap [0,\infty)$ and $\SSS^-:=\SSS \cap (-\infty,0]$ are both infinite
sets $\P$-a.s. if and only if $\lambda >0$.
Moreover, conditional on $A_0$, the expected distance
between two successive elements of $\SSS$ is $1/\lambda$.
\end{lemma}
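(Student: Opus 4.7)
The plan is to extract both parts from the fact that $(\indset{A_j})_{j\in\Z}$ is stationary ergodic under $\P$, combined with Kac's return-time lemma for the mean-gap formula.

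First I would record the equivariance property: from $(\theta\alpha)_{i,j} = \alpha_{i+1,j+1}$ one checks directly that $\SSS(\theta\alpha) = \SSS(\alpha) - 1$, and hence $\indset{A_k}(\alpha) = \indset{A_0}(\theta^k\alpha)$. Birkhoff's pointwise ergodic theorem applied to the bounded function $\indset{A_0}$ then gives, $\P$-a.s.,
\[
\frac{1}{n}\sum_{k=0}^{n-1} \indset{k \in \SSS} \longrightarrow \P(A_0) = \lambda \quad \text{as } n\to\infty.
\]
If $\lambda > 0$ this forces $|\SSS\cap[0,n)|$ to grow linearly in $n$, so $\SSS^+$ is infinite $\P$-a.s.; running the same argument with the inverse shift $\theta^{-1}$ (still stationary and ergodic) yields $\SSS^-$ infinite. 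If instead $\lambda = 0$, then by stationarity $\P(j \in \SSS) = \P(A_j) = \P(A_0) = 0$ for every $j \in \Z$, so by countable subadditivity $\P(\SSS = \emptyset) = 1$ and neither $\SSS^+$ nor $\SSS^-$ is infinite.

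For the mean-gap formula, assume $\lambda > 0$. Iterating the equivariance gives $\SSS(\theta^k\alpha) = \SSS(\alpha) - k$, so $\theta^k\alpha \in A_0$ iff $k \in \SSS(\alpha)$, and therefore the first return time of $\theta$ to $A_0$,
\[
\tau_{A_0}(\alpha) := \inf\{k \ge 1 : \theta^k\alpha \in A_0\},
\]
coincides with the first strictly positive skeleton point $\min(\SSS(\alpha)\cap[1,\infty))$. Under the Palm-type conditioning $\P(\cdot \mid A_0)$ we have $0 \in \SSS$, so $\tau_{A_0}$ is precisely the distance from $0$ to the next skeleton point, and by shift-invariance of this conditioning the same distribution describes the gap between any two successive skeleton points. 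Kac's return-time lemma applied to the ergodic measure-preserving system $(\Omega,\P,\theta)$ then yields $\E[\tau_{A_0} \mid A_0] = 1/\P(A_0) = 1/\lambda$, as required.

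The main obstacle I anticipate is just bookkeeping: establishing the equivariance $\SSS(\theta\alpha) = \SSS(\alpha) - 1$ cleanly, so as to identify $\tau_{A_0}$ with the first skeleton gap. Once this identification is in place, both parts reduce to standard dynamical results (Birkhoff and Kac). A variant that avoids invoking Kac would be to apply Birkhoff directly to the stationary sequence of gaps under the Palm measure and match its asymptotic density with the $\SSS$-count given by the unconditional ergodic theorem; this works equally well, but Kac's formula is the most economical route.
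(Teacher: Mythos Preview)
Your proof is correct and essentially parallel to the paper's sketch, which invokes the Poincar\'e recurrence theorem for the first claim and ``basic properties of stationary point processes'' for the second. You use Birkhoff's ergodic theorem in place of Poincar\'e recurrence (which is slightly stronger, giving the density and not just infiniteness) and Kac's lemma for the mean gap; Kac's lemma is precisely the discrete-time formulation of the Palm/intensity identity the paper is alluding to, so the two arguments are really the same result stated in different dialects.
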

\begin{proof}[Sketch of proof.]
The first claim is due to the Poincar\'e recurrence theorem \cite{Caratheodory}.
The second claim is from basic properties of stationary point processes.
\end{proof}

\begin{remark}
\label{remrem}
If $\SSS^+$ and $\SSS^-$ are both infinite
then any two far apart vertices $i$ and $j$ will contain a skeleton
point between them. This implies that the there is at
least one path from $i$ to $j$ (and this path passes through
the skeleton point).
\end{remark}

The following is taken from \cite{DFK12}.
\begin{lemma}
\label{lela}
Consider $\vec G(\Z,\alpha)$ and assume that
$\alpha_{i,j}$, $i<j$, $i, j \in \Z$,
are all independent with
\[
\P(\alpha_{i,j}=1)=p_{j-i},
\]
where $p_n$, $n \in \N$, is a sequence of probabilities
such that
\begin{equation}
\label{summa}
\sum_{n=1}^\infty (1-p_1)\cdots(1-p_n)<\infty.
\end{equation}
Then the rate $\lambda$, defined by \eqref{SKELRATE},
of the skeleton $\mathscr S$ is positive and given by
\begin{equation} \label{lprod}
\lambda = \prod_{j>0} (1-(1-p_1) \cdots (1-p_j))^2.
\end{equation}
\end{lemma}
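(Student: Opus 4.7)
The plan is to decompose the event $A_0$ into independent ``forward'' and ``backward'' parts and then, inside each part, replace the complicated reachability condition by a simple local condition on in-degrees.

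First, I would write $A_0 = F \cap B$, where $F = \{\alpha : 0 \leadsto j \text{ for every } j \ge 1\}$ and $B = \{\alpha : i \leadsto 0 \text{ for every } i \le -1\}$. Any path from $0$ to some $j \ge 1$ uses only edges $\alpha_{k,l}$ with $0 \le k < l$, while any path from $i \le -1$ to $0$ uses only edges $\alpha_{k,l}$ with $k < l \le 0$; these two edge families are disjoint, so the independence of the $\alpha_{i,j}$ forces $F$ and $B$ to be independent. Translation invariance of the edge law (together with the spatial reflection $k \mapsto -k$, under which a path $i \leadsto 0$ becomes a path $0 \leadsto -i$) gives $\P(F) = \P(B)$, so $\lambda = \P(F)^2$.

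The main step is to prove $F = F'$, where $F' := \bigcap_{j \ge 1} E_j$ and $E_j := \{\alpha_{k,j} = 1 \text{ for some } 0 \le k < j\}$ is the event that vertex $j$ has at least one in-neighbor in $\{0,\ldots,j-1\}$. The inclusion $F \subseteq F'$ is immediate, since the last edge of any path $0 \leadsto j$ supplies such an in-neighbor. For the converse, on $F'$ pick for each $j \ge 1$ some $k_j \in \{0, \ldots, j-1\}$ with $\alpha_{k_j, j} = 1$; then $j > k_j > k_{k_j} > \cdots$ is a strictly decreasing sequence of nonnegative integers that must reach $0$ in finitely many steps, and reversing the resulting chain of edges produces a path $0 \leadsto j$. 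For distinct $j$ the events $E_j$ depend on disjoint families of edges and are therefore mutually independent, giving
\[
\P(F) = \prod_{j \ge 1} \P(E_j) = \prod_{j \ge 1}\bigl(1-(1-p_1)(1-p_2)\cdots(1-p_j)\bigr).
\]
Squaring yields \eqref{lprod}.

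Positivity of $\lambda$ is then the classical infinite-product criterion: setting $a_j := (1-p_1)\cdots(1-p_j) \in [0,1]$, the product $\prod_j (1-a_j)$ is strictly positive if and only if $\sum_j a_j < \infty$, which is precisely assumption \eqref{summa}. The only non-routine step is the backward-tracing identity $F = F'$; once that local reformulation is in place, the rest is an elementary product computation over independent events, so no real obstacle arises.
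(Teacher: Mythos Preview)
Your proof is correct and follows essentially the same route as the paper: decompose $A_0$ into independent forward and backward halves of equal probability, then rewrite the forward event $F$ as the intersection of the local in-neighbor events $E_j$ (the paper writes these as $\{\ell(j)\le j\}$ where $\ell(j)$ is the distance from $j$ to its nearest predecessor). Your backward-tracing argument for $F'\subseteq F$ is exactly the inductive step the paper leaves implicit when it asserts $A^+_{0,m}=\bigcap_{j=1}^m\{\ell(j)\le j\}$, so if anything your version is slightly more explicit.
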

\begin{proof}
The independence assumption implies that $(\theta, \P)$ is stationary
and ergodic, where $\theta$ is as in  \eqref{thetadef}.
We will argue that the summability assumption \eqref{summa} implies that
$\lambda >0$ which, by Lemma \ref{lemoll}, will imply that
$\SSS^-$ is an infinite set.
Consider the random variables
\begin{align*}
\ell(j) &:= \max\{k>0:\, \alpha_{j-k,j}=1\}, \quad j \in\Z,
\end{align*}
which have the same distribution:
\[
\P(\ell(j) > k) = (1-p_1) \cdots (1-p_k).
\]
Condition \eqref{summa} implies that $\ell(j)<\infty$ a.s.
Consider also the event
\begin{equation*}
\label{A+}
A^+_{0,m} = \{0 \leadsto j, \text{ for all } j=1,\ldots,m\},
\end{equation*}
noticing that
\[
A^+_{0,m} = \bigcap_{j=1}^m \bigcup_{i=0}^{j-1} \{i \leadsto j\}
= \bigcap_{j=1}^m \{\ell(j) \le j\}.
\]
Therefore,
\[
A^+_0 := \{0 \leadsto 1, 0 \leadsto 2, \ldots\}
=  \bigcap_{j=1}^\infty \{\ell(j) \le j\},
\]
and so
\[
\P(A_0^+) = \prod_{j=1}^\infty (1-(1-p_1)\cdots(1-p_j))>0.
\]
Similarly,
\[
A_0^-:=\{-1 \leadsto 0, -2 \leadsto 0, -3 \leadsto 0, \ldots\}
\]
has the same probability as $A_0^+$. Noticing that the event
$A_0$, defined by \eqref{AAA}, is the intersection of $A_0^+$ and $A_0^-$,
two independent events, we obtain
\[
\lambda = \P(A_0) =  \P(A_0^+)\P(A_0^-)
= \prod_{j>0} (1-(1-p_1) \cdots (1-p_j))^2.
\]
\end{proof}

\begin{remark}
If one of the $p_j$ equals $1$ then letting $k=\min\{j: p_j=1\}$
we have $\lambda = \prod_{j=1}^{k-1} (1-(1-p_1) \cdots (1-p_j))^2$.
The case $p_1=1$ is uninteresting.
\end{remark}

Combining all of the above we conclude that the length of longest paths in
Barak-Erd\H{o}s graphs grows linearly, a result first observed by Newman
\cite{NEWM92}.

\begin{corollary}
\label{aLL}
Consider the four quantities defined by \eqref{LR} for a Barak-Erd\H{o}s
graph $\vec G(\Z,p)$.
Then there is a constant $C=C(p)$ such that
\[
\lim_{n\to \infty} \frac{L_{0,n}}{n}
= \lim_{n\to \infty} \frac{L^\gl_{0,n}}{n}
= \lim_{n\to \infty} \frac{L^\gr_{0,n}}{n}
= \lim_{n\to \infty} \frac{L^{\gl,\gr}_{0,n}}{n} = C(p) \text{ a.s.}
\]
\end{corollary}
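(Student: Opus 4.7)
The plan is to apply Lemma \ref{lemmix} to the largest of the four quantities and transfer its convergence to $L^{\gl,\gr}_{0,n}$ via the skeleton $\SSS$; the two middle quantities are then sandwiched. Lemma \ref{lemmix} immediately gives $L_{0,n}/n\to C$ a.s., and the chain of trivial inequalities $L^{\gl,\gr}_{0,n}\le L^\gl_{0,n},\,L^\gr_{0,n}\le L_{0,n}$ reduces the corollary to proving $L^{\gl,\gr}_{0,n}/n\to C$ a.s. For $0<p<1$, the Barak-Erd\H{o}s setting satisfies the hypotheses of Lemma \ref{lela} with $p_n\equiv p$ (since $\sum_n(1-p)^n<\infty$), so the skeleton $\SSS$ has positive rate $\lambda(p)=\prod_{j\ge 1}(1-(1-p)^j)^2$ and is bi-infinite a.s.\ by Lemma \ref{lemoll}; the cases $p\in\{0,1\}$ are trivial.

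The key step is to introduce the random points $s_1:=\min(\SSS\cap[0,\infty))$ and $s_K:=\max(\SSS\cap(-\infty,n])$, both finite a.s., with $s_1\le s_K$ for all $n$ large. By the defining property \eqref{AAA} of the skeleton, $s_1$ is reached by a path from $0$ and connects by a path to every larger integer; symmetrically $s_K$ is reachable from every smaller integer and connects to $n$. Hence an optimal path for $L_{s_1,s_K}$, whose endpoints lie in $[s_1,s_K]$, can be prepended by a path $0\leadsto s_1\leadsto (\text{its left endpoint})$ and appended by a path $(\text{its right endpoint})\leadsto s_K\leadsto n$, producing a path from $0$ to $n$ of length at least $L_{s_1,s_K}$. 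Conversely, an optimal path for $L_{0,n}$ has at most $s_1$ vertices in $[0,s_1)$ and at most $n-s_K$ vertices in $(s_K,n]$, so truncating it to its maximal sub-path with vertices in $[s_1,s_K]$ removes at most $s_1+(n-s_K)$ edges. Together,
\[
L_{0,n}-s_1-(n-s_K)\;\le\;L_{s_1,s_K}\;\le\;L^{\gl,\gr}_{0,n}\;\le\;L_{0,n}.
\]

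It remains to divide by $n$ and kill the boundary terms. The term $s_1/n\to 0$ trivially since $s_1$ is a.s.\ finite. For $(n-s_K)/n$, I would apply Birkhoff's ergodic theorem to the indicator $\mathbf{1}_{A_0}$ under $(\theta,\P)$: for any fixed $\epsilon>0$,
\[
\frac{|\SSS\cap[(1-\epsilon)n,n]|}{n}\;\longrightarrow\;\epsilon\,\lambda(p)\;>\;0\quad\text{a.s.,}
\]
so eventually the skeleton intersects $[(1-\epsilon)n,n]$, forcing $n-s_K\le\epsilon n$ for all large $n$, whence $(n-s_K)/n\to 0$ a.s. Taking the liminf of the displayed chain and using $L_{0,n}/n\to C$ together with the trivial upper bound then yields $L^{\gl,\gr}_{0,n}/n\to C$ a.s. The main obstacle is precisely this $o(n)$-control on $n-s_K$: stationarity alone only gives that $n-s_K$ is a.s.\ finite, and ruling out anomalously large last-gaps along a subsequence requires the ergodic theorem for the stationary point process $\SSS$, bypassing any need for explicit tail estimates on its gaps.
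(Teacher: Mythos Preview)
Your proof is correct and takes essentially the same approach as the paper's sketch. The paper simply asserts that $L_{0,n}=L^{\gl,\gr}_{0,n}+o(n)$ a.s.\ via the positive-rate skeleton and leaves the details to the reader; you have supplied precisely those details through the explicit sandwich with the boundary skeleton points $s_1,s_K$ and the ergodic-theorem control on $(n-s_K)/n$.
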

\begin{proof}[Sketch of proof]
If $p=0$ then the graph has no edges and the above limits
hold trivially with $C(0)=0$.
Assume $p>0$ and
note that condition \eqref{summa} of Lemma \ref{lela} holds because
\eqref{summa} holds: $\sum_{n=1}^\infty (1-p)^n < \infty$.
We thus have $\lambda>0$.
By Lemma \ref{lemoll}, the random sets
$\SSS^+$, $\SSS^-$ are a.s.\ infinite with positive rate $\lambda$.
We can easily see that $L_{0,n}=L^{\gl,\gr}_{0,n}+o(n)$,
as $n \to \infty$, a.s.
The conclusion now follows from Lemma \ref{lemmix}.
\end{proof}

\begin{remark}
\label{elleuler}
For a $\vec G(\Z,p)$ with $0 \le p \le 1$, we have that the rate of
its skeleton is given by
\begin{equation}
\label{ellq}
\lambda = \prod_{j=1}^\infty (1-(1-p)^j)^2
\end{equation}
This follows from \eqref{lprod}.
We now give a number-theoretic interpretation.
Consider Euler's function
\[
\phi(q) := \prod_{k=1}^\infty (1-q^k), \quad |q|<1.
\]
Clearly,
\[
\lambda(p) = \phi(1-p)^2.
\]
It is easy to see that $1/\phi(q)$ is the generating function
of the sequence $p(n)$ of integer partitions of the positive integer $n$,
that is,
\[
\sum_{n=1}^\infty p(n) q^n = \frac{1}{\phi(q)}.
\]
To see this, recall that $p(n)$ is defined as the number of ways to write
$n= \ell_1+2\ell_2+3\ell_3+\cdots$, where the $\ell_i$ are nonnegative integers.
So
$\sum_{n=1}^\infty q^n p(n) = \sum_{n=1}^\infty q^n \sum_{\ell_1, \ell_2,
\ldots} \1_{n=\ell_1+2\ell_2+\cdots}
= \sum_{\ell_1} q^{\ell_1} \sum_{\ell_2} q^{2\ell_2} \cdots
= (1-q)^{-1} (1-q^2)^{-2} \cdots=1/\phi(q)$.
Euler's pentagonal number theorem relates Euler's function to pentagonal
numbers, that is numbers of the form $(3n^2-n)/2$ (pentagonal
numbers are ``Pythagorean'' numbers in the sense that they
can be represented using pentagons, analogously to triangular and square
numbers that were actually known by Pythagoras).
The theorem says that
\[
\phi(q)
= \sum_{n =-\infty}^\infty (-1)^n q^{\frac{3n^2-n}{2}}, \quad |q|<1.
\]
A beautiful bijective proof of this is due to Franklin (1881) \cite{FRA1881};
see Andrews \cite{AND} for a more modern account.
Other algebraic proofs are due to Jacobi, Euler, and others;
see P\'olya and Szeg\H{o} \cite[$\mathsection$4, 50-54]{PS78} for these proofs.
\end{remark}

\begin{remark}
It is interesting to see that for a sparse $\vec G(\Z,p)$ graph, the average
distance between two successive skeleton points is huge,
whereas for a dense graph every second point is a skeleton point.
For the symmetric $p=0.5$ case, roughly every $12$th point
is a skeleton point.
\begin{table}[h]
\centering
\caption{Average distance between two successive skeleton points in the infinite
Barak-Erd\H{o}s graph with parameter $p$.}
\setlength\extrarowheight{2pt}
\begin{tabular}{|c|ccccccc|}
\hline
$p$ & $0.01$ & $0.1$ & $0.3$ & $0.5$ & $0.6$ & $0.8$ & $0.9$  \\
\hline
$1/\lambda$ &$10^{139}$ &$6\times10^{11}$ & $558.46$ &$11.99$ &$4.9$ &$1.73$ &$1.26$\\
\hline
\end{tabular}
\end{table}
\begin{figure}[ht]
\centering
\includegraphics[height=4.5cm]{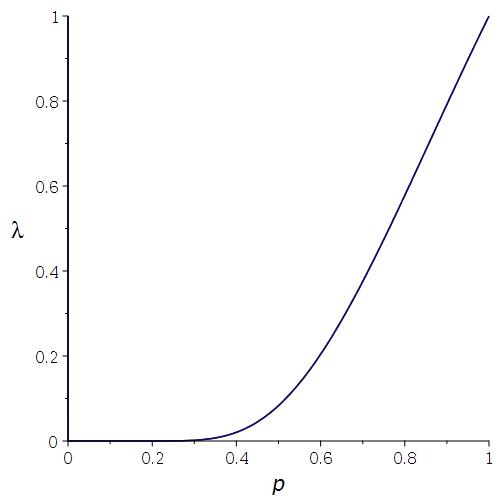}
\caption{Plot of the rate $\lambda(p)$ of skeleton points against
the connectivity probability $p$}
\label{fig:lambda}
\end{figure}
We used the  formula $\lambda(p) = \left(\sum_{n=-\infty}^\infty
(-1)^n (1-p)^{\frac{3n^2-n}{2}}\right)^2$ to perform these computations,
since the series converges much faster than the product.
This, together with the regenerative properties (Section \ref{secreg})
provides a method for constructing an accurate picture of $\vec G(\Z,p)$.
\end{remark}

\subsection{The infinite bin model corresponding to the Barak-Erd\H{o}s graph.}
\label{BEIBM}
The general Infinite Bin Model (IBM) is a particle system that will
be introduced in Section \ref{sec:ibm}. In the current section,
we will motivate the need to study it by explaining how to obtain
an IBM by growing a Barak-Erd\H{o}s $\vec G(\Z, p)$ graph dynamically.
For the construction, we shall keep in mind that we are interested
in longest paths.

Suppose that we have created
\[
\vec G_n=\vec G(\{1,\ldots,n\},p).
\]
To construct $\vec G_{n+1}$
we need to add all edges $(i,n+1)$ for which $\alpha_{i,n+1}=1$.
Conditional on $\vec G_n$,
the distribution of $\vec G_{n+1}$
will not change if we permute the variables
$\alpha_{1,n+1}, \ldots, \alpha_{n,n+1}$.
We choose to order the variables $\alpha_{1,n+1}$, $i=1,\ldots,n$
by ordering the vertices $i=1,\ldots,n$, according to decreasing
values of $(L^{\gr}_{1,i})_{1 \le i \le n}$:
\[
j \preceq i \iff L^{\gr}_{1,j} \ge L^{\gr}_{1,i},
\]
with ties resolved arbitrarily.
Introduce a state (or configuration) vector $X_n=(X_n(0), X_n(1),\ldots)$
for $\vec G(\{1,\ldots,n\},p)$ by letting
\[
  X_n(\ell) := |\{1\le j \le n:\, L^{\gr}_{1,j}=\ell\}|.
\]
In particular, $X_n(0)$ is the number of vertices $j$ for which none
of the edges $(i,j)$ exist, for any $1\le i < j$, and $X_n(1)$ is
the number of vertices $j$ such that there exists an edge $(i,j)$ for some $i$
that is counted in $X_n(0)$ and there are no other incoming edges to $j$.
We now let
\[
L_n :=  \max_{1\le j \le n} L^{\gr}_{1,j},
\]
for notational convenience.
Hence $X_n(L_n)$ is the number of vertices $1\le i\le n$
such that $L^{\gr}_{1,i}$ is maximal.
For technical reasons, we shall extend $X_n(\ell)$ on
negative integers $\ell$ too and let $X_n(\ell)=\infty$ if $\ell<0$.
Therefore, our state vector is of the form
\begin{equation}
\label{Xform}
X_n=[\ldots, \infty, \infty, X_n(0), X_n(1), \ldots, X_n(L_n), 0,0,\ldots].
\end{equation}
Note that $X_1 = [\ldots, \infty, \infty, 1, 0,0,\ldots]$, that is, $X_1(0)=1$,
$X_1(\ell)=\infty$ for $\ell <0$, and $X_1(\ell)=0$ for $\ell > 0$. We observe
that the sequence $(X_n)$ of state vectors is a Markov chain.

Indeed, changing the point of view, we shall think of $X_n$
as a configuration of a number of identical balls (corresponding to the
vertices) into labeled bins,
so that $X_n(k)$ is the number of balls in bin $k$.
The Markovian evolution is then easily obtained.
When we add a new vertex $n+1$ then the state will change by the addition
of a new ball into a bin.
Recalling the ordering $\prec$ of the vertices
of $\vec G_n$, we remark that the balls are placed in the bin in an
increasing fashion. In other words, for all $i < j$, if the ball corresponding
to vertex $i$ is in a bin to the left of the ball corresponding to vertex $j$, then
$i \prec j$.

To construct $X_{n+1}$ we only need to monitor the largest vertex $i$, for that
order, such that $\alpha_{i,n+1}=1$.
We then add a new ball to the bin immediately to the right of the bin
containing ball $i$.
If such an $i$ does not exist we are in the situation that vertex
$n+1$ is not the endpoint of a path starting from vertex $1$
and, in this case, a ball is added in bin $0$:
$X_{n+1}(0)= X_n(0)+1$.
Thus, first let $B_n$ be the nonnegative integer uniquely specified by
\begin{equation}
\label{binning}
X_n(B_n+1) + \cdots + X_n(L_n) \le I < X_n(B_n)+X_n(B_n+1) + \cdots + X_n(L_n),
\end{equation}
where $I$ is the rank of the largest vertex $i$ for $\prec$ such that
$\alpha_{i,n+1} = 1$, or $I = n+1$ if there is no such vertex.
Then, we construct $X_{n+1}$ as $X_n + \delta_{B_n}$, where $\delta_{k}$
is the configuration with a single ball in bin $k$, so that $\delta_k(\ell) =
\ind{k=\ell}$.
Noting the form of the state \eqref{Xform} we see that such a $B_n$
always exists and, because negative bins contain an infinite number
of balls we have $B_n \ge 0$.

We can equivalently describe the transition from $X_n$ to $X_{n+1}$
by the stochastic recursion
\begin{equation}
\label{begbin}
X_{n+1} = X_n + \delta_{B(X_n,\xi_{n+1})+1},
\end{equation}
where $\xi_{n+1}$ is a geometric random variable with parameter $p$,
i.e.\, $\P(\xi_{n+1}=i)=(1-p)^{i-1}p$, $i \in \N$,
independent of $X_n$ and
\begin{equation}
  \label{eqn:firstB}
  B(X_n,z) = \inf\left\{k \in \Z : \sum_{p = k+1}^\infty X_n(p) < z\right\},
\end{equation}
i.e.\ $B(X_n, \xi_{n+1})$ is the $B_n$ satisfying \eqref{binning} with
$\xi_{n+1}$ in place of $I$. The stochastic recursion \eqref{begbin} then
proves that $(X_n)$ is a Markov chain.

The process $(X_n)$ is a particular case of an {\em infinite bin model}
that will be studied in more detail in the next section.
This bin model was introduced in \cite{FK03} even under more
general stationary and ergodic assumptions. It was shown that
a stationary version of a spatially-shifted version of it exists.
Under independence assumptions, it was possible to write balance
equations for the stationary version. These led
to sharp bounds on $C(p)$ showing, in particular, the
asymptotics of $C(p)$ as $p \to 0$ that had previously obtained
by Newman \cite{NEWM92}.  Moreover, it became possible to obtain analytically
expressible upper and lower bounds for the whole function $p\mapsto C(p)$.

We study in the next section the infinite bin model as a stochastic process per se, dropping the geometric distribution assumption. This allows us to obtain general formula for the speed at which the index of the rightmost occupied bin is growing. Specifying these formulas for the geometric distribution will allow us to obtain the precise analytic properties of the function $C$ in Section~\ref{sec:analytic}.

\section{The general infinite bin model}
\label{sec:ibm}
We consider in this section the infinite bin model as a ``ball in bins'' process. It can be constructed as a Markov chain in which at each step, a new ball is added in one of the bins similarly to the process defined in Section~\ref{BEIBM}, but with an arbitrary law for the placement of the new ball. This model has appeared in different forms in several areas of probability. Among others, Aldous and Pitman \cite{ALDPIT} took interest in an infinite bin model in which at each step, a new ball is added to the right of one of the $N$ rightmost balls, chosen uniformly at random. The present general setting was introduced by Foss and Konstantopoulos \cite{FK03}, however in this article a stationary version of the process (defined in Section~\ref{subsec:stationaryversion}) is considered.

This section is organized as follows: we first introduce a formal definition of
the generalized infinite bin model in Section~\ref{subsec:defIBM} as a Markov
chain on the space of functions $\Z \to \Z_+$ with support of the form $\{n \leq
a\}$ for some $a \in \Z$. In Section~\ref{subsec:speed}, we show that the index
of the rightmost occupied bin in an infinite bin model grows linearly over time,
at a certain speed $v$. The rest of the section is devoted to various ways to
compute this constant.

We introduce the notion of coupling words in Section~\ref{subsec:couplingwords},
which allows us to introduce renewal events in the evolution of infinite bin
model. Thanks to these renewal events, we can define a stationary version of the
infinite bin model in Section~\ref{subsec:stationaryversion}, which allows us to
obtain several analytic formulas for its speed in Section~\ref{speedexpress}.

\subsection{Definition and first properties of the infinite bin model}
\label{subsec:defIBM}

In order to give a general description of infinite bin models, we first describe
the state space on which this Markov chain will evolve.
A configuration (or state) $X$ is a map $k \mapsto X(k)$
from $\Z$ (the set of bins) into
\[
\overline\Z_+ := \{0,1,2,\ldots\} \cup \{\infty\}
\]
such that $X(k)=0$ eventually.
We let
\[
\mathbb S := \left\{X \in \overline{\Z}_+^\Z: \text{ there is $f \in \Z$
such that ($X(k)=0 \text{ iff } k > f$) }\right\}
\]
be the set of configurations.
We think of $k \in \Z$ as a bin and of $X(k)$ as a number of
indistinguishable balls placed in this bin.
Given $X \in \mathbb S$ we let
\[
F(X)=\max\left\{ k\in\Z: X(k)>0\right\},
\]
a quantity called {\em the front} (bin) of $X$.
Thus, each bin contains some balls (either no ball or a
positive finite number of balls or an infinite number of balls)
such that every bin to the right of $F(X)$ is empty
and every bin to the left of $F(X)$ is nonempty.
\footnote{This last assumption may be relaxed, allowing empty bins to the left
of the front, in which case the proofs become more technical, with some
absorbing states being created. However, the main results
stated in this section still hold true under quite general conditions.}.

\paragraph{System dynamics.}
Consider a configuration $X$ and a positive integer $\xi$ that we will
refer to as the {\em selection number}.
The rightmost nonempty bin is $F(X)$. Enumerate the balls in the
nonempty bins of $X$ starting from the right and moving to the left,
select the $\xi$-th ball, and let
$B(X,\xi)$ be the bin containing it.
The next state is obtained by simply adding a single ball to the
bin to its right, indexed $B(X,\xi)+1$.

\begin{figure}[ht]
\centering
\begin{tikzpicture}[scale = 0.6]
\draw (-1,0) -- (7,0);
\draw (-.5,2) node{$\cdots$};
\draw (0,0) -- (0,4.5);
\draw (1,0) -- (1,4.5);
\draw (2,0) -- (2,4.5);
\draw (3,0) -- (3,4.5);
\draw (4,0) -- (4,4.5);
\draw (5,0) -- (5,4.5);
\draw (6,0) -- (6,4.5);
\draw (0.5,0.5) node{$8$} circle (0.4);
\draw (0.5,1.4) node{$9$} circle (0.4);
\draw (1.5,0.5) node{$7$} circle (0.4);
\draw (2.5,0.5) node{$3$} circle (0.4);
\draw (2.5,1.4) node{$4$} circle (0.4);
\draw (2.5,2.3) node{$5$} circle (0.4);
\draw (2.5,3.2) node{$6$} circle (0.4);
\draw (3.5,0.5) node{$1$} circle (0.4);
\draw (3.5,1.4) node{$2$} circle (0.4);
\end{tikzpicture}
\caption{Representation of the state $X = [\ldots, 2,1,4,2, 0,0, \ldots]$ in
terms of balls in bins. Balls are enumerated from right to left.}
\label{fig:configuration}
\end{figure}
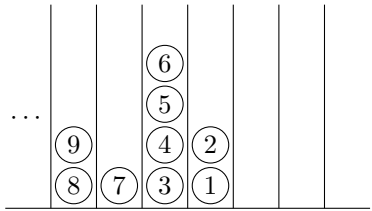

\begin{example}
Let $X=[\ldots, 2,1,4,2, 0,0, \ldots]$ be the configuration pictured in Figure
\ref{fig:configuration}.
If $\xi=1$ then the next state is $X=[\ldots, 2,1,4,2, 1,0, \ldots]$.
If $\xi=3$ then the bin that contains the third ball from the right
is the second bin from the right, so the next state is
$X=[\ldots, 2,1,4,3, 0,0, \ldots]$. The new ball will be placed in the second
bin from the right if $\xi = 7$, in the third bin from the right if $\xi \in
\{8,9\}$, etc.
\end{example}

For technical reasons, we will also allow for the possibility that $\xi = 0$ or
$\xi = \infty$. If $\xi = \infty$, the bin that contains the $\xi$-th ball is
formally at $-\infty$, so no ball is added. If $\xi = 0$, the new
configuration is obtained by shifting the position of every ball
in the configuration one step
to its right, i.e.\ replacing $X$ by $k \mapsto X(k-1)$.

In symbols, we let
\begin{equation}
\label{Bdef}
B(X,\xi):=\inf\bigg\{k\in\Z:\, \sum\limits_{j>k} X(j)<\xi\bigg\},
\end{equation}
writing $B(X,\infty) = -\infty$ by convention. This definition generalizes the one given in \eqref{eqn:firstB}. Then, for $\xi \in \N \cup \{\infty\}$, we define $\Phi_\xi : \configS \to
\configS$ by
\begin{equation}
\label{Phidef}
\Phi_\xi(X) := X + \delta_{B(X,\xi)+1},
\end{equation}
similarly to \eqref{begbin}, where $\delta_k$ is the element of $\Z_+^\Z$ with
$\delta_k(i)=1$ if $i=k$ and $0$ otherwise, so that $\delta_{-\infty} \equiv
0$ (in particular $\Phi_{\infty}(X) = X$). We also define $\Phi_0$ via
\[
  \Phi_0(X) (k) := X(k-1), \quad k \in \Z,
\]
corresponding to shifting $X$ by a unit step to the right.
We notice that $B=B(X,\xi)$ is uniquely specified by the inequality
\[
X(B+1)+\cdots+X(F) < \xi \le X(B)+X(B+1)+\cdots+X(F), \text{ where $F=F(X)$.}
\]
We also have, for all $1 \le \xi \le \infty$,
\[
F(X)-\xi+1 \le B(X,\xi) \le F(X)=B(X,1).
\]

\begin{definition}[infinite bin model]
\label{def31}
Given a probability measure $\mu$ on $\overline\Z_+ =\{0,1,2,\ldots,\infty\}$,
and an i.i.d.\ random sequence $(\xi_n)_{n \ge 1}$ with common law $\mu$,
we define IBM($\mu$) to be the Markov process $(X_n)$ with values
in $\mathbb S$ given by
\[
X_{n+1} = \Phi_{\xi_{n+1}}(X_n).
\]
We leave the initial configuration $X_0$ unspecified.
\end{definition}

\begin{remark}
In Section \ref{subsec:couplingwords} we will write $\Phi_\xi(X)$ simply as
$\xi X$ or $\xi(X)$ for reasons that will become apparent there.
Keeping this in mind,
we refer the reader to the Example \ref{weex} below.
\end{remark}

Note that $X_0$ does not reflect in the notation IBM($\mu$). This is
justified by the fact that the asymptotic properties of the IBM that we are
interested in are independent of the choice of $X_0$.
In relation with the
Barak-Erd\H{o}s graphs, we will be mostly interested in the case when $\mu$ is a
geometric random variable on $\N$, but it will be useful to consider more
general measures on $\overline\Z_+$ in order to obtain estimates
on some quantities of interest. However, as several lemmas are easier to prove
under the assumption that $\mu$ is supported on $\N$,
let us first remark that this condition
is usually enough to study the asymptotic properties of the infinite bin model.

\begin{lemma}
\label{lem:connectionIbm}
Let $\mu$ be a probability distribution on $\bar{\Z}_+$ with $\mu(\N)>0$. We
denote by $\nu$ the law $\mu$ conditioned to be on $\N$, i.e.\
\[
\forall k \in \N, \quad \nu(\{k\}) = \frac{\mu(\{k\})}{\mu(\N)}.
\]
There exists a coupling between the IBM($\mu$) $(X_n)$ and a couple
$(Y_n,(A_n,B_n))$, where $(Y_n)$ is an IBM($\nu$) started from $X_0$ and
$(A_n,B_n)$ is an independent $\Z^2$-valued random walk with step distribution
\begin{multline*}
\P(A_{n+1} = A_n+1,B_{n+1}=B_n) = \mu(0), \text{ } \P(A_{n+1}=A_n,B_{n+1}
= B_n+1) = \mu(\infty)
\\
\text{ and } \P(A_{n+1}=A_n,B_{n+1}=B_n) = \mu(\N)
\end{multline*}
such that
\[
\forall n \in \Z_+,  \forall k \in \Z,
X_n(k) = Y_{n-A_n-B_n}(k-A_n) \quad \text{a.s.}
\]
\end{lemma}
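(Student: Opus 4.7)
The plan is to split the driving i.i.d.\ sequence $(\xi_n)$ into three sub-sequences according to whether $\xi_n$ equals $0$, lies in $\N$, or equals $\infty$, and observe that only the $\N$-draws run a genuine IBM dynamics, the $0$-draws merely shift the configuration, and the $\infty$-draws do nothing. The trivial effects get absorbed into the auxiliary random walk $(A_n,B_n)$, and the $\N$-draws, being an i.i.d.\ thinning of $(\xi_n)$, drive an IBM($\nu$) by definition of $\nu$.

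The key algebraic fact is that the shift $\tau:=\Phi_0$ commutes with $\Phi_\xi$ for every $\xi \in \N \cup\{\infty\}$. For $\xi=\infty$ this is trivial since $\Phi_\infty$ is the identity. For $\xi \in \N$, the definition \eqref{Bdef} immediately gives $B(\tau X,\xi)=B(X,\xi)+1$, and hence
\[
\Phi_\xi(\tau X) \;=\; \tau X + \delta_{B(X,\xi)+2} \;=\; \tau\bigl(X+\delta_{B(X,\xi)+1}\bigr) \;=\; \tau \Phi_\xi(X).
\]

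For the construction, set $A_n:=|\{1\le k\le n:\xi_k=0\}|$, $B_n:=|\{1\le k\le n:\xi_k=\infty\}|$, and $N_n:=n-A_n-B_n$. Enumerate in order the indices $k$ with $\xi_k\in\N$ as $k_1<k_2<\cdots$ and set $\tilde\xi_m:=\xi_{k_m}$. By the standard thinning lemma for i.i.d.\ sequences, the type sequence $(\ind{\xi_k=0},\ind{\xi_k=\infty})_{k\ge 1}$ is independent of $(\tilde\xi_m)_{m\ge 1}$; moreover $(\tilde\xi_m)$ is i.i.d.\ with law $\nu$, and $(A_n,B_n)$ is a $\Z^2$-valued random walk with the step distribution stated in the lemma. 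Define $Y_0:=X_0$ and $Y_{m+1}:=\Phi_{\tilde\xi_{m+1}}(Y_m)$; this is an IBM($\nu$) started from $X_0$, independent of $(A_n,B_n)$.

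It remains to prove by induction on $n$ the identity $X_n=\tau^{A_n} Y_{N_n}$, which rewritten at the bin level is exactly $X_n(k)=Y_{n-A_n-B_n}(k-A_n)$. The base case $n=0$ is immediate. For the step, consider the three possibilities for $\xi_{n+1}$: if $\xi_{n+1}=0$ then $X_{n+1}=\tau X_n=\tau^{A_n+1}Y_{N_n}$ while $(A_{n+1},N_{n+1})=(A_n+1,N_n)$; if $\xi_{n+1}=\infty$ then $X_{n+1}=X_n$ and $(A_{n+1},N_{n+1})=(A_n,N_n)$; and if $\xi_{n+1}\in\N$ then $\xi_{n+1}=\tilde\xi_{N_n+1}$, so, using the commutation relation,
\[
X_{n+1}=\Phi_{\xi_{n+1}}(\tau^{A_n}Y_{N_n})=\tau^{A_n}\Phi_{\tilde\xi_{N_n+1}}(Y_{N_n})=\tau^{A_n}Y_{N_n+1},
\]
with $(A_{n+1},N_{n+1})=(A_n,N_n+1)$. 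In all three cases the induction closes.

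There is no deep obstacle here: the proof is a bookkeeping exercise built on the commutation $\tau\circ\Phi_\xi=\Phi_\xi\circ\tau$ for $\xi\in\N$ together with the i.i.d.\ thinning fact. The only care needed is with the edge cases $\xi=0$ and $\xi=\infty$, which are defined by convention rather than by formula \eqref{Phidef}, and with the verification that the joint law of $(A_n,B_n)$ is indeed that of the random walk with the three-point step distribution claimed.
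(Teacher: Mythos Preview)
Your proof is correct and follows essentially the same approach as the paper: split the driving sequence according to the three cases $\xi_k\in\{0\}$, $\xi_k\in\N$, $\xi_k=\infty$, use that $\Phi_0$ and $\Phi_\infty$ commute with every $\Phi_k$, and identify the $\N$-thinned subsequence as an i.i.d.\ $\nu$-sequence independent of $(A_n,B_n)$. The only cosmetic difference is that the paper pulls all the $\Phi_0$'s to the front in one line via commutation, whereas you verify $X_n=\tau^{A_n}Y_{N_n}$ by a three-case induction on $n$; the content is identical.
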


\begin{proof}
Letting $(\xi_n, n \geq 0)$ be a sequence of i.i.d.\
random variables with law $\mu$, we set
\[
A_n = \sum_{k=1}^n \ind{\xi_k = 0} \quad \text{and} \quad
B_n = \sum_{k=1}^n \ind{\xi_{k} = \infty},
\]
and observe that $(A_n,B_n)$ is the same random walk as defined in the lemma. Then, we relabel $(\zeta_n)$ the random sequence $(\xi_n)$ removing all
terms equal to $0$ or $\infty$, in increasing order of their indices. We observe
that $(\zeta_n)$ is a sequence of i.i.d.\ random variables with law $\nu$, independent from $(A_n,B_n)$.

We define $X_n$ and $Y_n$ by setting $Y_0 = X_0$ and
\[
X_{n+1} = \Phi_{\xi_{n+1}}(X_n) = \Phi_{\xi_{n+1}} \circ \cdots \circ
\Phi_{\xi_1} (X_0) \quad \text{and} \quad Y_{n+1} = \Phi_{\zeta_{n+1}}(Y_n) =
\Phi_{\zeta_{n+1}} \circ \cdots \circ \Phi_{\zeta_1} (X_0).
\]
Observing that $\Phi_0$ and $\Phi_\infty$ both commute with all $\Phi_k$ for
$k \in \N$, we can rewrite
\[
X_{n} = \Phi_0^{A_n} \Phi_{\zeta_{n-A_n-B_n}} \circ \cdots \circ
\Phi_{\zeta_1}(X_0) = Y_{n-A_n-B_n}(k-A_n),
\]
using that there are exactly $A_n$ elements of $(\xi_1,\ldots, \xi_n)$ equal to
$0$ and $B_n$ equal to $-\infty$, the rest being given, in the same order, by
$(\zeta_1,\ldots,\zeta_{n-A_n-B_n})$, which completes the proof.
\end{proof}

\begin{theorem}[Speed of the IBM [\cite{FK03,MR16}]
\label{thm:speed}
Let $\mu$ be a probability measure on $\bar{\Z}_+$.
Let $(X_n)$ be an IBM($\mu$) with initial configuration $X_0$.
Then there exists a constant $0 \le v_\mu\le 1$, not dependent of $X_0$,
such that
\[
\lim_{n\rightarrow\infty} \frac{F(X_n)}{n}=v_\mu \quad a.s.
\]
\end{theorem}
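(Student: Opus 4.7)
The plan is to apply Kingman's subadditive ergodic theorem to the front process, together with a monotonicity argument to handle arbitrary initial configurations.

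First I would set up a monotonicity structure on $\mathbb S$. Equip $\mathbb S$ with the partial order $X \preceq Y$ defined by the requirement that for every $i \geq 1$, the position of the $i$-th rightmost ball of $X$ is at most that of $Y$ (absent balls placed at $-\infty$). Since $\Phi_\xi$ moves only the $\xi$-th rightmost ball one bin to the right in both configurations, ranks are preserved under $\Phi_\xi$, so $X \preceq Y$ implies $\Phi_\xi(X) \preceq \Phi_\xi(Y)$; in particular $F(X) \le F(Y)$.

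Next I would carry out the subadditive argument on the canonical initial configuration $X^{(0)}$ with $X^{(0)}(k) = \infty$ for $k \leq 0$ and $0$ otherwise. This configuration is shift-invariant, so the IBM from $X^{(a)}$ is distributed as the IBM from $X^{(0)}$ shifted by $a$. Let $(X_n)_{n\geq 0}$ be an IBM from $X^{(0)}$ and set $F_n := F(X_n)$. Since $X_m \preceq X^{(F_m)}$ at every time $m$ (both have front $F_m$, while $X^{(F_m)}$ is maximal at every lower rank), monotonicity combined with shift-invariance yields, for $0 \le m \le n$,
\[
F_n \leq F_m + F_n^{(m)},
\]
where $F_n^{(m)}$ is the front at time $n$ of an IBM from $X^{(0)}$ restarted at time $m$ and driven by $(\xi_{m+1},\dots,\xi_n)$. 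The array $g(m,n) := F_n^{(m)}$ is stationary under shifts of $m$, ergodic thanks to the i.i.d.\ driving, and satisfies $0 \le g(0,1) \le 1$. Kingman's subadditive ergodic theorem then delivers
\[
\frac{F_n}{n} \xrightarrow[n \to \infty]{} v_\mu := \inf_n \frac{\E F_n}{n} \qquad \text{a.s.,}
\]
with $v_\mu \in [0,1]$ since $F_{n+1} - F_n \in \{0,1\}$.

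Finally I would remove the dependence on $X_0$. For arbitrary $X_0 \in \mathbb S$ with front $f_0$, one has $X_0 \preceq X^{(f_0)}$, so by monotonicity $F(X_n(X_0)) \leq f_0 + F_n$, giving $\limsup_n F(X_n(X_0))/n \leq v_\mu$. The matching lower bound is the subtle point: in general $X_0$ cannot be $\preceq$-dominated below by any $X^{(a)}$, so one exploits that over a bounded time horizon only the top-ranked balls are ever seen by the selections $\xi_j$, together with the fact that after sufficiently many steps the IBM from $X_0$ has accumulated enough balls near its front to dominate, up to the ranks relevant on such a horizon, a shifted IBM from $X^{(0)}$; taking the horizon to infinity yields $\liminf_n F(X_n(X_0))/n \geq v_\mu$. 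If $\mu$ charges $\{0,\infty\}$, one first reduces to $\mu$ supported on $\N$ via Lemma~\ref{lem:connectionIbm}, absorbing the $\mu(\{0\})$-shift moves into a deterministic drift. The main obstacle is precisely this lower bound in the independence-from-$X_0$ step; the cleanest route would likely invoke the renewal/coupling-word structure developed in the following subsections to formalise the forgetting of the initial configuration by the IBM.
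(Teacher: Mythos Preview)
Your approach via Kingman's subadditive theorem is genuinely different from the paper's, and the subadditive part is correct: from the ``all-infinity'' configuration $X^{(0)}$, the inequality $F_n \le F_m + F_n^{(m)}$ follows from $X_m \preceq X^{(F_m)}$ and monotonicity (minor correction: $\Phi_\xi$ does not move the $\xi$-th ball but \emph{adds} a new ball one bin to its right; the monotonicity conclusion is nonetheless valid, as the paper proves directly). Kingman then yields $F_n/n \to v_\mu$ a.s.\ for this initial condition, and the upper bound for general $X_0$ follows as you say.

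The gap is exactly where you place it: the lower bound $\liminf_n F(X_n(X_0))/n \ge v_\mu$ for arbitrary $X_0$. Your sketch (``enough balls accumulate near the front'') is not a proof, and your fallback to the coupling-word machinery is problematic in the paper's logical order: triangular words require $\E\xi < \infty$, and full generality relies on \cite{CR17,MR19}, so this does not yet cover all $\mu$ on $\bar\Z_+$. The paper sidesteps the issue by a completely different mechanism: it sandwiches IBM($\mu$) between IBM($\mu_k^\infty$) and IBM($\mu_k^0$), both supported on $\{0,\dots,k,\infty\}$, for which the projection $\widetilde\Pi_k(X_n)$ is an irreducible \emph{finite-state} Markov chain (Lemma~\ref{lem:finiteMarkovchain}); the ergodic theorem for that chain gives a front speed independent of $X_0$ automatically (Proposition~\ref{prop:finitespeed}). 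Since the two bounding speeds differ by $\mu([k+1,\infty)) \to 0$, letting $k\to\infty$ closes the sandwich for every fixed $X_0$. This truncation-to-finite-Markov-chain route is what delivers independence from $X_0$ without any moment assumption or forward reference, which your Kingman approach does not achieve as written.
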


The quantity $v_\mu$ is called the speed of the IBM($\mu$). This theorem is
proved in Section \ref{subsec:speed} by bounding the IBM($\mu$) by two
sequences of IBMs with laws having finite supports and using an increasing
coupling between these processes.

\begin{remark}
Applying Lemma~\ref{lem:connectionIbm}, we observe that if $\mu$ is a
probability distribution on $\bar{\Z}_+$ with $\mu(\N) > 0$, then setting $\nu =
\mu(\cdot|\N)$, we have (by law of large numbers)
\[
v_\mu = \mu(0) + \mu(\N) v_\nu.
\]
In particular, it is enough to prove Theorem~\ref{thm:speed} for measures
supported by $\N$.
\end{remark}

Beyond the position of the front, we will generally be interested in the content
of a finite number of bins at a fixed distance from the front. In the case of an
IBM($\mu$) where $\mu$ has finite support, one can reduce the study of the IBM
to a finite state space Markov chain having a stationary distribution, by
considering some finite-dimensional projection of the process, see
Section \ref{subsec:speed}. For a general $\mu$ the content of the rightmost
$K$ non-empty bins also has a stationary distribution but the arguments are more
involved, see Section \ref{subsec:stationaryversion}.

\begin{definition}[partial order on $\configS$]
For any $X,Y\in \configS$, we set $X\preceq Y$ if for every
$\xi \in \N$, $B(X,\xi)\leq B(Y,\xi)$, that is, if for every
$\xi$ the $\xi$-th ball of $X$ is to the left of the $\xi$-th ball of $Y$.
\end{definition}

\begin{lemma}
The relation $\preceq$ is a partial order that is preserved by addition.
Moreover,
\begin{equation}
\label{eq:monotonicity}
\text{ if } 0\le \xi \le \xi' \le \infty \text{ and } X \preceq Y
\text{ then }\Phi_{\xi'}(X)\preceq \Phi_{\xi}(Y).
\end{equation}
\end{lemma}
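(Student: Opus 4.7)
The plan is to reduce the whole statement to an equivalent tail-sum characterization: for $X, Y \in \configS$, $X \preceq Y$ holds if and only if $S_k(X) \le S_k(Y)$ for every $k \in \Z$, where $S_k(W) := \sum_{j > k} W(j)$. The ``if'' direction is immediate from the inclusion $\{k : S_k(Y) < \xi\} \subseteq \{k : S_k(X) < \xi\}$, which yields $B(X,\xi) \le B(Y,\xi)$ upon taking infima. For the ``only if'' direction, if $S_k(X) > S_k(Y)$ held at some $k$ with $S_k(Y)$ finite, then taking $\xi = S_k(Y) + 1 \in \N$ would give $B(Y,\xi) \le k < B(X,\xi)$, contradicting $X \preceq Y$. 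Granted this characterization, reflexivity and transitivity of $\preceq$ are immediate, antisymmetry follows from the identity $W(k+1) = S_k(W) - S_{k+1}(W)$, and preservation under addition is clear since $S_k$ is linear.

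For the monotonicity assertion, the idea is to compute $S_k(\Phi_\eta(W))$ for each $\eta$: one has $S_k(\Phi_\eta(W)) = S_k(W) + \ind{B(W,\eta) \ge k}$ when $\eta \in \N \cup \{\infty\}$ (interpreting $B(W,\infty) = -\infty$, so that the indicator vanishes), while $S_k(\Phi_0(W)) = S_{k-1}(W) = W(k) + S_k(W)$ for the shift. The goal then becomes verifying $S_k(\Phi_{\xi'}(X)) \le S_k(\Phi_\xi(Y))$ for every $k$, by a case split on the values of $\xi$ and $\xi'$.

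The substance lies in two cases. When $1 \le \xi \le \xi' < \infty$, the four combinations of the indicators $\ind{B(X,\xi') \ge k}$ and $\ind{B(Y,\xi) \ge k}$ are immediately harmless except possibly $(1,0)$; but that case would give $S_{k-1}(X) \ge \xi'$ and $S_{k-1}(Y) < \xi$, which combined with $S_{k-1}(X) \le S_{k-1}(Y)$ forces $\xi' \le S_{k-1}(X) \le S_{k-1}(Y) < \xi \le \xi'$, a contradiction, so $(1,0)$ cannot occur. The main obstacle is the mixed case $\xi = 0$ and $\xi' \ge 1$: after using $X \preceq Y$ at index $k-1$ to bound $S_k(\Phi_0(Y)) = Y(k) + S_k(Y) \ge X(k) + S_k(X)$, it suffices to show that $\ind{B(X,\xi') \ge k} \le X(k)$. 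This is trivial when the indicator vanishes; when it equals $1$, the bound $B(X,\xi') \ge k$ together with $B(X,\xi') \le F(X)$ forces $k \le F(X)$, and the defining property of $\configS$---every bin at or to the left of the front is non-empty---gives $X(k) \ge 1$, closing the argument. The remaining boundary cases ($\xi' = \infty$, or $\xi = \xi' = 0$) reduce directly to $X \preceq Y$ together with the trivial facts $W \preceq W + Z$ for $Z \in \configS$ and $S_k(\Phi_0(W)) = S_{k-1}(W) \ge S_k(W)$.
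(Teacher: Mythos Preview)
Your argument is correct on the substantive part (the monotonicity \eqref{eq:monotonicity}), and you start from the same tail-sum characterization the paper uses. The difference lies in how you extract the monotonicity from it. The paper works at the level of the order: from $\xi \le \xi'$ and $X \preceq Y$ one gets $B(X,\xi') \le B(X,\xi) \le B(Y,\xi)$, hence $\delta_{B(X,\xi')+1} \preceq \delta_{B(Y,\xi)+1}$, and preservation under addition disposes of the case $1 \le \xi \le \xi' \le \infty$ in one stroke; for $\xi = 0 < \xi'$ the paper chains $\Phi_{\xi'}(X) \preceq \Phi_{\xi'}(Y) \preceq \Phi_1(Y) \preceq \Phi_0(Y)$, reducing to the previous case plus the easy inequality $\Phi_1 \preceq \Phi_0$. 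You instead compute $S_k(\Phi_\eta(\cdot))$ explicitly and do a case split on the indicator values. Both routes ultimately rest on the same structural feature of $\configS$ (bins at or left of the front are nonempty): you invoke it directly as $\ind{B(X,\xi') \ge k} \le X(k)$, while the paper packages it inside $\Phi_1(Y) \preceq \Phi_0(Y)$. The paper's version is shorter and more conceptual; yours makes the role of each hypothesis more explicit.

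One small caveat: your antisymmetry step via $W(k+1) = S_k(W) - S_{k+1}(W)$ is ill-defined when both tails equal $\infty$, and in fact antisymmetry can fail on $\configS$ as literally defined (two configurations may agree on all finite tail sums yet differ beyond the first bin with infinite content). The paper does not prove antisymmetry either; only the preorder structure and the monotonicity are actually used downstream.
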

\begin{proof}
Simply notice that
\[
X\preceq Y \iff
\sum_{k=\ell}^\infty X(k)
\le \sum_{k=\ell}^\infty Y(k)
\text{ for all } \ell \in \Z.
\]
This implies that $X\preceq Y$ and $X'\preceq Y'$ implies $X+X' \preceq Y+Y'$.

For the second assertion, assume first that $1 \le \xi \le \xi'\le \infty$.
We then have, by \eqref{Bdef}, $B(X,\xi') \le B(X, \xi)$ and so
$\delta_{B(X, \xi')+1} \preceq \delta_{B(X, \xi)+1}$;
and if $X \preceq Y$ then
$X + \delta_{B(X, \xi')+1} \preceq Y+\delta_{B(X, \xi)+1}$,
and so $\Phi_{\xi'}(X) \preceq \Phi_{\xi}(Y)$ by \eqref{Phidef}.

The case $\xi=\xi'=0$ being straightforward, we are left with the case $\xi=0 <
\xi' \le \infty$. Assume again $X \preceq Y$. Then, by the argument above, and
since $\xi' \ge 1$,
we have $\Phi_{\xi'}(X) \le \Phi_{\xi'} (Y) \le \Phi_1(Y)$ and we can
easily see that $\Phi_1(Y) \le \Phi_0(Y)$.
\end{proof}

This partial order can be used to define an
increasing coupling between two IBMs when the step distribution of the first IBM
is dominated by the step distribution of the second IBM.

\begin{proposition}[increasing coupling]
\label{prop:coupling}
Let $\mu$ and $\nu$ be two probability measures on $\overline{\Z}_+$ such that
for
every $i\geq0$ we have $\mu([0,i])\leq\nu([0,i])$. Then if $X_0 \preceq Y_0$ are
two configurations in $S$, we can construct a coupling of $(X_n) \sim
\text{IBM}(\mu)$ and $(Y_n) \sim \text{IBM}(\nu)$ such that $X_n \preceq Y_n$
for every $n\geq0$ a.s.
\end{proposition}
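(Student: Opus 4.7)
The plan is to reduce the statement to a pointwise coupling of the driving selection numbers, so that the monotonicity property \eqref{eq:monotonicity} can be applied inductively.

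First I would reinterpret the hypothesis on the measures. The inequality $\mu([0,i]) \le \nu([0,i])$ for every $i \ge 0$ says that $\mu$ stochastically dominates $\nu$. The standard quantile coupling then yields, on a single probability space, random variables $\xi \sim \mu$ and $\eta \sim \nu$ with $\xi \ge \eta$ a.s.: take a uniform random variable $U$ on $[0,1]$ and set $\xi = F_\mu^{-1}(U)$, $\eta = F_\nu^{-1}(U)$, where $F_\mu^{-1}(u) := \inf\{k \in \overline{\Z}_+: \mu([0,k]) \ge u\}$ and similarly for $\nu$; the hypothesis ensures $F_\nu^{-1} \le F_\mu^{-1}$ pointwise, hence $\eta \le \xi$ almost surely.

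Next I would build the dynamics. Let $(U_n)_{n \ge 1}$ be an i.i.d.\ sequence of uniform random variables on $[0,1]$, and set $\xi_n := F_\mu^{-1}(U_n)$ and $\eta_n := F_\nu^{-1}(U_n)$, so that $(\xi_n)$ and $(\eta_n)$ are i.i.d.\ with marginal laws $\mu$ and $\nu$ respectively, and $\eta_n \le \xi_n$ a.s.\ for every $n$. Define
\[
X_n := \Phi_{\xi_n}(X_{n-1}), \qquad Y_n := \Phi_{\eta_n}(Y_{n-1}), \qquad n \ge 1,
\]
starting from the given configurations $X_0 \preceq Y_0$. By construction, $(X_n)$ is an IBM$(\mu)$ and $(Y_n)$ is an IBM$(\nu)$ on the same probability space.

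Finally I would verify the domination $X_n \preceq Y_n$ by induction on $n$. The base case $n=0$ is the hypothesis. For the inductive step, assuming $X_{n-1} \preceq Y_{n-1}$ and using $\eta_n \le \xi_n$, the monotonicity property \eqref{eq:monotonicity} (applied with $\xi := \eta_n$ and $\xi' := \xi_n$) gives
\[
X_n = \Phi_{\xi_n}(X_{n-1}) \preceq \Phi_{\eta_n}(Y_{n-1}) = Y_n,
\]
which closes the induction and completes the proof.

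There is no real obstacle: the content of the proposition is entirely encoded in the lemma preceding it (monotonicity of $(X,\xi) \mapsto \Phi_\xi(X)$ in the prescribed opposite senses) together with the elementary stochastic-ordering fact that the tail condition on $\mu,\nu$ produces a coupling $\eta \le \xi$ a.s. The only mild care needed is to handle the values $0$ and $\infty$ of the selection number in the quantile coupling, but since the monotonicity statement \eqref{eq:monotonicity} is already stated for $0 \le \xi \le \xi' \le \infty$, no additional argument is required.
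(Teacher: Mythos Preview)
Your proof is correct and follows essentially the same approach as the paper: couple the driving sequences via stochastic domination so that $\eta_n \le \xi_n$ a.s., then apply \eqref{eq:monotonicity} inductively. The only difference is that you spell out the quantile coupling explicitly where the paper simply asserts the existence of such a coupling.
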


\begin{proof}
The assumption that $\mu([0,i])\leq\nu([0,i])$ for all $i$
allows us to define random $\xi, \xi'$, with laws $\mu, \nu$,
respectively, such that $\xi' \le \xi$ a.s.
Hence we can build an i.i.d.\ sequence of pairs
$(\xi_n,\xi'_n)_{n\geq1}$ such that $\xi_n$
has law $\mu$, $\xi'_n$ has law $\nu$ and $\xi'_n\leq\xi_n$ for all $n\geq1$.
Defining the IBM($\mu$) $(X_n)$ using $(\xi_n)$ and the IBM($\nu$) $(Y_n)$ using
$(\xi'_n)$, we obtain that $X_n \preceq Y_n$ for every $n\geq0$ a.s.\ by
applying
\eqref{eq:monotonicity} inductively.
\end{proof}

\subsection{The speed of the general infinite bin model}
\label{subsec:speed}
This section is devoted to the proof of Theorem \ref{thm:speed}
for the existence of the speed $v_\mu$ for IBM$(\mu)$.
We first examine the case where $\mu$ has finite support and
then develop a coupling technique to deal with the general case.

\paragraph{The finite support case.}
Let $\xi$ be a random element of $\bar{\Z}_+$, distributed according to the law $\mu$. Assume there is an integer $k$ such that
\begin{equation}
\label{eqn:kSupport}
\xi \in \{0,1,\ldots,k\} \cup \{\infty\} \text{ a.s.}
\end{equation}
If $k=1$ then, by the definition of $\Phi_\xi$, the front of $\Phi_\xi(X)$ is one unit to the front of $X$ iff
$\xi \in\{0,1\}$. Hence, in this case,
$F(X_n)-F(X_0)=\sum_{m=1}^n \1_{\xi_m \in \{0,1\}}$
and so $v_\mu=\mu(0)+\mu(1)=1-\mu(\infty)$.

We now assume in the rest of the section that
\[
\text{there exists } k \ge 2 \text{ such that } \mu(k)>0.
\]
Given a configuration $X$ define
\begin{equation}
\label{PPP}
\widetilde\Pi_k(X) := \big[ X(B(X,k)+1),\ldots, X(F(X)) \big],
\end{equation}
which is interpreted as a \emph{finite word}\footnote{A finite word in an
alphabet $A$ is a finite (possibly empty) sequence of elements of $A$. We denote
by $A^* = \bigcup_{\ell = 0}^\infty A^\ell$ the set of finite words in $A$, with
the convention $A^0 = \{\varnothing\}$. We denote by $|w|$ the length of the
word $w \in A^*$, defined as the unique $\ell \in \Z_+$ such that $w \in
A^\ell$. In particular, $\varnothing$ is the only word with length $0$.} in
$\N$.
Let $|\widetilde \Pi_k(X)|$ be its
length and $\|\widetilde \Pi_k(X)\|$ its content, that is,
the total number of balls, corresponding to the sum of the letters of that word.
We have
\[
0 \le |\widetilde \Pi_k(X)| = F(X)-B(X,k) \le k-1, \quad
0\le \|\widetilde \Pi_k(X)\| = \sum_{j=X(B,k)+1}^{F(X)}X(j) \le k-1,
\]
and $|\widetilde \Pi_k(X)|=0$ if and only if $\|\widetilde \Pi_k(X)\|=0$.
The set
\begin{align*}
\widetilde \Pi_k(\configS)
&=\{\widetilde \Pi_k(X): X \in \configS\}
\\
&=\{\varnothing\} \cup
\{[a_1, \ldots,a_\ell]:\, 1 \le \ell \le  k-1,\, a_1,\ldots,a_\ell \ge 1,\,
a_1+\cdots+a_\ell \le k-1\}
\end{align*}
is finite\footnote{More precisely $|\widetilde \Pi_k(\configS)| = 2^{k-1}$. Indeed, recall that the set of $\ell$-tuples $(a_1, \ldots, a_\ell)$ of strictly positive integers summing to $m$ has cardinality $\binom{m-1}{\ell-1}$. Thus the number of $\ell$-tuples of positive integers summing at most to $k-1$ is given by $\sum_{m=\ell}^{k-1} \binom{m-1}{\ell-1} = \binom{k-1}{\ell}$. Summing over all possible values of $\ell$ yields that $\widetilde \Pi_k(\configS)$
has cardinality $\sum_{\ell=0}^{k-1} \binom{k-1}{\ell} = 2^{k-1}$.}.

We think of $\tilde{\Pi}_k$ as a projection of $\configS$ on $\tilde{\Pi}_k(\configS)$ a set of finite cardinality. We observe that an IBM $X$ with step distribution satisfying \eqref{eqn:kSupport} is compatible with this projection, in the sense that $\tilde{\Pi}_k(X)$ remains a Markov chain.

\begin{lemma}
\label{lem:finiteMarkovchain}
Let $k \geq 2$ be an integer, and assume that the law $\mu$ is supported in $\{0\} \cup\{1,\ldots,k\} \cup\{\infty\}$ with $\mu(\{k\})>0$.
Consider the IBM defined by $X_{n+1}=\Phi_{\xi_{n+1}}(X_n)$, where the $(\xi_n)$ are i.i.d.\ random variables with law $\mu$. Then $Z_n = \widetilde \Pi_k(X_n)$, $n \ge 0$, is an irreducible Markov chain with values in the set $\widetilde\Pi_k(\mathbb S)$.
In particular, the chain is positive recurrent and has a unique stationary probability measure.
\end{lemma}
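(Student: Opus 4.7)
The plan is to prove the lemma in three steps: (i) establish the Markov property of $Z_n$ by showing that the transition is a deterministic function of the word $\widetilde\Pi_k(X)$ alone; (ii) use the hypothesis $\mu(\{k\})>0$ to build a positive-probability path from every state to the empty word $\varnothing$; and (iii) invoke the finiteness of $\widetilde\Pi_k(\configS)$ (cardinality $2^{k-1}$, as noted in the preceding footnote) to conclude positive recurrence and uniqueness of the stationary measure.

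For step (i), I would show that for each $\xi \in \{0,1,\ldots,k,\infty\}$, the new word $\widetilde\Pi_k(\Phi_\xi(X))$ depends on $X$ only through $\widetilde\Pi_k(X)$. The cases $\xi=0$ (a uniform right-shift, under which $B(X,k)$ and $F(X)$ translate together) and $\xi=\infty$ (identity) are immediate. For $\xi \in \{1,\ldots,k\}$, write $w=\widetilde\Pi_k(X)=[a_1,\ldots,a_\ell]$ with $s=a_1+\cdots+a_\ell \le k-1$. The defining inequalities
\[
\sum_{i > B(X,k)} X(i) < k \le \sum_{i \ge B(X,k)} X(i)
\]
force $X(B(X,k))\ge k-s$, which in turn forces the $\xi$-th ball from the right to lie either in a word bin $B(X,k)+m$ when $\xi\le s$ (with $m$ determined by the partial sums $a_\ell,\,a_\ell+a_{\ell-1},\ldots$), or exactly in bin $B(X,k)$ when $s<\xi\le k$. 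In either sub-case the new ball at $B(X,\xi)+1$ lands on a bin whose position relative to the word is a function of $(w,\xi)$ alone; a final split on whether the updated total on bins strictly right of $B(X,k)$ equals $k$ (so $B$ shifts right by one and $a_1$ is dropped) or remains below $k$ (so $B$ is unchanged) produces an explicit deterministic map $\Psi:\widetilde\Pi_k(\configS)\times\{0,\ldots,k,\infty\}\to\widetilde\Pi_k(\configS)$ with $Z_{n+1}=\Psi(Z_n,\xi_{n+1})$. Together with the i.i.d.\ property of $(\xi_n)$, this yields the Markov property.

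For step (ii), specializing to $\xi=k$ and using the case analysis above, $w=[a_1,\ldots,a_\ell]$ is mapped to $[a_1+1,a_2,\ldots,a_\ell]$ when $s<k-1$, and to $[a_2,\ldots,a_\ell]$ when $s=k-1$. Iterating, $s$ grows by one per step until it reaches $k-1$, after which the next step reduces the length by one; hence at most $k\ell$ applications of $\xi=k$ empty the word. Since $\mu(\{k\})>0$, this is a path of probability at least $\mu(\{k\})^{k\ell}>0$ from $w$ to $\varnothing$. Because $\widetilde\Pi_k(\configS)$ is finite and $\varnothing$ is reachable from every state, the chain has a unique recurrent communicating class (that of $\varnothing$), on which it is irreducible and positive recurrent; the standard finite Markov chain theory then yields a unique stationary probability measure.

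The main obstacle is step (i): one must verify that no data about $X$ strictly below $B(X,k)$ is ever needed to compute $\widetilde\Pi_k(X_{n+1})$. The crucial quantitative input is the bound $X(B(X,k))\ge k-s$, which pins every ball numbered $s+1$ through $k$ to the single bin $B(X,k)$; without it, the $\xi$-th ball for $s<\xi\le k$ might sit further left and the updated word would genuinely depend on data outside the window $[B(X,k),F(X)]$.
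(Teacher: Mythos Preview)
Your proposal is correct and follows essentially the same route as the paper, which only offers an ``intuition of the proof'' and defers the formal details to \cite{MR16}: both argue that $\widetilde\Pi_k(X)$ carries enough information to determine the transition, then reach $\varnothing$ from any state using $\mu(\{k\})>0$, and finally invoke finiteness of the state space. You supply precisely the two details the paper omits---the lower bound $X(B(X,k))\ge k-s$ that pins the $\xi$-th ball for $s<\xi\le k$ to bin $B(X,k)$, and the explicit iteration of $\xi=k$ (increment $a_1$ until $s=k-1$, then drop it) that drives any word to $\varnothing$---so your write-up is a faithful fleshing-out of the paper's sketch rather than a different argument.
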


\begin{proof}[Intuition of the proof]
The reason that the function $\widetilde \Pi_k$ preserves the Markov property is because each of the states $a\in \widetilde \Pi_k(\mathbb S)$ conveys just enough information about the state $X \in \mathbb S$ with $a= \widetilde \Pi_k(X)$ that is enough to decouple the past from the future.

For example, if $a=\varnothing$ then we know that any $X \in \mathbb S$ with $a= \widetilde \Pi_k(X)$ must satify $B(X,k)=F(X)$ (because $a$ has
length $0$) and so the front bin $F(X)$ contains at least $k$ balls. This means that regardless of the value of $\xi$, $1 \le \xi \le k$, we have $\Phi_\xi(X)$ is $X$ together with a single ball in the bin to the right of $F(X)$. So $\Phi_\xi(X)$ has a front at $F(X)+1$ with a single ball and so $\widetilde\Pi_k(\Phi_\xi(X))=[1]$. On the other hand, if $\xi=0$ or $\infty$ then the next state remains $a$. A complete proof of this fact being available in \cite[Lemma~3.1]{MR16}, it is perhaps best to work out two examples and leave the formal details to the reader.

\begin{figure}[ht]
\centering

\begin{subfigure}{.3\linewidth}
\centering
\begin{tikzpicture}[state/.style={circle,draw,font=\scriptsize},
        > = Stealth,
        auto,
        prob/.style = {inner sep=2pt,font=\scriptsize},
        loop right/.style={out=30,in=330,loop},
        ]
        \node[state]  (a) at (0,0) {\phantom{[}$\emptyset$\phantom{]}};
        \node[state]  (b) at (1.5,0)   {$[1]$};
        \path[->]   (a) edge[very thick, color=red, bend left] node[prob]{$1,2$} (b)
                    (b) edge[out=330,in=30,loop,very thick,color=red] node[below right,prob]{$1$} (b)
                    (b) edge[bend left] node[prob]{$2$} (a);
\end{tikzpicture}
\end{subfigure}
\begin{subfigure}{{.3\linewidth}}
\centering
\begin{tikzpicture}[state/.style={circle,draw,font=\scriptsize},
        > = Stealth,
        auto,
        prob/.style = {inner sep=2pt,font=\scriptsize},
        loop right/.style={out=30,in=330,loop},
        ]
        \node[state]  (a) at (0,0) {\phantom{[}$\emptyset$\phantom{]}};
        \node[state]  (b) at (1.5,1)  {$[1]$};
        \node[state]  (c) at (1.5,-1) {$[2]$};
        \node[state]  (d) at (3,0) {$[1,1]$};
        \path[->]   (a) edge[very thick, color=red] node[prob]{$1,2,3$} (b)
                    (b) edge[bend left] node[right,prob]{$2,3$} (c)
                    (b) edge[very thick, color=red,bend left] node[prob]{$1$} (d)
                    (c) edge node[prob]{$3$} (a)
                    (c) edge[very thick, color=red,bend left] node[prob]{$1,2$} (b)
                    (d) edge node[prob]{$2$} (c)
                    (d) edge node[prob]{$3$} (b)
                    (d) edge[out=330,in=30,loop,very thick,color=red] node[below right,prob]{$1$} (d);
\end{tikzpicture}
\end{subfigure}

\caption{This figure describes the projected Markov chain, with $k=2$ on the
left picture, and $k=3$ on the right one. The integers over the directed edges
represent the values of $\xi$ corresponding to the transitions. For example, if
$\mu$ is the law of $\xi$, the transition probability for state $[1]$ to state
$[2]$ on right equals $\mu(2) + \mu(3)$. The thick arrows represent transitions
that move the front by one unit. Note that if $\xi \in \{0,\infty\}$, the
projected Markov chain remains at the same state, moving the front by $1$ if and
only if $\xi =0$.}
\label{wordchain}
\end{figure}
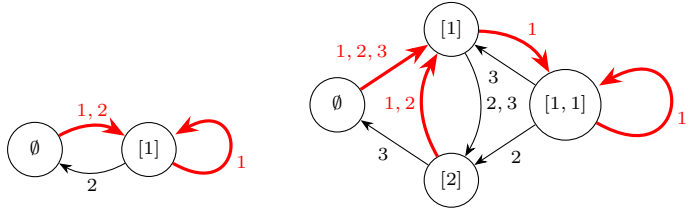

It is easy to see that the Markov chain $\widetilde \Pi_k(X_n)$ has an irreducible class connected to the state $\varnothing$ (by producing a sequence of moves that takes the chain from any state to $\varnothing$; the assumption that $\P(\xi=k)>0$ is required at this step). Since the state space is finite, the chain is positive recurrent and has a unique invariant probability measure.
\end{proof}

Given $a=[a_1, \ldots, a_\ell] \in \widetilde \Pi_k(\mathbb S)$
let $L(a) = a_\ell$ with the convention that $L(a)=k$ if $a=\varnothing$.
So if $Z_n = \widetilde\Pi_k(X_n)$ then $L(Z_n)$ corresponds to the number of balls in the front bin of $X_n$ (or $k$ if this number is larger than $k$).

\begin{proposition}
\label{prop:finitespeed}
Let $(X_n)$ be an IBM($\mu$) where $\mu$ is supported on
$\{0\}\cup\{1,\ldots,k\} \cup \{\infty\}$ where $k \ge 2$ and $\mu(k)>0$.
Then there exists a constant $v_\mu\in[0,1]$,
that does not depend on $X_0$, such that
\[
\lim_{n\rightarrow\infty} \frac{F(X_n)}{n}=v_\mu \quad \text{a.s.}
\]
\end{proposition}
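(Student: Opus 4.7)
My plan is to combine Lemma \ref{lem:finiteMarkovchain} with a standard Birkhoff/Markov-chain ergodic theorem, after first showing that the one-step increment of the front is a measurable function of $(Z_n, \xi_{n+1})$, where $Z_n = \widetilde\Pi_k(X_n)$.

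\textbf{Step 1: Increment representation.} I would first check that $F(X_{n+1}) - F(X_n) \in \{0,1\}$ and express it as a function of $(Z_n,\xi_{n+1})$. The three cases are: if $\xi_{n+1} = \infty$ the front is unchanged; if $\xi_{n+1} = 0$ the whole configuration is shifted right by one, so the front moves by $1$; and if $\xi_{n+1} \in \{1,\ldots,k\}$, the increment is $\mathbf 1\{B(X_n, \xi_{n+1}) = F(X_n)\} = \mathbf 1\{\xi_{n+1} \le X_n(F(X_n))\}$. Crucially, $X_n(F(X_n)) \wedge k$ is exactly $L(Z_n)$ (with the convention $L(\varnothing) = k$), and since $\xi_{n+1} \le k$ the truncation does not matter. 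Therefore
\[
F(X_{n+1}) - F(X_n) = g(Z_n, \xi_{n+1}), \qquad g(z,\xi) := \mathbf 1\{\xi = 0\} + \mathbf 1\{1 \le \xi \le L(z)\}.
\]

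\textbf{Step 2: Ergodic theorem on the projected chain.} By Lemma \ref{lem:finiteMarkovchain}, $(Z_n)$ is an irreducible (hence positive recurrent) Markov chain on the finite set $\widetilde\Pi_k(\configS)$, with a unique stationary distribution $\pi$. The pair $(Z_n, \xi_{n+1})_{n\ge 0}$ is itself a Markov chain on a finite state space with unique stationary law $\pi \otimes \mu$, and it is irreducible because the $\xi$-coordinate is refreshed i.i.d.\ at each step. Applying the finite-state Markov chain ergodic theorem to the bounded function $g$, we obtain, for any initial configuration $X_0$ (hence any starting state $Z_0$),
\[
\frac{F(X_n) - F(X_0)}{n} = \frac{1}{n} \sum_{m=0}^{n-1} g(Z_m, \xi_{m+1}) \xrightarrow[n\to\infty]{\mathrm{a.s.}} v_\mu := \mu(\{0\}) + \sum_{z \in \widetilde\Pi_k(\configS)} \pi(z)\, \mu(\{1,\ldots,L(z)\}).
\]

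\textbf{Step 3: Wrap-up.} Since $F(X_0)$ is fixed, $F(X_n)/n \to v_\mu$ a.s., and because $g \in \{0,1\}$ we clearly have $v_\mu \in [0,1]$. The limit $v_\mu$ depends only on $\mu$ (through $\pi$ and the conditional probabilities $\mu(\{1,\ldots,L(z)\})$), not on $X_0$, because the ergodic limit is independent of the initial distribution of an irreducible positive recurrent chain on a finite state space.

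\textbf{Main potential obstacle.} The only genuinely non-routine point is the identification of the increment as a function of the projected chain in Step~1; one has to be careful about the boundary case $Z_n = \varnothing$ (where the front bin contains $\ge k$ balls, but the truncation convention $L(\varnothing)=k$ together with the bound $\xi \le k$ makes the formula correct). Everything else is a direct application of tools already in place (Lemma \ref{lem:finiteMarkovchain} and the classical ergodic theorem for finite-state Markov chains).
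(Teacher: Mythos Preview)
Your proof is correct and follows essentially the same approach as the paper: both express the front increment as $\mathbf 1\{\xi_{n+1}\le L(Z_{n-1})\}$ (your $g(Z_n,\xi_{n+1})$ is just this indicator rewritten to separate the $\xi=0$ case), then invoke the ergodic theorem for the irreducible finite-state chain $Z_n=\widetilde\Pi_k(X_n)$ from Lemma~\ref{lem:finiteMarkovchain}. Your version is slightly more detailed---you spell out the pair chain $(Z_n,\xi_{n+1})$ and give the explicit formula $v_\mu=\mu(\{0\})+\sum_z\pi(z)\mu(\{1,\ldots,L(z)\})$, which the paper only records in the subsequent remark---but the argument is the same.
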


\begin{proof}
Let $Z_n = \widetilde\Pi_k(X_n)$. Notice that
\begin{equation}
  \label{eqn:cnsFrontMoving}
F(X_j) > F(X_{j-1}) \iff F(X_j)-F(X_{j-1})=1 \iff L(X_{j-1}) \geq \xi_j,
\end{equation}
using that a ball is added in the leftmost empty bin if $\xi$ is either equal to $0$ or smaller than the number of balls in the rightmost bin, see Figure~\ref{wordchain}. Setting $b_j$ for the indicator of the event on the right
of the last equivalence, we have
\[
F(X_n)-F(X_0)=\sum_{j=1}^n b_j.
\]
By the ergodic theorem $\frac{1}{n} \sum_{j=1}^n b_j$ converges to a constant $v_\mu$ a.s. which completes the proof.
\end{proof}

\begin{remark}
Observe that $v_\mu$ can be computed from the knowledge of the invariant distribution $\pi$ of the Markov chain $\widetilde\Pi_k(X_n)$. More precisely, writing $Z$ for a random word distributed according to $\pi$ and $\xi$ an independent random variable with law $\mu$, we have
\[
  v_\mu = \P(L(Z) \geq \xi).
\]
For example, if $k=2$, we have $v_\mu = \pi(\varnothing) \mu(\{0,1,2\})
+ \pi([1]) \mu(\{0,1\}) = (\mu(1)+\mu(2))^2/(\mu(1)+2\mu(2))$; see Figure
\ref{wordchain}. However, since the size of $\Pi_k(\configS)$ grows exponentially
with $k$, this exact formula can be computed for small values of $k$ only.
\end{remark}

\paragraph{The general $\mu$ case.}
In order to extend the existence of the speed to the case when $\mu$ may have infinite support, we compare $\mu$ to three distributions with finite supports and use the increasing coupling of Proposition \ref{prop:coupling}. Let $\xi$ be a random element of $\overline\Z_+$ with distribution $\mu$. Fix $k\geq1$ and define
\begin{equation*}
  \label{xijk}
  \xi^j_k := \xi \1_{\xi\le k} + j \1_{\xi>k},
\end{equation*}
and let $\mu^j_k$ be the distribution of $\xi^j_k$.
We are only interested in the cases where $j=0, k$ or $\infty$
(noticing that $\xi^\infty_k = \xi$ when $\xi \le k$ or
$\infty$ when $\xi>k$).
Clearly,
\[
\xi^0_k \le \xi^k_k \le \xi \le \xi^\infty_k.
\]
Let $(\xi^0_{k,n}, \xi^k_{k,n}, \xi_n, \xi^\infty_{k,n})$, $n\in \N$,
be i.i.d.\ copies of $(\xi^0_k, \xi^k_k, \xi, \xi^\infty_k)$.
Define 4 coupled IBMs, $(X^0_{k,n})$,
$(X^k_{k,n})$, $(X_n)$ and $(X^\infty_{k,n})$,
as in Definition \ref{def31}, using the
4 coupled sequences
$(\xi^0_{k,n})$, $(\xi^k_{k,n})$, $(\xi_n)$ and $(\xi^\infty_{k,n})$
of selection numbers, respectively.
By \eqref{eq:monotonicity} and Proposition \eqref{prop:coupling} we have
\begin{equation}
\label{ALLIBMS}
\forall n \ge 0 ~~
X^\infty_{k,n}
\preceq
X_n
\preceq
X^k_{k,n}
\preceq
X^0_{k,n}
~\text{ a.s.}
\end{equation}
We next observe that
\begin{equation}
  \label{eqn:someFact}
\widetilde\Pi_k(X^\infty_{k,n})
=\widetilde\Pi_k(X^0_{k,n}) \quad \text{for all }  n\geq0,
\end{equation}
where $\widetilde\Pi_k$ was defined in \eqref{PPP}.
\begin{proof}[Proof of \eqref{eqn:someFact}]
It follows immediately from its definition that $\Phi_{0}$ commutes with $\Phi_k$ for all $k \in \bar{\Z}_+$. Using that $\xi^\infty_{k,n} \neq \xi^0_{k,n}$ if and only if $\xi_k \in (k,\infty)$, in which case the former takes value $\infty$ while the latter take value $0$, we have
\begin{align*}
  X^0_{k,n} = \Phi_{\xi^{0}_{k,n}} \circ \cdots \circ \Phi_{\xi^0_{1,n}}(X_0) = \Phi_0^{I_n} \circ  \Phi_{\xi^{\infty}_{k,n}} \circ \cdots \circ \Phi_{\xi^\infty_{1,n}}(X_0) =\Phi_0^{I_n}\left(X^\infty_{k,n}\right),
\end{align*}
where $I_n = \sum_{j=0}^{n-1} \ind{\xi_k \in (k,\infty)}$. Consequently, $X^0_{k,n}(\ell) = X^\infty_{k,n}(\ell - I_n)$ for all $\ell \in \Z$, from which we deduce that $\widetilde\Pi_k(X^\infty_{k,n}) =\widetilde\Pi_k(X^0_{k,n})$, completing the proof.
\end{proof}

We are now able to complete the proof of Theorem~\ref{thm:speed}.

\begin{proof}[\bf\em Proof of Theorem~\ref{thm:speed}]
Let $k\geq1$ be an integer.
By Proposition \ref{prop:finitespeed}, there exist
$v_{\mu^\infty_k}$ and $v_{\mu^0_k}$ in $[0,1]$ such that
\begin{align*}
\lim_{n\rightarrow\infty}
\frac{F(X^\infty_{k,n})}{n}&=v_{\mu^\infty_k}, \quad
\lim_{n\rightarrow\infty}
\frac{F(X^0_{k,n})}{n}=v_{\mu^0_k} \qquad \text{a.s.}
\end{align*}
Using that $F(X^0_{k,n}) = F(X^\infty_{k,n})+I_n$, as observed above, and that $I_n/n \to \mu([k+1,\infty))$ a.s. by law of large numbers, we conclude that
\begin{equation}
\label{eq:upperlowerbounds}
v_{\mu^{0,k}}=v_{\mu^{\infty,k}}+\mu([k+1,\infty)).
\end{equation}
Furthermore, we have that for every $n\geq1$,
\begin{equation}
\label{eq:frontbounds}
F(X^\infty_{k,n})\leq F(X_n) \leq F(X^0_{k,n}) \quad \text{a.s.}
\end{equation}
Combining \eqref{eq:upperlowerbounds} and~\eqref{eq:frontbounds} we deduce that
\begin{equation}
\label{eq:kbounds}
v_{\mu^\infty_k} \leq \liminf_{n\rightarrow\infty} \frac{F(X_n)}{n}
\leq \limsup_{n\rightarrow\infty} \frac{F(X_n)}{n}
\leq v_{\mu^\infty_k}+\mu([k+1,\infty)) \quad \text{a.s.}
\end{equation}
Since for every $i\geq0$ and $k\geq1$, we have
$\mu^{\infty,k}([0,i])\leq\mu^{\infty,k+1}([0,i])$, we have by
Proposition~\ref{prop:coupling} that $X^{\infty,k}_n\preceq X^{\infty,k+1}_n$
thus the sequence $(v_{\mu^{\infty,k}})_{k\geq1}$ is nondecreasing.
Since it is upper-bounded by $1$,
we have that
\[
\lim_{k \to \infty} v_{\mu^{\infty,k}} = v_\mu,
\]
for some $v_\mu\in[0,1]$.
Letting $k$ go to infinity in \eqref{eq:kbounds} we conclude that
\[
\lim_{n\rightarrow\infty} \frac{F(X_n)}{n}=v_\mu \quad \text{a.s.}
\]
\end{proof}

\begin{remark}
\label{rem:speedmonotonicity}
If $\nu$ is another probability distribution on $\bar\Z_+$ such that
$\nu([0,i])\leq \mu([0,i])$ for all $i\geq0$, then it follows from
Proposition~\ref{prop:coupling} that $v_\nu\leq v_\mu$.
\end{remark}

\subsection{Coupling words}
\label{subsec:couplingwords}
We assume, throughout this section, that $\mu$ is a probability measure on the
set $\N$ of positive integers, in other words that $\mu(\{0\}) =
\mu(\{\infty\}) = 0$. The IBM$(\mu)$ is the Markov process introduced in
Definition \ref{def31} and aims to describe a stationary version of this
process\footnote{Observe that the restriction made in this section remains
tame, as a selection number of $0$ moves the whole configuration to the right
by $1$ while a selection number of $\infty$ leaves the configuration unchanged. A
generic IBM($\nu$) $X$ can thus be coupled with an IBM($\mu$) $Y$,  where
$\mu(\cdot) = \nu(\cdot |\N)$, in such a way that the configuration $X_n$ is
obtained as a random shift of $Y_{Z_n}$, where $(Z_n)$ is a random walk
independent of $Y$. See Lemma \ref{lem:connectionIbm}.}.
Since $F(X_{n+1})-F(X_{n}) \ge 0$, and is positive with
positive probability, it is easy to see that $(X_n)$ has no stationary version.
But this is an illusion! To remedy it, we simply modify $X_n$ by shifting the
origin of space to the front bin $F(X_n)$. This will be done in Section
\ref{subsec:stationaryversion}.

Recall that $X_n$ is defined by the repeated application of maps of type
$\Phi_\xi$ several times. To simplify notation, we write $\xi X$ instead of
$\Phi_\xi(X)$, so that $\eta \xi X$ corresponds to $\Phi_\eta(\Phi_\xi(X))$.
Applying a finite number of these maps is thus identified by a ``selection
word''.

Let $\N^*$ be the set of words from $\N$. A {\em selection word} (or, simply,
word when no confusion arises) is a {nonempty} word in $\N$. Let
\[
\calU=\bigcup_{n\geq1} \N^n = \N^* \backslash \{\varnothing\}
\]
be the set of selection words. If $\alpha \in \cal U$ then we write it, rather uncoventionally, \emph{from
right to left}
\[
\alpha= \alpha_\ell \cdots \alpha_1
\]
(or $(\alpha_\ell, \ldots, \alpha_1)$ when confusion arises)
as we made the identification
\begin{equation}
\label{wordaction}
\Phi_{\alpha_\ell} \comp \cdots \comp \Phi_{\alpha_1} \equiv \alpha_\ell \cdots \alpha_1.
\end{equation}

\begin{example}
\label{weex}
Take $X = [ \ldots, 5, 3, 2, 1, 2, 0,0,\ldots]$, with $F(X)=0$,
and consider the selection word $\alpha = (2, 4, 5, 1)$.
To compute $\alpha(X)$ we first apply $1$ (that is, $\Phi_1$), then $5$, then $4$
and then $2$.
We have $1 (X)= \Phi_1 (X) = X + \delta_1 = [ \ldots, 5, 3, 2, 1, 2, 1,0,\ldots]$,
$5 (1X) = \Phi_5 \comp \Phi_1 (X) = X+\delta_1 + \delta_{-1}$,
$4(5 (1X)) = \Phi_4 \comp \Phi_5\comp\Phi_1 (X)= X+\delta_1 + \delta_{-1} + \delta_0$,
and, finally, $\alpha(X) = 2(4(5(1(X)))) =
X+ 2\delta_1 + \delta_{-1} + \delta_0$.
\end{example}

Recall that we denote by $|\alpha|$ the length of the word $\alpha$. An element of a selection word is a selection number and is sometimes referred to
as a {\em letter}. There is only one word of length $0$, the empty word $\varnothing$, which is not considered as a selection word. However, we can formally define $\Phi_\varnothing$ as the identity on $\mathbb S$.

If $\alpha = \alpha_\ell \cdots \alpha_1$ is a selection word and $k \le \ell$
then $\alpha_k \cdots \alpha_1$ is a {\em prefix} of $\alpha$, while
$\alpha_\ell \cdots \alpha_k$ is a {\em suffix}.
The number of occurrences of letter $n$ in a word $\alpha$ is written
\[
\vartheta_n(\alpha):=|\{1\leq k \leq |\alpha|, \alpha_k=n\}|.
\]
The concatenation of $\alpha=\alpha_\ell\cdots \alpha_1$ with
$\beta =\beta_k\cdots\beta_1$
is the word $\beta\alpha=\beta_k\cdots\beta_1 \alpha_\ell\cdots \alpha_1$,
again keeping in mind the right-to-left convention. Of course, $\varnothing \alpha
= \alpha \varnothing = \alpha$.
For $m, \ell \in \N$, the word $m^\ell$ is the length-$\ell$ word whose letters are all equal to $m$.

For $k\in\N$ define
\[
\Pi_k:
\begin{array}{rcl} \configS & \longrightarrow & \N^k\\ X & \longmapsto
&\left( X\left(F(X)-k+1\right), \ldots, X\left(F(X) \right) \right)
\end{array}
\]
which isolates the content of the rightmost $k$ non-empty bins. The definition
can be extended to $k=0$ by setting $\Pi_0(X)$ to be the empty vector.

\begin{definition}[$k$-coupling words]
\label{defcoupl}
We say that a selection word $\alpha$ is {\em $k$-coupling} if
\[
\forall X, Y \in \configS ~~ \Pi_k(\alpha X) = \Pi_k(\alpha Y).
\]
Let $\mathcal C_k$ be the set of all $k$-coupling words.
A word $\alpha$ is called {\em coupling} if it is $k$-coupling for some positive
integer $k$.
\end{definition}

In other terms, a word $\alpha$ is said to be $k$-coupling if the action of
$\alpha$ on any configuration always give the same values in the $k$ rightmost
non-empty bins. For example, $1$ is a $1$-coupling word, as the rightmost
non-empty bin in $1X$ always contain exactly one ball. More generally, for $k
\in \N$, the word $1^k$ is a $k$-coupling word.

Note that $\mathcal C_1 \supset \mathcal C_2 \supset \cdots$. Hence $\mathcal
C_1$ is the set of all coupling words. Note that $\mathcal C_1$ is the set of
all words $\alpha$ such that the number of balls in the bin at the front of
$\alpha X$ is the same as for $\alpha Y$ for any other configuration $Y$. The
words in $\mathcal U \setminus \mathcal C_1$ are non-coupling.

\begin{definition}[Coupling number]
The {\em coupling number} of $\alpha$ is defined by
\[
K(\alpha) = \sup\{k \ge 0:\, \forall X, Y \in \mathbb ~
\Pi_k(\alpha X) = \Pi_k(\alpha Y)\}.
\]
\end{definition}

In particular, for a word $\alpha$, $K(\alpha) = 0$ is equivalent to $\alpha \in
\mathcal{U} \setminus \mathcal{C}_1$, i.e.\ to the fact that $\alpha$ is non-coupling. More generally, for $k \geq 1$, we have
\[
  K(\alpha) = k \iff \alpha \in \mathcal{C}_k \setminus \mathcal{C}_{k+1}.
\]
Hence, $\alpha$ is $k$-coupling if and only if $K(\alpha) \geq k$. We naturally let $K(\varnothing)=0$.

\begin{example}
We have that $K(1)=1$, $K(1,1)=2$, and more generally, if $1^\ell=(1,\ldots,1)$ is the word consisting of $\ell$ letters all equal to $1$, then  $K(1^\ell)= \ell$. Indeed, as previously observed, $1^\ell$ is an $\ell$-coupling word. But $1^\ell \not\in \mathcal C_{\ell+1}$ because the $\ell+1$st rightmost nonempty bins contains the number of balls in the rightmost non-empty bin of $X$.

Consider next the word $(2,2,1)$. We see that, for any $X$, we have $\Pi_2((2,2,1)X) = (2,1)$; but the three rightmost bins of $(2,2,1)X$ depend on the content of the front of $X$. Hence $(2,2,1) \in \mathcal C_2 \setminus \mathcal C_3$. As a third example, one can check that $(2,2)$ is a non-coupling word.
\end{example}

We now define a class of words which will provide useful examples of coupling words.

\begin{definition}[Triangular words]
A word $\alpha=(\alpha_\ell,\ldots,\alpha_1)\in\calU$ is called
\emph{triangular} if for every $1\leq i \leq \ell$ we have $\alpha_i\leq i$. We
denote by $\calT$ the set of triangular words. An infinite sequence $(\ldots,
\alpha_3, \alpha_2, \alpha_1)$ of positive integers is called an {\em infinite
triangular word} if $\alpha_i \le i$ for all $i$. Let $\calT_\infty$ be the set
of infinite triangular words.
\end{definition}

We claim that every triangular word is a coupling word. More precisely, if $\alpha$ is a triangular word with $k$ occurrences of the letter $1$, we show that $\alpha$ is $k$-coupling.

\begin{lemma}
\label{lem:triangularcoupling}
Let $k\geq1$ and let $\alpha$ be a triangular word with $\vartheta_1(\alpha)=k$.
Then $\alpha$ is $k$-coupling.
\end{lemma}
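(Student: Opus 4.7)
The plan is to prove by induction on $\ell = |\alpha|$ the following strengthened claim, from which the lemma follows by Definition~\ref{defcoupl}: for every triangular word $\alpha$ of length $\ell \ge 1$, there exists an integer $\kappa(\alpha) \ge \vartheta_1(\alpha)$ such that for every $X \in \configS$, the vector $\Pi_{\kappa(\alpha)}(\alpha X)$ does not depend on $X$ and the sum of its entries equals $\ell$. The base case $\ell = 1$ is immediate: triangularity forces $\alpha = (1)$, and $\Phi_1(X)$ produces a configuration whose front bin contains exactly one ball, so one may take $\kappa((1)) = 1$ with sum $1$.

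For the inductive step, let $\alpha'$ be triangular of length $\ell+1$, written $\alpha' = \alpha_{\ell+1} \alpha$ with $\alpha$ triangular of length $\ell$ and $1 \le \alpha_{\ell+1} \le \ell+1$. Set $Y := \alpha X$, $F := F(Y)$, $\kappa := \kappa(\alpha)$, and $(c_1, \ldots, c_\kappa) := \Pi_\kappa(Y)$; by the induction hypothesis this tuple does not depend on $X$ and $c_1 + \cdots + c_\kappa = \ell$, while by the defining property of $\configS$ every bin of $Y$ at position $\le F$ contains at least one ball. I compare $\alpha_{\ell+1}$ to $\ell$, the total number of balls in the coupled zone. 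If $\alpha_{\ell+1} = 1$, the new ball lands at $F+1$, and $\Pi_{\kappa+1}(\alpha' X) = (c_1, \ldots, c_\kappa, 1)$; setting $\kappa(\alpha') = \kappa+1$ respects $\vartheta_1(\alpha') = \vartheta_1(\alpha)+1 \le \kappa+1$. If $2 \le \alpha_{\ell+1} \le \ell$, then $\alpha_{\ell+1} \le c_1 + \cdots + c_\kappa$ guarantees that the selected ball lies inside the coupled zone, so its bin, as well as the bin to its right (which receives the new ball), are both determined by $(c_1, \ldots, c_\kappa)$ alone; we take $\kappa(\alpha') \in \{\kappa, \kappa+1\}$ according to whether the new ball falls inside the coupled zone or at the new front $F+1$. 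If $\alpha_{\ell+1} = \ell+1$, the selected ball lies exactly one bin to the left of the coupled zone, at position $F - \kappa$; crucially, this bin is non-empty by the defining property of $\configS$, so it accommodates the $(\ell+1)$-th ball from the right regardless of its exact content, and the new ball is added at $F - \kappa + 1$, the leftmost bin of the coupled zone, updating its contents to $(c_1+1, c_2, \ldots, c_\kappa)$; we take $\kappa(\alpha') = \kappa$. In every case the strengthened claim holds for $\alpha'$, with coupled sum $\ell+1$ and $\kappa(\alpha') \ge \vartheta_1(\alpha')$.

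I expect the main obstacle to be the third case: the triangularity bound $\alpha_{\ell+1} \le \ell+1$ is exactly tight, and one must invoke the property that every bin at or to the left of the front is non-empty in order to know that the $(\ell+1)$-th ball from the right necessarily sits in the single bin $F - \kappa$, without the actual (and $X$-dependent) content of that bin playing any role. The strengthened invariant that the coupled zone contains precisely $\ell$ balls after $\ell$ triangular steps is exactly what enables this delicate matching, which is why a plain induction targeting only the independence of $\Pi_k(\alpha X)$ from $X$ would be hard to close.
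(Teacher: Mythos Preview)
Your proof is correct and follows essentially the same inductive idea as the paper's: after applying a triangular prefix of length $\ell$, a coupled zone of bins (the paper's ``bins of positive index'' with the initial front pinned at $0$, your $\Pi_{\kappa(\alpha)}$) contains exactly $\ell$ balls and is independent of $X$, and the triangularity bound $\alpha_{\ell+1}\le\ell+1$ together with the $\configS$ property that bins at or left of the front are nonempty forces the next ball to land in a determined position. The only difference is framing: the paper fixes absolute coordinates ($F(X_0)=0$) and tracks bins of positive index, which lets it treat all values of $\alpha_{\ell+1}$ uniformly, whereas your relative $\Pi_{\kappa(\alpha)}$ bookkeeping requires the three-way case split; both arrive at $K(\alpha)\ge\vartheta_1(\alpha)$ by the same mechanism.
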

\begin{proof}
Write $\alpha=(\alpha_\ell,\ldots,\alpha_1)\in\calT$ and let $X_0,Y_0\in \mathbb S$. Without loss of generality, we assume that $F(X_0) = F(Y_0) = 0$.
For every $1\leq i\leq n$, define $X_i=\alpha_i \cdots \alpha_1(X_0)$ and $Y_i=\alpha_i \cdots \alpha_1(Y_0)$.

We show by induction on $0\leq i\leq \ell$ that $X_i$ and $Y_i$ coincide for all bins of positive indices  and have $i$ balls in these bins. In other words, we show  that for every $k\geq1$, $X_i(k)=Y_i(k)$ and $\sum_{j\geq1} X_i(j)=\sum_{j\geq1} Y_i(j)=i$. This is trivially true when $i=0$.

If $1\leq i\leq \ell$, by induction hypothesis, we know that $X_{i-1}$ and $Y_{i-1}$ have $i-1$ balls in the same positions in bins of positive indices and at least one ball in the bin of index $0$, thus $B(X_i,\alpha_i)=B(Y_i,\alpha_i)\geq0$. Hence $\Phi_{\alpha_i}$ adds a ball to
$X_{i-1}$ and $Y_{i-1}$ in the same bin of positive index, completing the proof of that statement.

Next, setting $f:=F(X_\ell)=F(Y_\ell)$, we have $f\geq1$ (since $\alpha_1 = 1$). As $X_{|Z_+} = Y_{|Z_+}$, we have $\Pi_f(\alpha(X_0))=\Pi_f(\alpha(Y_0))$, so $K(\alpha)\geq f$. Since each transition $\Phi_1$ adds a balls to a previously empty bin, we have that $f\geq \vartheta_1(\alpha)=k$, which shows that $\alpha\in\mathcal C_k$.
\end{proof}

More generally, an \emph{infinite triangular word} allows the coupling of IBM with arbitrary initial conditions.

\begin{lemma}
\label{lem:infinitetriangular}
Let $\alpha \in \calT_\infty$ and let $X_0,Y_0\in \configS_0$. For every $n\geq1$, denote by $X_n={\alpha_n \cdots \alpha_1}(X_0)$ and by $Y_n={\alpha_n \cdots \alpha_1}(Y_0)$. Then for every $n\geq0$ we have ${X_n}_{|\N} = {Y_n}_{|\N}$ (both configurations have identical contents in all bins with positive indices).
\end{lemma}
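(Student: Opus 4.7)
The plan is to reuse the inductive argument from the proof of Lemma~\ref{lem:triangularcoupling}, observing that this induction actually establishes a stronger intermediate statement than the conclusion that is recorded at the end of that lemma: at each step $i$ of the finite triangular word, the two configurations agree on \emph{all} bins of strictly positive index (not merely on the rightmost $\vartheta_1$ of them) and have exactly $i$ balls in these bins. Since every finite prefix $(\alpha_n, \ldots, \alpha_1)$ of an infinite triangular word $\alpha \in \calT_\infty$ is itself a finite triangular word, the same step-by-step reasoning can be propagated indefinitely to obtain the result for every $n \ge 0$.

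Concretely, I would induct on $n$, maintaining the joint invariant: (i) $X_n(k) = Y_n(k)$ for every $k \ge 1$; (ii) $\sum_{k \ge 1} X_n(k) = n$; and (iii) $X_n(0) \ge 1$ and $Y_n(0) \ge 1$. The base case $n = 0$ is immediate from $X_0, Y_0 \in \configS_0$. For the inductive step at stage $n$, the triangular bound $\alpha_n \le n$ combined with (ii) and (iii) at stage $n-1$ gives $\alpha_n \le n \le (n-1) + X_{n-1}(0) = \sum_{k \ge 0} X_{n-1}(k)$, and likewise for $Y_{n-1}$, so both $B(X_{n-1},\alpha_n)$ and $B(Y_{n-1},\alpha_n)$ are nonnegative. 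A short case distinction --- either $\alpha_n \le n-1$, in which case both $B$-values are $\ge 1$ and are determined entirely by the positive bins, which agree by (i); or $\alpha_n = n$, in which case $\sum_{k \ge 1}X_{n-1}(k) = n-1 < \alpha_n \le \sum_{k \ge 0}X_{n-1}(k)$ forces both $B$-values to equal $0$ --- then shows $B(X_{n-1},\alpha_n) = B(Y_{n-1},\alpha_n)$. Hence the new ball is placed in the same strictly positive bin of $X_n$ and $Y_n$, which preserves (i) and (iii) and increments (ii) by one.

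The only mildly delicate point is the case $\alpha_n = n$: here the contents of bin $0$ (and below) may genuinely differ between $X_{n-1}$ and $Y_{n-1}$, and yet the $n$-th ball from the right must sit in bin $0$ in both configurations. This is precisely what invariant (iii) is designed to enforce, and it is the reason for carrying bin $0$ along in the induction rather than tracking only strictly positive bins. Once this case is dispatched, the induction closes and gives ${X_n}_{|\N} = {Y_n}_{|\N}$ for every $n \ge 0$, as required.
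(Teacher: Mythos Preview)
Your proof is correct and follows essentially the same approach as the paper, which simply remarks that the result follows from an induction identical to the one used for Lemma~\ref{lem:triangularcoupling}. Your explicit case distinction between $\alpha_n \le n-1$ and $\alpha_n = n$ spells out what the paper's inductive step states in one line, and your invariant (iii) is exactly the ``at least one ball in bin $0$'' observation used there (it could alternatively be noted once and for all, since balls are never removed and $X_0\in\configS_0$).
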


The proof follows from an induction similar to the one used above in the proof of Lemma \ref{lem:triangularcoupling}.

\subsection{Construction of a stationary version and coupling}
\label{subsec:stationaryversion}
In order to construct a stationary version of the infinite bin model, we first define a variant of the IBM where the front is pinned at position $0$.
Denote by $\configS_0$ the set of all $X\in \configS$ such that $F(X)=0$. For
$X \in \Z^\Z$ and $u \in \Z$, we introduce the map
\[
  \Theta_u X(k):= X(k-u).
\]
In other words, $\Theta_u$ shifts $X$ to the right by $u$. We remark immediately that $\Theta_u = \left(\Phi_0\right)^u$.

Next, let
\[
\Theta X : \begin{array}{rcl}
  \configS &\to& \configS_0\\
  X &\mapsto & \Theta_{-F(X)} X
\end{array}
\]
be the projection $\configS \to \configS_0$ which ``pins the front'' at position $0$.

\begin{definition}[Pinned infinite bin model]
Given a probability measure $\mu$ on $\N$, and a sequence $(\xi_n)_{n \ge 1}$ of i.i.d.\ random variables with law $\mu$, we define the pinned IBM$(\mu)$ to be the Markov process $(\widehat X_n)$ with values in $\mathbb S_0$ given by
\[
  \widehat X_{n+1} = \Theta \circ \Phi_{\xi_{n+1}}(\widehat X_n), \quad n \ge 0.
\]
We leave the initial configuration $\widehat X_0 \in \configS_0$ unspecified.
\end{definition}

Let $(\widehat X_n)$ be the pinned IBM as above. Using the same sequence of selection numbers, and recalling the convention \eqref{wordaction}, we define an unpinned IBM by setting $X_0 = \hat{X}_0$ and $X_{n} = \xi_n \cdots \xi_1 {X}_0$ for $n \geq 1$.
We observe that for any selection word $\alpha$ and $X \in \configS$, we have $\Theta (\alpha X) = \Theta (\alpha (\Theta X))$. Therefore, for each $n \in \N$, we have
\[
  \widehat X_n = \Theta X_n.
\]
As a result, $\widehat{X}_n$ can be thought of as the pinned version of the IBM $X$.

\begin{definition}[Stationary pinned IBM]
Given a stationary sequence $(\xi_n)_{n \in \Z}$ we say that $(Y_n)_{n \in \Z}$ is a stationary pinned IBM if
\begin{equation}
  \label{eq:recursion}
  Y_{n+1}=\Theta({\xi_{n+1}}Y_n), \quad n \in \Z.
\end{equation}
If $(\xi_n)_{n \in \Z}$ is an i.i.d.\ sequence with common law $\mu$ then we refer to (the law) of $(Y_n)$ as a stationary version of IBM$(\mu)$.
\end{definition}

We show the existence and uniqueness of this stationary version under a first moment condition on $\mu$.
\begin{theorem}
\label{thm:stationary}
Let $\mu$ be a probability distribution on $\N$ such that
\begin{equation}
  \label{eq:munice}
  \sum_{k \in \N} k \mu(k)<\infty \text{ and } \sup_{k \in \N} \mu(k) < 1.
\end{equation}
Let $(\xi_n)_{n\in\Z}$ be a sequence of i.i.d.\ random variables with law $\mu$. Write $\calF_n:=\sigma(\xi_k,-\infty < k\leq n)$. Then a.s. there exists a unique process $(Y_n)_{n\in\Z}$ on $\configS_0$ satisfying the recursion \eqref{eq:recursion} and
such that $Y_n$ is $\calF_n$--measurable for all $n \in \Z$.
\end{theorem}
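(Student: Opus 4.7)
The plan is a Loynes-type coupling-from-the-past argument. Fix an arbitrary reference configuration $X_* \in \configS_0$ and, for each $n \in \Z$ and each integer $m \le n$, set
\[
\widehat X_n^{(m)} := \Theta \circ \Phi_{\xi_n} \circ \cdots \circ \Phi_{\xi_{m+1}}(X_*),
\]
which is by construction $\mathcal F_n$-measurable. I will show that $\widehat X_n^{(m)}$ stabilises coordinate by coordinate as $m \to -\infty$, and then take that limit as the definition of $Y_n$.

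The engine of the argument is the existence, for each $k \ge 1$, of a deterministic finite word $w^{(k)}$ of some length $L_k$ that is $k$-coupling in the sense of Definition~\ref{defcoupl} and whose letters all lie in $\supp(\mu)$. Granting this, the event $R_N^{(k)} := \{(\xi_{N-L_k+1}, \ldots, \xi_N) = w^{(k)}\}$ is $\sigma(\xi_{N-L_k+1}, \ldots, \xi_N)$-measurable and has probability $\prod_i \mu(\{w^{(k)}_i\}) > 0$. Passing to the decimated subsequence $(R^{(k)}_{n - j L_k})_{j \ge 0}$ gives independent events of constant positive probability, so by the second Borel--Cantelli lemma there exists, a.s., a (largest) random index $T_n^{(k)} \le n$ for which $R_{T_n^{(k)}}^{(k)}$ holds. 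When $\mu(\{1\}) > 0$ the simplest choice is $w^{(k)} = (1, \ldots, 1)$ of length $k$, which is $k$-coupling by Lemma~\ref{lem:triangularcoupling}.

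The stability of $k$-coupling under prepending --- if $\beta$ is $k$-coupling and $\gamma$ is any selection word, then $\beta\gamma$ is $k$-coupling, since $\Pi_k(\beta\gamma X) = \Pi_k(\beta(\gamma X))$ does not see $\gamma$ or $X$ --- then implies that for every $m \le T_n^{(k)}$ the word $(\xi_{m+1}, \ldots, \xi_n)$ is $k$-coupling. Consequently $\Pi_k(\widehat X_n^{(m)})$, and hence the restriction of $\widehat X_n^{(m)}$ to $\{-k+1, \ldots, 0\}$, is independent of $m \le T_n^{(k)}$ and of $X_*$. I then define $Y_n(-j) := \widehat X_n^{(T_n^{(j+1)})}(-j)$ for $j \ge 0$ and $Y_n(j) := 0$ for $j \ge 1$. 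Each coordinate lies in $\{1, 2, \ldots, \infty\}$ because every $\widehat X_n^{(m)}$ is in $\configS_0$, so $Y_n \in \configS_0$ and is $\mathcal F_n$-measurable. The recursion $Y_{n+1}=\Theta(\xi_{n+1} Y_n)$ then follows by applying $\Theta \circ \Phi_{\xi_{n+1}}$, which depends only on finitely many coordinates near the front, to the eventual identity $\widehat X_n^{(m)} \equiv Y_n$ on $\{-k+1,\ldots,0\}$. For uniqueness, any $\mathcal F_n$-measurable solution $(Y_n')$ of (\ref{eq:recursion}) satisfies $Y_n' = \Theta \circ \Phi_{\xi_n} \circ \cdots \circ \Phi_{\xi_{m+1}}(Y_m')$ by iteration, and the $k$-coupling property at $T_n^{(k)}$ forces $Y_n'$ to agree with $\widehat X_n^{(m)}$, hence with $Y_n$, on $\{-k+1,\ldots,0\}$ for every $k$, so $Y_n' = Y_n$ a.s.

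The main obstacle is the opening claim: the existence of a $k$-coupling word $w^{(k)}$ whose letters lie in $\supp(\mu)$ for an arbitrary $\mu$ satisfying (\ref{eq:munice}). When $\mu(\{1\}) = 0$ the clean family $(1^k)$ is unavailable and one must build $k$-coupling words from whatever atoms of $\mu$ are at hand, via a combinatorial construction that enforces $\Pi_k$ to become deterministic. This is really where both hypotheses of (\ref{eq:munice}) enter: the non-atomicity $\sup_k \mu(\{k\}) < 1$ rules out purely deterministic dynamics for which no Loynes stationary version would exist in general, and the finite first moment $\sum_k k\mu(\{k\}) < \infty$ controls how deep into the past one must look before a suitable coupling pattern appears in $(\xi_j)$.
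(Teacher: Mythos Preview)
Your argument is correct when $\mu(\{1\})>0$ and follows the same coupling-from-the-past skeleton as the paper; both restrict to this case and defer $\mu(\{1\})=0$ to the construction of \cite{CR17}. The one genuine difference is in how the $k$-coupling event in the past is produced. The paper locates renewal times in the random set $\mathscr R=\{k:\xi_{k+j}\le j\text{ for all }j\ge 1\}$ of starting points of \emph{infinite} triangular words; positivity of $\P(0\in\mathscr R)=\prod_j(1-\P(\xi>j))$ is exactly where the first-moment hypothesis $\sum k\mu(k)<\infty$ enters. You instead wait for a \emph{finite} pattern $1^k$ and apply Borel--Cantelli on a decimated subsequence, which only needs $\mu(\{1\})>0$. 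Your route is a shade more elementary and in fact does not use the first-moment condition at all.

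That last observation undercuts your closing paragraph: your claim that the finite first moment ``controls how deep into the past one must look before a suitable coupling pattern appears'' is not what your own argument does---Borel--Cantelli on a fixed finite word needs no moments. In the paper's proof the moment condition is genuinely used; in yours it is idle (and indeed the paper remarks that the theorem holds without it, citing \cite{MR19}). The condition $\sup_k\mu(\{k\})<1$ is likewise not used in either proof under $\mu(\{1\})>0$; it only matters in the $\mu(\{1\})=0$ case to guarantee two distinct atoms from which one can build coupling words. So your diagnosis of ``where both hypotheses enter'' should be revised.
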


We observe that the condition $\sup_{k \in \N} \mu(k) < 1$ is only here to avoid considering the case when $\mu$ is a Dirac mass $\delta_k$ for some $k \geq 1$. In that case, there would be $k$ processes $(Y^j_n)_{n \in \Z, 1 \leq j \leq k}$ on $\configS_0$ satisfying \eqref{eq:recursion}, given by
\[
  Y^k_n(\ell) = \begin{cases}
    k & \text{ if } \ell \leq -1\\
    0 & \text{ if } \ell \geq 1\\
    \left((j+n) \text{ mod } k\right) + 1& \text{ if } \ell = 0
  \end{cases}.
\]

The proof of Theorem~\ref{thm:stationary} relies on the existence of infinitely many $\infty$-coupling words embedded in the bi-infinite sequence $(\xi_n)_{n \in \Z}$. These words induce \emph{renewal events} for the stationary IBM($\mu$).

In order to avoid unnecessary technicalities, we prove Theorem~\ref{thm:stationary} under the additional condition $\mu(1) > 0$. Under this condition, we prove the existence of infinitely many infinite triangular words in Lemma~\ref{lem:infiniterenovations}. These triangular words induce renovation events for the IBM($\mu$), in the sense that conditionally on their realisation at time $k$, the only algebraic dependence of the future (starting from time $k$) on the past (before time $k$) is given by the position of the front at time $k$. For the concept of renovation events see \cite{BOR78,BOR98,BORFOS92,FK03}.

\begin{remark}
To extend Theorem~\ref{thm:stationary} to measure $\mu$ such that $\mu(0) = 1$,
one can use \cite{CR17} in which for all $a\neq b \in \N$, a coupling word
$\alpha$ with letters in $\{a,b\}$ with arbitrary $K(\alpha)$ is constructed. An
analogue of Lemma~\ref{lem:infiniterenovations} can be stated replacing a
triangular word by $\alpha$ followed by an \emph{infinite coupling word}
$\beta$.
The rest of the
proof would follow straightforwardly;
see \cite{MR19}.
\end{remark}

With Lemma~\ref{lem:infinitetriangular} in mind, we show almost surely, there
exist infinitely many triangular words in the infinite sequence
$(\xi_n,n\in\Z)$. This result (and its proof) should be compared and contrasted
with Lemma~\ref{lemoll}.

\begin{lemma}
\label{lem:infiniterenovations}
Let $(\xi_n)_{n\in\Z}$ be an i.i.d.\ sequence with $\E \xi_0<\infty$.
Then the law of the random set
\begin{equation}
\label{RRR}
\mathscr R = \{k \in \Z:\, (\cdots \xi_{k+2}\, \xi_{k+1}\, \xi_k) \in \calT_\infty\}.
\end{equation}
is invariant under translations and
$\mathscr R \cap [0,\infty)$ and $\mathscr R \cap (-\infty,0]$ are
infinite sets a.s.
\end{lemma}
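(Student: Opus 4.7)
The plan is to mirror the strategy of Lemma \ref{lemoll}: rewrite the event $\{k \in \mathscr R\}$ as a translate of the fixed event $A := \{0 \in \mathscr R\}$ under the Bernoulli shift, verify that $\P(A) > 0$ using the moment condition, and then conclude via Birkhoff's ergodic theorem (or Poincaré recurrence).

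Unfolding the definition of $\calT_\infty$, the word $(\ldots, \xi_{k+2}, \xi_{k+1}, \xi_k)$ is infinite triangular precisely when its $i$-th letter from the right, namely $\xi_{k+i-1}$, satisfies $\xi_{k+i-1}\le i$ for every $i\ge 1$. Equivalently, $\sigma^k\xi\in A$, where $\sigma$ is the one-step left shift on $\N^\Z$ and $A=\{\xi:\xi_{i-1}\le i\text{ for all }i\ge 1\}$. Since $(\xi_n)_{n\in\Z}$ is i.i.d., $\sigma$ is a measure-preserving ergodic transformation on the canonical space, and the translation invariance of the law of $\mathscr R$ follows immediately.

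The key quantitative step is that $\P(A)>0$. By independence,
\[
\P(A)=\prod_{i=1}^\infty \mu(\{1,\ldots,i\})=\prod_{i=1}^\infty \bigl(1-\P(\xi_0>i)\bigr),
\]
which is positive if and only if $\sum_{i\ge 1}\P(\xi_0>i)<\infty$. This series is exactly $\E\xi_0$ (shifted by one), so the assumption $\E\xi_0<\infty$ gives $\P(A)>0$. Applying Birkhoff's ergodic theorem to $\mathbf 1_A$,
\[
\frac{|\mathscr R\cap\{0,1,\ldots,n-1\}|}{n}=\frac{1}{n}\sum_{k=0}^{n-1}\mathbf 1_A(\sigma^k\xi)\;\longrightarrow\;\P(A)>0\quad\text{a.s.},
\]
which forces $\mathscr R\cap[0,\infty)$ to be infinite a.s.; the same argument applied to $\sigma^{-1}$ (also ergodic) handles $\mathscr R\cap(-\infty,0]$.

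No real obstacle is expected: the only point requiring care is the bookkeeping that identifies $\{k\in\mathscr R\}$ with $\sigma^{-k}A$ (so that translation invariance and the ergodic theorem apply cleanly). Once this identification is made, both claims reduce to the positivity of the infinite product above, which is exactly the content of the first moment hypothesis on $\mu$.
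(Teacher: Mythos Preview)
Your proof is correct and follows essentially the same approach as the paper: express $\{0\in\mathscr R\}$ as $\bigcap_{j\ge 1}\{\xi_{j-1}\le j\}$, verify $\P(0\in\mathscr R)=\prod_{j\ge 1}(1-\P(\xi_0>j))>0$ via the first-moment hypothesis, and deduce infiniteness of $\mathscr R$ in both directions from ergodicity of the shift. The paper invokes Poincar\'e recurrence (through Lemma~\ref{lemoll}) rather than Birkhoff's ergodic theorem, but this is an inessential variation.
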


\begin{proof}
The invariance by translation is obvious. Observe that
\[
\P((\cdots \xi_{k+2}\, \xi_{k+1}\, \xi_k) \in \calT_\infty)
= \P(\xi_k \le 1, \xi_{k+1} \le 2, \xi_{k+2} \le 3, \ldots)
= \prod_{j=1}^\infty (1-\P(\xi_0>j)).
\]
According to the assumptions \eqref{eq:munice} we have $\P(\xi_0=1)>0$ and hence all terms in this product are positive. We also have $\sum_{j=0}^\infty \P(\xi_0>j) = \E \xi_0 <  \infty$ and so the whole product is positive. As a result, $\mathcal{R}$ possesses a positive density on $\Z$. Just as in Lemma~\ref{lemoll} we conclude that $\mathscr R$ contains infinitely many positive and infinitely many negative integers a.s.
\end{proof}

We write $\mathscr R = \{T_0, T_{\pm 1}, T_{\pm 2}, \ldots\}$ and arrange the indexing so that
\begin{equation*}
  \label{Tpoints}
  \cdots<T_{-1}<T_0 \le 0 <T_1<\cdots
\end{equation*}
For each $n$ in $\Z$, let $s(n)$ be the unique (random) integer such that $T_{s(n)} \le n < T_{s(n)+1}$.

\begin{proof}[\bf\em Proof of Theorem~\ref{thm:stationary}]
We shall construct $(Y_n)$ as a measurable function of $(\xi_n)$ by constructing $\Pi_k(Y_n)$ for all $k$. Fix $n\in\Z$ and $k\geq1$. Consider the selection word
\[
  \xi_n \cdots \xi_{T_{s(n)-k+1}}.
\]
Since $T_{s(n)-k+1} \in \mathscr R$, the word $\xi_n \cdots \xi_{T_{s(n)-k+1}}$
is triangular. Note that the number of elements of $\mathscr R$ in the interval
$[T_{s(n)-k+1}, T_{s(n)}]$ is $k$. For each $T_j$ we have $\xi_{T_j} =1$, by the
definition of $\mathscr R$. Hence the selection word $\xi_n \cdots
\xi_{T_{s(n)-k+1}}$ contains at least $k$ letters equal to $1$. Thus by Lemma
\ref{lem:triangularcoupling} it is $k$-coupling. Hence the vector
$\Pi_k({\xi_n\cdots \xi_{T_{s-k+1}}}(X))$ is the same for all $X\in \configS$.

Fix an $X$ and define the rightmost $k$ bins of $Y_n$ to be equal to this vector. This definition is consistent for different values of $k$, hence defines $Y_n$ up to a global shift. Requiring $F(Y_n)=0$ yields a unique definition of $Y_n$ a.s. By construction, for every $n\in\Z$, $Y_n\in\calF_n$ and the sequence $(Y_n)_{n\in\Z}$ satisfies \eqref{eq:recursion}. Conversely, any sequence of configurations in $\configS_0$ satisfying \eqref{eq:recursion} has to coincide with the sequence $(Y_n)$ constructed above.
\end{proof}

It follows from Theorem~\ref{thm:stationary} that every finite-dimensional marginal of the pinned IBM started at time $0$ converges and even gets coupled to the corresponding marginal of the stationary version.

\begin{corollary}[Coupling-convergence]
\label{cor:couplingconvergence}
Let $(\xi_n)_{n\in\Z}$ be an i.i.d.\ sequence of law $\mu$ satisfying conditions
\eqref{eq:munice}. Let $\widehat X_0\in \mathbb S_0$
and let $(\widehat X_n)_{n\geq0}$
be a pinned IBM($\mu$) constructed using the variables $\xi_n, n\geq1$. Let
$(Y_n)$ be the stationary version of the pinned IBM. Then for every $k\geq1$,
there exists $n_k\geq1$ such that for every $n\geq n_k$, we have
\begin{equation}
  \label{eq:couplingconvergence}
  \Pi_k(\widehat X_n)=\Pi_k(Y_n).
\end{equation}
\end{corollary}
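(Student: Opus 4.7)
The plan is to use the infinitely many triangular renewal times guaranteed by Lemma~\ref{lem:infiniterenovations} to produce, for each $k$, a triangular suffix of the selection word $\xi_n \cdots \xi_1$ that is $k$-coupling. Since $k$-coupling words erase the dependence on the initial configuration on the rightmost $k$ non-empty bins, this will force $\Pi_k(\widehat X_n) = \Pi_k(Y_n)$.

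More precisely, enumerate the positive elements of the random set $\mathscr R$ from \eqref{RRR} as $0 < T_1 < T_2 < \cdots$, which by Lemma~\ref{lem:infiniterenovations} is an a.s.\ infinite sequence. I set $n_k := T_k$. For any $n \ge n_k$, I consider the suffix word $\beta_n := \xi_n \xi_{n-1} \cdots \xi_{T_1}$. Because $T_1 \in \mathscr R$ means $(\xi_{T_1}, \xi_{T_1+1}, \ldots) \in \calT_\infty$, the word $\beta_n$ is triangular. Moreover, each index $T_j$ in $\mathscr R$ satisfies $\xi_{T_j} = 1$, so $\beta_n$ contains at least $k$ occurrences of the letter $1$ (one at each of $T_1,\ldots,T_k$). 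By Lemma~\ref{lem:triangularcoupling}, $\beta_n$ is a $k$-coupling word.

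I then write $\xi_n \cdots \xi_1 = \beta_n \gamma$ where $\gamma = \xi_{T_1-1} \cdots \xi_1$ (the empty word if $T_1 = 1$). Iterating the recursion $\widehat X_{m+1} = \Theta(\xi_{m+1} \widehat X_m)$ together with the easy identity $\Theta \comp \Phi_\xi \comp \Theta = \Theta \comp \Phi_\xi$ (valid because $\Phi_\xi$ commutes with translations of the state up to a global shift, which $\Theta$ then normalizes away), I get
\[
\widehat X_n = \Theta(\xi_n \cdots \xi_1 \widehat X_0) = \Theta(\beta_n(\gamma \widehat X_0)),
\qquad
Y_n = \Theta(\xi_n \cdots \xi_1 Y_0) = \Theta(\beta_n(\gamma Y_0)),
\]
the second identity coming from the same iteration applied to the recursion \eqref{eq:recursion} satisfied by $Y$. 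Since $\beta_n$ is $k$-coupling, Definition~\ref{defcoupl} yields $\Pi_k(\beta_n(\gamma \widehat X_0)) = \Pi_k(\beta_n(\gamma Y_0))$. Finally, $\Pi_k$ is invariant under the pinning map $\Theta$ because it reads off the contents of the rightmost $k$ non-empty bins, which are unchanged by a global shift. Combining these observations gives \eqref{eq:couplingconvergence} for all $n \ge n_k$.

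The routine verification step is the commutation identity $\Theta \comp \Phi_\xi \comp \Theta = \Theta \comp \Phi_\xi$, needed to unwind the pinned recursion into a one-shot application of the word $\xi_n \cdots \xi_1$. No genuinely hard step remains; the conceptual work has already been done in Lemmas~\ref{lem:triangularcoupling} and \ref{lem:infiniterenovations}, which together guarantee both that triangular words couple the IBM on their rightmost bins and that sufficiently long sub-words of the driving sequence a.s.\ become triangular with arbitrarily many ones.
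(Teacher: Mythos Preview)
Your proof is correct and follows essentially the same approach as the paper's: both set $n_k = T_k$ and invoke Lemma~\ref{lem:triangularcoupling} via the $k$ occurrences of the letter $1$ at the indices $T_1,\ldots,T_k$. Your version is in fact more carefully written than the paper's one-line proof, since you explicitly isolate the triangular suffix $\beta_n = \xi_n\cdots\xi_{T_1}$ (the full word $\xi_n\cdots\xi_1$ need not be triangular) and spell out the commutation identity $\Theta\circ\Phi_\xi\circ\Theta = \Theta\circ\Phi_\xi$ needed to unwind the pinned recursion; the paper takes both of these points for granted.
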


\begin{proof}
Let $k\geq1$ and set $n_k=T_k$ which is finite a.s. If $n\geq n_k$, we have $\vartheta_1(\xi_n \cdots \xi_1)\ge k$ and \eqref{eq:couplingconvergence} follows from Lemma~\ref{lem:triangularcoupling}.
\end{proof}

A second consequence of Theorem~\ref{thm:stationary} is a simple expression for the speed $v_\mu$ in terms of the stationary version of the pinned IBM($\mu$).

\begin{corollary}
\label{cor:stationaryspeed}
Let $(Y_n)$ be the stationary version of the pinned IBM constructed using the i.i.d. variables $(\xi_n)_{n\in\Z}$ of law $\mu$ satisfying conditions~\eqref{eq:munice}. Then
\begin{equation}
  \label{eq:stationaryspeed}
  v_\mu=\P(\xi_1\leq Y_0(0)).
\end{equation}
\end{corollary}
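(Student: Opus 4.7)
The plan is to combine the dynamics of the front from equation \eqref{eqn:cnsFrontMoving} with the coupling-convergence of Corollary~\ref{cor:couplingconvergence} and the Birkhoff ergodic theorem applied to the stationary sequence built from $(\xi_n,Y_n)$.

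First, let $(\widehat X_n)_{n\ge 0}$ be the pinned IBM($\mu$) started from an arbitrary $\widehat X_0\in\configS_0$ and driven by $(\xi_n)_{n\ge 1}$, and let $(X_n)_{n\ge 0}$ be the associated unpinned IBM so that $\widehat X_n=\Theta X_n$ and $\widehat X_n(0)$ coincides with the number of balls in the front bin of $X_n$. By \eqref{eqn:cnsFrontMoving} the front moves at step $j$ exactly when $\xi_j\le \widehat X_{j-1}(0)$, hence
\[
F(X_n)-F(X_0)=\sum_{j=1}^n \ind{\xi_j\le \widehat X_{j-1}(0)}.
\]
Dividing by $n$ and using Theorem~\ref{thm:speed}, the left-hand side converges a.s.\ to $v_\mu$.

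Next, I would extend $(\xi_n)$ to a two-sided i.i.d.\ sequence with law $\mu$ and let $(Y_n)_{n\in\Z}$ be the stationary pinned IBM produced by Theorem~\ref{thm:stationary}. By Corollary~\ref{cor:couplingconvergence} applied with $k=1$, there is an a.s.\ finite random index $n_1$ such that $\widehat X_j(0)=Y_j(0)$ for every $j\ge n_1$. Consequently the two Ces\`aro averages
\[
\frac1n\sum_{j=1}^n \ind{\xi_j\le \widehat X_{j-1}(0)} \quad\text{and}\quad \frac1n\sum_{j=1}^n \ind{\xi_j\le Y_{j-1}(0)}
\]
differ by $O(n_1/n)\to 0$ a.s., so they share the same a.s.\ limit (if any).

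Finally, observe that $Y_{j-1}$ is $\calF_{j-1}$-measurable while $\xi_j$ is independent of $\calF_{j-1}$; the sequence $(\xi_{j+1},Y_j)_{j\in\Z}$ is stationary under the shift on $(\xi_n)_{n\in\Z}$ and this shift is ergodic because $(\xi_n)$ is i.i.d.\ and $Y_j$ is a measurable function of the past. Birkhoff's ergodic theorem therefore gives
\[
\frac1n\sum_{j=1}^n \ind{\xi_j\le Y_{j-1}(0)} \xrightarrow[n\to\infty]{\text{a.s.}} \E\!\left[\ind{\xi_1\le Y_0(0)}\right]=\P(\xi_1\le Y_0(0)).
\]
Combining the three displays yields $v_\mu=\P(\xi_1\le Y_0(0))$, which is \eqref{eq:stationaryspeed}. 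The only subtle point is the ergodicity of the joint process, but this follows directly from the representation of $Y_j$ as a measurable function of $(\xi_k)_{k\le j}$ already used in the construction of the stationary version; no delicate estimate is needed.
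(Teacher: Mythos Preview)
Your proof is correct. It differs from the paper's argument in one organisational choice: the paper starts the (unpinned) IBM directly from the stationary state, i.e.\ takes $X_0=Y_0$, so that $L(X_{j-1})=Y_{j-1}(0)$ holds for \emph{all} $j\ge 1$ rather than only eventually. This lets the paper bypass Corollary~\ref{cor:couplingconvergence} entirely and work with expectations instead of almost-sure limits: from $v_\mu=\lim_n \E F(X_n)/n$ and \eqref{eqn:cnsFrontMoving} one gets $v_\mu=\lim_n \frac{1}{n}\sum_{j=1}^n \P(\xi_j\le Y_{j-1}(0))$, and each summand equals $\P(\xi_1\le Y_0(0))$ by stationarity of $(\xi_j,Y_{j-1})$, with no need for Birkhoff. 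Your route---arbitrary initial state, coupling-convergence to erase the transient, then the ergodic theorem---is slightly longer but has the merit of illustrating how Corollary~\ref{cor:couplingconvergence} is used in practice, and of keeping everything at the almost-sure level. Both arguments rest on the same two ingredients (the front-increment identity \eqref{eqn:cnsFrontMoving} and stationarity of the pair $(\xi_j,Y_{j-1})$); the difference is only whether one pays the coupling cost up front by choosing $X_0=Y_0$ or later via the coupling corollary.
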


\begin{proof}
Let $(X_n)$ be an IBM constructed using the random variables $(\xi_n, n \geq 0)$ such that $X_0 = Y_0(0)$ a.s. We recall that
\[
  v_\mu = \lim_{n \to \infty} \frac{F(X_n)}{n}  \quad \text{a.s.}
\]
Therefore, using the dominated convergence theorem, we have
\begin{equation*}
  v_\mu  = \lim_{n \to \infty} \frac{\E(F(X_n))}{n} = \lim_{n \to \infty} \frac{1}{n} \sum_{j=1}^n \E\left( F(X_j)-F(X_{j-1}) \right)
   = \lim_{n \to \infty} \frac{1}{n} \sum_{j=1}^n \P(\xi_j \leq L(X_{j-1})),
\end{equation*}
using \eqref{eqn:cnsFrontMoving} (recall that $L(X)$ is the number of balls in the rightmost non-empty bin of $X$). Then, as $L(X_n) = L(Y_n) = Y_n(0)$ a.s. for all $n\in\N$, we have
\[
  \P(\xi_j \leq L(X_{j-1})) = \P(\xi_j \leq Y_{j-1}(0)) = \P(\xi \leq Y_0(0)),
\]
using that $Y$ is a stationary adapted sequence. This result immediately implies \eqref{eq:stationaryspeed}.
\end{proof}

\subsection{New expressions for the speed}
\label{speedexpress}
Using Corollary~\ref{cor:stationaryspeed}, we deduce in this section new
formulas for the speed $v_\mu$ as infinite sums over some special classes of
selection words. In constructing the stationary regime, we were interested in
renovation events, that is events that resulted in decoupling the future from
the past.
In computing the speed, we are merely interested in whether the front advances
or not at time $1$.

Words that manage to advance the front of any configuration at their last selection index are called {\em good}. Words that never do this are called {\em bad}. Bad is not the opposite of good: there are selection words
that sometimes move the front {at the last step} and sometimes do not. In what follows recall that, for integers $\xi, \eta, \ldots$ and $X \in \mathbb
S$, the symbol $\eta \xi X$ stands for $\Phi_\eta(\Phi_\xi(X))$.

\begin{definition}[Good/bad words]
Define the following sets of selection words:
\begin{align*}
\mathcal G & := \{\alpha_\ell \cdots \alpha_1 \in \calU:\,
\forall X \in \configS ~
F({\alpha_\ell}{\alpha_{\ell-1}} \cdots {\alpha_1}  X)
= F( {\alpha_{\ell-1}} \cdots {\alpha_1}  X) +1\},
\\
\mathcal B & := \{\alpha_\ell \cdots \alpha_1 \in \calU:\,
\forall X \in \configS ~
F({\alpha_\ell}{\alpha_{\ell-1}} \cdots {\alpha_1}  X)
= F( {\alpha_{\ell-1}} \cdots {\alpha_1}  X)\},
\\
\text{and } \mathcal A &:= \calU \setminus (\mathcal G \cup \mathcal B).
\end{align*}
We call the words in $\mathcal{G}$ \emph{good}, those in $\mathcal{B}$ \emph{bad}, and those in $\mathcal{A}$ \emph{ambivalent}.
\end{definition}

\begin{example}
The word $1$ is good. More generally every word $\alpha_\ell \cdots\alpha_1$
ending with $\alpha_\ell = 1$ is good.

The word $(2,1)$ is bad. To see this, let $X$ have $F(X)=0$. We observe that $1X
= X + \delta_1$ and $2(1X) = X + 2 \delta_1$ as the second ball of $1X$ is at
bin $0$. Hence, $F(2(1X)) = F(1X) = 1$. As this relation holds regardless of
$X$, the word $(2,1)$ is bad.

The word $2$ is ambivalent. Indeed, if $X$ has $F(X)=0$ and $X(0)=1$ then
$2X=X+\delta_0$, so $F(2X) = 0$, but if $X(0)\ge 2$ then $2X=X+\delta_1$, so
$F(2X)=1$.
\end{example}

Coupling words can be used to generate a large class of good and bad words. More
precisely, the following result holds.

\begin{lemma}
\label{lem:goodbadexamples}
If $\alpha\in\mathcal C_1$ and $\xi\in\N$
then either $\xi\alpha \in \mathcal G$ or $\xi\alpha \in \mathcal B$.
In particular, every triangular word is either good or bad.
\end{lemma}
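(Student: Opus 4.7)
The plan is to reduce both claims to the front-advancement criterion that appears in \eqref{eqn:cnsFrontMoving}: for any configuration $Z$ and any $\xi \in \N$, one has $F(\xi Z) = F(Z) + 1$ if $\xi \le L(Z)$ and $F(\xi Z) = F(Z)$ if $\xi > L(Z)$, where $L(Z) := Z(F(Z))$ denotes the number of balls in the front bin of $Z$. This holds because $\Phi_\xi$ adds one ball at position $B(X,\xi)+1$, and a direct inspection of the definition \eqref{Bdef} shows that $B(Z,\xi) = F(Z)$ precisely when $\xi \le L(Z)$.

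First I would exploit the hypothesis $\alpha \in \mathcal C_1$: by the definition of $1$-coupling, $\Pi_1(\alpha X) = \Pi_1(\alpha Y)$ for every $X, Y \in \configS$, which is to say that the content $L(\alpha X)$ of the front bin of $\alpha X$ is a constant $c = c(\alpha) \in \N$ not depending on $X$. Plugging $Z = \alpha X$ into the criterion above gives $F(\xi \alpha X) - F(\alpha X) = 1$ for every $X$ when $\xi \le c$, and $F(\xi \alpha X) - F(\alpha X) = 0$ for every $X$ when $\xi > c$. This places $\xi\alpha$ in $\mathcal G$ in the former case and in $\mathcal B$ in the latter, settling the first assertion.

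For the ``in particular'' part, let $\alpha = (\alpha_\ell, \ldots, \alpha_1) \in \calT$. If $\ell = 1$ then $\alpha_1 \le 1$ forces $\alpha_1 = 1$, and the preceding example already records that every word ending in $1$ is good. If $\ell \ge 2$, I would factor $\alpha = \alpha_\ell \beta$ with $\beta := (\alpha_{\ell-1}, \ldots, \alpha_1)$. The triangularity inequalities $\alpha_i \le i$ persist for $1 \le i \le \ell - 1$, so $\beta$ is itself triangular, and since it contains at least the letter $\alpha_1 = 1$ we have $\vartheta_1(\beta) \ge 1$ and Lemma~\ref{lem:triangularcoupling} yields $\beta \in \mathcal C_1$. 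Applying the first half of the present lemma to $\beta$ with selection number $\xi = \alpha_\ell$ then shows that $\alpha = \xi\beta$ is good or bad.

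There is no real obstacle here: once one recognizes that membership in $\mathcal C_1$ makes $L(\alpha X)$ constant in $X$, the dichotomy is immediate from the single-step front-advancement rule. The only routine checks are that the suffix of a triangular word is triangular (immediate from the definition) and that the one-letter word $(1)$ is good (already established), both used only in the second half.
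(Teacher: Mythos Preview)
Your proof is correct and follows essentially the same approach as the paper: both use that $\alpha\in\mathcal C_1$ forces the front-bin content $L(\alpha X)$ to be a constant independent of $X$, then invoke the front-advancement criterion to get the good/bad dichotomy. For the triangular case the paper simply notes that every triangular word lies in $\mathcal C_1$ and says ``the previous argument applies''; you make this step explicit by factoring $\alpha=\alpha_\ell\beta$ and checking that the triangular prefix $\beta$ is itself in $\mathcal C_1$, and you also handle the length-$1$ case separately---both are details the paper leaves implicit.
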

\begin{proof}
Since $\alpha$ is in $\mathcal C_1$, the front of $\alpha X$ contains exactly
the
same number of balls as the front of $\alpha Y$ for any configuration $Y$;
see forthcoming Definition~\ref{defcoupl}.
Let $b$ be this number of balls.
Then $F(\xi \alpha X) = F(\alpha X)+1$ if and only if $\xi \le b$.
Since neither $\xi$ nor $b$ depend on $X$, it follows that the word is good if
$\xi \le b$, or bad otherwise.

By Lemma \ref{lem:triangularcoupling} every triangular word is in $\mathcal
C_1$. Hence the previous argument applies.
\end{proof}

To apply Corollary~\ref{cor:stationaryspeed} to compute the speed $v_\mu$ of the IBM($\mu$), we have to detect, based on the sequence $(\xi_n, n \leq 1)$, if the front of $Y$ will increase at time $1$. To this end, we introduce the notion of minimal good and bad words.

\begin{definition}[$\cal V$-minimal words]
For every $\mathcal V \subset \mathcal U$ define the set of $\mathcal V$-minimal
words as the set of words in $\mathcal{V}$ with no strict suffix belonging to
$\mathcal{V}$, i.e.
\[
  \mathcal V_{\min} :=\{\alpha=\alpha_\ell\cdots\alpha_1 \in \mathcal V:\, \alpha_\ell\cdots\alpha_i  \not \in \mathcal V, i=2,\ldots,\ell\}.
\]
\end{definition}

Observe that if $\alpha \in \mathcal{V}$ satisfies $|\alpha| = \min \{|\beta|,\beta \in \mathcal{V}\}$, then $\alpha \in \mathcal{V}_{\min}$.

\begin{example}
First consider $\mathcal V = \mathcal T$ the set of triangular words. We see that $(2,2,1) \in \mathcal T_{\min}$ (because $(2,2,1)$ is in $\mathcal
T$ but $(2,2)$ and $(2)$ are not in $\mathcal T$). On the other hand, the
triangular word $(2,2,1,3,2,1)$ is not in $\mathcal T_{\min}$ because its suffix
$(2,2,1)$ is triangular.

Let us now consider minimal good and bad words. We have obviously $1 \in \mathcal{G}_{\min}$, but $(1,1) \not \in \mathcal{G}_{\min}$. Similarly, we observe that $(3,2,1) \in \mathcal B$ as $3(2(1X)) = X + 3 \delta_1$ and $2(1X) = X + 2\delta_1$, but both $(3)$ and $(3,2)$ are ambivalent words (hence not bad). Therefore $(3,2,1) \in \mathcal B_{\min}$.
\end{example}
We next observe that
\begin{equation}
\label{GB}
(\calG\cup\calB)_{\min}=\calG_{\min} \cup \calB_{\min}.
\end{equation}
This follows from the fact that a suffix of a good word cannot be bad and a suffix of a bad word cannot be good.

\begin{definition}[Weight of a word]
The {\em weight} of
$\alpha=(\alpha_\ell,\ldots,\alpha_1)\in\calU$ under the probability measure $\mu$ is defined to be
\[
  w_\mu(\alpha):=\prod\limits_{i=1}^\ell \mu(\alpha_i).
\]
\end{definition}

We are now ready to explain how to get new formulas for the speed $v_\mu$.

\begin{proposition}
\label{prop:Cminimal}
Let $\mu$ be any probability measure on $\N$ and $(\xi_n)_{n\in\Z}$ a sequence
of i.i.d.\ random variables with law $\mu$. Let $\mathcal V$ be a class of selection words (recalling that $\varnothing$ is not a selection word) and define
\[
T^*_\mathcal V :=\sup\{-\infty < \ell \le 1:\, \xi_1\,\xi_0\cdots \xi_\ell \in\mathcal V\}.
\]
Assume that
\\
(i) $\mathcal V \subset \calG \cup \calB$ ($\mathcal V$ contains no ambivalent words)
\\
(ii) $\P(T^*_\mathcal V  > -\infty) =1$.
\\
Then
\begin{equation}
\label{eq:Cminimalspeed}
v_\mu=\sum\limits_{\alpha \in \mathcal V_{\min} \cap \calG} w_\mu(\alpha)
=1-\sum\limits_{\alpha \in \mathcal V_{\min} \cap \calB} w_\mu(\alpha).
\end{equation}
\end{proposition}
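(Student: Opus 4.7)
The plan is to combine Corollary~\ref{cor:stationaryspeed} with a case analysis on the random $\mathcal V$-minimal word embedded in the sequence $(\xi_n)_{n \le 1}$. Let $(Y_n)_{n\in\Z}$ be the stationary pinned IBM($\mu$) built from $(\xi_n)$, so that $v_\mu = \P(\xi_1 \le Y_0(0)) = \P(\xi_1 \le L(Y_0))$. By assumption (ii) the random word $\alpha := \xi_1 \xi_0 \cdots \xi_{T^*_{\mathcal V}}$ is a.s.\ well-defined, and it is a.s.\ in $\mathcal V_{\min}$: indeed, any strict suffix of $\alpha$ has the form $\xi_1 \cdots \xi_i$ with $i > T^*_{\mathcal V}$, so cannot lie in $\mathcal V$ by the very definition of $T^*_{\mathcal V}$ as a supremum.

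The key step is to show that $\{\xi_1 \le L(Y_0)\} = \{\alpha \in \mathcal G\}$ up to a null set. By (i) the word $\alpha$ is a.s.\ either good or bad. Suppose first $\alpha \in \mathcal G$. For any $X \in \mathbb S$ and any $t$ with $t+1 \le T^*_{\mathcal V}$, the word $\xi_1 \xi_0 \cdots \xi_{t+1}$ factors as $\alpha \cdot \beta$ with $\beta = \xi_{T^*_{\mathcal V}-1}\cdots \xi_{t+1}$. Applying the definition of $\mathcal G$ to the configuration $\beta X$ yields
\[
F(\xi_1 \xi_0 \cdots \xi_{t+1} X) = F(\xi_0 \xi_{-1} \cdots \xi_{t+1} X) + 1,
\]
i.e.\ $\xi_1 \le L(\xi_0 \xi_{-1} \cdots \xi_{t+1} X)$. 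By the stationary construction of $Y$ (Theorem~\ref{thm:stationary}, Corollary~\ref{cor:couplingconvergence}), for $t$ sufficiently negative (depending on $X$) the rightmost bin of $\xi_0\xi_{-1}\cdots\xi_{t+1}X$ coincides with that of $Y_0$, so $\xi_1 \le L(Y_0)$. The symmetric argument for $\alpha \in \mathcal B$ gives $\xi_1 > L(Y_0)$, proving the claimed equivalence.

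It remains to decompose $\P(\alpha \in \mathcal G)$ as an explicit sum. For each $\alpha' = \alpha'_\ell\cdots\alpha'_1 \in \mathcal V_{\min}$, the $\mathcal V$-minimality of $\alpha'$ ensures that the event $\{\alpha = \alpha'\}$ is simply $\{\xi_1 = \alpha'_1, \ldots, \xi_{2-\ell} = \alpha'_\ell\}$ (any shorter matching suffix would be a strict suffix of $\alpha'$, excluded by minimality). Independence of $(\xi_n)$ therefore gives $\P(\alpha = \alpha') = w_\mu(\alpha')$, and assumption (ii) yields $\sum_{\alpha' \in \mathcal V_{\min}} w_\mu(\alpha') = 1$. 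Summing over good and over bad minimal words, together with the disjoint decomposition $\mathcal V_{\min} = (\mathcal V_{\min} \cap \mathcal G) \sqcup (\mathcal V_{\min} \cap \mathcal B)$ from (i), yields both equalities in \eqref{eq:Cminimalspeed}.

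The main obstacle is the second step: reconciling the universal quantification over $X$ that defines goodness/badness with the specific (random) stationary configuration $Y_0$. This is where Corollary~\ref{cor:couplingconvergence} is essential, since it guarantees that the content of the rightmost bin stabilises to $L(Y_0)$ as one pushes the initial time to $-\infty$, irrespective of the starting configuration. The rest of the proof is a bookkeeping exercise in partitioning the sample space according to the unique $\mathcal V$-minimal suffix ending at position $1$.
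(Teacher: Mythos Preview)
Your proof is correct and follows essentially the same route as the paper: invoke Corollary~\ref{cor:stationaryspeed}, identify $\{\xi_1\le L(Y_0)\}$ with $\{\alpha\in\mathcal G\}$ where $\alpha=\xi_1\cdots\xi_{T^*_{\mathcal V}}$ is the $\mathcal V$-minimal word, and then compute $\P(\alpha=\alpha')=w_\mu(\alpha')$ by minimality. The only difference is in the key identification step: the paper avoids your limiting argument by directly plugging $Y_{T^*_{\mathcal V}-1}$ into the recursion, observing that $L(Y_0)=L(\xi_0\cdots\xi_{T^*_{\mathcal V}}Y_{T^*_{\mathcal V}-1})$ and that, since $\alpha\notin\mathcal A$, whether the front advances at step~$1$ is independent of the configuration $Y_{T^*_{\mathcal V}-1}$.
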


\begin{proof}
Fix the set $\mathcal V \subset \mathcal G \cup \mathcal B$ and let $T = T^*_{\mathcal V}$ for brevity. Let $(Y_n)$ be the stationary version of the pinned IBM constructed from $(\xi_n)_{n\in\Z}$. From Corollary \ref{cor:stationaryspeed} we have $v_\mu = \P(\xi_1 \le Y_0(0))$. Observe that
\begin{equation}
\label{AJAX}
\{\xi_1 \le Y_0(0)\}
= \{F(\xi_1 \xi_0 \cdots \xi_T Y_{T-1})-F(\xi_0 \cdots \xi_T Y_{T-1})=1\}.
\end{equation}
Since $\xi_1 \xi_0 \cdots \xi_T \not \in \mathcal A$, it follows that
\[
F(\xi_1 \xi_0 \cdots \xi_T Y_{T-1})-F(\xi_0 \cdots \xi_T Y_{T-1})
= F(\xi_1 \xi_0 \cdots \xi_T X)-F(\xi_0 \cdots \xi_T X) \text{ for all
$X \in \mathbb S$.}
\]
And so the event of \eqref{AJAX} {is equal to}
\[
\{\xi_1 \xi_0 \cdots \xi_T \in \mathcal G\}.
\]
On the other hand, $\xi_1 \xi_0 \cdots \xi_T \in \mathcal V$
but $\xi_1 \xi_0 \cdots \xi_{T+j} \not \in \mathcal V$ if $j>0$.
Hence
\[
\xi_1 \xi_0 \cdots \xi_T \in \mathcal V_{\min} \text{ a.s.}
\]
We conclude that
\begin{equation}
\label{vmu}
  v_\mu = \P(\xi_1 \xi_0 \cdots \xi_T \in \mathcal V_{\min} \cap \mathcal G)
  = \sum_{\alpha \in \mathcal V_{\min} \cap \mathcal G} \P(\xi_1 \xi_0 \cdots \xi_T =\alpha).
\end{equation}
The length of the word $\xi_1 \xi_0 \cdots \xi_T$ is $2-T$, so
\begin{multline*}
  \P(\xi_1 \xi_0 \cdots \xi_T =\alpha) = \P(\xi_1 \xi_0 \cdots \xi_{2-|\alpha|}=\alpha, T=2-|\alpha|) \\
  = \P(\xi_1 \xi_0 \cdots \xi_{2-|\alpha|}=\alpha) = \P(\xi_1 \cdots \xi_{|\alpha|}=\alpha) =\mu(\alpha_1) \cdots \mu(\alpha_{|\alpha|}) = w_\mu(\alpha),
\end{multline*}
where the second equality follows from the fact that we calculate this probability for $\alpha \in \mathcal V_{\min}$. Thus the first identity in \eqref{eq:Cminimalspeed} is proved. The second identity follows from $\mathcal V_{\min}\setminus (\mathcal V_{\min} \cap \mathcal G) = \mathcal V_{\min} \cap \mathcal B$.
\end{proof}

By taking special choices for the class $\mathcal V $ we obtain the following useful formulas when $\mu$ satisfies assumptions \eqref{eq:munice}:

\begin{theorem}[speed as a sum over words, \cite{MR19}]
\label{thm:sumoverwords}
Let $\mu$ be a probability measure on $\N$ satisfying assumptions \eqref{eq:munice}. Then
\begin{align}
  \label{eq:minimaltriangularspeed}
  v_\mu&=\sum\limits_{\alpha\in \calT_{\min}\cap \calG} w_\mu(\alpha)\\
  \label{eq:minimalgoodspeed}
  v_\mu&=\sum\limits_{\alpha\in \calG_{\min}} w_\mu(\alpha).
\end{align}
\end{theorem}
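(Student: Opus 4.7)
The plan is to derive both identities as direct consequences of Proposition~\ref{prop:Cminimal} by making two clever choices for the class $\mathcal V$. For \eqref{eq:minimaltriangularspeed} take $\mathcal V = \calT$, and for \eqref{eq:minimalgoodspeed} take $\mathcal V = \calG \cup \calB$. In each case I will verify the two hypotheses (no ambivalent words, and $T^*_{\mathcal V} > -\infty$ a.s.) and then identify the set $\mathcal V_{\min}\cap \calG$ with the index set appearing on the right-hand side.

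For the triangular case, hypothesis (i) is precisely the last statement of Lemma~\ref{lem:goodbadexamples}: every triangular word is either good or bad. Hypothesis (ii) is the only nontrivial point and is where the renovation machinery enters. I will invoke Lemma~\ref{lem:infiniterenovations}: the random set $\mathscr R$ of indices $k$ for which $(\cdots \xi_{k+2}\,\xi_{k+1}\,\xi_k)$ is an infinite triangular word intersects $(-\infty,0]$ in an infinite set almost surely. If $\ell \in \mathscr R \cap (-\infty,1]$, then by definition $\xi_{\ell+i-1}\le i$ for all $i\ge 1$, hence in particular for $1 \le i \le 2-\ell$, so the finite suffix $\xi_1\xi_0\cdots\xi_\ell$ is a triangular word. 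Thus $T^*_{\calT} \ge \ell > -\infty$ a.s.\ and Proposition~\ref{prop:Cminimal} yields
\[
v_\mu \;=\; \sum_{\alpha \in \calT_{\min}\cap \calG} w_\mu(\alpha),
\]
which is \eqref{eq:minimaltriangularspeed}.

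For \eqref{eq:minimalgoodspeed}, hypothesis (i) is trivial since $\mathcal V = \calG \cup \calB$ excludes ambivalent words by definition. Hypothesis (ii) follows immediately from the previous case: since $\calT \subset \calG \cup \calB$ (Lemma~\ref{lem:goodbadexamples}), every triangular suffix is automatically in $\mathcal V$, so $T^*_{\calG\cup\calB} \ge T^*_{\calT} > -\infty$ a.s. Proposition~\ref{prop:Cminimal} then gives
\[
v_\mu \;=\; \sum_{\alpha \in (\calG\cup\calB)_{\min}\cap \calG} w_\mu(\alpha).
\]
To finish, I use identity~\eqref{GB}, $(\calG\cup\calB)_{\min} = \calG_{\min}\cup\calB_{\min}$, which came from the observation that no suffix of a good word can be bad and vice versa. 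Intersecting with $\calG$ kills the $\calB_{\min}$ piece entirely (good and bad are disjoint), leaving $(\calG\cup\calB)_{\min}\cap \calG = \calG_{\min}$, which yields \eqref{eq:minimalgoodspeed}.

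The only real obstacle is the verification of hypothesis (ii) for $\calT$, which is essentially a matter of aligning the two index conventions: the infinite triangular words of Lemma~\ref{lem:infiniterenovations} are indexed so that the constraint on the letter grows with distance to the right of the starting index, while the finite triangular words of Definition of $\calT$ have the constraint grow from the rightmost letter. Once one checks that a prefix (in the index-increasing sense) of an infinite triangular word, read in the right-to-left convention, is a finite triangular word, everything lines up. The rest is pure bookkeeping using the already-established Proposition~\ref{prop:Cminimal} and \eqref{GB}.
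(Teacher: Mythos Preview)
Your proof is correct and follows essentially the same approach as the paper's: apply Proposition~\ref{prop:Cminimal} with $\mathcal V=\calT$ for \eqref{eq:minimaltriangularspeed} and with $\mathcal V=\calG\cup\calB$ for \eqref{eq:minimalgoodspeed}, verifying hypothesis~(ii) in the first case via Lemma~\ref{lem:infiniterenovations} and in the second by the inclusion $\calT\subset\calG\cup\calB$, then simplifying with~\eqref{GB}. Your explicit check of the index alignment between infinite and finite triangular words is a useful clarification that the paper leaves implicit.
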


\begin{remark}
As observed in Proposition~\ref{prop:Cminimal}, the number of possible formulas for the speed $v_\mu$ is equal to the number of subset of $\mathcal{V} \subset \mathcal{G} \cup \mathcal{B}$ such that $T^*_\mathcal{V} > -\infty$ a.s. However, the two formulas \eqref{eq:minimaltriangularspeed} and \eqref{eq:minimalgoodspeed} will be the more useful for our purpose. It should be apparent that setting $\mathcal{V} = \mathcal{G} \cup \mathcal{B}$ gives a formula such that $|T^*_\mathcal{V}|$ is minimal. However, for an algorithmic purpose, it is much easier to verify that a word is triangular than to verify that it is not ambivalent. Therefore, \eqref{eq:minimaltriangularspeed} is particularly efficient when estimating $v_\mu$ via Monte-Carlo methods, see forthcoming Section~\ref{sec:perfectSimu}.
\end{remark}

\begin{proof}
We first apply Proposition \ref{prop:Cminimal} with $\mathcal V = \mathcal T$
and make sure that (i) and (ii) in that proposition hold. By Lemma
\ref{lem:goodbadexamples} we have $\calT\subset\calG\cup\calB$, so (i) holds.
Recall that $T^*_{\mathcal T} = \sup\{\ell \le 1:\, (\xi_1\, \xi_0\,\xi_{-1}
\cdots \xi_{\ell+1} \xi_\ell) \in \mathcal T\}$. By Lemma
\ref{lem:infiniterenovations} all the points of $\mathscr R$ are finite in
absolute value a.s. Hence there are points $-\infty < \ell \le 0$ such that
$(\cdots \xi_{\ell+1} \xi_\ell) \in \mathcal T_\infty$. Any such point certainly
satisfies $(\xi_1\, \xi_0\, \cdots \xi_{\ell+1} \xi_\ell) \in \mathcal T$, so
$T^*_{\mathcal T} > -\infty$ a.s., therefore (ii) holds as well. As a
consequence \eqref{eq:minimaltriangularspeed} follows from
\eqref{eq:Cminimalspeed}.

We next apply Proposition \ref{prop:Cminimal} with $\mathcal V = \mathcal G \cup
\mathcal B$, so (i) holds immediately. Moreover, we have $T^*_{\calG\cup\calB}
=\sup\{\ell \leq 1:\, \xi_1 \cdots \xi_\ell \in\calG\cup\calB \}\geq T^*_\calT$
as $\xi_1\xi_0\cdots \xi_{T^*_{\calT}} \in \mathcal{G} \cup \mathcal{B}$, hence
$T^*_{\calG\cup\calB}>-\infty$ a.s. Additionally, thanks to \eqref{GB} we have
\[
  (\calG\cup\calB)_{\min}\cap\calG=(\calG_{\min}\cup\calB_{\min})\cap\calG =
(\calG_{\min} \cap \calG) \cup (\calB_{\min} \cap\calG) =\calG_{\min} \cup
\varnothing = \calG_{\min},
\]
which yields to \eqref{eq:minimalgoodspeed}, by using the first equality of \eqref{eq:Cminimalspeed} with $\mathcal V = \mathcal G \cup \mathcal B$.
\end{proof}

\begin{remark}
Note that there is no inclusion relation between $\calT_{\min}\cap\calG$ and
$\calG_{\min}$. For example
\[
  (2,4,2,1) \in\calG_{\min}\setminus(\calT_{\min}\cap\calG) \text{ and } (2,4,2,1,1) \in(\calT_{\min}\cap\calG)\setminus\calG_{\min}.
\]
\end{remark}

\begin{remark}
In \cite{MR19}, Theorem~\ref{thm:stationary} is showed to hold for any infinite bin model, under the condition that $\mu$ is not a Dirac mass. As a result, Corollary~\ref{cor:stationaryspeed} also holds without any assumption on the first of $\mu$. In fact, \cite[Theorem~1.2]{MR19} states that \eqref{eq:minimalgoodspeed} holds for any non-degenerated measure $\mu$. However, observe that \eqref{eq:minimaltriangularspeed} does not necessarily holds, as $\E(\xi) < \infty$ is a necessary condition for the existence of infinite triangular words.
\end{remark}

\section{Analytic properties of the asymptotic length of the longest path}
\label{sec:analytic}

In this section we use the results of the previous section on the infinite bin model to derive properties of the asymptotic length of the longest path in Barak-Erd\H{o}s graphs by coupling the latter to an infinite bin model in the case when $\mu$ is a geometric distribution. Let $\mu_p(k)=(1-p)^{k-1}p$, $k \in \N$. Let $(X_n)$ be the Markov process constructed in Section~\ref{BEIBM} from $\vec G(\Z, p)$. Then $(X_n)$ is an IBM$(\mu_p)$. Looking at \eqref{Xform} we see that
\begin{equation}
  \label{eq:lengthfront}
  F(X_n) = L_n,
\end{equation}
where $L_n$ is the maximum of all paths  in $\vec G(\Z, p)$ with endpoints in
$[1,n]$.
By Corollary \ref{aLL}, we have that $L_n/n \to C(p)$ a.s.
whereas Theorem \ref{thm:speed} shows that $F(X_n)/n \to v_{\mu_p}$ a.s. We
conclude that
\begin{equation}
\label{eq:Candv}
C(p)=v_{\mu_p}.
\end{equation}
We now make use of the results and formula obtained in Section \ref{speedexpress} to prove the analyticity of $C$ on $(0,1]$ in Section~\ref{subsec:analyticityOfC}, and to express its power series expansion in Section~\ref{subsec:powerseries}. But first, we rephrase the results of Section~\ref{speedexpress} for a geometrically distributed infinite bin model.

\subsection{First formulas for \texorpdfstring{$C$}{C}}
\label{subsec:firstFormula}

Using Corollary~\ref{cor:stationaryspeed} and the geometric distribution of $\xi$, we observe that $C(p)$ can be computed rather explicitly in terms of the stationary version $Y$ of the IBM($\mu_p$).

\begin{corollary}
\label{cor:Cexpectation}
Let $0<p\leq 1$, we denote by $R_p$ a random variable distributed as the content of the front bin in the stationary IBM($\mu_p$). Then
\begin{equation}
\label{eq:Cexpectation}
C(p)=1-\E\left((1-p)^{R_p}\right).
\end{equation}
\end{corollary}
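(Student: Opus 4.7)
The proof is essentially a one-line application of Corollary~\ref{cor:stationaryspeed} combined with the identification $C(p)=v_{\mu_p}$ from \eqref{eq:Candv}, together with the explicit tail of the geometric distribution. Let me lay out the plan.

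First I would handle the edge case $p=1$ separately: here $\mu_1=\delta_1$ is a Dirac mass so Theorem~\ref{thm:stationary} does not directly apply, but the graph $\vec G(\Z,1)$ is complete, $L_n=n-1$, and hence $C(1)=1$; the front bin of the (unique up to shift) stationary configuration contains one ball deterministically, so $R_1=1$ and $1-\E((1-1)^{R_1})=1$, matching the claim.

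For $0<p<1$, I would first verify that $\mu_p$ satisfies the hypotheses \eqref{eq:munice} of Theorem~\ref{thm:stationary}: we have $\sum_k k\mu_p(k)=1/p<\infty$ and $\sup_k\mu_p(k)=p<1$. Hence a stationary pinned IBM$(\mu_p)$ $(Y_n)_{n\in\Z}$ exists with $Y_n$ measurable with respect to $\calF_n=\sigma(\xi_k,k\le n)$. By \eqref{eq:Candv} and Corollary~\ref{cor:stationaryspeed},
\[
C(p)=v_{\mu_p}=\P(\xi_1\le Y_0(0)).
\]
Writing $R_p:=Y_0(0)$, we have $R_p\in\calF_0$ and $\xi_1$ is independent of $\calF_0$, so conditioning on $R_p$ gives
\[
C(p)=\E\bigl[\P(\xi_1\le R_p\mid R_p)\bigr]=\E\bigl[1-(1-p)^{R_p}\bigr]=1-\E\bigl((1-p)^{R_p}\bigr),
\]
using that $\P(\xi_1\le k)=1-(1-p)^k$ for all integers $k\ge 0$ (in particular this formula is still valid when $k=0$, since then the probability is $0$).

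There is no real obstacle here: everything has been prepared by the previous two subsections. The only small point that deserves care is the independence step, which is why I would explicitly invoke the adaptedness $Y_0\in\calF_0$ from Theorem~\ref{thm:stationary} before taking the conditional expectation.
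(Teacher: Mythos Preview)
Your proposal is correct and follows essentially the same route as the paper: invoke Corollary~\ref{cor:stationaryspeed} together with \eqref{eq:Candv} to write $C(p)=\P(\xi_1\le Y_0(0))$, then condition on $R_p=Y_0(0)$ and use the geometric tail. Your explicit treatment of the degenerate case $p=1$ and the verification of the hypotheses \eqref{eq:munice} are a bit more careful than the paper's proof, which leaves these points implicit.
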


\begin{proof}
Let $\xi$ be a random variable with geometric distribution $\mu_p$ which is independent of the stationary IBM($\mu_p$) $Y$. It follows from Corollary~\ref{cor:stationaryspeed} that
\[
  v_{\mu_p}=\P(\xi\leq Y_0(0))=1-\P(\xi> R_p).
\]
Conditioning on the value of $R_p$ and using formula \eqref{eq:Candv} yields formula \eqref{eq:Cexpectation}.
\end{proof}

We also apply Theorem~\ref{thm:sumoverwords} to obtain formulas for $C(p)$ as the sum of weights of well-chosen set of words. To this end we need the notion of height of a word.

\begin{definition}[Height]
The \emph{height} of $\alpha=\alpha_\ell\cdots\alpha_1\in\calU$ is defined as
\[
  H(\alpha):=\sum\limits_{i=1}^\ell (\alpha_i-1) =\sum\limits_{i=1}^\ell \alpha_i -|\alpha|.
\]
\end{definition}

With this notation, we observe that
\[
  w_{\mu_p}(\alpha) = \prod_{j=1}^\alpha \mu_p(\alpha_j) = p^{|\alpha|}(1 - p)^{H(\alpha)}.
\]
We immediately obtain the following restatement of Theorem~\ref{thm:sumoverwords} for the special case $\mu = \mu_p$.

\begin{theorem}
\label{thm:Csumoverwords}
Let $0<p\leq 1$. Then
\begin{align}
C(p)&=\sum\limits_{\alpha\in \calT_{\min}\cap \calG} p^{|\alpha|}(1-p)^{H(\alpha)}
\label{eq:Cminimaltriangular} \\
C(p)&=\sum\limits_{\alpha\in \calG_{\min}} p^{|\alpha|}(1-p)^{H(\alpha)}.
\label{eq:Cminimalgood}
\end{align}
\end{theorem}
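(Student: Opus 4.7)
The plan is to obtain Theorem~\ref{thm:Csumoverwords} as an immediate specialization of Theorem~\ref{thm:sumoverwords} combined with the identity $C(p)=v_{\mu_p}$ already recorded in \eqref{eq:Candv}. Two things need to be checked: that $\mu_p$ satisfies the hypotheses \eqref{eq:munice} of the earlier theorem, and that the abstract weight $w_{\mu_p}(\alpha)$ admits the displayed closed form $p^{|\alpha|}(1-p)^{H(\alpha)}$.

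For any $p\in(0,1)$ I would first verify the two conditions of \eqref{eq:munice} for the geometric law: the mean is $\sum_{k\ge1} k\,p(1-p)^{k-1}=1/p<\infty$, and the largest atom $\sup_k \mu_p(k)=\mu_p(1)=p<1$. The weight computation is then a one-line manipulation. For a selection word $\alpha=\alpha_\ell\cdots\alpha_1$,
\[
w_{\mu_p}(\alpha) = \prod_{i=1}^{|\alpha|} \mu_p(\alpha_i) = \prod_{i=1}^{|\alpha|} p(1-p)^{\alpha_i-1} = p^{|\alpha|}(1-p)^{\sum_{i=1}^{|\alpha|}\alpha_i - |\alpha|} = p^{|\alpha|}(1-p)^{H(\alpha)},
\]
where the last equality is just the definition $H(\alpha)=\sum_i\alpha_i-|\alpha|$. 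Substituting this into \eqref{eq:minimaltriangularspeed} and into \eqref{eq:minimalgoodspeed} yields \eqref{eq:Cminimaltriangular} and \eqref{eq:Cminimalgood} respectively, proving the theorem for $p\in(0,1)$.

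Finally I would dispatch the boundary case $p=1$ separately, since $\mu_1=\delta_1$ fails the condition $\sup_k\mu_1(k)<1$ so Theorem~\ref{thm:sumoverwords} does not apply directly. Here $w_{\mu_1}(\alpha)$ vanishes unless every letter of $\alpha$ equals $1$, in which case it equals $1$. Among words made entirely of $1$'s, only $(1)$ lies in $\mathcal T_{\min}\cap\mathcal G$ and in $\mathcal G_{\min}$: any longer such word has $(1)\in\mathcal T\cap\mathcal G$ as a proper suffix. Thus both right-hand sides equal $1$, which agrees with $C(1)=1$ since $\vec G(\Z,1)$ contains every admissible edge and $L_n=n-1$. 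There is really no genuine obstacle in this proof; the substantive content lies upstream, in the construction of the stationary pinned IBM (Theorem~\ref{thm:stationary}) and in the word-enumeration formula of Proposition~\ref{prop:Cminimal} from which Theorem~\ref{thm:sumoverwords} was derived.
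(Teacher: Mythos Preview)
Your proof is correct and follows the same approach as the paper, which simply states the theorem as an immediate restatement of Theorem~\ref{thm:sumoverwords} after computing $w_{\mu_p}(\alpha)=p^{|\alpha|}(1-p)^{H(\alpha)}$. You are in fact more careful than the paper: the hypothesis $\sup_k\mu(k)<1$ of \eqref{eq:munice} fails at $p=1$, and you correctly handle this boundary case by direct verification, whereas the paper glosses over it.
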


\paragraph{Precise bounds.}
We use Theorem \ref{thm:sumoverwords} to obtain precise bounds on $C(p)$. Let $k\geq1$ and let $0<p\leq1$. Recalling the notation $\xi^j_k = \xi \1_{\xi \le k} + j \1_{\xi>k}$ from Section \ref{subsec:speed}, and assuming that $\xi$ has geometric law $\mu_p$, we let $\mu_{p,k}^j$ be the law of $\xi^j_k$.

Recall that IBM$(\mu^\infty_{p,k})$ bounds IBM$(\mu)$ from below, and that IBM$(\mu^k_{p,k})$ bounds IBM$(\mu)$ from above, in the sense of \eqref{ALLIBMS}.
Denote by $\underline C_k(p)$, $\overline C_k(p)$, the speeds of the IBM($\mu^\infty_{p,k}$), IBM($\mu^k_{p,k}$), respectively. We then have
\begin{equation}
\label{eq:Cgeneralbounds}
\forall p \in [0,1], \quad \underline C_k(p) \leq C(p) \leq \overline C_k(p).
\end{equation}

As the IBM($\mu^\infty_{p,k}$) and the IBM($\mu^k_{p,k}$) can be constructed using Markov chains on a finite state space, with transition probabilities that are polynomial functions of $p$, their speeds $\underline C_k(p)$ and $\overline C_k(p)$ are rational functions in $p$ that can be computed explicitly. For example, with $k = 3$, performing such computations yields
\begin{align*}
\underline C_3(p)&=\frac{p(p^2-3p+3)^2(p^4-6p^3+14p^2-16p+8)}{3p^6-26p^5+96p^4-196p^3+235p^2-158p+47} \\
\overline C_3(p)&=\frac{p^3-2p^2+p-1}{p^5-4p^4+8p^3-9p^2+6p-3}.
\end{align*}

It is worth noting that \eqref{eq:upperlowerbounds} implies that
\[
  0\leq\overline C_k(p)-\underline C_k(p)\leq (1-p)^k,
\]
therefore the sequences of functions $(\overline C_k(p))_{k\geq1}$ and $(\underline C_k(p))_{k\geq1}$ converge exponentially fast to $C(p)$, uniformly on every interval of the form $[\epsilon,1]$ with $\epsilon>0$. As we will see in Section~\ref{subsec:powerseries}, this convergence is so fast when $p$ is close to $1$ that they provide many coefficients of the power series expansion of $C(p)$ at $p=1$. In fact, comparing the asymptotic expansions of $\underline{C}_3$ and $\bar{C}_3$ around $p = 1$ already give
\[
  C(p) = 1 - (1-p) + (1-p)^2 - 3(1-p)^3 + 7 (1-p)^4 + O((1-p)^5) \text{ as $p \to 1$}.
\]

\begin{figure}[ht]
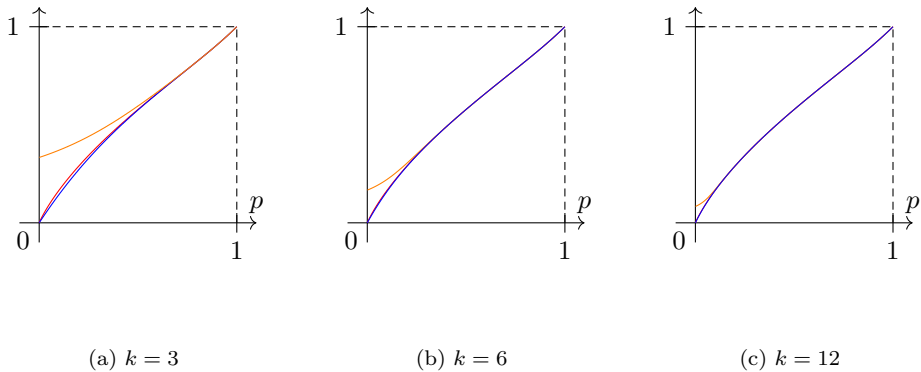

\centering
\begin{subfigure}{.28\linewidth}
\centering
\include{figTex/boundC3}
\caption{$k=3$}
\end{subfigure}
\begin{subfigure}{.28\linewidth}
\centering
\include{figTex/boundC6}
\caption{$k=6$}
\end{subfigure}
\begin{subfigure}{.28\linewidth}
\centering
\include{figTex/boundC12}
\caption{$k=12$}
\end{subfigure}
\caption{Successive bounds on the function $C$ (in red) by $\overline{C}_k$ (in
orange) and $\underline{C}_k$ (in blue). Observe that if these bounds appear
very sharp for $p$ close to $1$, the approximation (in particular the upper
bound) remains quite crude for $p$ close to $0$.}
\label{fig:approximations}
\end{figure}

\begin{remark}
Writing the balance equations for the stationary version $Y$ of the IBM($\mu_p$), Foss and Konstantopoulos \cite{FK03} obtained a different upper and lower bound for $C(p)$. This result on a more precise upper bound for $C(p)$ for $p$ close to $0$, however this bound still does not allow the capture of the asymptotic behavior of $C$ as $p \to 0$.
\end{remark}

\subsection{Analyticity of \texorpdfstring{$C$}{C}}
\label{subsec:analyticityOfC}

Using the formulas obtained above for the function $C$ resulting from the coupling with the infinite bin model, we can show that the function $C$ is analytic on $(0,1]$.

\begin{theorem}
\label{thm:Canalytic}
The function $p\mapsto C(p)$ is analytic on $\in(0,1]$.
\end{theorem}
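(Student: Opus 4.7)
My plan is to prove analyticity pointwise by extending, for each $p_0 \in (0,1]$, the series representation from Theorem~\ref{thm:Csumoverwords},
\[
C(p) = \sum_{\alpha \in \mathcal T_{\min} \cap \mathcal G} p^{|\alpha|}\,(1-p)^{H(\alpha)},
\]
to a complex disc $D(p_0,r) \subset \C$. Each summand is a polynomial in the complex variable $p$, so once absolute uniform convergence on $D(p_0,r)$ is established, Weierstrass' theorem on uniform limits of holomorphic functions yields a holomorphic extension of $C$, which by the identity principle applied on $D(p_0,r)\cap(0,1]$ must coincide with $C$.

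Absolute convergence for $p \in D(p_0,r)$ reduces, via $|p| \le p_0+r =: a$ and $|1-p| \le (1-p_0)+r =: b$, to the real estimate $\Sigma(a,b) := \sum_{\alpha \in \mathcal T_{\min}} a^{|\alpha|}\, b^{H(\alpha)} < \infty$. Using $H(\alpha) = \sum_i\alpha_i - |\alpha|$ and normalising $b^{k-1}(1-b)$ to the geometric law $\mu_{\hat p}$ with $\hat p := 1-b = p_0 - r$, a direct rearrangement gives
\[
\Sigma(a,b) = \E_{\mu_{\hat p}}\!\left[\rho^{|\alpha|}\right],\qquad \rho := \frac{a}{1-b} = \frac{p_0+r}{p_0-r},
\]
where $\alpha = \xi_1\cdots\xi_{T^*_{\mathcal T}}$ is the random minimal triangular suffix of Proposition~\ref{prop:Cminimal}. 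Here I use that $\sum_{\alpha \in \mathcal T_{\min}} w_{\mu_{\hat p}}(\alpha)=1$, which is the case $\mathcal V = \mathcal T$ of Proposition~\ref{prop:Cminimal}, applicable because $\mathcal T \subset \mathcal G \cup \mathcal B$ by Lemma~\ref{lem:goodbadexamples} and $T^*_{\mathcal T} > -\infty$ a.s.\ by Lemma~\ref{lem:infiniterenovations}. For $r$ small, $\rho > 1$ is close to $1$, so the whole argument reduces to an exponential-moment bound on $|T^*_{\mathcal T}|$ under $\mu_{\hat p}$.

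The main obstacle is this exponential-moment estimate. Since $\mathscr R$ has positive density $\lambda(\hat p) > 0$, $|T^*_{\mathcal T}|$ is dominated by the backward distance from position $1$ to the nearest element of $\mathscr R$, so it suffices to show that this renewal gap has an exponential tail under $\mu_{\hat p}$. The set $\mathscr R$ is not an independent pattern (consecutive renewals force their shared letter to be $1$), but it can be thinned to an almost-i.i.d.\ sub-pattern by selecting sufficiently spaced positions, giving a sub-geometric upper tail for the gap and hence the required exponential moment when $\rho > 1$ is close enough to $1$; this step quantitatively determines the maximum admissible $r$ as a function of $p_0$. The endpoint $p_0=1$ is an easier separate case: at $p=1$ only $\alpha = (1)$ contributes (all other summands have $H(\alpha)\ge 1$), and the series naturally rearranges into a power series in $q := 1-p$ with integer coefficients, whose radius of convergence is the subject of Section~\ref{subsec:powerseries}.
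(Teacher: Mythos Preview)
Your reduction is essentially the paper's own argument: define $D(p,q)=\sum_{\alpha\in\calT_{\min}\cap\calG}p^{|\alpha|}q^{H(\alpha)}$, use $C(p)=D(p,1-p)$, and establish analyticity near $p_0$ by showing $D(p',q')<\infty$ for some $p'>p_0$, $q'>1-p_0$. Your rewriting $\Sigma(a,b)=\E_{\mu_{\hat p}}[\rho^{2-T^*_{\calT}}]$ with $\hat p=p_0-r$ and $\rho=(p_0+r)/(p_0-r)$ is exactly the paper's bound $D(rp,1-p)\le\E(r^{2-T_0})$ (they pass from $T^*_{\calT}$ to the smaller $T_0\le T^*_{\calT}$, which only strengthens the inequality). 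So the strategy is the same; no separate treatment of $p_0=1$ is needed.

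The gap is in your exponential-moment step. The events $\{k\in\mathscr R\}$ each depend on the \emph{entire} forward sequence $\xi_k,\xi_{k+1},\xi_{k+2},\ldots$, so no finite spacing makes them independent or even approximately so; ``thinning $\mathscr R$ to sufficiently spaced positions'' does not produce an i.i.d.\ or $m$-dependent sub-pattern, and the argument as written does not yield a sub-geometric tail for $|T_0|$. The paper bypasses this by working directly with the finite quantity $2-T^*_{\calT}$ (equivalently $|T_0|$), identifying it with the first hitting time of state $1$ by the Markov chain $Z_0=1$, $Z_{n+1}=\max(Z_n-1,\xi_{1-n})$. When $Z_n\ge K$ the increment is bounded above by $(\xi-K)^+-1$, so the chain is dominated by a downward-skip-free random walk with negative drift once $K$ is large; standard results then give the required exponential moment. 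That is the missing ingredient you should swap in for the thinning heuristic.
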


\begin{proof}
For every $p,q\geq0$ define
\begin{equation}
\label{eq:Ddefinition}
D(p,q) = \sum\limits_{\alpha\in \calT_{\min}\cap \calG} p^{|\alpha|}q^{H(\alpha)}.
\end{equation}
By formula \eqref{eq:Cminimaltriangular}, we have that $C(p)=D(p,1-p)$ for every $0<p\leq1$. Let $0<p_0\leq1$. The rest of the proof consists in the construction of $(p',q')$ such that $p'> p_0$, $q'>1-p_0$ and the series \eqref{eq:Ddefinition} for $D(p',q')$ converges. This will imply the normal convergence of $D(p,q)$ on $[0,p']\times [0,q']$, therefore of the series of derivatives of~\eqref{eq:Cminimaltriangular} around $p_0$, which in turn will imply the analyticity of $C$ in a neighborhood of $p_0$.

Let $0<p\leq1$ and let $(\xi_n)_{n\in\Z}$ be i.i.d.\ of law $\mu_p$. We claim that, for all $r >1$,
\[
  D(rp,1-p)  \le \E(r^{2-T_0}),
\]
where $T_0$ is the largest nonpositive point of the set $\mathscr R$ defined in \eqref{RRR}. To see this, recall the time $T^*_{\calT} = \sup\{-\infty \le t \le 1:\, \xi_1 \xi_0 \cdots \xi_t \in \calT\}$ and recall that $\P(T^*_{\calT}>-\infty)=1$ (as in the proof of Theorem~\ref{thm:sumoverwords}) and that $\P(T^*_{\calT} \ge T_0)=1$. For brevity, set $T=T^*_{\calT}$.
Arguing as in \eqref{vmu} we have
\begin{multline*}
\E (r^{2-T_0}) \ge \E(r^{2-T}) \ge
\E\left( r^{2-T} ;\, {\xi_1 \xi_0 \cdots \xi_{T}
\in \calT_{\min} \cap \calG} \right)
= \sum_{\alpha \in \calT_{\min} \cap \calG}
\E\left(r^{2-T} ;\, {\xi_1 \xi_0 \cdots \xi_{T}=\alpha}\right)
\\
= \sum_{\alpha \in \calT_{\min} \cap \calG}  \E\left(r^{|\alpha|} ;\,
{\xi_1 \xi_0 \cdots \xi_{2-|\alpha|}=\alpha, 2-T=|\alpha|}\right)
= \sum_{\alpha \in \calT_{\min} \cap \calG}
r^{|\alpha|} \P\left({\xi_1 \xi_0 \cdots \xi_{2-|\alpha|}=\alpha}\right)
\\
= \sum_{\alpha \in \calT_{\min} \cap \calG} r^{|\alpha|}
p^{\alpha_1}(1-p)^{\alpha_1-1}\cdots p^{\alpha_\ell}(1-p)^{\alpha_\ell-1}
= D(rp,1-p).
\end{multline*}

We now observe that $|T_0|$ has some finite exponential moments. Indeed, $|T_0|$ can be seen as the first return time to $1$ of the Markov chain defined by $Z_0=1$ and $Z_{n+1} = \max(Z_n - 1,\xi_{1-n})$. This Markov chain can straightforwardly be dominated by a downward-skip free random walk with negative drift, yielding the existence of these exponential moments. We refer to \cite[p14, proof of Theorem~1.1]{MR19} for extra details on this proof.

As a consequence, there exists $r > 1$ such that for all $\frac{p_0}{2} < p < 1$, we have $\E(r^{|T_0}|)<\infty$. Choose $p$ such that $\max\left(\frac{p_0}{2},\frac{p_0}{r}\right)<p<p_0$. Then setting $p'=rp$ and $q'=1-p$ we have that $p'>p_0$, $q'>1-p_0$ and the series \eqref{eq:Ddefinition} for $D(p',q')$ converges.
\end{proof}

\begin{remark}
We observe that $C'(0) = e$ and $C''(0) = \infty$ (see forthcoming Section~\ref{sec:sparse}). Therefore the analyticity of $C$ cannot be extended up to $p=0$. \end{remark}

\subsection{Power series expansion of \texorpdfstring{$C(p)$}{C(p)} around \texorpdfstring{$p=1$}{p=1}}
\label{subsec:powerseries}

We use in this section Formula \eqref{eq:Cminimalgood} to prove that the power series expansion of $p \mapsto C(p)$ around $p=1$ only consists of integer coefficients. We write $q = 1-p$, and expand $C(1-q)$ as a power series in the variable $q$. Recall that $H(\alpha) = \sum_{i=1}^{|\alpha|} (\alpha_i-1)$ is the ``height'' of the word $\alpha$. We begin with the following observation.
\begin{lemma}
\label{lem:lengthbound}
For every $\alpha\in\calG_{\min}$, we have $|\alpha|\leq H(\alpha)+1$.
\end{lemma}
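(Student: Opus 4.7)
My plan is to exploit the fact that strict suffixes of a minimal good word must be \emph{ambivalent}, and therefore, by Lemma~\ref{lem:goodbadexamples}, cannot be triangular. The argument splits into a short structural reduction to an arithmetic condition on the letters of $\alpha$, followed by a combinatorial induction on suffix sums.

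First, suppose $\alpha = (\alpha_\ell, \ldots, \alpha_1) \in \calG_{\min}$ with $\ell = |\alpha| \geq 2$ (the case $\alpha = (1)$ is trivial). For every $i \in [2, \ell]$, the strict suffix $\alpha_\ell \cdots \alpha_i$ is not in $\calG$ by the definition of $\calG_{\min}$ and not in $\calB$ because a suffix of a good word is never bad (as noted just before \eqref{GB}), so it lies in $\calA$. Lemma~\ref{lem:goodbadexamples} places every triangular word in $\calG \cup \calB$, so this strict suffix cannot be triangular. Relabeling the letters of $\alpha_\ell \cdots \alpha_i$ by positions $1, 2, \ldots, \ell - i + 1$ (the letter at position $p$ being $\alpha_{i+p-1}$), non-triangularity precisely means that some $k \in [i, \ell]$ satisfies $\alpha_k \geq k - i + 2$, equivalently $\alpha_k - 1 \geq k - i + 1$.

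Next, let $T_i := \sum_{k=i}^\ell (\alpha_k - 1)$. I would show by downward induction on $i \in \{\ell, \ell-1, \ldots, 2\}$ that $T_i \geq \ell - i + 1$. The base case $i = \ell$ is exactly $\alpha_\ell - 1 \geq 1$, furnished by the condition at $i = \ell$. For the inductive step, pick a witness $k^* \in [i, \ell]$ with $\alpha_{k^*} - 1 \geq k^* - i + 1$ and split
\[
T_i \;=\; \sum_{k=i}^{k^*-1}(\alpha_k - 1) \;+\; (\alpha_{k^*} - 1) \;+\; T_{k^*+1} \;\geq\; 0 + (k^* - i + 1) + (\ell - k^*) \;=\; \ell - i + 1,
\]
where $T_{k^*+1} \geq \ell - k^*$ comes from the induction hypothesis applied to the index $k^* + 1$ (note $\ell - k^* < \ell - i + 1$) when $k^* < \ell$, while if $k^* = \ell$ the bound $\alpha_\ell - 1 \geq \ell - i + 1$ already gives the claim directly. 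Specializing to $i = 2$ yields $T_2 \geq \ell - 1$, hence $H(\alpha) = (\alpha_1 - 1) + T_2 \geq \ell - 1$, which is the desired inequality $|\alpha| \leq H(\alpha) + 1$.

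The main substantive step is the reduction of minimality to non-triangularity of every strict suffix, which I expect to be the cleanest route since it turns an operational definition into a purely arithmetic constraint; the induction is then routine bookkeeping on suffix sums.
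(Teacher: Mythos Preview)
Your proof is correct. Both your argument and the paper's hinge on the same observation: every strict suffix of a word in $\calG_{\min}$ is ambivalent (not good by minimality, not bad because a suffix of a good word is never bad), and hence, by Lemma~\ref{lem:goodbadexamples}, not triangular.

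Where the two arguments diverge is in how this observation is exploited. The paper argues by contradiction: assuming $|\alpha|>H(\alpha)+1$, it introduces the partial sums $S(k)=\sum_{i=1}^k(\alpha_i-2)$, locates the last entry time $n$ into the level set $\{S\le-2\}$, and verifies that the suffix $\alpha_{|\alpha|}\cdots\alpha_n$ is triangular, a contradiction. Your argument is direct: you extract from the non-triangularity of the suffix $\alpha_\ell\cdots\alpha_i$ a witness $k^*$ with $\alpha_{k^*}-1\ge k^*-i+1$, and then run a clean downward induction on the tail sums $T_i=\sum_{k=i}^\ell(\alpha_k-1)$ to obtain $T_2\ge\ell-1$ and hence $H(\alpha)\ge\ell-1$. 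The paper's route is a single well-chosen suffix; yours aggregates the information from all suffixes. Your version avoids the contradiction framing and the partial-sum bookkeeping, at the mild cost of an induction; either approach is perfectly adequate here.
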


\begin{proof}
Let us assume that there exists $\alpha \in \calG_{\min}$ such that $|\alpha| > H(\alpha)+1$. We obtain a contradiction by showing that $\alpha$ will possess a strict suffix in $\mathcal{T}$ (hence in $\mathcal{G}\cup \mathcal{B}$), which violates the assumption that $\alpha$ is $\mathcal{G}$-minimal.

Let $S(k):=\sum_{i=1}^k (\alpha_i-2)$, $1\le k \le |\alpha|$, we remark that $S(|\alpha|) = H(\alpha)-|\alpha|<-1$. We denote by
$n=\max\{1 \le k \le |\alpha|: S(k)<-1\}$, and remark that $S(k) \leq -2$ for all $n \leq k \leq |\alpha|$. In particular, as $S(1) = \alpha_1- 2 \geq -1$, we have $n \geq 2$.

As $\alpha_k \geq 1$, we have $S(k) - S(k-1) \geq -1$ for all $2 \leq k \leq |\alpha|$. Therefore, $S(n) = -2$ and for all $k \leq n - |\alpha|$, we have $S(n+k) \geq -k - 2$. As a result, $\alpha_{n+k} = 2 + S_{n+k} - S_{n+k-1} \leq 2 - 2 - (-(k-1)-2) \leq k+1$. As a result, we now have proved that $\alpha_{|\alpha|}\cdots\alpha_n$ is a triangular word which is a strict suffix of $\alpha$, completing the proof by contradiction as mentioned above.
\end{proof}

We adopt the convention that for $\ell\in\N$ and $k\in\Z$, the binomial
coefficient $\binom{\ell}{k}$ vanishes whenever $k<0$ or $k>\ell$. We now use \eqref{eq:Cminimalgood}, and show that the power series expansion obtained around $p=1$ by rearranging its terms has positive radius of convergence, which completes the proof of the main result of the section.

\begin{theorem}
[\cite{MR16}]
\label{thm:powerseries}
For every $n\geq0$, define

\begin{equation}
\label{eq:ancleandef}
a_n:= \sum\limits_{\alpha\in\calG_{\min}}
\binom{|\alpha|}{n-H(\alpha)}(-1)^{H(\alpha)}.
\end{equation}
Then for every $0\leq q<\tfrac{\sqrt{2}-1}{2}$ we have
\begin{equation}
\label{eq:powerseries}
C(1-q)=\sum\limits_{n\geq0} (-1)^n a_n q^n.
\end{equation}
\end{theorem}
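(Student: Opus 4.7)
The plan is to start from formula \eqref{eq:Cminimalgood}, namely
\[
C(1-q) = \sum_{\alpha\in\calG_{\min}} (1-q)^{|\alpha|} q^{H(\alpha)},
\]
valid for $q\in[0,1]$, expand $(1-q)^{|\alpha|}$ via the binomial theorem as $\sum_{k=0}^{|\alpha|}\binom{|\alpha|}{k}(-q)^k$, and then swap the two summations to collect the coefficient of $q^n$. Setting $n=k+H(\alpha)$, the coefficient becomes
\[
\sum_{\alpha\in\calG_{\min}} \binom{|\alpha|}{n-H(\alpha)}(-1)^{n-H(\alpha)}
= (-1)^n \sum_{\alpha\in\calG_{\min}} \binom{|\alpha|}{n-H(\alpha)}(-1)^{H(\alpha)} = (-1)^n a_n,
\]
which matches \eqref{eq:ancleandef} and \eqref{eq:powerseries}. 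So there are essentially two things to check: that each $a_n$ is a finite integer, and that the interchange of summations is legitimate in the stated range of $q$.

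Finiteness of $a_n$ is immediate from Lemma~\ref{lem:lengthbound}. Indeed, for $\alpha\in\calG_{\min}$ contributing a nonzero term to $a_n$ one needs $0\leq n-H(\alpha)\leq |\alpha|$, which combined with $|\alpha|\leq H(\alpha)+1$ forces $(n-1)/2\leq H(\alpha)\leq n$ and $|\alpha|\leq n+1$. For each such pair $(\ell,h)=(|\alpha|,H(\alpha))$, the number of words in $\calU$ with these parameters is the number of compositions of $h+\ell$ into $\ell$ positive parts, namely $\binom{h+\ell-1}{\ell-1}$, which is finite. Thus $a_n$ is a finite integer sum.

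The main (and really the only nontrivial) step is to justify the interchange of sums via absolute convergence. By Fubini--Tonelli it suffices to show that the majorant
\[
M(q) := \sum_{\alpha\in\calG_{\min}}\sum_{k=0}^{|\alpha|}\binom{|\alpha|}{k} q^{k+H(\alpha)}
= \sum_{\alpha\in\calG_{\min}} (1+q)^{|\alpha|}\, q^{H(\alpha)}
\]
is finite for $0\le q<\tfrac{\sqrt2-1}{2}$. Using Lemma~\ref{lem:lengthbound}, $(1+q)^{|\alpha|}\leq(1+q)^{H(\alpha)+1}$, so
\[
M(q)\leq (1+q)\sum_{\alpha\in\calG_{\min}}\bigl(q(1+q)\bigr)^{H(\alpha)}
\leq (1+q)\sum_{h\ge 0} N_h\,\bigl(q(1+q)\bigr)^{h},
\]
where $N_h$ is the number of $\alpha\in\calG_{\min}$ with $H(\alpha)=h$. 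Applying Lemma~\ref{lem:lengthbound} once more and using the composition count above together with the hockey stick identity,
\[
N_h\;\leq\;\sum_{\ell=1}^{h+1}\binom{h+\ell-1}{\ell-1}=\binom{2h+1}{h}.
\]
Since $\binom{2h+1}{h}^{1/h}\to 4$, the series $\sum_h\binom{2h+1}{h}x^h$ has radius of convergence $1/4$. The condition $q(1+q)<1/4$ is equivalent to $4q^2+4q-1<0$, i.e.\ to $q<\tfrac{\sqrt2-1}{2}$, which is exactly the stated range. Hence $M(q)<\infty$ there, the interchange of summations is justified, and \eqref{eq:powerseries} follows. I expect the main obstacle to be precisely this bookkeeping with $N_h$ and the matching between the algebraic condition $q(1+q)<1/4$ and the radius $\tfrac{\sqrt2-1}{2}$; everything else is just binomial expansion and Fubini.
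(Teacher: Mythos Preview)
Your proof is correct and follows the same overall scheme as the paper: start from \eqref{eq:Cminimalgood}, expand $(1-q)^{|\alpha|}$ binomially, and justify the interchange of sums by bounding the absolute series $\sum_{\alpha\in\calG_{\min}}(1+q)^{|\alpha|}q^{H(\alpha)}$ using Lemma~\ref{lem:lengthbound} and the crude inequality $|\calG_{\min}\cap\calU^h_\ell|\le\binom{h+\ell-1}{\ell-1}$. The one place where you diverge is in the final convergence estimate: the paper keeps the double sum $\sum_{h}\sum_{\ell\le h}\binom{h+\ell}{\ell}q^h(1+q)^\ell$, interprets it via a biased random walk, and applies Chernoff's bound to get $2\sqrt{q(1+q)}<1$; you instead immediately replace $(1+q)^{|\alpha|}$ by $(1+q)^{H(\alpha)+1}$, collapse to a single sum over $h$, apply the hockey-stick identity to get $N_h\le\binom{2h+1}{h}$, and use the root test. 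Both routes land on exactly the same threshold $q(1+q)<1/4$, but yours is more elementary and avoids the probabilistic detour.
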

The fact that $a_n$ introduced in \eqref{eq:ancleandef} is
well-defined for all $n \in \Z_+$ is a consequence of Lemma~\ref{lem:lengthbound}. Indeed, for every $h\geq0$
and $\ell\geq1$ define
\[
\calU_\ell^h:=\{\alpha\in\calU, |\alpha|=\ell \text{ and } H(\alpha)=h\}.
\]
We can consider this as the number of arrangements of $h$
unlabelled balls into $\ell$ labelled boxes. Therefore, its cardinal is given by
\begin{equation}
\label{eq:compositions}
|\calU_\ell^h|=\binom{h+\ell-1}{\ell-1}.
\end{equation}
Any word $\alpha$ having a non-zero contribution in the sum defining $a_n$ must satisfy $H(\alpha)\leq n$ and $|\alpha|\leq
H(\alpha)+1\leq n+1$, where the latter condition follows from
Lemma~\ref{lem:lengthbound}. An equivalent formulation of~\eqref{eq:ancleandef}
is
\begin{equation}
\label{eq:anmessydef}
a_n=\sum\limits_{h=0}^n\sum\limits_{\ell=1}^{n+1}(-1)^{h}\binom{\ell}{n-h}|\calG_{\min}\cap\calU_\ell^h|
\end{equation}
which is in particular clearly finite.

\begin{proof}[Proof of Theorem~\ref{thm:powerseries}]
Let $n\geq0$ and let $0\leq q<1$. Rewrite formula~\eqref{eq:Cminimalgood} as
\begin{align}
C(1-q)=
\sum \limits_{h\geq0} \sum\limits_{\ell=1}^{h+1}
\sum\limits_{\alpha\in\calG_{\min}\cap\calU_\ell^h}	q^h(1-q)^\ell
&= \sum\limits_{h\geq0} \sum\limits_{\ell=1}^{h+1}
|\calG_{\min}\cap\calU_\ell^h| \sum\limits_{i=0}^{\ell} \binom{\ell}{i}(-1)^{i}q^{i+h}
\nonumber\\
&= \sum\limits_{h\geq0} \sum\limits_{n\geq0} \sum\limits_{\ell=1}^{h+1}
|\calG_{\min}\cap\calU_\ell^h| \binom{\ell}{n-h}(-1)^{n-h}q^n,
\label{eq:tripleseries}
\end{align}
For all $q$ such that \eqref{eq:tripleseries} absolutely converges, we can apply Fubini's
theorem to obtain that \eqref{eq:powerseries} holds, where $a_n$ given by \eqref{eq:anmessydef} or equivalently by \eqref{eq:ancleandef}.
Taking absolute values inside the sums of~\eqref{eq:tripleseries} we obtain
\begin{equation}
\label{eq:absolutesum}
I_q := \sum\limits_{h\geq0} \sum\limits_{n\geq0} \sum\limits_{\ell=1}^{h+1}
|\calG_{\min}\cap\calU_\ell^h| \binom{\ell}{n-h} q^n
= \sum\limits_{h\geq0} \sum\limits_{\ell=1}^{h+1}
|\calG_{\min}\cap\calU_\ell^h| q^h(1+q)^\ell.
\end{equation}
By \eqref{eq:compositions}, we have
\[
  I_q \leq (1+q) \sum\limits_{h\geq0} \sum\limits_{\ell=0}^{h} \binom{h+\ell}{\ell} q^h(1+q)^\ell.
\]
We use a random walk representation to find some values of $q$ for which this is
finite. Let $(S_N)_{N\geq0}$ be a random walk on $\Z$ starting at $0$ and taking
a step $+1$ (resp. $-1$) with probability $\tfrac{q}{1+2q}$ (resp.
$\tfrac{1+q}{1+2q}$). Performing the change of variables $N=h+\ell$, we have
\begin{equation}
\label{eq:rwrepresentation}
\sum\limits_{h\geq0}\sum \limits_{\ell=0}^{h}
\binom{h+\ell}{\ell} q^h(1+q)^\ell
=\sum\limits_{N\geq0}(1+2q)^N \P(S_N\leq0).
\end{equation}
Applying Chernoff's bound, we get
\[
\P(S_N\leq0) \leq \inf\limits_{t\geq0}\left(\E(e^{-tS_1})\right)^N
\leq \inf\limits_{t\geq0}\left(\frac{qe^t+(1+q)e^{-t}}{1+2q}\right)^N
\leq\left(\frac{2\sqrt{q(1+q)}}{1+2q}\right)^N.
\]
Thus, when $0\leq q<\tfrac{\sqrt2-1}{2}$, the series
in~\eqref{eq:rwrepresentation} converges and the series
in~\eqref{eq:tripleseries} converges absolutely.
\end{proof}

\begin{remark}
Lemma~\ref{lem:lengthbound} and Theorem~\ref{thm:powerseries} hold true with the
same proofs if one replaces $\calG_{\min}$ by $\calT_{\min}\cap\calG$, thus for every
$n\geq0$ we also have the formula
\begin{equation}
\label{eq:antriangular}
a_n=\sum\limits_{\alpha\in\calT_{\min}\cap\calG}\binom{|\alpha|}{n-H(\alpha)}(-1)^{H(\alpha)}.
\end{equation}
The power series expansion being unique, we observe that \eqref{eq:ancleandef} and \eqref{eq:antriangular} give the same values.
\end{remark}

\begin{remark}
Note that the radius of convergence of $\frac{\sqrt{2}-1}{2} \approx 0.207$ obtained in Theorem~\ref{thm:powerseries} is far from optimal, being obtained by the crude bound $|\mathcal{G}_{\min} \cap \mathcal{U}^h_\ell| \leq |\mathcal{U}^h_\ell|$. Based on the numerical computation of the first few terms of $\tfrac{a_{n+1}}{a_n}$ and $a_n^{1/n}$, it is reasonable to expect that the radius of convergence of this power series is  greater than $0.5$, but strictly smaller than $1$.
\end{remark}

In order to compute the first terms of the sequence $(a_n)$, three methods have
mainly been used so far. The first method is to use the
bounds~\eqref{eq:Cgeneralbounds} for some small values of $k$. Both
$\underline{C}_k(p)$ and $\overline{C}_k(p)$ arise as speeds of infinite bin
models associated to probability measures supported on $[0,k]\cup\{\infty\}$,
which are Markov chains on finite state spaces with a stationary distribution
that can be computed explicitly. A finite number of coefficients of the Taylor
expansion of $\underline{C}_k(p)$ and $\overline{C}_k(p)$ at $p=1$ coincide,
hence are coefficients of the Taylor expansion of $C(p)$ at $p=1$.
The first $17$ values of $a_n$ are computed in \cite{MR16}.
However, the size
of the state space of the Markov chain grows exponentially fast with $k$, and
the computations have to be made analytically, this method quickly becomes
computationally challenging.

The second method to compute $a_n$ consists in constructing the sets $\calU_\ell^h$ for small values of $h$
then using formula~\eqref{eq:anmessydef}. One may combine both methods, using a
formula analogous to~\eqref{eq:anmessydef} to obtain the beginning of the power
series expansions $\underline{C}_k(p)$ and $\overline{C}_k(p)$. They are
expressed as sums over words constrained to have letters at most equal to $k$.
Retaining the terms that coincide for the lower and upper bounds give terms for
$C(p)$. This last method was used in \cite{terlat} to obtain the first $24$
values of $a_n$. See Table~\ref{tab:anvalues} for the first few values of $a_n$. This sequence is referenced as A321309 in the On-Line Encyclopedia of Integer
Sequences~\cite{OEIS}.
\begin{table}[h]
\centering

\caption{The values of $a_n$ for $0\leq n\leq 12$.}
\label{tab:anvalues}
\begin{tabular}{|c|c|c|c|c|c|c|c|c|c|c|c|c|c|}
\hline
$n$ & 0 & 1 & 2 & 3 & 4 & 5 & 6 & 7 & 8 & 9 & 10 & 11 & 12 \\
\hline
$a_n$ & 1 & 1 & 1 & 3 & 7 & 15 & 29 & 54 & 102 & 197 & 375 & 687 & 1226 \\
\hline
\end{tabular}
\end{table}

From the observation of the first terms of the sequence, it was conjectured in \cite{MR16} that the sequence $(a_n)$ was non-decreasing and non-negative. Very recently, during the revision stage of this survey, Terlat \cite{Terlatthesis} found a third method to compute this sequence. This new method yields a dozen new terms and disproves the above conjectures.

This third method is based on an efficient algorithm to compute the Taylor expansion around $p=1$ of the maximal path growth rate $C(p,x)$ for the two-weights model presented in Subsection~\ref{subsec:twoweight}, in the case when $x$ is a finite negative integer. This two-weight model corresponds to last passage percolation on the complete directed graph on $\mathbb Z$ where each edge has weight $1$ with probability $p$ or $x$ with probability $1-p$. Terlat \cite{Terlatthesis} shows that the functions $C(p)$ and $C(p,x)$ have the same Taylor expansions around $p=1$ up to order $k$, as soon as $0\leq k < \binom{3-x}{2}$. Applying this to $x=-6$, one computes the values of $a_n$ up to $n=35$. One observes in particular that $a_{26}=683794$, $a_{27}=487644$ and $a_{28}=-425932$. This disproves both conjectures about non-decreasing absolute values and non-negativity.

\section{Longest path of the Barak-Erd\texorpdfstring{\H{o}}{o}s graph in the sparse regime}
\label{sec:sparse}
In this section we explore the asymptotic properties of the length of long paths
in a Barak-Erd\H{o}s graph $\vec G(\N,p)$
in the sparse graph limit, that is, when $p \to 0$.
It can be seen that, in this limit, $\vec G(\N,p)$ is well-approximated by a
{\em branching random walk}, a discrete-time particle system on the positive
half-line $\R_+$.
Throught this section, we will let $G$ denote the $\vec G(\N,p)$.

Let $L_n(p)$ be the maximum length of all paths in $\vec G(\N,p)$ from $1$ to $n$.
Using branching random walk approximation,
Newman \cite{NEWM92} obtained the lead order of the
asymptotic behavior of $L_n(p_n)$ when $p_n \to 0$ as $n \to \infty$.
He showed in particular that
\begin{equation} \label{eqn:firstOrder}
\lim_{n \to \infty} \frac{L_n(p_n)}{n p_n}
= \e \quad \text{in probability,}
\end{equation}
as long as $p_n \to 0$ and $n p_n\to \infty$. He also obtained the asymptotic
behavior of the overall longest path in that graph ($L_{0,n}$ with the notation of \eqref{LR}).

Recalling that $C(p)$ is the limit of $L_n(p)/n$ as $n \to \infty$,
Mallein and Ramassamy \cite{MR19} obtained the precise asymptotic
behavior of $C(p)$ as $p \to 0$ by comparing IBM$(\nu_p)$
(the infinite bin model with geometric distribution $\nu_p$ with small $p$)
to a continuous-time branching random walk with selection.
Precisely, \cite{MR19} states that
\begin{equation} \label{eqn:smallC}
\frac{C(p)}{p} = \e - \frac{\pi^2\e}{2} (\log p)^{{\red - }2}(1 + o(1)),
\quad\text{as $p \to 0$},
\end{equation}
using the so-called Brunet-Derrida behavior \cite{BD09,BeG10} of the speed of
branching random walks with selection that we now describe.

A {\em branching-selection process}
is a particle system in which each particle moves
and reproduces independently, but an exterior selection mechanism keeps the size
of the total population close to $N$ by killing particles.
\footnote{The most classical model is the $N$-branching Brownian motion (N-BBM)
defined as follows.
At each time $t>0$, there are $N$ particles on the real line.
The particles move according to i.i.d.\ Brownian motions.
At independent exponential times of parameter $N$, the leftmost particle
is killed and one of the $N-1$ other particles gives birth to a new particle
at at its currently occupied position. This model
was notably studied in \cite{Mai16} in which the speed and fluctuation of
the cloud if particles as $N \to \infty$ is obtained.
}

 Brunet and Derrida
\cite{BD09} conjectured, through numerical simulations and the study of exactly
solvable models, that for a large class of branching-selection processes, the
speed of the cloud of particles $v_N$ converges to its limit $v_\infty$ at a
slow rate, such that
\begin{equation}
  \label{eqn:bdbehavior}
  v_\infty - v_N = \frac{C(1+o(1))}{(\log N)^2}.
\end{equation}

\begin{remark}
Belief in this conjecture was increased by the study of an exactly solvable
model \cite{BDMM07,CorMal17}.
This type of behavior was  observed by Berestycki, Berestycki and Schweinsberg
\cite{BBS} for branching Brownian motions with absorption, Bérard and Gouéré
\cite{BeG10} for branching random walks, and for noisy F-KPP
\footnote{Equations of this type are partial differential equations of
the form $v_t=v_{xx} + F(v)$
were introduced by Kolmogorov, Petrovsky and Piscounov
\cite{KPP37} as models for a reaction-diffusion systems.
The name Fisher was added to these three names,
whence the acronym F-KPP, owing to Fisher's infamous work \cite{FIS37},
a paper cited in \cite{KPP37} also.
A duality relationship between the F-KPP equation, in the $F(v)=v(1-v)$ case,
and the branching Brownian
motion was established by McKean \cite{McK75}.
Connections between the noisy F-KPP equation and the branching Brownian
motion with selection were obtained in \cite{DurRem11,DeMFer19}.
}
equations modeling e.g.\ directed polymers \cite{MMLQ11}  among many other examples.
\end{remark}

We present in the current paper an alternative, possibly simpler, construction
of the coupling used by Mallein and Ramassamy \cite{MR19} between the
IBM($\nu_p$) and an $N$-branching random walk, a discrete analog of the $N$-BBM.
We give in Section~\ref{subsec:branching} some heuristics motivating
the kind of limit that sparse Barak-Erd\H{o}s graph has.
This limit, being interpreted as a particular branching random walk
sometimes called PWIT (Poisson-weighted infinite tree) is discussed in Section \ref{PWIT}.
In Section~\ref{THEcoupling} we
introduce a coupling between the Barak-Erd\H{o}s graph, and the PWIT.
This enables us to explain and describe the results of \cite{NEWM92} and \cite{MR19}.
We also extend these results to some other
stochastic ordered graphs in Section~\ref{subsec:extensions}. We then turn to
computations of the length of the longest path of the Barak-Erd\H{o}s graph in
Section~\ref{subsec:longestPath} and the shortest path in
Section~\ref{subsec:shortestPath}.

\subsection{Heuristics on the sparse limit}
\label{subsec:branching}

It is now commonly known that the neighborhoods in many sparse random graphs,
among which Erd\H{o}s-Rényi graphs and configuration models, are
well-approximated by branching processes. See \cite{vdH17} and references therein.
For example, let ${G}(n,\lambda/n)$ be an Erd\H{o}s-Rényi random graph.
That is, on the set $\{1,\ldots,n\}$ a pair of points forms an
indirected edge with probability $\lambda/n$, independently from pair to pair.
Using the graph distance of the Erd\H{o}s-Rényi random graph, we can observe
that the set of points within finite distance from any fixed vertex converges
weakly, as $n \to \infty$, to a Galton-Watson tree with Poisson($\lambda$)
offspring distribution; see e.g.\ \cite{Cur22}.

Indeed, the number of neighbors of a given vertex $v^*$ is given by a binomial
distribution with parameters $n$ and $\lambda/n$ that converges to a
Poisson($\lambda$) distribution as $n \to \infty$. In turn, the number of
neighbors of a given neighbor, excluding the vertex $v^*$, is given by an
independent binomial distribution with parameters $n-1$ and $\lambda/n$, which
also converges to a Poisson($\lambda$) distribution. Moreover, as there is with
large probability a bounded number of vertices in the ball of radius $k$ of the
vertex $v^*$, the probability of observing a non-trivial cycle of bounded size goes to $0$ as $n
\to \infty$. This proves that any finite neighborhood of the vertex $v^*$
converges in distribution to a Galton-Watson tree.

Consider now a Barak-Erd\H{o}s graph on $\N$ rather than on $\Z$,
as we are interested in paths from fixed root, the vertex 1 in this case.
Denote this by
\[
G := \vec G (\N, p),
\]
letting $E(G)$
be the random set of its edges.  Assume that it is sparse;
that is, we are interested in the limit as $p \to 0$.
We are able to obtain a similar description of the neighborhoods of
the vertex $1$ in terms of a branching process. However, to take into account
the directed structure of the graph, we have to record in the limiting
branching process the label of the vertices we consider. To achieve this, consider instead the graph $\vec G (p\N, p)$,
where $p \N := \{pk:\, k \in \N\}$.
Think of the immediate neighbors of the root as a point process on $(0,\infty)$
and let $N$ be a standard Poisson point process on $(0,\infty)$. We then
have
\[
\sum_{k\in \N} \1_{(1,k) \in E(G)}\, \delta_{pk}
\xrightarrow{\rm d} N, \quad \text{ as } p \to 0.
\]
where $\xrightarrow{\rm d}$ denotes convergence in distribution.
Similarly, for any $x>0$,
the set of immediate neighbors of vertex $\lfloor x/p \rfloor p$,
considered as a point process, also converges to $N$ in
distribution.
It was shown in \cite{GAB,FK18} that the connected component of the root
converges weakly, as $p \to 0$, to the {\em Poisson-weighted infinite tree}
(PWIT). This process is a branching random walk in which at each generation, all
particles in the system give birth to children independently, such that the
children of a particle at position $x$ are positioned according to a Poisson
point process with unit intensity on $[x,\infty)$.

\subsection{The PWIT and some of its properties}
\label{PWIT}

The terminology PWIT was introduced by Aldous and Steele \cite{AS}.
We describe it as a Markovian particle system that we call {\em immortal
particles process}.
At time $0$ an immortal particle is born. The particle produces a child at each epoch of a standard Poisson process;
and, recursively, each of the offspring has the same reproduction law, independently.
A convenient way to capture the system, together with all connection
information, is by letting
\[
\N^*:= \bigcup_{n=0}^\infty \N^n,
\]
where $\N^0=\{\varnothing\}$ be the vertex set of a tree with
edges $(u,v) \in \N^* \times \N^*$ only when $v = u k$, the concatenation
of $u$ with a single integer $k$.
Recall that $\N^*$ is the set of words (=finite sequences) of positive
integers equipped with the concatenation operation. The trivial
word $\varnothing$ is the identity of the concatenation operation. We do
not give a special symbol to the edges of $\N^*$ as they are immediately
fixed through $\N^*$. The resulting object is a tree that is now
known as the Ulam-Harris tree.
To encode the PWIT, simply add weights to the edges by letting,
\[
X(\varnothing)=0,
\]
and then, for each $u \in \N^*$,
\[
0< X(u1)-X(u) < X(u2)-X(u) < X(u3)-X(u) < \cdots
\]
be the epochs of an independent copy of a  Poisson$(1)$ point process on $(0,\infty)$.
In our immortal particles interpretation, $\N^*$ is the set of (names of)
all particles that are born to the end of time
and $X(u)$ is simply the time at which particle $u$ is born.
Thus, for example, $X(2,5,3)$ is the time at which the $3$rd offspring of
the $5$th offspring of the $2$nd offspring of $\varnothing$ is born, and has
the distribution of the sum of $2+5+3=10$ i.i.d.\ exponential random variables.
If we let $|u|$ be the length of the word $u$ and $\|u\|$ the sum
of its elements as integers then particle $u$ is born at generation $|u|$
and has Gamma($\|u\|$) distribution.
\footnote{
\label{interpr}
There are other ways to visualize the PWIT. First, recall that
a  branching random walk BRW$(N)$
in discrete time with parameter (the distribution of)
a (finite or infinite) point process $N$ on the real line,
is created by letting a single particle
at stage $0$, located at point $x$,
die at stage $1$ and immediately be replaced by children located at the points
of $N+x$ (that is, the set of points of $N$ all translated by $x$).
All children behave exactly in the same manner, independently.
If $N$ is a standard Poisson process (whose points are interpreted as
spatial points here) is the parameter of a branching random walk,
then this branching random walk is the PWIT;
this is the {\em first interpretation}.
In this interpretation, $X(u)$ is the spatial location of particle $u$.
The {\em second interpretation} of a PWIT is as a so-called ``Poisson cascade''
in the physics literature \cite{IK12}: Let $N_t$, $t \ge 0$,
be a collection of i.i.d.\ standard Poisson processes. Interpreting
$N$ as a set of points, we let $V = \bigcup_{t \ge 0} N_t$
be a set of vertices, letting $(s,t)$ be an edge if $t \in N_s+s$.
The corresponding graph is a random forest and the connected component of
$0$ is distributed like the PWIT.
A {\em third interpretation} \cite{AS} is as a random metric space $(\N^*, d)$
where the metric $d$ is as follows. First let $d(u,uk) = X(uk)-X(u)$ and then,
for each $u, v \in \N^*$, let $u=w_0, w_1, \ldots, w_\ell=v$
be the necessarily unique path between $u$ and $v$,
and let $d(u,v) = \sum_{i=1}^\ell d(w_{i-1}, w_i)$.
}
\footnote{The reason that we discuss different interpretations of
the PWIT is because there exist results in the literature referring
to seemingly different , but in essence identical stochastic models
around the PWIT. For instance, if we consider the continuous-time Markovian
branching process with offspring distribution $\delta_2$ (the Yule process)
then we can construct the PWIT as a deterministic function of it.
We shall not explain this here.}

Let
$
\Pi=(\N^*, X)
$
denote the standard (unit rate) PWIT.
Note that for any particle $u \in \N^*$ the subtree rooted at
$u$ is also a PWIT after relabeling and time-shifting.
We let
\[
V_t := \{u \in \N^*:\, X(u) \le t\},
\quad
X_t:= \{X(u):\, X(u) \le t\},
\quad
\Pi_t = (V_t, X_t),
\]
be the induced subgraph of $\Pi$ on the set of vertices $u$
with $X(u) \le t$. Hence $\Pi_t$ describes the immortal particles
process up to time $t$.
Note that $\Pi_t$, $t \ge 0$, is Markovian.
If we forget the connections between particles
and only keep the information of the lengths of their labels, then
we obtain an IBM-type of model.
Indeed, letting
\begin{equation}
\label{Zbinl}
Z_t(\ell) := |\{u \in \N^*:\, X(u) \le t,\, |u|=\ell\}|,
\end{equation}
then
\[
Z_t=(Z_t(0), Z_t(1), \ldots ),\quad t \ge 0,
\]
is a continuous-time IBM model whose evolution is as follows.
Let, for each $t$, each of the particles $u$ in $\Pi(t)$
possess an independent exponential$(1)$ clock.
One of the clocks expires first; say that this clock
is possessed by a particle in bin $\ell$; then we
add a new particle in bin $\ell+1$.
Note that $Z_t$, $t \ge 0$, is also Markovian.
This process is an Uchiyama-type continuous-time branching
random walk \cite{Uch82} on $\Z_+$, initiated from a single particle at
position $0$ at time $0$.
Also note that
\[
|V_t| := |\{u \in \N^*:\, X(u) \le t\}| = \sum_{\ell \ge 0} Z_t(\ell),
\quad t \ge 0,
\]
is also Markovian with state space $\{1,2,\ldots\}$ and
transition rate $y$ from state $y$ to $y+1$ (the Yule-Furry pure birth
process).
Finally note that there is a front bin, namely
\begin{equation} \label{FFF}
F_t = \max\{|u|:\, X(u) \le t\} = \max\{\ell \in \Z_+:\, Z_t(\ell)>0\},
\end{equation}
which is the largest generation particle present in $\Pi_t$.
\footnote{The word ``generation'' may be confusing,
especially in the immortal particles process interpretation of
the PWIT. To avoid confusion, simply interpret the
phrase ``generation of particle $u$'' as $|u|$.}
So $Z_t(\ell)=0$ for all $\ell > F_t$.
We use the abbreviation PWIT-IBM for $Z_t$, $t \ge 0$
and note that it differs from $\Pi_t$, $t \ge 0$,
only by the absence of the connections information.

The total number of particles in every bin $\ell$, with $\ell \ge 1$,
of the PWIT-IBM at time $t$ grows exponentially fast.
\begin{lemma}
\label{lem:growthRate}
Let $f$ be a nonnegative measurable function on $\R$.
Then, for all $\ell \in \N$,
\[
\E \sum_{|u|=\ell} f(X(u)) = \E \sum_{|v|=\ell-1} \int_{0}^\infty f(X(v)+t) dt
= \int_0^\infty f(x) \frac{x^{\ell-1}}{(\ell-1)!} dx.
\]
\end{lemma}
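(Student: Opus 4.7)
The plan is to prove the two equalities separately. For the first, I would invoke the Campbell--Mecke formula for Poisson point processes. Every particle $u\in\N^*$ with $|u|=\ell$ is uniquely of the form $u = vk$ with $|v|=\ell-1$ and $k\in\N$, and by the very construction of the PWIT the children $\{X(vk)-X(v):k\geq 1\}$ of $v$ form an independent standard Poisson process on $(0,\infty)$, independent of the $\sigma$-field $\mathscr F_v$ generated by the subtree up to and including $v$. Conditioning on $\mathscr F_v$ and applying Campbell's formula gives
\[
\E\Big[\sum_{k\geq 1} f(X(vk))\,\Big|\,\mathscr F_v\Big]=\int_0^\infty f(X(v)+t)\,\d t\quad\text{a.s.}
\]
Summing over $v$ with $|v|=\ell-1$ (Tonelli justifies the interchange since $f\geq 0$) yields the first equality.

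For the second equality, I would proceed by induction on $\ell\geq 1$. In the base case $\ell=1$, generation~$1$ consists of the children of $\varnothing$, so $\{X(u):|u|=1\}$ is a standard Poisson process on $(0,\infty)$ and Campbell's formula gives $\E\sum_{|u|=1}f(X(u))=\int_0^\infty f(x)\,\d x$, which matches the stated formula with $x^0/0!\equiv 1$. For the induction step, assume the formula holds at level $\ell-1$ and apply it to the nonnegative measurable function $g(x):=\int_0^\infty f(x+t)\,\d t=\int_x^\infty f(s)\,\d s$. Combined with the first equality of the lemma,
\[
\E\sum_{|u|=\ell}f(X(u))
=\E\sum_{|v|=\ell-1}g(X(v))
=\int_0^\infty g(x)\,\frac{x^{\ell-2}}{(\ell-2)!}\,\d x.
\]
A Fubini swap then gives
\[
\int_0^\infty\!\!\int_x^\infty f(s)\,\d s\,\frac{x^{\ell-2}}{(\ell-2)!}\,\d x
=\int_0^\infty f(s)\!\int_0^s\!\frac{x^{\ell-2}}{(\ell-2)!}\,\d x\,\d s
=\int_0^\infty f(s)\,\frac{s^{\ell-1}}{(\ell-1)!}\,\d s,
\]
completing the induction.

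There is no real obstacle: nonnegativity of $f$ makes every Fubini/Tonelli exchange legitimate, and the only probabilistic input is the branching-property description of the offspring law as an independent Poisson process, already baked into the construction of $\Pi$. One may alternatively view the statement as a many-to-one lemma: since $X(u)$ for fixed $u=(u_1,\dots,u_\ell)\in\N^\ell$ is a sum of $\|u\|$ independent Exp$(1)$ variables (Gamma$(\|u\|,1)$), summing over all $u\in\N^\ell$ of the density $\tfrac{x^{\|u\|-1}}{(\|u\|-1)!}e^{-x}$ weighted against $f(x)$ recovers the same answer; but the inductive Campbell argument above is cleaner and avoids having to rearrange multi-indexed series.
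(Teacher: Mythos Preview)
Your proof is correct and follows essentially the same approach as the paper. The paper conditions on $X(v)$ and uses Campbell's formula exactly as you do for the first equality, then iterates this step $\ell$ times to reach $\int_{[0,\infty)^\ell} f(t_1+\cdots+t_\ell)\,dt_1\cdots dt_\ell$ and finishes with a single change of variables, whereas you organize the same computation as a formal induction with a Fubini swap at each step; the content is identical.
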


Let us stress at this point that, with our encoding, particles live forever. In particular, $\{|u|=1\}$ is the set of children of the ancestor $\varnothing$ of the population, which arise at the epoch of a Poisson process of intensity $1$. This set is a.s. infinite, and $\E \sum_{|u|=1} f(X(u)) = \int f(x) \dd x$ by Campbell's formula, as stated above.

\begin{proof}
Let $u \in \N^\ell$. Then $u=vk$ for some $v \in \N^{\ell-1}$ and some
$k \in \N$. So
\begin{align*}
\E \sum_{|u|=\ell} f(X(u))
&= \E \sum_{|v|=\ell-1} \E[f(X(v) + X(vk)-X(v))|X(v)]
\\
&= \E \sum_{|v|=\ell-1} \E\left[\int_0^\infty f(X(v) + t) N(dt)\bigg|X(v)\right]
= \E \sum_{|v|=\ell-1} \int_{0}^\infty f(X(v)+t) dt,
\end{align*}
where the $N$ above is a Poisson$(1)$ point process on $(0,\infty)$,
independent of $X(v)$.
Iterating this we obtain
\[
\E \sum_{|u|=\ell} f(X(u))
= \int_{[0,\infty)^\ell} f(t_1+\cdots+t_\ell) \, dt_1 \cdots dt_\ell,
\]
and the last expression follows by a change of variables.
\end{proof}

\begin{corollary}
We have the following formulas for the expected number $\E Z_t(\ell)$ of particles
in bin $\ell$ as well as the total number $\E |V_t|$ of the PWIT-IBM at time $t$:
\[
\E Z_t(\ell) =\frac{t^\ell}{\ell!}, \quad \E |V_t| = e^t.
\]
\end{corollary}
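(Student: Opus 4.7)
The plan is to deduce both formulas directly from the previous Lemma by taking $f$ to be an indicator function of a half-line, together with a routine summation over generations $\ell$.

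First I would handle $\E Z_t(\ell)$ for $\ell \ge 1$. By definition \eqref{Zbinl}, $Z_t(\ell) = \sum_{|u|=\ell} \1_{X(u)\le t}$, so applying Lemma~\ref{lem:growthRate} to $f(x)=\1_{x\le t}$ yields
\[
\E Z_t(\ell) = \int_0^\infty \1_{x\le t}\,\frac{x^{\ell-1}}{(\ell-1)!}\,dx = \int_0^t \frac{x^{\ell-1}}{(\ell-1)!}\,dx = \frac{t^\ell}{\ell!}.
\]
The case $\ell=0$ is separate and trivial: the only word of length $0$ is $\varnothing$, with $X(\varnothing)=0\le t$, so $Z_t(0)=1=t^0/0!$ deterministically.

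Next I would obtain the second formula by monotone convergence (all terms are nonnegative), which allows interchanging the expectation with the sum over $\ell$:
\[
\E|V_t| = \E\sum_{\ell\ge 0} Z_t(\ell) = \sum_{\ell\ge 0} \E Z_t(\ell) = \sum_{\ell\ge 0} \frac{t^\ell}{\ell!} = e^t.
\]

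There is no genuine obstacle here; the only thing to watch is treating the boundary case $\ell=0$ separately, since Lemma~\ref{lem:growthRate} is stated for $\ell\in\N$ (one needs $(\ell-1)!$ to make sense). An alternative verification of $\E|V_t|=e^t$ would be to note that $|V_t|$ is the Yule--Furry pure-birth process started from $1$ at rate equal to the current population size, whose mean is well known to be $e^t$; this provides a consistency check but is not needed for the proof.
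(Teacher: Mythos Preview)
Your proof is correct and follows essentially the same approach as the paper: apply Lemma~\ref{lem:growthRate} with $f(x)=\1_{x\le t}$ to get $\E Z_t(\ell)=t^\ell/\ell!$, then sum over $\ell$ (or invoke the Yule--Furry interpretation) for $\E|V_t|=e^t$. You are slightly more careful than the paper in separating out the $\ell=0$ case and making the interchange of sum and expectation explicit.
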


\begin{proof}
From \eqref{Zbinl} and Lemma \ref{lem:growthRate} with $f(x)=\1_{x \le t}$
we have
\[
\E Z_t(\ell) = \int_0^t \frac{x^{\ell-1}}{(\ell-1)!} dx = \frac{t^\ell}{\ell!}.
\]
The second claim follows by summation or by remembering that $|V_t|$, $t \ge 0$,
is the Yule-Furry process.
\end{proof}

We are interested in the PWIT since, as motivated in Section
\ref{subsec:branching}, the PWIT appears as the limit of a sparse
Barak-Erd\H{o}s graph.
We thus proceed in outlining some results concerning the PWIT.
In Section~\ref{THEcoupling} we will couple the PWIT together
with the Barak-Erd\H{o}s graph for all $p$ (or more specifically with an appropriate spanning tree of the connected component
of the root of $G$).
This coupling will be such that
\begin{equation}
  \label{eqn:minBRW}
  M_\ell = \inf_{|u|=\ell} X(u),
\end{equation}
the first time that bin $\ell$ of the PWIT-IBM becomes nonempty,
will give a lower bound for the index of any vertex of
$G$ linked to the vertex $1$ by
a path of length $\ell$. It will be enough to obtain the upper bound for
\eqref{eqn:firstOrder}. This lower bound will be sharp enough in very sparse
graphs, but some additional approximations will be needed when the density of
edges becomes too large, yielding the estimate \eqref{eqn:smallC}.
In terms of the BRW$(N)$ interpretation--see footnote \ref{interpr}--the
quantity $M_\ell$ is the minimal displacement (position of the leftmost particle)
at stage $\ell$ and this has been the subject of a large body of work.
These asymptotic properties of the minimal displacement for a BRW$(N)$ depend
on the quantity
\[
\kappa(\theta) := \log \int_{\R} e^{-\theta x} \E N(dx)
\]
the logarithm of the Laplace transform of the mean measure of $N$.
The speed $v$ of BRW$(N)$ is then expressed as
\[
v = \sup_{\theta > 0} \frac{-\kappa(\theta)}{\theta}.
\]
Indeed, Hammersley \cite{Ham74}, Kingman \cite{Kin75}
and Biggins \cite{Big76} proved under increasing generality that
\begin{equation} \label{eqn:speedBRW}
\lim_{n \to \infty} \frac{M_n}{n} = v \quad
\text{a.s. and in $\mathrm{L}^1$}.
\end{equation}

In our case, $N$ being standard Poisson process, we have
\[
\kappa(\theta) =\log \int_0^\infty \e^{-\theta x} \dd x = - \log \theta,
\quad
v := \sup_{\theta > 0} \frac{\log \theta}{\theta} = \frac{1}{\e},
\]
This is the same $1/\e$ that appears in the limit \eqref{eqn:firstOrder}.

Addario-Berry and Reed \cite{BERREE} and Hu and Shi \cite{HuS09} independently
proved that $M_n-n/\e$ increases at logarithmic rate; more precisely in our case that
\begin{equation} \label{eqn:logCorrection}
\lim_{n \to \infty} \frac{M_n - n/\e}{\log n} = \frac{3}{2\e}, \quad
\text{ in probability,}
\end{equation}
with almost sure fluctuation occurring on that logarithmic scale. The
convergence in distribution of the
minimal displacement of a branching random walk, when centered around
its median, was then obtained by A\"id\'ekon \cite{Aid13}. Using that
\[
 D_n = \sum_{|u|=n} (n/\e - X(u)) e^{-\e X(u)}
\]
is a non-uniformly integrable signed martingale that converges almost surely
to a positive limit $D_\infty$, he proved that there exists $c_\star > 0$ such that for
all $x \geq 0$,
\begin{equation}
  \label{eqn:cvInLawBRW}
  \P(M_n \geq n/\e + \tfrac{3}{2\e} \log n + x) = \E\left( \exp\left( - c_\star D_\infty \e^{\e x}\right) \right)
\end{equation}
The result was independently obtained in \cite{BERFOR} in the specific case
of the PWIT.

Recalling the PWIT-IBM interpretation of $Z_t$, $t \ge 0$,
we note that the functions $t \mapsto F_t$ and $\ell \mapsto M_\ell$,
defined in \eqref{FFF} and \eqref{eqn:minBRW}, respectively, are
generalized inverses of one another.
Indeed, it is clear that, for all $t \ge 0$ and all $\ell \in \Z_+$,
\[
Z_t(\ell)> 0 \iff \exists u \in \N^\ell \, X(u) \le t
\iff M_\ell \le t,
\]
which implies that the front bin $F_t$ in the PWIT-IBM, as defined by
\eqref{FFF}, satisfies
\begin{equation}
\label{Ft}
F_t =  \max\{ \ell \in \Z_+:\, M_\ell \le t\}.
\end{equation}
Hence, from the asymptotic behavior of $M_\ell$ as $\ell \to \infty$ we are able to obtain the asymptotic
behavior of $F_t$ as $t \to \infty$. This method as already been used by
Corre in \cite{Cor17} in this purpose. However, note that in his description of
the Yule process, at each birthing event particles were dying giving birth to
two new children. Therefore our result does not align exactly with  the one of Corre.

\begin{lemma}[Corre \cite{Cor17}]
\label{lem:asympK}
For all $t \geq 1$, we set
\[ n_t = \floor{\e t - \frac{3}{2}\log t} \in \N \quad \text{and} \quad y_t = \e t - \frac{3}{2}\log t - n_t \in (0,1).\]
For all $k \in \Z$, we have
\[
  \lim_{t \to \infty} \P(F_t \leq n_t + k) - \E\left( \exp\left( - c_\star D_\infty \e^{y_t - k - 1}\right) \right) = 0.
\]
\end{lemma}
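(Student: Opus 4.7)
The plan is to exploit the elementary duality between the front process and the minimal displacement, then plug into A\"id\'ekon's convergence in distribution \eqref{eqn:cvInLawBRW}. From \eqref{Ft} we have the identity
\[
\{F_t \le n_t + k\} = \{M_{n_t+k+1} > t\},
\]
so setting $n = n(t,k) := n_t + k + 1$ (which satisfies $n \to \infty$ as $t \to \infty$ for each fixed $k$) reduces the lemma to the asymptotic behaviour of $\P(M_n > t)$. The idea is to rewrite $t$ in the form
\[
t = \tfrac{n}{\e} + \tfrac{3}{2\e}\log n + x_{t,k}, \qquad x_{t,k} := t - \tfrac{1}{\e}(n_t+k+1) - \tfrac{3}{2\e}\log(n_t+k+1),
\]
so that A\"id\'ekon's theorem gives $\P(M_n \ge t) \to \E(\exp(-c_\star D_\infty \e^{\e x_{t,k}}))$, and then identify $\e x_{t,k}$, up to a vanishing correction (absorbed into the constant $c_\star$), with $y_t - k - 1$.

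The algebraic identification is the first concrete step. Using the definition $\e t = n_t + y_t + \tfrac{3}{2}\log t$ and expanding $\log(n_t+k+1)$ around $\log(\e t)$ yields
\[
\e x_{t,k} = (y_t - k - 1) + \tfrac{3}{2}\bigl(\log t - \log(n_t+k+1)\bigr) = (y_t - k - 1) - \tfrac{3}{2} + \tfrac{3}{2}\log\tfrac{\e t}{n_t+k+1}.
\]
Since $n_t \sim \e t$, the last term is $o(1)$; the constant shift $-\tfrac{3}{2}$ is then a deterministic factor inside the exponential that we fold into $c_\star$ (the statement \eqref{eqn:cvInLawBRW} only asserts the existence of such a positive constant).

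The second concrete step is to apply A\"id\'ekon's theorem with a moving argument $x = x_{t,k}$. Because $y_t$ oscillates in $[0,1)$ without converging, pointwise convergence is not enough: I would need convergence uniformly over $x$ in a compact set. This is obtained from the fact that the limiting function
\[
G(x) := \E\bigl(\exp(-c_\star D_\infty \e^{\e x})\bigr)
\]
is continuous and strictly monotone in $x$ (it is essentially the Laplace transform of the positive random variable $c_\star D_\infty$ evaluated at $\e^{\e x}$), so by Dini's theorem the pointwise convergence of the monotone distribution functions $x \mapsto \P(M_n \ge n/\e + \tfrac{3}{2\e}\log n + x)$ to the continuous monotone $G$ upgrades automatically to uniform convergence on $[0,1]$. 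Substituting $x = x_{t,k}$ and invoking continuity of $G$ to replace $\e x_{t,k}$ by its limit $y_t - k - 1$ closes the argument.

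The main obstacle, and the only non-routine step, is this uniform-in-argument passage to the limit: the target expression in the lemma contains the oscillating quantity $y_t$, so one must legitimately compose the sequence of distribution functions with a sequence of moving points. Everything else — the duality \eqref{Ft}, the Taylor-type asymptotic of $\e x_{t,k}$, and the absorption of the spurious $\e^{-3/2}$ factor into $c_\star$ — is mechanical once A\"id\'ekon's theorem is granted.
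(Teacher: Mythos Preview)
Your proof is correct and follows essentially the same route as the paper: the duality \eqref{Ft}, the asymptotic expansion of $t$ in the centering for $M_{n_t+k+1}$, and the upgrade of \eqref{eqn:cvInLawBRW} to uniform convergence via Dini's theorem to handle the moving argument $x_{t,k}$. You are also right to flag the stray factor $\e^{-3/2}$ that must be absorbed into $c_\star$; the paper's displayed asymptotic silently drops this constant.
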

\begin{remark}
Lemma \ref{lem:asympK} shows that as $t \to \infty$, $F_t$ remains tight
around its median but does not converge in distribution, due to the
fluctuations of $\log t - \floor{\log t}$.
\end{remark}
\begin{proof}[Proof of Lemma \ref{lem:asympK}]
By definition of $F_t$, and the fact that $M_\ell$ is atomless and increases
as $\ell$ increases,
we have
\begin{equation*}
\label{eqn:temp}
\P(F_t \leq n_t + k) = \P\left( M_{n_t + k+1} > t \right)
= \P\left( M_{n_t + k+1} \geq t \right), \quad k \in \N.
\end{equation*}
In addition, since $x \mapsto \P(M_n \geq n/\e + \tfrac{3}{2\e} \log n + x)$
converges pointwise to a monotone decreasing continuous function
from $\R$ into $[0,1]$,
we deduce from Dini's theorem that this convergence is uniform.
In particular, for any bounded sequence $(x_n)$, we have
\[
\lim_{n \to \infty} \P(M_n \geq n/\e + \tfrac{3}{2\e} \log n + x_n) - \E\left( \exp\left( - c_\star D_\infty \e^{\e x_n}\right) \right) = 0.
\]
Fix $k \in \Z$ and observe that
\[
\frac{1}{\e} (n_t+k+1) + \frac{3}{2\e} \log (n_t+k+1)
= t + \frac{1}{\e}(k+1-y_t) + o(1), \quad \text{as } t \to \infty.
\]
As a result we obtain that, for all $k \in \Z$,
\[
\lim_{t \to \infty} \P(M_{n_t + k+1} \geq t)
- \E\left( \exp\left( - c_\star D_\infty \e^{y_t - k - 1}\right) \right) = 0,
\]
which completes the proof.
\end{proof}

\subsection{Coupling of the PWIT and the Barak-Erd\texorpdfstring{\H{o}}{o}s graph}
\label{THEcoupling}

We construct a coupling between the PWIT and the Barak-Erd\H{o}s graph such that the heuristic convergence of neighborhoods of vertex $1$ described in Section~\ref{subsec:branching} is more explicit as $p \to 0$.
Recall that $G$ denotes the $\vec G(\N, p)$ Barak-Erd\H{o}s graph.
We will describe the laws of two random subraphs, the connected component
$C$ of vertex $1$ (the root of $G$) and a special spanning tree
$B$ of $C$ that has the property that the path from every of its
vertices to the root has maximal length.
We will then change the vertex set of $C$ and, in a sense, put
$C$ in continuous time, explaining the ``correct'' time scale.
Having done this, we will explain how to construct $B$ and the PWIT
on the same probability space. We will embed $B$ into the PWIT
making sure that the edge-relationships are preserved and that
the locations (in continuous time) of vertices are also correct.

\subsubsection*{The law of the set of vertices of \texorpdfstring{$C$}{C}}
Let $q=1-p$.
Define $C$ as the subgraph of $G$ containing the root, i,e.\ vertex $1$,
and all vertices $k \ge 1$ such that $1 \leadsto k$ (i.e.\ there is a path in $G$ between $1$ and $k$). We define these vertices recursively, letting
\[
\kappa_0=1,\qquad
\kappa_i = \min\{k > \kappa_{i-1}:\, 1 \leadsto k\}, \quad i \in \N.
\]
Clearly,
\begin{equation}\label{k1}
\kappa_1-\kappa_0, \, \kappa_2-\kappa_1, \ldots \text{ are independent},
\end{equation}
and, for all $i \in \N$
\begin{equation}\label{k2}
\kappa_i-\kappa_{i-1} \text{ is geometric$(1-q^i)$};
\qquad \P(\kappa_i- \kappa_{i-1} > m) = (q^i)^m, \quad m \ge 0.
\end{equation}
This describes the law of the random set
\[
\K =\{\kappa_0, \kappa_1, \kappa_2, \ldots\} = V(C)
\]
of vertices of $C$ (and hence of any spanning tree of $C$).
We can think of $\K$ as an inhomogeneous renewal process on $\N$
that quickly converges to a homogeneous one. Indeed, $q^i \to 0$
as $i\to \infty$, so fast that the Borel-Cantelli lemma ensures that,
eventually, $\K$ contains all positive integers. This is another manifestation
of the existence of skeleton points, as described in Section \ref{ergosec}.
Note that the rates of the above geometric random variables are increasing with $i$,
indicating that, initially, $\K$ is a sparse set (the smaller the $p$ the
sparser the  $\K$ is initially).

\subsubsection*{The spanning tree $B$ and its law}
Let $L_k=L^{\gl,\gr}_{1,k}$, the maximum length of all paths from $1$ to $k$
in $G$. A spanning tree of $C$ is a tree whose set of vertices is $\K$.
We say that a spanning tree of $C$ is a {\em maximal length spanning tree}
if for all $\kappa_i \in \K$, the length of the (necessarily unique) path
from $1$ to $\kappa_i$ equals $L_{\kappa_i}$.
To define the special maximal length spanning tree $B$
we need a definition of ordering on the set
\[
\K_{i-1} :=\{\kappa_0, \ldots, \kappa_{i-1}\}
\]
of the first $i$ vertices of $C$.
Roughly speaking, we order the elements of $\K_{i-1}$ in decreasing length,
breaking ties in some way.

\begin{definition}[Vertex ranking]
\label{def:vr}
Fix $i\in \N$ and let $a,b$ be distinct elements of $\{0,1,\ldots,i-1\}$.
We say that
$\kappa_a \lhd \kappa_b$ (or $\kappa_b \rhd \kappa_a$),
if either $L_{\kappa_a} > L_{\kappa_b}$
or $L_{\kappa_a} = L_{\kappa_b}$ and $a > b$.
Note that $(\K_{i-1}, \lhd)$ is a totally ordered set and that $\lhd$ depends on
$i$
(that is, the order on $\K_{i}$ is different from the order on $\K_{i-1}$).
Let now
\[
\sigma: \{1,\ldots,i\} \to \{0, \ldots, i-1\}
\]
be the unique bijection so that
\[
\kappa_{\sigma(1)} \lhd \kappa_{\sigma(2)} \lhd \cdots \lhd \kappa_{\sigma(i)}.
\]
The domain of $\sigma$ is the set of ranks and $\kappa_{\sigma(1)}$ is
the vertex of rank $1$, $\kappa_{\sigma(2)}$ the vertex of rank 2, etc.
We always have $\kappa_{\sigma(i)}=\kappa_0$ and $\kappa_{\sigma(i-1)}=\kappa_1$.
We also let
\[
\rho := \sigma^{-1}
\]
denote the inverse function.
\end{definition}

\begin{definition}[Special spanning tree $B$]
The special spanning tree $B$ of $C$ is defined as being the maximal-length
spanning tree with the property that, for all $i$, the unique parent of
$\kappa_i$ in $B$ is the minimal element $\kappa_j$ for the order $\lhd$ among the elements such that $(\kappa_j, \kappa_i)$ is an edge in $G$.
\end{definition}
Figure \ref{oraio} gives an example of the definitions.

\begin{figure}[ht]
\centering
\begin{tikzpicture}[xscale=1.4,state/.style={circle,draw,font=\scriptsize},
        > = Stealth,
        auto,
        prob/.style = {inner sep=2pt,font=\scriptsize},
        loop right/.style={out=30,in=330,loop},
        ]
        \node[state] (a) at (0,0) {$\kappa_0$};
        \node[state] (b) at (1,0) {$\kappa_1$};
        \node[state] (c) at (2,0) {$\kappa_2$};
        \node[state] (d) at (3,0) {$\kappa_3$};
        \node[state] (e) at (4,0) {$\kappa_4$};
        \node[state] (f) at (5,0) {$\kappa_5$};
        \node[state] (g) at (6,0) {$\kappa_6$};
        \node[state] (h) at (7,0) {$\kappa_7$};
        \node[state] (i) at (8,0) {$\kappa_8$};
        \node[state,densely dashed] (j) at (9,0) {$\kappa_9$};

        \path[-]
        (a) edge[bend left] (f)
        (b) edge[bend right] (i)
        (d) edge[bend left] (g)
        (a) edge[very thick, color=red] (b)
        (a) edge[very thick, color=red,bend left] (c)
        (a) edge[very thick, color=red, bend left] (d)
        (e) edge[very thick, color=red,bend left] (g)
        (c) edge[very thick, color=red,bend right] (e)
        (c) edge[very thick, color=red,bend right] (i)
        (d) edge[very thick, color=red,bend left] (f)
        (a) edge[very thick, color=red,bend left] (h)
         ;
         \draw[blue] (0,0.7) node {$0$};
         \draw[blue] (1,0.7) node {$1$};
         \draw[blue] (2,0.7) node {$1$};
         \draw[blue] (3,0.7) node {$1$};
         \draw[blue] (4,0.7) node {$2$};
         \draw[blue] (5,0.7) node {$2$};
         \draw[blue] (6,0.7) node {$3$};
         \draw[blue] (7,0.7) node {$1$};
         \draw[blue] (8,0.7) node {$2$};
         \draw[red] (0,-.7) node {$9$};
         \draw[red] (1,-.7) node {$8$};
         \draw[red] (2,-.7) node {$7$};
         \draw[red] (3,-.7) node {$6$};
         \draw[red] (4,-.7) node {$4$};
         \draw[red] (5,-.7) node {$3$};
         \draw[red] (6,-.7) node {$1$};
         \draw[red] (7,-.7) node {$5$};
         \draw[red] (8,-.7) node {$2$};
         \draw (0,-1.5) node {$q^8$};
         \draw (1,-1.5) node {$q^7$};
         \draw (2,-1.5) node {$q^6$};
         \draw (3,-1.5) node {$q^5$};
         \draw (4,-1.5) node {$q^3$};
         \draw (5,-1.5) node {$q^2$};
         \draw (6,-1.5) node {$1$};
         \draw (7,-1.5) node {$q^4$};
         \draw (8,-1.5) node {$q$};

\end{tikzpicture}
\caption{The first $9$ vertices of $C$ are shown, together with the edges
between them. Vertices of $G$ not connected to $1$ via a path are not shown. The
number above each vertex $\kappa_j$ is the maximal path length $L_{\kappa_i}$.
The number below each vertex is its rank. The thick edges are the edges of the
special spanning tree $B$. To explain this, consider, e.g., vertex $\kappa_6$
and observe that is connected to $\kappa_4$ with $L_{\kappa_4}=2$ and to
$\kappa_3$ with $L_{\kappa_3}=1$; so we choose $\kappa_4$ as the parent of
$\kappa_6$ in $B_5$. Consider vertex $\kappa_8$. It is connected to vertices
$\kappa_2$ and $\kappa_1$ with $L_{\kappa_2}=L_{\kappa_1}=2$. Since $\kappa_2$
has lower rank than $\kappa_1$ we declare that $\kappa_2$ is the parent of
$\kappa_8$ in $B$. Finally, the quantities on the lower line are proportional to
the probability that $\kappa_9$ chooses its parent among the existing vertices, i.e it will be connected to e.g. $\kappa_8$ with probability $\frac{q}{1+q+q^2+q^3+q^4 +q^5+q^6+q^7+q^8}$.}
\label{oraio}
\end{figure}
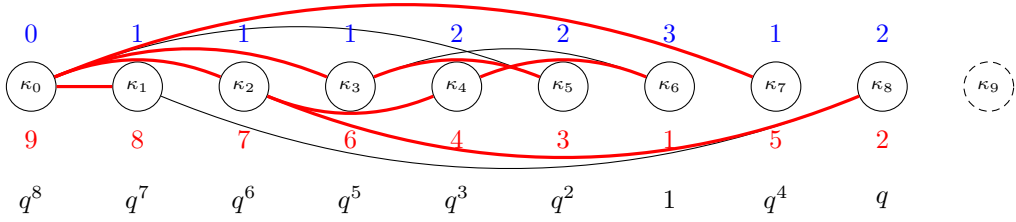

To describe the law of the edges of $B$, we let, for each $i \in \N$,
the index $\pi(i)$ be such that $\kappa_{\pi(i)}$ is the parent of
$\kappa_i$ in $B$.
For any $A \subset \N$ let $G(A)$ denote the induced subgraph of $G$
on the set $A$ (i.e.\ the graph that contains as vertices the elements of $A$
and edges only those edges with endpoints in $A$).

\begin{lemma}  \label{Bedges}
With $\kappa_{\pi(i)}$ denoting the parent of $\kappa_i$ in $B$ we have
\begin{equation} \label{k3}
\P(\pi(i) = \sigma(r) | G(\K_{i-1})) = \frac{q^{r-1}(1-q)}{1-q^i}, \quad
r=1,\ldots, i.
\end{equation}
\end{lemma}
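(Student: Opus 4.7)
The plan is to compute the conditional probability by further conditioning on the value of $\kappa_i$, and then to exploit the fact that edges targeting $\kappa_i$ are independent of the past.

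First I would observe that the rank function $\sigma$ is measurable with respect to $G(\K_{i-1})$: for each $j \in \{0,\ldots,i-1\}$, the maximal path length $L_{\kappa_j}$ from $1$ to $\kappa_j$ only uses vertices reachable from $1$, all of which lie in $\K_{i-1}$; so $L_{\kappa_j}$ is determined by $G(\K_{i-1})$, and hence so is $\sigma$. Next, I would condition additionally on the event $\{\kappa_i = k\}$ for a given $k > \kappa_{i-1}$. By the definition of $\kappa_i$, this event is equivalent to: for every $v \in \{\kappa_{i-1}+1,\ldots,k-1\}$, there is no edge from $\K_{i-1}$ to $v$; and there is at least one edge from $\K_{i-1}$ to $k$. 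The key independence observation is that the family $(\alpha_{\kappa_j, k})_{0 \le j \le i-1}$, being indexed by pairs $(\kappa_j,k)$ distinct from those determining $G(\K_{i-1})$ and from those determining whether vertices strictly between $\kappa_{i-1}$ and $k$ are connected to $1$, is a vector of $i$ i.i.d.\ Bernoulli($p$) variables independent of $G(\K_{i-1})$ and of the event $\{\kappa_i = k\}$ apart from the constraint that at least one of them equals $1$.

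With this in hand, write $e_r := \alpha_{\kappa_{\sigma(r)}, k}$ for $r = 1,\ldots,i$, so that conditionally on $G(\K_{i-1})$ and $\{\kappa_i = k\}$ the vector $(e_1,\ldots,e_i)$ is i.i.d.\ Bernoulli($p$) conditioned on $e_1 + \cdots + e_i \ge 1$, an event of unconditional probability $1-q^i$. By the definition of $B$, the parent $\kappa_{\pi(i)}$ is the $\lhd$--minimal, i.e.\ smallest-rank, element of $\K_{i-1}$ with an edge to $\kappa_i$. Hence $\pi(i) = \sigma(r)$ if and only if $e_1 = \cdots = e_{r-1} = 0$ and $e_r = 1$, an event of unconditional probability $q^{r-1} p$. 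It follows that
\[
\P(\pi(i) = \sigma(r) \mid G(\K_{i-1}), \kappa_i = k) = \frac{q^{r-1}(1-q)}{1-q^i}.
\]

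Since the right-hand side does not depend on $k$, integrating out $\kappa_i$ gives the stated formula for $\P(\pi(i) = \sigma(r) \mid G(\K_{i-1}))$. Sanity-checking $\sum_{r=1}^i q^{r-1}(1-q)/(1-q^i) = 1$ confirms that the claimed expression is a genuine probability distribution. The only delicate point, and the one worth stating carefully, is the independence claim: the edges from $\K_{i-1}$ to $k$ are measurable with respect to a different block of the product $\sigma$-algebra than that generating $G(\K_{i-1})$ and the event $\{\kappa_{i-1} < \kappa_i = k\}$ minus the ``at least one edge to $k$'' constraint; everything else is a direct computation with a truncated geometric law.
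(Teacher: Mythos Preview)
Your proof is correct and follows essentially the same approach as the paper's (very terse) sketch: both rest on the observation that $\sigma$ is $G(\K_{i-1})$-measurable and that, given $\K_{i-1}$ and $\kappa_i$, the edges from $\K_{i-1}$ to $\kappa_i$ are i.i.d.\ Bernoulli$(p)$ conditioned on at least one success. You have simply made explicit the further conditioning on $\{\kappa_i=k\}$ and the resulting truncated-geometric computation that the paper leaves to the reader.
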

\begin{proof}[Sketch of proof]
To understand this, first note that $\sigma$ depends only on $G(\K_{i-1})$,
where $\K_{i-1}=\{\kappa_0, \ldots, \kappa_{i-1}\}$.
Second, this formula says that the parent of $\kappa_i$ is
the least ranked vertex $\kappa_j$, say, in $\K_{i-1}$ such that
$(\kappa_j, \kappa_i)$ is an edge in $G$.
The denominator $1-q^i$ expresses the probability that $\kappa_i$
connects to one of the vertices in $\K_{i-1}$.
\end{proof}

Figure \ref{oraio} shows these probabilities in an example
with $i=9$.

\begin{remark}
Note that \eqref{k1}, \eqref{k2} and \eqref{k3} provide a complete characterization of the law
of the special spanning tree $B$.
\end{remark}

\subsubsection*{The special tree $B$ in continuous time}
Putting $B$ in continuous time means replacing its vertex set $\K$
by a possibly random subset of $(0,\infty)$ in a way allowing its coupling with the PWIT. Let us start by recalling the clockwork lemma:
consider a finite set of positive numbers, say $a_1, \ldots, a_i$.
To simulate a random variable that takes value $j\in \{1,\ldots, i\}$
with probability proportional to $a_i$, we can use $\tau_1, \ldots, \tau_i$ independent exponential random variables with
parameters $a_1, \ldots, a_i$. Then the minimum of these random variables
equals $\tau_j$ with probability $a_j/\sum_{k=1}^i a_k$.

Fix $i$ and the set $\K_{i-1}=\{0,1,\ldots,i-1\}$.
We imagine that this set, along with the edges from $B$, has been constructed at
time $t$.
Equip each vertex $\kappa_j \in \K_{i-1}$
with an independent exponential random variable $\tau_j$, corresponding to the elapsed time
after time $t$ at which $\kappa_j$ wishes to connect to a new vertex $\kappa_i$.
We declare that this takes place at time $t+\tau$, where
$\tau=\min(\tau_1, \ldots, \tau_i)$  and if $\tau=\tau_j$ then it is
vertex $\kappa_j$ that will connect to $\kappa_i$.
Let $\rho(j)$ be the rank of $\kappa_j$, i.e.\ the inverse of $\sigma$ defined in Definition~\ref{def:vr}.
We should assign rate $q^{\rho(j)-1}$ to $\tau_j$, as then
\[
\P(\tau_j = \tau) = \frac{q^{\rho(j)-1}}{1+q+\cdots+q^{i-1}}
=  \frac{q^{\rho(j)-1}(1-q)}{1-q^i}, \quad 0 \le j \le i-1,
\]
which precisely equal to the probability \eqref{k3}.
If then we let $T(\kappa_i)$ be the time of appearance of $\kappa_i$
then we must have
\begin{equation}
\label{TTT}
T(\kappa_i)-T(\kappa_{i-1})
\eqdist \tau \sim \operatorname{exponential}(1+q+\cdots+q^{i-1}),
\end{equation}
that is,
\[
T(\kappa_i)-T(\kappa_{i-1})
\sim \operatorname{exponential}\left(\frac{1-q^i}{1-q}\right).
\]
Hence we can obtain $T(\kappa_i)$ by an appropriate thinning of
a Poisson process.
\begin{lemma}	\label{pxi}
Let $\xi_1 = 0$ and $0<\xi_2 < \xi_3 < \cdots$ be the points of a Poisson$(1)$ point process,
independent of the vertex set $\K=\{\kappa_0, \kappa_1, \ldots\}$ of $C$.
Then
\[
p \xi_{\kappa_i} \eqdist T(\kappa_i), \quad i=0,1,2,\ldots
\]
\end{lemma}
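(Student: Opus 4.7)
The idea is to show that both processes $(T(\kappa_i))_{i \geq 0}$ and $(p\xi_{\kappa_i})_{i \geq 0}$ are pure-jump increasing processes with independent increments that are exponentially distributed with the same rates, namely rate $(1-q^i)/(1-q)$ for the $i$-th increment; since both start at $0$, this gives equality in (joint) distribution.

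First I would handle the starting condition. Since $\kappa_0 = 1$, we have $\xi_{\kappa_0} = \xi_1 = 0$, so $p\xi_{\kappa_0} = 0 = T(\kappa_0)$. Next, consider the increments $\Delta_i := p(\xi_{\kappa_i} - \xi_{\kappa_{i-1}})$ for $i \ge 1$. Because $\xi_1 = 0$ and the points of a Poisson$(1)$ process have iid exponential$(1)$ interarrival times, we may write
\[
\xi_{\kappa_i} - \xi_{\kappa_{i-1}} = \sum_{j=\kappa_{i-1}+1}^{\kappa_i} \eta_j,
\]
where $\eta_j := \xi_j - \xi_{j-1}$ ($j \ge 2$) are iid exponential$(1)$ and are independent of $\K$ by assumption.

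The main computation is to identify the law of each $\Delta_i$. Conditioning on $\K$, we see that $\xi_{\kappa_i} - \xi_{\kappa_{i-1}}$ is a sum of a geometric number of iid exponential$(1)$ random variables, so it is a geometric sum of exponentials. By \eqref{k2}, $\kappa_i - \kappa_{i-1}$ is geometric with success probability $1-q^i$. A standard Laplace transform calculation (or direct computation) shows that if $M$ is geometric with parameter $r \in (0,1]$ and $E_1, E_2, \ldots$ are iid exponential$(1)$ independent of $M$, then $\sum_{k=1}^M E_k$ is exponential with rate $r$. Applied here, this yields that $\xi_{\kappa_i} - \xi_{\kappa_{i-1}}$ is exponential with rate $1-q^i$, so that $\Delta_i = p(\xi_{\kappa_i} - \xi_{\kappa_{i-1}})$ is exponential with rate $(1-q^i)/p = (1-q^i)/(1-q)$, matching exactly \eqref{TTT}.

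Finally, I would check independence of the $\Delta_i$. The differences $\kappa_i - \kappa_{i-1}$ are mutually independent by \eqref{k1}, and the blocks of interarrival times
$(\eta_j)_{\kappa_{i-1} < j \le \kappa_i}$
are disjoint for different $i$. Hence, conditionally on $\K$, the increments $\Delta_i$ are independent sums over disjoint blocks of iid exponentials, and since their conditional laws (exponential with rate $1-q^i$) do not depend on $\K$ beyond the index $i$, the $\Delta_i$ are unconditionally independent. Combining steps, $(p\xi_{\kappa_i})_{i \ge 0}$ has the same joint distribution as $(T(\kappa_i))_{i \ge 0}$, both being the partial sums of independent exponentials with rates $(1-q^i)/(1-q)$. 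The only subtle point, and the one I would be most careful with, is the extraction of unconditional independence from conditional independence; this follows cleanly because the conditional law of $\Delta_i$ given $\K$ depends only on $i$, not on $\K$.
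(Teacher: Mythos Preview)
Your proof is correct and follows essentially the same approach as the paper: both arguments reduce to the fact that a geometric$(1-q^i)$ sum of i.i.d.\ exponential$(1)$ variables is exponential$(1-q^i)$ (the paper phrases this as the ``standard Poisson thinning theorem''), then scale by $p$ and invoke independent increments. You are simply more explicit about the independence step, which the paper dispatches in one line.
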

\begin{proof}
If $\tau_1, \tau_2, \ldots$ are i.i.d.\ exponential random variables with
rate $\lambda$ and if $\nu$ is an independent geometric random variable
with parameter $\alpha$ (i.e., $\P(\nu=n) = (1-\alpha)^{n-1}\alpha$, $n \in \N$),
then $\tau_1+ \tau_2 + \cdots + \tau_\nu$ is exponential with rate $\alpha \lambda$,
by the standard Poisson thinning theorem.
Recall, from eq.\ \eqref{k2},
that $\kappa_i-\kappa_{i-1}$ is geometric$(1-q^i)$.
Applying the above thinning theorem we have that
$\xi_{\kappa_i}-\xi_{\kappa_{i-1}}$ is exponential with rate
$(1-q^i)$ and so $p\xi_{\kappa_i}-p\xi_{\kappa_{i-1}}$ is exponential with rate
$(1-q^i)/p$.
Hence, from \eqref{TTT},
\[
p\xi_{\kappa_i}-p\xi_{\kappa_{i-1}} \eqdist T(\kappa_i)-T(\kappa_{i-1})
\]
and the claim follows by independent increments of both sides.
\end{proof}
\begin{remark}
This shows that the correct set of vertices of $B$, if we are to put
it in continuous time and create a Markov process, is the set
\[
0= p\xi_{\kappa_0} < p\xi_{\kappa_1} < p \xi_{\kappa_2} < \cdots
\]
To spell out the construction explicitly, we identify every integer vertex
$\kappa_i$ with the real number $p \xi_{\kappa_i}$
and keep the edges intact.
Let now $B_t$ be the induced subgraph of $B$
on the set of new vertices $p\xi_{\kappa_i}$ that do not exceed $t$.
Then $B_t$, $t \ge 0$, is Markovian.
\end{remark}

\subsubsection*{The coupling}
The quantities defined above allow us to give a constructive proof of the following theorem.
\begin{theorem}
\label{thm:coupling}
Let $B$ be the special spanning tree of $G$
and $\Pi$ a standard PWIT.
Then there is a probability space on which
$(\Pi, B)$ is defined in such a way that
there is an injection
\[
\Phi: \K \to \N^*,
\]
preserving edges and having the property that
\begin{equation} \label{PHIcorrect}
|\Phi(\kappa_i)| = L_{\kappa_i},
\end{equation}
for all $\kappa_i \in \K$.
\end{theorem}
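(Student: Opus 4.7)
The plan is to construct $\Pi$, $B$ and $\Phi$ simultaneously by thinning the birth events of the PWIT according to the rank $\rho$ of Definition~\ref{def:vr}. Begin with mutually independent ingredients: a standard PWIT $\Pi=(\N^*,X)$; i.i.d.\ marks $U_v\sim\mathrm{Uniform}(0,1)$ for $v\in\N^*\setminus\{\varnothing\}$; and independent geometrics $G_i\sim\mathrm{Geom}(1-q^i)$ used to define $\K$ via $\kappa_0=1$, $\kappa_i-\kappa_{i-1}=G_i$, which already realises the correct vertex marginal by \eqref{k1}--\eqref{k2}.

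Then sweep through the PWIT births in order of $X(v)$, starting with $\Phi(\kappa_0)=\varnothing$. Suppose $\Phi(\kappa_0),\dots,\Phi(\kappa_{i-1})$ have already been defined. At the next PWIT birth event $v=uk$: do nothing if $u$ has not yet been assigned; otherwise, with $u=\Phi(\kappa_j)$, compute the rank $r=\rho(j)$ of $\kappa_j$ in $\K_{i-1}$ (well-defined because the $L$-values of all previously accepted $\kappa$'s are fixed), and accept iff $U_v<pq^{r-1}$. Upon acceptance declare $\Phi(\kappa_i)=v$, $\pi(i)=j$, $L_{\kappa_i}=L_{\kappa_j}+1$, and add the edge $(\kappa_j,\kappa_i)$ to $B$. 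Edge preservation and the identity $|\Phi(\kappa_i)|=L_{\kappa_i}$ are then immediate by induction (since $|v|=|u|+1$), and $\Phi$ is injective because distinct acceptances hit distinct particles of $\N^*$.

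The substantive step, which I expect to be the main obstacle, is checking that the jointly constructed $B$ has the correct marginal law. In PWIT time each assigned vertex $\Phi(\kappa_j)$ of current rank $r$ gives birth at rate $1$, and thinning by the fresh independent uniforms turns this into a Poisson$(pq^{r-1})$ stream of acceptances; superposing over $r=1,\dots,i$ yields an acceptance clock of total rate $\sum_{r=1}^i pq^{r-1}=1-q^i$. Hence the PWIT-time gap between the $(i-1)$-th and $i$-th acceptances is $\mathrm{Exp}(1-q^i)$, which after the scaling $T=pX$ matches \eqref{TTT}, while the rank of the accepting vertex is $r$ with probability $pq^{r-1}/(1-q^i)=q^{r-1}(1-q)/(1-q^i)$, matching \eqref{k3}. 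The induction carries through because the ranks, though random, are measurable with respect to the past of $B$; conditioning on this past, the strong Markov property of the PWIT together with the freshness of the $U_v$ reduces each step to the Poisson-superposition calculation above. Combined with the independent $G_i$, this reproduces the joint law of $(\K,B)$, while the $\Pi$-marginal is preserved since $(U_v)$ and $(G_i)$ are used only to select, not to perturb, PWIT births.
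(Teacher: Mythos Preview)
Your proposal is correct and follows essentially the same coloring/thinning construction as the paper's informal proof: embed $B$ into $\Pi$ by accepting PWIT births from already-assigned particles with a rank-dependent probability, so that the competing-clocks calculation reproduces \eqref{k3}. The only difference is the acceptance rate: the paper colors a birth from the rank-$r$ red particle red with probability $q^{r-1}$ (splitting its unit-rate offspring stream into red at rate $q^{r-1}$ and blue at rate $1-q^{r-1}$), whereas you accept with probability $pq^{r-1}$. Both are proportional in $r$ and hence give the same parent law; the paper's choice makes the PWIT-time gap between successive red births $\mathrm{Exp}((1-q^i)/p)$, so that $X(\Phi(\kappa_i))$ matches $T(\kappa_i)$ directly (feeding straight into Corollary~\ref{cor:couplingResult}), while your sparser thinning gives $\mathrm{Exp}(1-q^i)$ gaps and hence $X(\Phi(\kappa_i))\eqdist\xi_{\kappa_i}$, requiring the extra factor $p$ you note. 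For the theorem as stated this is immaterial.
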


\begin{proof}[Informal construction of the coupling.]
We simultaneously construct $B_t$ together with $\Pi_t$
at times $t$.
Observe that every particle in $\Pi_t$ gives birth to offspring at unit rate, while a particle $p \xi_{j}$ in $B_t$ are connected to newly discovered vertex at slower rate $1 - q^k$, where $k$ is the rank of $p \xi_j$ for the order $\lhd$. We construct the coupling by strategically selecting the particles in $\Pi$ corresponding to the vertices in $B$. For sake of illustration, we think of particles in $\Pi$ as being blue, unless they correspond to vertices of $B$ via $\Phi$, in which case they are red. We informally refer to particle $\Phi(\kappa_i)$ as vertex $\kappa_i$.

At time $t=0$ the situation is trivial, there is a single red particle in $\Pi_0$ corresponding to vertex $1$ in $B_0$. Fix $t > 0$, let $i$ be the number of red particles in $\Pi_t$, corresponding to vertices $\kappa_0, \ldots, \kappa_{i-1}$ in $B_t$. The remaining particles in $\Pi_t$ are blue. We associate to each blue particle an independent blue exponential clock with parameter $1$, and to each red particle a pair of independent exponential clocks, such that the rate of the red clock associated to the red particle ranked $r$ is taken to be $q^r$, and the blue clock is $1 - q^r$. These clocks correspond to the times at which each particle creates a child, with the color of the clock corresponding to the color of the newborn particle.

We consider the time $\tau$ corresponding to the smallest of these exponential clocks. At time $t + \tau$, if the clock is blue we add a new blue particle to $\Pi_t$ with position $t + \tau$, which is born from the particle associated to this exponential clock. If the clock at time $t + \tau$ is red, we add a new red particle to $\Pi_t$ with position $t+\tau$, connected to the (red) parent particle $\Phi^{-1}(\kappa_j)$. Simultaneously, we add to $B_{t+\tau}$ the vertex $\kappa_i$, which is connected to the newborn particle in $\Pi_{t+\tau}$ via $\Phi$.

Since red clocks have rate $1,q,\ldots, q^{i-1}$, it follows that the particle corresponding to the new vertex will be added after an exponential time of parameter $1 + \cdots + q^{i-1}$. Therefore, this time corresponds to $p\xi_{\kappa_i}$ in distribution, so we set $\xi_{\kappa_i} = X(\Phi^{-1}(\kappa_i))/p$. It then should be clear, by construction that $(\Pi_t)$ is the PWIT, $B$ has the law of the special spanning tree, and \eqref{PHIcorrect} holds, since every edge in $B$ correspond to a parent-child relationship in the PWIT.
\end{proof}

As Barak-Erd\H{o}s graphs with any connecting constant can be associated to the
same branching random walk, this coupling also yields a coupling of
Barak-Erd\H{o}s graphs with different constants. It is worth noting that with
this coupling, the set of red ``coding'' particles is decreasing with $p$. It is
also worth observing that typically, the position of the leftmost blue
descendant of a red particle tend to be of order $-\log p$, as newborn red
particle occur at rate $1$, and the rate at which new blue particle occur can be
bounded from above by $1 - (1-p)^n$ with $n$ the number of red particles born
after that particle.

\begin{figure}[ht]
\centering
\begin{tikzpicture}[state/.style={thick,red,circle,draw,font=\scriptsize},
        > = Stealth,
        auto,
        prob/.style = {inner sep=2pt,font=\scriptsize},
        loop right/.style={out=30,in=330,loop},
        ]
        \node[state] (a) at (0,0) {$\kappa_0$};
        \draw (-.4,.4) node{$\emptyset$};
        \node[state] (b) at (-2,-1) {$\kappa_1$};
        \draw (-2.5,-1) node{$1$};
        \node (c) at (-1,-1) {};
        \node (d) at (0,-1) {};
        \node[state] (e) at (1,-1) {$\kappa_2$};
        \draw (.5,-1) node{$4$};
        \node (f) at (2,-1) {};
        \node (g) at (-2.7,-2) {};
        \node (h) at (-2,-2) {};
        \node (i) at (-1.3,-2) {};
        \node[state] (j) at (1,-2) {$\kappa_3$};
        \draw (0.4,-2) node{$41$};
        \shade[top color=blue,bottom color=white] (c) +(0,.0) --+ (-.3,-.7) --+ (.3,-.7) -- cycle;
        \shade[top color=blue,bottom color=white] (d) +(0,0) --+ (-.3,-.7) --+ (.3,-.7) -- cycle;
        \shade[top color=blue,bottom color=white] (f) +(0,.0) --+ (-.3,-.7) --+ (.3,-.7) -- cycle;
        \shade[top color=blue,bottom color=white] (g) +(0,.0) --+ (-.3,-.7) --+ (.3,-.7) -- cycle;
        \shade[top color=blue,bottom color=white] (h) +(0,.0) --+ (-.3,-.7) --+ (.3,-.7) -- cycle;
        \shade[top color=blue,bottom color=white] (i) +(0,.0) --+ (-.3,-.7) --+ (.3,-.7) -- cycle;

        \path[-]
        (a) edge[thick,red] (b)
        (a) edge[blue] (-1,-1)
        (a) edge[blue] (0,-1)
        (a) edge[thick, red] (e)
        (a) edge[blue] (2,-1)
        (e) edge[thick,red] (j)
        (b) edge[blue] (-2.7,-2)
        (b) edge[blue] (-2,-2)
        (b) edge[blue] (-1.3,-2);
\end{tikzpicture}
\caption{A possible evolution of the PWIT with the vertices of $\K$ embedded
correctly. The little triangle at the end of blue particles symbolize the blue
descendants produced by each blue particle, which play no role in the coupling,
hence do not interact with $\Phi$ or the times at which the red particles
appear. However, the blue particles represented here play a role, as they
influence the label given to each of the red particle.}
\label{stages}
\end{figure}
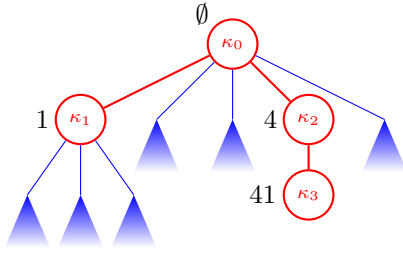

\begin{example}
We refer to the example drawn in Figure \ref{stages}.
We start with $\kappa_0$ as a root particle of the PWIT.
Since $\kappa_0$ is red and has rank $1$ its red clock has rate $q^{1-1}=1$,
so its blue clock has rate $1-1=0$. That is, $\kappa_0$, at this stage,
produces red offspring. The first offspring, $\kappa_1$, appears at an exponential
time with parameter $1+q+\cdots+q^{i-1}$ with $i=1$, hence at an exponential$(1)$
time, that can be taken to be $p\xi_{\kappa_1}$.
We set $\Phi(\kappa_1)=1$.
At this time, $\kappa_1$ has rank $1$ and $\kappa_0$ has rank $2$.
Therefore the red and blue clocks of $\kappa_1$ have rates $1, 0$, respectively,
while the red and blue clocks of $\kappa_0$ have rates $q, 1-q$ respectively.
Hence $\kappa_2$ appears at time differing from $p\xi_{\kappa_1}$
by an exponential$(1+q)$ random variable, that is, at time $p\xi_{\kappa_2}$.
Moreover, $\kappa_2$ has parent $\kappa_1$ with probability $1/(1+q)=
(1-q)/(1-q^2)$
or $\kappa_0$ with probability $q/(1+q) = q(1-q)/(1-q^2)$.
Suppose the latter happens. If $\kappa_0$ had produced $2$ blue offspring
then $\Phi(\kappa_2)=4$, as it is its third offspring.
Once $\kappa_2$ is added we have that $\kappa_2$ has rank $1$ so
has no blue clock, $\kappa_1$ has rank $2$ so it has a red clock at
rate $q$ and a blue at rate $1-q$, while $\kappa_0$ has the least rank, $3$,
and has a red clock at rare $q^2$ and a blue at rate $1-q^2$.
At time $p\xi_{\kappa_3}$ the new particle $\kappa_3$ is added and if it
chooses $\kappa_2$ as parent then its label is $\Phi(\kappa_2)1=41$, since
$\kappa_2$ produced no offspring, so $\kappa_3$ is its first one.
\end{example}

We may call the times $p \xi_{\kappa_i}$ at which a new vertex of $B$ appears
in $\Pi$ as embedded times. We define for $\ell \in \Z_+$
\[
  \bar{M}_\ell = \inf\{X(u) : u \text{ red particle with } |u|=\ell\}
\]
and for $t \geq 0$
\[
  \bar{F}_t = \max\{\ell \in \Z_+: \bar{M}_\ell \leq t\},
\]
which are the analogue of $(M_\ell)$ and $(F_t)$ defined in \eqref{Ft}, restricted to the set of red particles from the coupling.

\begin{corollary}
\label{cor:couplingResult}
The process $(\bar{F}_t, t \ge 0)$ and $(L_k, k \ge 0)$ where $L_k=L^{\gl,\gr}_k$, are related by
\[
  F_{p \xi_{\kappa_i}} = \max_{0 \le j \le i} L_{\kappa_j}.
\]
\end{corollary}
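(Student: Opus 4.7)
The plan is to verify the identity directly by unpacking the coupling of Theorem \ref{thm:coupling}. The point is that both sides count, in two different guises, the maximum generation of a red particle present in $\Pi$ at the embedded time $p\xi_{\kappa_i}$, and the equality is essentially a matter of re-indexing.

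First, I would invoke Lemma \ref{pxi} together with the construction of the coupling to note that the $j$-th red particle appears at time $X(\Phi(\kappa_j)) = p\xi_{\kappa_j}$, and that the red particles are born in the order $\kappa_0,\kappa_1,\kappa_2,\ldots$. Consequently, for any $t \ge 0$, the set of red particles present in $\Pi_t$ equals $\{\Phi(\kappa_j): p\xi_{\kappa_j}\le t\}$. Specializing to $t = p\xi_{\kappa_i}$, this set is exactly $\{\Phi(\kappa_0),\ldots,\Phi(\kappa_i)\}$.

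From here the computation is immediate. By the definition of $\bar M_\ell$ and $\bar F_t$, together with the identity $|\Phi(\kappa_j)| = L_{\kappa_j}$ furnished by \eqref{PHIcorrect}, I would write
\begin{equation*}
\bar F_{p\xi_{\kappa_i}} \;=\; \max\bigl\{\ell\in\Z_+:\, \bar M_\ell\le p\xi_{\kappa_i}\bigr\}
\;=\; \max_{0\le j\le i} |\Phi(\kappa_j)|
\;=\; \max_{0\le j\le i} L_{\kappa_j}.
\end{equation*}

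The only point that requires attention is the bookkeeping between the discrete labels $\{\kappa_j\}$ and the continuous embedding times, specifically checking that the increments $p\xi_{\kappa_j}-p\xi_{\kappa_{j-1}}$ match the $\operatorname{exponential}((1-q^j)/(1-q))$ waiting times that arise from summing the red clocks of rates $1,q,\ldots,q^{j-1}$ in the coupled construction. This is precisely the content of Lemma \ref{pxi} and the informal construction following Theorem \ref{thm:coupling}, so there is no genuine obstacle; the corollary is a direct translation of the coupling into a statement about the front of the PWIT-IBM restricted to red particles.
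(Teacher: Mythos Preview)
Your proof is correct and is precisely the direct unpacking of the coupling that the paper intends: the corollary is stated in the paper without proof, as an immediate consequence of Theorem~\ref{thm:coupling} and \eqref{PHIcorrect}, and your argument is exactly that immediate consequence. The only cosmetic point is that the displayed formula in the paper writes $F_{p\xi_{\kappa_i}}$ where $\bar F_{p\xi_{\kappa_i}}$ is clearly meant (as the corollary text announces $(\bar F_t,t\ge 0)$); you handled this correctly.
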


\subsection{Longest chain issued from the initial vertex}
\label{subsec:issuedFrom1}

The length of the longest path issued from the vertex $1$ is studied using the
branching random walk coupling described above. This coupling was introduced by Newman
\cite{NEWM92} for this purpose. In particular, using this coupling, he obtained
the following result for the asymptotic behavior of $L_n$ as a function of $p$.

\begin{theorem}[Newman \cite{NEWM92}]
\label{thm:start1}
Let $(p_n)$ be a null sequence.
\begin{enumerate}
\item If $\lim_{n \to \infty} n p_n = t \in (0,\infty)$, then $\lim_{n \to \infty} L^{(1)}_n(p_n) = F_t$ in distribution.
\item If $\lim_{n \to \infty} n p_n = \infty$, then $\lim_{n \to \infty} L^{(1)}_n(p_n)/np_n = \e$ in probability.
\end{enumerate}
\end{theorem}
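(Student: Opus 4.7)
The plan rests on the coupling of Theorem \ref{thm:coupling}. On a common probability space we dispose of a standard PWIT $\Pi$, the Barak-Erd\H{o}s graph $G = \vec G(\N, p_n)$, and its special spanning tree $B$, together with an injection $\Phi : \K \to \N^*$ mapping each vertex $\kappa_i \in V(C)$ to a red particle of $\Pi$ located at generation $L_{\kappa_i}$ and born at time $X(\Phi(\kappa_i)) = p_n \xi_{\kappa_i}$, where $(\xi_k)_{k\ge1}$ is a unit-rate Poisson process independent of $\K$. Since $\xi$ is strictly increasing, $\{\xi_{\kappa_i}\le \xi_n\}=\{\kappa_i\le n\}$, so the red particles born by time $p_n\xi_n$ are exactly $\Phi(\kappa_0),\dots,\Phi(\kappa_{i_n})$ with $i_n=\max\{i:\kappa_i\le n\}$. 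Combining with Corollary \ref{cor:couplingResult} yields the key identity
\[
L^{(1)}_n(p_n) \;=\; \max_{\kappa_i\le n} L_{\kappa_i} \;=\; \bar F_{p_n \xi_n}.
\]
By the law of large numbers for a Poisson process, $\xi_n/n \to 1$ a.s., so $p_n \xi_n \to t$ (resp. $\infty$) in probability according to the regime.

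For part (1), the plan is to show that $\bar F_s \to F_s$ in distribution for each fixed $s>0$ as $p_n\to 0$. In the coupling, a red particle of rank $r$ spawns a blue offspring at rate $1 - (1-p_n)^{r-1} \le (r-1)p_n$, so when $R$ red particles are present the total blue-birth rate is at most $p_n R^2/2$. On the event that no blue births have yet occurred the red sub-process has $R_u = |V_u|$ and is dominated by the Yule--Furry pure birth process, whose second moment $\E|V_s|^2 \le 2\e^{2s}$ can be read off the geometric marginals computed in Section \ref{PWIT}. Hence the expected number of blue births by time $s$ is bounded by $\tfrac12 p_n \e^{2s}\to 0$. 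On this event the red rates $(1-p_n)^{r-1}$ also deviate from $1$ by only $O(p_n)$, so a short calculation shows that the red sub-process converges in finite-dimensional distributions to $\Pi$ on $[0,s]$; in particular $\bar F_s \to F_s$ in distribution. Combined with $p_n\xi_n\to t$ in probability and the fact that $s\mapsto F_s$ has a.s.\ no jump at a deterministic time, this yields $L^{(1)}_n(p_n)=\bar F_{p_n\xi_n}\to F_t$ in distribution.

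For part (2), the upper bound is immediate: $L^{(1)}_n(p_n) \le F_{p_n\xi_n}$, and from Theorem \ref{thm:speed} and \eqref{eqn:speedBRW} the PWIT has speed $v = 1/\e$, equivalently $F_t/t \to \e$ a.s.\ as $t\to\infty$. Together with $p_n\xi_n/(np_n)\to 1$ and $p_n\xi_n\to\infty$ in probability, this gives $\limsup_n L^{(1)}_n(p_n)/(np_n)\le \e$ in probability.

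The main obstacle is the matching lower bound, because in the regime $np_n\to\infty$ blue births are no longer negligible: their cumulative rate becomes comparable to the red-birth rate once $|V_s|\sim 1/p_n$, i.e.\ for times $s\gtrsim \log(1/p_n)$, which lies well inside the range of interest. The cleanest resolution, stepping away from the coupling, is a direct first-and-second moment computation on the graph. For $\epsilon\in(0,1)$ set $\ell_n := \lfloor(1-\epsilon)\e np_n\rfloor$ and let $N_{\ell_n}$ count the strictly increasing sequences $1=v_0<v_1<\cdots<v_{\ell_n}\le n$ whose consecutive pairs are all edges of $G$; then $\E N_{\ell_n} = \binom{n-1}{\ell_n}p_n^{\ell_n}$, which by Stirling tends to $+\infty$ under our sizing of $\ell_n$ (using $np_n\to\infty$). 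A second-moment estimate, grouping pairs of such sequences by their number of common edges and exploiting the independence of disjoint edges, yields $\mathrm{Var}(N_{\ell_n})/(\E N_{\ell_n})^2\to 0$. Chebyshev's inequality then gives $\P(N_{\ell_n}>0)\to 1$, so $L^{(1)}_n(p_n)\ge \ell_n$ with probability tending to $1$; letting $\epsilon\downarrow 0$ completes the proof.
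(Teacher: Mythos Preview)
Your treatment of part (1) and the upper bound in part (2) is correct and essentially matches the paper: both rely on the coupling of Theorem~\ref{thm:coupling}, the fact that blue births are negligible on bounded time windows as $p_n\to 0$, and the PWIT front speed $F_t/t\to\e$ inherited from \eqref{eqn:speedBRW}.

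The genuine gap is in your lower bound for part (2). The assertion $\Var(N_{\ell_n})/(\E N_{\ell_n})^2\to 0$ is false. Writing $K$ for the number of edges shared by two independent uniformly chosen sequences $1=v_0<\cdots<v_{\ell_n}\le n$ and $1=v_0'<\cdots<v_{\ell_n}'\le n$, one has the exact identity
\[
\frac{\E[N_{\ell_n}^2]}{(\E N_{\ell_n})^2}=\E\bigl[p_n^{-K}\bigr].
\]
Because both sequences are pinned at vertex $1$, the first edge is shared exactly when $v_1=v_1'$; a direct computation gives $\P(v_1=v_1')=\sum_m \P(v_1=m)^2\sim \ell_n/(2n)\sim(1-\epsilon)\e\,p_n/2$. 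On that event $K\ge 1$, hence
\[
\E\bigl[p_n^{-K}\bigr]\;\ge\;\P(v_1\ne v_1')+p_n^{-1}\,\P(v_1=v_1')\;\longrightarrow\;1+\tfrac{(1-\epsilon)\e}{2}>1,
\]
so $\Var(N_{\ell_n})/(\E N_{\ell_n})^2$ is bounded below by $(1-\epsilon)\e/2\approx 1.36$, not $o(1)$. (Longer shared initial segments only make this worse.) Paley--Zygmund therefore yields at best $\P(N_{\ell_n}>0)\ge c$ for some $c<1$, which does not give convergence in probability. The obstruction is structural: forcing all paths through the single vertex $1$ creates order-one correlations that the naive second moment cannot absorb.

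The paper sidesteps this with a chaining argument. Fix $t>0$, cut $[1,n]$ into $\sim np_n/(t+2)$ blocks of width $\sim t/p_n$, and apply part (1) inside each block to produce a path of length close to $F_t$ in distribution; consecutive blocks are then stitched by a single edge (available with high probability, as its endpoint is at geometric distance $\sim 1/p_n$). This yields $L^{(1)}_n(p_n)$ stochastically bounded below by a sum of $\sim np_n/(t+2)$ i.i.d.\ copies of $F_t$, hence $\liminf L^{(1)}_n(p_n)/(np_n)\ge \E F_t/(t+2)$ in probability; sending $t\to\infty$ and using $\E F_t/t\to\e$ gives the lower bound $\e$. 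If you wish to salvage a moment method, you would need either to restrict to a thinner family of paths (e.g., with all gaps in $[1,C/p_n]$) so that the initial-segment overlap has probability $o(p_n)$, or to combine the weak bound $\P(N_{\ell_n}>0)\ge c$ with a block-concatenation step---which is precisely the paper's approach.
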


\begin{proof}
We first assume that $n p_n \to t \in (0,\infty)$. In this situation, the
connected component of the vertex $1$ in the Barak-Erd\H{o}s graph
$\overline{\mathcal{G}}(n,p_n)$ converges, as $n \to \infty$ to $(X(u), u \in
\mathcal{U} : X(u) \leq t)$, using the branching random walk coupling.

Indeed,
the number of vertices in this connected component remains tight as $n \to \infty$,
the probability of observing one extra edge between two vertices in the
component goes to $0$. Therefore, $C^{(p_n)} \cap[1,\ldots n]$ is a tree with high probability for $n$ large enough, hence $B^{(p_n)}\cap [0,n] = C^{(p_n)} \cap [0,n]$ with high probability, which completes the proof using Theorem~\ref{thm:coupling}.

 We deduce that the length of the longest path in this connected component is the same as the length of the longest path in the tree. This longest path is given by the largest generation $F_t$ at which there is at least one individual to the left of $t$, completing the proof.

We now assume that $n p_n \to \infty$, and we use once again the coupling
described in Theorem~\ref{thm:coupling}. By law of large numbers, the
position $\xi_{\kappa_n}$ of the $n$th vertex in the coupling satisfies $p_n \xi_{\kappa_n}/ n p_n \to
1$ a.s. as $n \to \infty$. Then, using \eqref{eqn:cvInLawBRW}, for all $\epsilon
> 0$, for $n$ large enough we have $M_{\floor{\e np_n(1 + \epsilon)}} \geq
np_n(1+\epsilon/2)$. Therefore, almost surely for $n$ large enough, the longest
path issued from $1$ will be shorter than $\e n(1+\epsilon)$, i.e.\
\[
  \limsup_{n \to \infty} \frac{L_n}{n p_n} \leq \e \quad \text{a.s.}
\]

For the lower bound, we fix $t > 0$, and we consider in a first time the length
of the longest path in the restriction of the Barak-Erd\H{o}s graph to the set
$\{1,\ldots, \floor{t/p_n}\}$. Thanks to the previous case, we know that the
length of this path converges to $F_t$. Moreover, the smallest vertex with
index larger than $t/p_n$ connected to this path is positioned at geometric
distance with parameter $p_n$ from $\floor{t/p_n}$. Therefore, with high
probability, there exists a path of length at least $F_t(1-\epsilon)$ starting
from vertex $1$ and ending at a vertex of index between $(t+1/2)/p_n$,
$(t+2)/p_n$.

Chaining this argument, we obtain that with high probability,
$L^{(n)}_1(p_n)$ is larger than the sum of $n p_n / (t+2)$ independent copies of
$F_t$. Letting $n \to \infty$, then $\epsilon \to 0$, we obtain
\[
  \liminf_{n \to \infty} \frac{L_n}{np_n} \geq \frac{\E(F_t)}{t}.
\]
Then using \eqref{eqn:speedBRW}, we have $\lim_{t \to \infty} \frac{\E(F_t)}{t} = \e$, which completes the proof.
\end{proof}

Theorem \ref{thm:start1} gives the precise asymptotic behavior of
$L_n(p_n)$ as long as $\lim_{n \to \infty} n p_n < \infty$. However, when
$np_n \to \infty$, it only gives the first order of its asymptotic expansion.
Using the coupling described in Theorem~\ref{thm:coupling}, Itoh
\cite{Itoh} showed that the asymptotic behavior of the branching random walk
applies to the sparse Barak-Erd\H{o}s settings, and that as long as $np_n \to
\infty$ ``slowly enough'', we have
\begin{equation}
  \label{eqn:brwLike}
  \lim_{n \to \infty} \sup_{k \in \N} \left| \P( L_n(p_n) =k) - \P(F_{np_n} = k) \right| = 0,
\end{equation}
showing that the law of $L_n$ has the same asymptotic behavior as $F_{np_n}$ in this regime, described in Lemma \ref{lem:asympK}. In particular, this results states that $(L_n(p_n) - np_n \e - \frac{3}{2\e} \log np_n)$ remains tight.

However, this asymptotic behavior fails to hold when $p_n$ decays too slow. The branching random walk associated to the Barak-Erd\H{o}s graph gives a good description of the connected component of $1$ as long as the number of blue balls below position $n p_n$ remains small. This is for example true assuming $np_n$ converges to a constant, under which conditions the number of blue balls before $n p_n$ go to $0$ as $n \to \infty$. On the opposite end of the spectrum, if $p$ remains constant, the
asymptotic behavior of $L_n$ differs sharply from the one of $F_{np}$. Indeed the leftmost particle will tend to be blue with high probability.

The behavior of red particles in the branching random walk can be compared to a \emph{branching random walk with selection}. Indeed, when considering dynamically their behavior, we observe that the $k$th largest red ball produces a red offspring at rate $q^k$. In particular, if $k \ll -\log p$, particles branch at a rate close to $1$, similarly to the usual branching random walk, while if $k \gg -\log p$, the branching rate of that particle is close to $0$. The set of red balls can therefore be closely estimated by the set of surviving particles in the branching random walk with selection of the $N \approx \floor{-\log p}$ rightmost particles.

The branching random walk with selection of the $N$ rightmost particles evolves according to the following procedure. At each generation, all particles reproduce independently, a particle at position $x$ giving birth to a Poisson point process with unit intensity on $[x,\infty)$. Among all children of these particles, only the $N$ leftmost are selected to constitute the new generation of the process.

It is worth observing that this process can be straightforwardly embedded in the
branching random walk. We write $\mathcal{D}_N$ for the set of particles in the
branching random walk that form the branching random walk with selection of the
$N$ rightmost generation, i.e.\ such that for all $n \in \N$, $\mathcal{D}_N \cap\{|u|=n\}$ is the set of the $N$ leftmost children of the particles in $\mathcal{D}_N \cap\{|u|=n-1\}$.

Bérard and Gouéré \cite{BeG10} first proved the Brunet-Derrida behavior for a branching random walk with binary branching and selection of the $N$ rightmost individuals. This result was extended by Mallein \cite{Mal17a} to more general reproduction laws covering the present case. From Kingman's subadditive theorem, we know the existence of a sequence $(v_N)$ such that for each $N \in \N$,
\[
  \lim_{n \to \infty} \frac{1}{n}\max_{u \in \mathcal{D}_N, |u|=n} = v_N \quad \text{a.s.}
\]
The Brunet-Derrida behavior of this branching random walk can be stated as follows
\begin{equation}
  \label{eqn:brunetDerrida}
  v_N = \e - \frac{\pi^2 \e(1 + o(1))}{2 (\log N)^2} \quad \text{as $N\to \infty$.}
\end{equation}
Using the coupling between the branching random walk and the Barak-Erd\H{o}s graph, and the fact that the set of red particles is well-approximated by the set $\mathcal{D}_{\floor{(-\log p)^2}}$, we obtain the following result.
\begin{theorem}[Mallein and Ramassamy \cite{MR19}]
\label{thm:cforSmallp}
As $p \to 0$, we have $$C(p) = p \e  - \frac{\pi^2 \e}{2}p(-\log p)^{-2} + o\left( p (-\log p)^{-2}\right).$$
\end{theorem}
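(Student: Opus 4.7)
The plan is to leverage the coupling of Theorem~\ref{thm:coupling} between $G=\vec G(\N,p)$ and the PWIT, together with the Brunet--Derrida speed asymptotics \eqref{eqn:brunetDerrida} for $N$-branching random walks. Corollary~\ref{cor:couplingResult} combined with the renewal laws \eqref{k1}--\eqref{k2} identifies the speed $C(p)/p$ with the asymptotic slope $\lim_{t\to\infty}\bar F_t/t$ of the front of the red-particle subprocess inside the PWIT. That subprocess is itself Markovian: when $i$ red particles are present, the $k$-th most recently born one (for $k=1,\ldots,i$) independently produces a red child after an exponential time of rate $q^{k-1}$, with $q=1-p$. Hence the whole task reduces to extracting the asymptotics, as $p\to0$, of the speed of this rank-dependent branching process; the delicate correction in $C(p)$ is encoded in the slightly non-constant rates $q^{k-1}$.

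Fix a truncation level $N=N(p)$ with $pN\to0$ and $\log N\sim \log(1/p)$; a convenient choice is $N=\lfloor 1/(p(\log(1/p))^{3})\rfloor$, which yields $pN=(\log(1/p))^{-3}$ and $\log N=\log(1/p)-3\log\log(1/p)$. For the lower bound on $C(p)/p$, the top-$N$ ranked red particles branch at rate at least $q^{N-1}=1-O(pN)=1-O((\log p)^{-3})$; thus the process obtained by discarding all rank-$(>N)$ branchings and slowing the retained ones to the common rate $q^{N-1}$ is stochastically dominated by the red process, and after a time rescaling by $q^{N-1}$ it is precisely a standard $N$-BRW. Its speed is $q^{N-1}v_N$, and \eqref{eqn:brunetDerrida} combined with $\log N\sim\log(1/p)$ and $1-q^{N-1}=o((\log p)^{-2})$ yields
\[
q^{N-1}v_N=\e-\frac{\pi^{2}\e}{2(\log p)^{2}}(1+o(1)),
\]
whence the matching lower bound on $C(p)/p$.

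For the upper bound one must argue that the rank-$(>N)$ red particles do not measurably accelerate the front. Their aggregate branching rate is $\sum_{k>N}q^{k-1}=q^{N}/p$, which tends to $0$ under our choice of $N$, but this alone is insufficient: a low-ranked particle could in principle produce a descendant that leapfrogs to the front. The strategy is to couple the top-$N$ ranked red particles from above with a standard $N$-BRW (all branching rates equal to $1$, with selection of the $N$ rightmost children at each event), and then to bound the total contribution of rank-$(>N)$ particles to the displacement of the front. Combined with the Brunet--Derrida upper bound on $v_N$, this gives $C(p)/p\le \e-\frac{\pi^{2}\e}{2(\log p)^{2}}(1+o(1))$ and completes the proof.

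The main obstacle is this upper-bound step: unlike a genuine $N$-BRW, the red process never kills any particle, so one must control the entire trailing population rather than only the $N$ leading particles. Quantitatively this requires showing that once a particle drops below rank $N$, it and its future red progeny stay at distance of order at least $\log N\sim\log(1/p)$ behind the front at all later times, so that they cannot overtake the coupled $N$-BRW on the relevant time scale. This rests on the sharp width estimates for the $N$-BRW, which are exactly the ingredients developed by B\'erard--Gou\'er\'e \cite{BeG10} and Mallein \cite{Mal17a} in proving \eqref{eqn:brunetDerrida}, and which feed into the ``trapping'' argument of \cite{MR19} in the present setting.
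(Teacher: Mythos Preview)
Your plan is the paper's scheme of proof: couple with the PWIT, identify $C(p)/p$ with the asymptotic speed of the red front, sandwich that speed between two $N$-branching-selection speeds, and invoke Brunet--Derrida \eqref{eqn:brunetDerrida} with $\log N\sim|\log p|$. Two small slips to flag. First, the rate $q^{k-1}$ attaches to the red particle of \emph{rank} $k$ in the order $\lhd$ of Definition~\ref{def:vr} (i.e.\ by bin index $L_{\kappa_j}$, with recency used only as a tie-breaker), not to the $k$-th most recently born; a red child of a low-ranked parent need not land at the front, so birth order and rank do not coincide. Your lower-bound comparison (slow the top-$N$ to common rate $q^{N-1}$, recovering a continuous-time $N$-IBM) is unaffected once the correct ranking is used. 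Second, your throwaway claim that the tail rate $\sum_{k>N}q^{k-1}=q^{N}/p\to 0$ is false for your $N$: since $pN=(\log(1/p))^{-3}\to 0$ you have $q^{N}\approx e^{-pN}\to 1$, hence $q^{N}/p\sim 1/p\to\infty$. You rightly discard that line anyway and correctly identify the genuine obstacle---rank-$(>N)$ births can feed back into the top-$N$---which is why the upper bound really needs the cloud-width (``trapping'') estimates from the $N$-BRW literature \cite{BeG10,Mal17a}. The paper's own sketch stops at precisely this point and defers to \cite{MR19} as well.
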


This asymptotic behavior extends to sparse Barak-Erd\H{o}s graphs, as long as
$p_n$ decays to $0$ slowly enough, we have
\begin{equation}
  \label{eqn:bdLike}
  \frac{L_n}{n p_n} = \e - \frac{\pi^2 \e}{2(-\log p_n)^2}(1+o(1)).
\end{equation}
The question of distinguishing sequences $p_n$ for which $L_n$ exhibit a behavior similar to \eqref{eqn:brwLike} or \eqref{eqn:bdLike}, or do distinguish an intermediate behavior remains open. In order to
obtain a proper insight on this question, we begin by giving a proof scheme for
Theorem \ref{thm:cforSmallp}.

\begin{proof}[Scheme of proof of Theorem~\ref{thm:cforSmallp}]
We first remark that typically, the leftmost blue particle born from a given red
particle is produced at a time when the parent is at a distance of order $-\log p$ of the position of the leftmost current red particle in the system. Therefore, for any $\delta > 0$, the set of red particles alive at a large generation $n$ is included in $\mathcal{D}_{(1+\delta)\floor{-\log p}}$, and contains all particles alive at generation $n$ in $\mathcal{D}_{(1-\delta) \floor{-\log p}}$ for all $p$ large enough.

Using \eqref{eqn:brunetDerrida}, we deduce that the leftmost red particle at a large generation $n$ is situated in $[n v_{(1-\delta)\floor{-\log p}}, n v_{(1+\delta)\floor{-\log p}}]$. Then, using Corollary \ref{cor:couplingResult}, we deduce that with high probability,
\[
  \frac{L_n}{p} \in [n v_{(1-\delta)\floor{-\log p}}, n v_{(1+\delta)\floor{-\log p}}]
\]
Letting $n \to \infty$, then $\delta \to 0$ and finally $p \to 0$, Theorem~\ref{thm:cforSmallp} will hold.
\end{proof}

We observe that in the proof of Theorem \ref{thm:cforSmallp}, we used the
coupling with the branching random walk to compare the connected component of
the vertex $1$ with the set of particles in the branching random walk staying at
all time within distance $-\log p$ from the position of the leftmost particle.
Note also that we are interested in the position of the minimal displacement at
time $n p_n$.

Chen \cite{Che15} proved that in a branching random walk, the trajectory
yielding to the minimal position at time $n$ can be scaled in time by a factor
$n$ and in space by a factor $\sqrt{n}$ to converge to a standard Brownian
excursion. In particular, we know that typical trajectory yielding to the
minimal position at time $n$ remains at all time within distance $\sqrt{t}$ from
the leftmost position. As a result, it is expected that as long as $-\log p_n
\gg \sqrt{n p_n}$, the asymptotic behavior of the position of the minimal
displacement at time $n$ is similar in the branching random walk and in the
$-\log p_n$-branching random walk. On the opposite, if $-\log p_n \ll (-\log
p_n)$, similar techniques as the ones used in \cite{Mal17a} show that the
minimal displacement becomes more similar to the branching process with
selection.

As a result, we formulate the following conjecture for the asymptotic behavior
of the length of the longest path starting from $1$ in sparse Barak-Erd\H{o}s
graphs.

\begin{conjecture}
Let $(p_n)$ be a null sequence with $n p_n = \infty$.
\begin{enumerate}
    \item If $\lim_{n \to \infty} n p_n/\log n^2 = 0$, then \eqref{eqn:brwLike} holds.
    \item If there exists $\delta > 0$ such that $\lim_{n \to \infty} np_n/(\log n)^{2+\delta} = \infty$, then \eqref{eqn:bdLike} holds.
\end{enumerate}
\end{conjecture}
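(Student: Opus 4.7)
The plan is to base both parts on the coupling of Theorem~\ref{thm:coupling} between the Barak-Erd\H{o}s graph $\vec G(\N,p_n)$ and the PWIT $\Pi$, using Corollary~\ref{cor:couplingResult} which relates $L_n(p_n)$ to the front $\bar F_t$ of the subset of red particles at the embedded time $t \approx p_n n$. In both regimes, the strategy is to compare $\bar F_t$ with one of two well understood quantities: the front $F_t$ of the full PWIT (which is governed by the minimum $M_\ell$ of an Uchiyama-type continuous-time BRW, with the known limit law \eqref{eqn:cvInLawBRW} after the $\tfrac{3}{2\e}\log$ correction), and the front $\widetilde F^{(N)}_t$ of the branching random walk with selection of the $N \approx -\log p_n$ leftmost particles, whose speed satisfies the Brunet--Derrida expansion \eqref{eqn:brunetDerrida}. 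The two parts of the conjecture correspond exactly to the regime in which one of these two comparisons is sharp.

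For part (1), I would combine the coupling with Chen's result that the optimal trajectory realising $M_\ell$ in the PWIT, viewed as a path from the root, deviates from the line of slope $1/\e$ by at most $O(\sqrt{\ell})$ when properly rescaled (converging after Brownian scaling to a normalised excursion). Translating to the time parametrisation, by time $t = np_n$, the ``tube'' within which an optimal path lies has width $O(\sqrt{np_n})$. The hypothesis $np_n / (\log n)^2 \to 0$, combined with the fact that $-\log p_n$ is of order $\log n$ in this regime, gives $\sqrt{np_n} = o(-\log p_n)$. In the coupling, a particle $u$ is red precisely when all its ancestors have rank at most of order $-\log p_n$; since the ranks of the leftmost particles in the PWIT are bounded by the number of particles lying in the tube of width $O(\sqrt{np_n})$ around the leftmost trajectory (of which there are $O(\sqrt{np_n}) = o(-\log p_n)$ on average), the minimising trajectory is red with probability $1-o(1)$. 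This forces $\bar F_{np_n} = F_{np_n}$ with high probability, and invoking Itoh's $\mathrm L^\infty$ approximation (leading to \eqref{eqn:brwLike}) concludes part (1).

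For part (2), the argument is dual: I would sandwich the red set between two copies of the $N$-BRW for two values $N_{\pm} = \lfloor(1\pm\delta)(-\log p_n)\rfloor$, exactly as in the scheme of proof of Theorem~\ref{thm:cforSmallp}. The upper bound $\bar M_\ell \le \widetilde M^{(N_-)}_\ell$ follows from the fact that red particles branch at rate at least $1-q^{N_-}$ once there are at least $N_-$ of them, dominating a rate $1$ branching restricted to the $N_-$ leftmost. The lower bound $\bar M_\ell \ge \widetilde M^{(N_+)}_\ell$ follows from a ``typical leftmost trajectory'' argument: under the hypothesis $np_n \gg (\log n)^{2+\delta} \sim (-\log p_n)^{2+\delta}$, Chen's excursion scaling yields that the width of the optimal trajectory is $\gg -\log p_n$, so the leftmost red descendant of a red particle of rank $r > N_+$ is born well after the minimum, and such particles contribute nothing to $\bar M_\ell$. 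Then applying \eqref{eqn:brunetDerrida} at $N = N_\pm$ and passing to the limit $\delta \to 0$ delivers \eqref{eqn:bdLike}.

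The main obstacle will be making the ``rank equals red-vs-blue'' heuristic quantitatively sharp, especially for part~(2): the coupling produces red particles whose branching rate depends on a dynamic rank, so the comparison with the strict $N$-BRW is not a straightforward stochastic domination but requires carefully tracking the rank process. Getting the correct value of $N$ (to within the $(1\pm\delta)$ factors needed to match the constant $\pi^2\e/2$) will need a finer accounting of how many blue particles accumulate in a tube of width comparable to $-\log p_n$ around the minimum, and showing that these blue particles really do displace exactly the rank-$(-\log p_n)$ red particles and not rank-$C(-\log p_n)$ ones for some worse constant $C$. The secondary difficulty is the intermediate regime $(\log n)^2 \lesssim np_n \lesssim (\log n)^{2+\delta}$, which the conjecture wisely leaves open; resolving it would presumably require an interpolating functional limit theorem linking Chen's excursion result to the Brunet--Derrida fluctuation scale.
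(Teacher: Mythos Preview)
This statement is presented in the paper as an open \emph{conjecture}, not a theorem; the paper offers only the heuristic motivation that immediately precedes it (Chen's excursion limit for the minimising trajectory, and the comparison of the red-particle system with the $N$-branching random walk for $N\approx -\log p_n$). Your proposal is not so much an alternative proof as a fleshed-out version of exactly that heuristic, and you correctly flag the central obstacle: turning the ``rank versus red/blue'' picture into a genuine stochastic comparison with $\mathcal D_{N_\pm}$, and a trajectory-confinement statement sharp enough to preserve the constant $\pi^2 e/2$.

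Two specific points deserve tightening. First, Chen's result controls the \emph{position} $X(u)$ of the ancestors of the minimiser relative to the line $k/e$, whereas the rank in the coupling is by \emph{generation} $|u|$ among red particles. Your sentence ``the ranks of the leftmost particles are bounded by the number of particles lying in the tube of width $O(\sqrt{np_n})$'' mixes these two scales; to make it rigorous you would have to argue that an ancestor born at time $k/e+O(\sqrt{np_n})$ sits within $O(\sqrt{np_n})$ of the current front \emph{in generation}, and then control how many red particles occupy those top bins. Second, in part~(2) you write $np_n\gg(\log n)^{2+\delta}\sim(-\log p_n)^{2+\delta}$, but $-\log p_n$ need not be comparable to $\log n$ in that regime (take $p_n=1/\log n$). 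Only the one-sided bound $-\log p_n\le \log n+O(1)$ is available; fortunately that inequality is the one your sandwich actually needs, so the argument survives, but the asymptotic equivalence as stated is false.
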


\subsection{Extension to stochastic ordered graphs with geometry}
\label{subsec:extensions}

In this section, we consider the two following planar extensions of the
Barak-Erd\H{o}s graph. In both cases, the set of vertices of the graph is
$\N^2$. In the first model, for all pair of distinct vertices $(i,j)$ and
$(k,l)$ a directed edge between $(i,j)$ and $(k,l)$ is present with probability
$p$ independently on any other edges if and only if $i \geq k$ and $j \geq l$
(and $(i,j)\neq (k,l)$). This random graph is written $\mathcal{G}^{(1)}(p)$. In
the second model, a directed edge is present with probability $p$ between
$(i,j)$ and $(k,l)$ if and only if $i=k$ and $j > l$, or $j=l$ and $i > k$. We
denote by $\mathcal{G}^{(2)}(p)$ this random graph. We also write
$\mathcal{G}^{(1)}(n,m,p)$ (respectively $\mathcal{G}^{(2)}(n,m,p)$) the
restriction of $\mathcal{G}^{(1)}(p)$ (resp. $\mathcal{G}^{(2)}(p)$) to the
vertex set $\{1,\ldots n\} \times\{1,\ldots,m\}$.

Similarly to the Barak-Erd\H{o}s graph, the connected component of the vertex
$(1,1)$ can be described, in the sparse limit $p \to 0$ as a branching random
walk. More precisely, in the graph $\mathcal{G}^{(1)}(p)$, rescaling the vertex
indices by $p^{1/2}$, the connected component of $(1,1)$ converges to a
branching random walk $X^{(1)}$ on $\R_+^2$, in which a particle at position
$(x,y)$ gives birth to a Poisson process of offspring on $[x,\infty)\times
[y,\infty)$ with unit intensity. This branching random walk provides an example
of \emph{stable} branching process as defined in \cite{BCM18}.

When scaling the vertex indices by $p$, the connected component of the vertex
$(1,1)$ converges, as $p \to \infty$ to a branching random walk $X^{(2)}$ on
$\R_+^2$ in which a particle at position $(x,y)$ gives birth to a Poisson
process of offspring on $\{x\} \times [y,\infty)$ and an independent Poisson
process of offspring on $[x,\infty)\times \{y\}$ with unit intensity.

Using similar methods as the ones used in the Barak-Erd\H{o}s graph, we can
obtain the following results. First, observing that the law of $X^{(1)}$ is
stable by the transformation $(x,y) \mapsto(x/a,ay)$, and that
\[
  \lim_{n \to \infty} \frac{1}{n} \min \Vert X^{(1)}(u) \Vert_\infty = \frac{2}{\e} \quad \text{a.s.}
\]
and applying the same proof as in \cite{NEWM92}, we obtain the following estimate
for the length of the longest path starting from vertex $1$.
\begin{theorem}
We denote by $L_{n,m}(p)$ the length of the longest path issued from $(1,1)$ in $\mathcal{G}^{(1)}(n,m,p)$.
\begin{enumerate}
  \item If $p \to 0$ with $mnp \to t$, then $L_{n,m}(p)$ converges in law to $K(t) = \max\{ |u| : X^{(1)}(u) \in [0,\sqrt{t}]^2 \}$.
  \item If $p \to 0$ with $mnp \to t$, then $L_{n,m}(p)/\sqrt{mnp} \to \e/2$ in probability.
\end{enumerate}
\end{theorem}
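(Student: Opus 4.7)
The plan is to adapt Newman's strategy from Theorem \ref{thm:start1} to the two-dimensional setting, by constructing a coupling between the connected component of the vertex $(1,1)$ in $\mathcal{G}^{(1)}(p)$ and the branching random walk $X^{(1)}$. After rescaling vertex indices by $\sqrt{p}$, the set $\N^2$ becomes $\sqrt{p}\,\N^2$, and the probability $p$ of connecting $(i,j)$ to any given vertex $(k,l)$ with $k>i$, $l>j$, produces in the limit a Poisson process on $(i\sqrt{p},\infty)\times(j\sqrt{p},\infty)$ with unit intensity, which matches the offspring distribution of $X^{(1)}$. The coupling is built inductively, mimicking Theorem \ref{thm:coupling}: a special spanning tree $B$ of the connected component of $(1,1)$ is grown step by step, and each newly discovered vertex is mapped to a particle of $X^{(1)}$ in a way that preserves the path length to the root.

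For Part (1), assume $mnp \to t$. Under the coupling, the connected component of $(1,1)$ within $\{1,\ldots,n\}\times\{1,\ldots,m\}$, viewed as a subset of $[0,n\sqrt{p}]\times[0,m\sqrt{p}]$ after rescaling, converges in distribution (as a finite random tree) to $X^{(1)}$ restricted to a rectangle of area $mnp \to t$. Since the limiting number of particles in that rectangle is a.s.\ finite, the probability of observing an extra edge that would create a cycle within the component tends to $0$, so with high probability the connected component is precisely the spanning tree inherited from the coupling. Consequently $L_{n,m}(p)$ converges in law to the maximum generation of a particle of $X^{(1)}$ contained in a rectangle of area $t$. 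By the scaling invariance of $X^{(1)}$ under the map $(x,y)\mapsto(x/a,ay)$, which preserves the law of the branching random walk for any $a>0$, the distribution of this maximum generation depends only on the area of the rectangle, and hence equals $K(t)$.

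For Part (2), assume $mnp \to \infty$. The upper bound follows from the coupling: $L_{n,m}(p)$ is bounded above by the maximum generation of a particle of $X^{(1)}$ in $[0,n\sqrt{p}]\times[0,m\sqrt{p}]$, which by the scaling invariance has the same law as $K(mnp)$. Since $K(t)=\max\{|u|:\|X^{(1)}(u)\|_\infty\leq\sqrt{t}\}$ is the generalized inverse of $n\mapsto\min_{|u|=n}\|X^{(1)}(u)\|_\infty$, and the latter satisfies $n^{-1}\min_{|u|=n}\|X^{(1)}(u)\|_\infty\to 2/\e$ almost surely, we deduce $K(t)/\sqrt{t}\to \e/2$ in probability. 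For the lower bound, I would tile $\{1,\ldots,n\}\times\{1,\ldots,m\}$ into subrectangles of area $\Theta(1/p)$ (so each contains, in the scaling limit, a nontrivial branching random walk on a fixed-size rectangle), apply Part (1) to produce in each subrectangle a path of length close to $K(t_0)$ with high probability, and concatenate these paths along a staircase of subrectangles by using connecting edges between them. Letting first $n,m\to\infty$ and then $t_0\to\infty$, the $K(t_0)/\sqrt{t_0}\to\e/2$ asymptotic yields the matching lower bound. The main obstacle is precisely this chaining argument: unlike the one-dimensional Barak-Erd\H{o}s graph, the two-dimensional geometry forces us to select a monotone path of subrectangles along which concatenation is possible, and to control uniformly the loss of length at each junction while ensuring enough connecting edges are present.
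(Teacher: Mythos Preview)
Your proposal is correct and follows the same route the paper indicates: the paper does not give a proof but simply states that one applies Newman's argument (Theorem~\ref{thm:start1}) after noting the scaling invariance of $X^{(1)}$ under $(x,y)\mapsto(x/a,ay)$ and the almost-sure limit $\tfrac{1}{n}\min_{|u|=n}\|X^{(1)}(u)\|_\infty\to 2/\e$. You have identified precisely these ingredients, and your tiling/staircase concatenation for the lower bound in Part~(2) is the natural two-dimensional analogue of Newman's one-dimensional chaining; the geometric complication you flag is real but routine once one restricts to a diagonal chain of subrectangles and uses the scaling invariance to reduce to a square.
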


Similarly, we can obtain the following estimate for the asymptotic behavior of
the length of the longest path issued from $1$ in the graph
$\mathcal{G}^{(2)}(n,n,p)$.
\begin{theorem}
We denote by $L_{n}(p)$ the length of the longest path issued from $(1,1)$ in $\mathcal{G}^{(2)}(n,n,p)$.
\begin{enumerate}
  \item If $p \to 0$ with $np \to t$, then $L_{n}(p)$ converges in law to $K(t) = \max\{ |u| : X^{(1)}(u) \in [0,t]^2\}$.
  \item If $p \to 0$ with $np \to t$, then $L_{n}(p)/np \to \e/(1 - \log 2)$ in probability.
\end{enumerate}
\end{theorem}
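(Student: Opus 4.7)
My plan is to extend the coupling of Theorem \ref{thm:coupling} between the one-dimensional Barak-Erd\H{o}s graph and its branching random walk to the 2D graph $\mathcal{G}^{(2)}(n,n,p)$ coupled with $X^{(2)}$. The children of $(1,1)$ in $\mathcal{G}^{(2)}(n,n,p)$ are the vertices of its row and column, each retained independently with probability $p$. Rescaling by $p$ so that $(i,j) \mapsto (ip, jp)$, these children converge jointly, as $p \to 0$ with $np \to t$, to two independent unit-intensity Poisson processes on $\{0\} \times (0,t]$ and on $(0,t] \times \{0\}$, which match the offspring distribution of $X^{(2)}$ restricted to $[0,t]^2$. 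The same description transfers inductively to descendants of arbitrary depth. Since the expected number of particles of $X^{(2)}$ in $[0,t]^2$ is bounded by $\sum_{n\ge 0} \binom{2n}{n} t^n/n!$, which is finite (and in fact grows at most like $e^{Ct}$), the explored population is almost surely finite and the coupling extends to the whole connected component of $(1,1)$. Taking the maximal depth in this coupling yields $L_n(p) \to K(t)$ in distribution, which is part 1.

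For part 2, I would combine the sparse-limit coupling of part 1 with a subadditivity/concentration argument in the spirit of the proof of Theorem \ref{thm:start1}. Partition $\{1,\ldots,n\}^2$ into suitable blocks in which the local parameter $np$ is large but finite, apply part 1 inside each block to produce a path of expected length proportional to $np$, and concatenate the resulting paths using row/column ``linking'' vertices shared between consecutive blocks, whose existence is guaranteed by skeleton-type arguments analogous to those of Section~\ref{secreg} adapted to 2D. A Kingman-type subadditive argument then yields $L_n(p)/(np) \to c$ in probability, with the limit $c$ identified as $\lim_{T\to\infty} K(T)/T$ for the branching random walk $X^{(2)}$.

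The hard part will be computing $c = \e/(1-\log 2)$. The log-Laplace of the offspring point process of $X^{(2)}$ is $\kappa(\theta_1, \theta_2) = \log(1/\theta_1 + 1/\theta_2)$, and the relevant quantity is the minimum of $\max(X(u), Y(u))$ at generation $n$ that governs $K(T)$. In contrast with the one-dimensional case, neither a naive first-moment bound on the number of particles in $[0,T]^2$ nor a direct many-to-one identity yields the sharp constant, because a path of length $n$ in $X^{(2)}$ carries an internal combinatorial structure: it decomposes into $n_1$ horizontal steps and $n-n_1$ vertical steps in some order. The refined analysis treats each admissible split $(n_1, n-n_1)$ as giving the front of a one-dimensional Poisson branching walk along each axis (of speed $1/\e$), then optimizes over the split $n_1/n \in [0,1]$ with the combinatorial multiplicity $\binom{n}{n_1}$ contributing an entropic correction. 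Solving this entropy-penalized variational problem produces the factor $(1-\log 2)^{-1}$ through the stationary point of $\log 2 - \alpha\log\alpha - (1-\alpha)\log(1-\alpha)$ type quantities; I expect this variational/entropic computation to be the technically most delicate step.
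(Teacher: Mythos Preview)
The paper provides no proof of this theorem; it is stated directly after the analogous result for $\mathcal{G}^{(1)}$ with only the remark ``Similarly, we can obtain the following estimate.'' The implicit template is the one-dimensional argument of Theorem~\ref{thm:start1}. So there is nothing to compare against beyond that template.

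Your Part 1 follows that template and is sound: couple the component of $(1,1)$ with $X^{(2)}$ (the ``$X^{(1)}$'' in the displayed statement is presumably a typo), observe that in the regime $np\to t$ the component is a.s.\ a finite tree, and read off the maximal generation.

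Your Part 2 has two genuine gaps. First, the 2D chaining is only gestured at. In 1D, Newman concatenates maximal paths on successive intervals via a single geometric link; in 2D there is no linear order on blocks, and your reference to ``row/column linking vertices'' and ``skeleton-type arguments adapted to 2D'' does not explain how staircase paths across a 2D partition are joined with controlled loss.

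Second, and more seriously, your heuristic for the constant is not carried out and rests on a flawed decomposition. Treating each split $(n_1,n-n_1)$ as producing \emph{independent} 1D PWIT fronts in each coordinate ignores that the horizontal and vertical steps share a single branching tree: one cannot minimize $x_u$ and $y_u$ separately over the same genealogy. The honest computation is the first moment,
\[
\E\bigl[\#\{|u|=n:X^{(2)}(u)\in[0,vn]^2\}\bigr]
=\sum_{k=0}^n\binom{n}{k}\frac{(vn)^k}{k!}\frac{(vn)^{n-k}}{(n-k)!}
=\frac{(vn)^n}{n!}\binom{2n}{n}\approx(4ev)^n,
\]
which via Biggins' shape theorem yields $K(T)/T\to 4e$, not $e/(1-\log 2)$. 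Your own sketch, read literally as ``$\min\max(x,y)\approx\max(\alpha,1-\alpha)\,n/e$ optimized at $\alpha=1/2$'', gives $2e$. Neither matches the paper's constant, and the paper itself offers no derivation. So the identification of the constant remains entirely open in your proposal; you would need either a different model for $X^{(2)}$ than the stated one, or a rigorous BRW computation that actually produces $(1-\log 2)^{-1}$.
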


\subsection{Longest path in the Barak-Erd\H{o}s graph}
\label{subsec:longestPath}

We now turn to the length of the longest path in the Barak-Erd\H{o}s graph in
the sparse regime. Again, Newman \cite[Theorem 2]{NEWM92} obtained a completed
description of the asymptotic behavior of $L_n(p_n)$ as long as $n p_n = O(\log
n)$. Moreover, he proved that as long as $np_n \gg \log n$, then the asymptotic
behavior of $L_n(p_n)$ and $L^{(1)}_n(p_n)$ are similar.

We decompose the asymptotic behavior of the length of the longest path in the
Barak-Erd\H{o}s graph along the following lines. We first assume that $p_n =
o(n^{1 - \epsilon})$ for some $\epsilon > 0$. In this situation, the support of
the length of the longest path in the Barak-Erd\H{o}s graph converges to one or
two points, depending on the exact asymptotic behavior of $p_n$. Remark that
with high probability $1$ will not be connected to any other vertex in the
graph, so $L_n^{(1)}(p_n)$ converges to $0$ in probability.
\begin{theorem}[Newman \cite{NEWM92}, Theorem 2]
\label{thm:longest1}
Let $(p_n)$ be a null sequence such that $\lim_{n \to \infty} n^{1+\epsilon}p_n =0$ for some $\epsilon > 0$.
\begin{enumerate}
  \item If $\lim_{n \to \infty} n^2p_n = 0$, then $\lim_{n \to \infty} L_n(p_n) = 0$ in probability.
  \item If there exists $m \in \N$ such that
  \[\lim_{n \to \infty} p_n n^{1+1/m} = \infty\quad\text{and}\quad \lim_{n\to \infty} p_n n^{1+1/(m+1)} = 0,\]
   then $L_n(p_n) \to m$ in probability.
   \item If there exists $\theta \in (0,\infty)$ and $m \in \N$ such that
   \[
     \lim_{n \to \infty} p_n n^{1+1/m} = \theta,
   \]
   then $\displaystyle \lim_{n \to \infty} \P(L_n(p_n) = m-1) = 1 - \lim_{n \to\infty} \P(L_n(p_n) = m) = e^{-\frac{\theta^m}{(m+1)!}}$.
\end{enumerate}
\end{theorem}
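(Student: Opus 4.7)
The plan is to work with $N_k$, the number of directed paths of length exactly $k$ in $\vec G(\{1,\ldots,n\},p_n)$, noting that $\{L_n(p_n)\ge k\}=\{N_k\ge 1\}$. A direct count gives
\[
\E[N_k]=\binom{n}{k+1}p_n^k\sim\frac{(p_n n^{1+1/k})^k}{(k+1)!}\qquad (n\to\infty),
\]
so the quantity $p_n n^{1+1/k}$ drives the asymptotics. For case (i) and the upper bound of case (ii), I would apply Markov's inequality: the hypothesis $p_n n^{1+1/(m+1)}\to 0$ (with $m=0$ for case (i)) yields $\E[N_{m+1}]\to 0$, hence $\P(L_n\ge m+1)\le \E[N_{m+1}]\to 0$.

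For the lower bound in case (ii), under $p_n n^{1+1/m}\to\infty$, I would use the Paley--Zygmund inequality $\P(N_m\ge 1)\ge \E[N_m]^2/\E[N_m^2]$ and show $\mathrm{Var}(N_m)=o(\E[N_m]^2)$. Expanding $\E[N_m^2]$ as a sum over ordered pairs of paths and grouping terms by the number $i\in\{1,\ldots,m-1\}$ of shared edges (and the shapes of the shared sub-paths), each group contributes $O(n^{2m+1-i}p_n^{2m-i})$, a fraction of order $(n(np_n)^i)^{-1}$ of $\E[N_m]^2$. Writing $\theta_n:=p_n n^{1+1/m}$, this ratio equals $(\theta_n^i n^{1-i/m})^{-1}$, which vanishes since $\theta_n\to\infty$ and $1-i/m>0$ for $i\le m-1$.

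For case (iii), an identical Paley--Zygmund argument applied to $N_{m-1}$ (whose mean diverges because $p_n n^{1+1/(m-1)}=p_n n^{1+1/m}\cdot n^{1/(m(m-1))}\to\infty$ when $m\ge 2$, while $N_0=n$ makes the case $m=1$ trivial) gives $\P(N_{m-1}\ge 1)\to 1$; meanwhile $\E[N_{m+1}]\to 0$ still supplies the ceiling. The key new ingredient is Poisson convergence $N_m\xrightarrow{d}\mathrm{Poisson}(\lambda)$ with $\lambda=\theta^m/(m+1)!$, which I would establish via the method of factorial moments: for each $r\ge 1$, ordered $r$-tuples of vertex-disjoint length-$m$ paths contribute $\sim \E[N_m]^r\to \lambda^r$, while by the same shared-edge estimate as above (now with $\theta_n=\theta$ bounded) the $r$-tuples containing a pair that shares an edge contribute $o(1)$. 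The method of moments then gives $\P(N_m=0)\to e^{-\lambda}$, and combining the three pieces yields
\[
\P(L_n=m-1)=\P(N_m=0)-\P(N_{m-1}=0)\to e^{-\lambda},\qquad \P(L_n=m)\to 1-e^{-\lambda}.
\]

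The main obstacle is the combinatorial bookkeeping in the higher-moment estimates: one has to classify ordered $r$-tuples of distinct length-$m$ paths by the isomorphism type of their shared-edge configuration (the number and lengths of shared sub-paths within each path, and the way they are glued) and check that each non-disjoint type contributes a negligible fraction of $\E[N_m]^r$ under the relevant scaling. The remainder of the argument is a routine first/second moment and Poisson-approximation scheme.
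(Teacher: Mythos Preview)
Your proposal is correct and follows essentially the same route as the paper: first and second moment bounds on the path count $N_k$ (the paper writes $Z_k$), Markov for the upper bound, the inequality $\P(N_m\ge 1)\ge \E[N_m]^2/\E[N_m^2]$ for the lower bound, and Poisson convergence of $N_m$ for case~(iii). The only cosmetic differences are that the paper organises the second-moment computation by the number of shared \emph{vertices} rather than shared edges, and for the Poisson limit it invokes the vanishing of the pairwise covariances (in the spirit of Chen--Stein) rather than factorial moments; the underlying estimates are the same.
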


\begin{proof}
This result is obtained using first and second moment methods, computing the
first and second moment of the number $Z_k(n,p)$ of paths of length $k$. Then,
using the Markov inequality from one part and the Cauchy-Schwarz inequality for
the second, we have
\[
  \E(Z_k(n,p)) \geq \P(Z_k(n,p) \geq 1) \geq \E(Z_k(n,p))^2/\E(Z_k(n,p)^2).
\]

We first note that in the complete graph on the vertex set $\{1,\ldots, n\}$,
there is $\binom{n}{k+1}$ increasing paths, each of which having probability
$p^k$ to be open in the Barak-Erd\H{o}s graph. Therefore,
\[
  \E(Z_k(n,p)) = \binom{n}{k+1} p^k \approx \frac{n^{k+1} p^k}{k!}.
\]
With similar computations, we have that
\[
  \E(Z_k(n,p)^2) = \sum_{\substack{1 \leq i_1 \leq \cdots \leq i_{k+1} \leq n\\1 \leq j_1 \leq \cdots \leq j_{k+1} \leq n}} \P((i_1,i_2),\ldots(j_k,j_{k+1}) \in E(G)).
\]
Observing that if two paths have $r$ vertices in common,
they have at most $r-1$ edges in common, we obtain
\begin{align*}
  \E(Z_k(n,p)^2) &\leq \binom{n}{k+1} \binom{n-k-1}{k+1} p^{2k} + \sum_{r = 1}^{k+1} \binom{n}{k+1} \binom{n-k-1}{k-r+1} p^{2k-r-1}\\
  &\approx (n^{k+1} p^k)^2 + \sum_{r =1}^{k+1} n^{2k+2-r} p^{2k-r-1}.
\end{align*}

If $n^2 p_n \to 0$ as $n \to \infty$, we have
\[
  \P(L_n(p_n) \geq 1) = \P(Z_1(n,p_n) \geq 1) \leq \frac{n(n-1)}{2} p_n.
\]
Therefore $\lim_{n \to \infty} \P(L_n(p_n) = 0) = 1$, showing that $L_n(p_n) \to
0$ in probability.

We now assume that there exists $m \in \N$ such that
\[\lim_{n \to \infty} p_n n^{1+1/m} = \infty\quad\text{and}\quad \lim_{n\to \infty} p_n n^{1+1/(m+1)} = 0.\]
We observe that
\[
  \P(L_n(p_n) \geq m+1)  = \P(Z_{m+1}(n,p_n) \geq 1) \leq \E(Z_{m+1}(n,p_n)) \leq \binom{n}{m+1} p_n^{m}.
\]
Then, as $\lim_{n \to \infty} n^{m+1} p_n^m = 0$, we deduce that $\lim_{n \to \infty} \P(L_n(p_n) \geq m+1) = 0$. Similarly, we have
\begin{align*}
  \P(L_n(p_n) \geq m) &= \P(Z_{m}(n,p_n) \geq 1) \geq  \E(Z_m(n,p_n))^2/\E(Z_m(n,p_n)^2)\\
  &\geq \frac{1}{1 + \sum_{r=1}^{m+1} \frac{\binom{n-k-1}{k-r+1}}{\binom{n-k-1}{k+1}} p_n^{-r-2}}.
\end{align*}
But as $\lim_{n \to \infty} n^{-r} p_n^{-r-2} = 0$ for all $r \leq m+1$ by assumption, we obtain that $\lim_{n \to \infty} \P(L_n(p_n) \geq m) = 1$, completing the proof that $L_n(p_n) \to m$ in probability in that case.

Next, assuming that $p_n^m n^{m+1} \to \theta^m$, we show that $Z_m(n,p_n)$
converges to a Poisson random variable with parameter $\theta^m/(m+1)!$. Indeed,
$Z_m(n,p_n)$ is the sum of $\binom{n}{m+1}$ Bernoulli random variables with
parameter $\theta^m/n^{m+1}$. We then observe that the sum of the covariances of
these Bernoulli random variables converges to $0$ as $n \to \infty$, proving
this convergence. As a result, we obtain
\[
  \lim_{n \to \infty}\P(L_n(p_n) \geq m)  = \lim_{n \to \infty} \P(Z_m(n,p_n) \geq 1) = 1 - e^{-\theta^m/m!},
\]
and as $\P(L_n(p_n) \geq m-1) \to 1$ using the same computations as above, we have obtained the convergence in law of $L_n(p_n)$.
\end{proof}

We then consider the asymptotic behavior of $L_n(p_n)$ assuming that $n p_n =
o(\log n)$. In this regime, recall that the length of the longest path issued
from vertex $1$ is of order $\e n p_n = o(\log n)$. The following result holds.

\begin{theorem}
\label{thm:longest2}
Let $(p_n)$ be a null sequence such that $\lim_{n \to \infty} n^{1+\epsilon}p_n
= \infty$ for all $\epsilon > 0$ with $\lim_{n \to \infty} np_n/\log n = 0$. We
set
\[
     \ell_n = \sup\left\{ k \in \N : \binom{n}{k}p_n^k \leq 1 \right\},
   \]
then $\lim_{n \to \infty} L_n(p_n)/\ell_n = 1$ in probability.
\end{theorem}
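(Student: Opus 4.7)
The plan is to apply the first-and-second-moment scheme already used in the proof of Theorem~\ref{thm:longest1}, now in the regime where the length of the longest path grows with $n$. Writing $Z_k = Z_k(n,p_n)$ for the number of directed paths of length $k$ in $\vec G([1,n],p_n)$, the equivalence $L_n\ge k \Longleftrightarrow Z_k\ge1$ reduces the theorem to two one-sided statements: (U)~$\E(Z_{(1+\epsilon)\ell_n}) \to 0$, which through Markov's inequality yields $\P(L_n\ge(1+\epsilon)\ell_n)\to 0$; and (L)~$\E(Z_{(1-\epsilon)\ell_n})^2/\E(Z_{(1-\epsilon)\ell_n}^2)\to 1$, which through the Paley--Zygmund inequality yields $\P(L_n\ge(1-\epsilon)\ell_n)\to 1$.

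For (U), I would exploit the explicit recursion $\E(Z_{k+1})/\E(Z_k)=(n-k-1)p_n/(k+2)$. Past the mode $k\approx np_n$ this ratio falls below $1$ and, past the threshold $\ell_n$, it is strictly bounded away from $1$, so $\E(Z_k)=\binom{n}{k+1}p_n^k$ decays geometrically beyond $\ell_n$. Using Stirling to write $\E(Z_k)\approx \frac{n}{k+1}\frac{(enp_n/k)^k}{\sqrt{2\pi k}}$ and invoking the definition of $\ell_n$ to bound $\E(Z_{\ell_n})$, one then estimates $\E(Z_{(1+\epsilon)\ell_n})$; the two-sided assumption on $p_n$ (namely $n^{1+\epsilon}p_n\to\infty$ together with $np_n/\log n\to0$) is what pins down the decay rate and ensures it beats the prefactor. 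For (L), one expands the second moment by decomposing pairs of paths $(\pi,\pi')$ according to the size $r=|V(\pi)\cap V(\pi')|$ of their vertex overlap, in the spirit of Newman's bound from the proof of Theorem~\ref{thm:longest1}:
\[
\E(Z_k^2)\;\le\;\E(Z_k)^2 \;+\; \sum_{r=1}^{k+1}N(n,k,r)\,p_n^{2k-r+1},
\]
where $N(n,k,r)$ counts ordered pairs of path-shapes sharing $r$ labelled vertices. For $k=(1-\epsilon)\ell_n$, the $r=0$ contribution already accounts for $\E(Z_k)^2$, and the ratio of the $r$-th term to $\E(Z_k)^2$ is essentially $(np_n)^{-r}$ times a combinatorial factor; since $np_n\to\infty$, these correction terms are of lower order, giving $\E(Z_k^2)/\E(Z_k)^2\to 1$.

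The hard part will be (L): the usual second-moment estimate of Theorem~\ref{thm:longest1} was performed with $k$ held constant, whereas here $k\to\infty$ and the number of possible intersection patterns grows, so one has to verify uniformly that overlapping-path configurations do not dominate. This requires combining sharp Stirling-type asymptotics for $\binom{n}{k}$ with a careful accounting of how the $r$ shared vertices can be interleaved along the two paths, exploiting the slow growth $np_n=o(\log n)$ to kill the combinatorial blow-up. Once (U) and (L) are established, the conclusion $L_n(p_n)/\ell_n\to 1$ in probability follows directly.
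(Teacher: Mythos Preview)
Your proposal is correct and matches the paper's approach exactly: the paper gives only a one-sentence proof, stating that the result ``is obtained in the same way as Theorem~\ref{thm:longest1}, using first and second moment methods to bound the probability of existence of a path of length $\ell_n(1\pm\epsilon)$.'' You have spelled out precisely this scheme in more detail than the paper itself does.
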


This result is obtained in the same way as Theorem~\ref{thm:longest1}, using
first and second moment methods to bound the probability of existence of a path
of length $\ell_n (1 \pm \epsilon)$. Using similar methods again, one can turn
to the case $p_n \sim \gamma \log n/n$. In this situation, $L^{(1)}_n(p_n) \sim
\gamma \e \log n$, and the longest path also is of order $\log n$, but with a
larger constant.

\begin{theorem}
\label{thm:longest3}
Let $(p_n)$ be a sequence such that $p_n \sim \gamma \log n/n$ as $n \to \infty$
for some $\gamma \in (0,1)$. Then
\[
  \lim_{n \to \infty} L_n(p_n)/n p_n = \e A(\gamma) \quad \text{in probability},
\]
where $A(\gamma)$ is the only solution larger than $1$ of
$x \log x = (\e\gamma)^{-1}$.
\end{theorem}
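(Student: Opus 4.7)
The plan is the classical moment method applied to $Z_k := Z_k(n,p_n)$, the number of paths of length $k$ in $\vec G(\{1,\ldots,n\}, p_n)$, in the spirit of the arguments behind Theorems \ref{thm:longest1}--\ref{thm:longest2} and Newman's original proof. First I would identify a critical length $k^*$ at which $\E Z_k$ transitions from $\infty$ to $0$: we have $\E Z_k = \binom{n}{k+1} p_n^k$, and writing $k = c \log n$ together with $p_n \sim \gamma \log n / n$, Stirling's formula yields
\[
\log \E Z_k \;=\; \log n \cdot \bigl[\, 1 - c \log(c/(\e\gamma))\,\bigr] + o(\log n).
\]
The map $c \mapsto c \log(c/(\e\gamma))$ is strictly increasing on $[\gamma,\infty)$, vanishes at $c = \e\gamma$, and equals $1$ at the critical value $c^* := \e\gamma A(\gamma)$: setting $x = c^*/(\e\gamma)$, the equation $c^* \log(c^*/(\e\gamma)) = 1$ becomes $x \log x = 1/(\e\gamma)$, the defining equation of $A(\gamma)$, whose unique solution indeed satisfies $A(\gamma) > 1$. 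Setting $k^* := c^* \log n = \e A(\gamma)\, n p_n\, (1+o(1))$, we see that $\E Z_k \to 0$ polynomially fast when $k \geq (1+\epsilon) k^*$, and $\E Z_k \to \infty$ when $k \leq (1-\epsilon) k^*$. Markov's inequality $\P(L_n(p_n) \geq k) \leq \E Z_k$ then yields the matching upper bound $L_n(p_n) \leq (1+\epsilon) k^*$ with probability tending to $1$.

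For the lower bound I would use the Paley--Zygmund inequality, which requires the second-moment estimate $\E Z_k^2 \leq (1+o(1))(\E Z_k)^2$ at $k = \lfloor (1-\epsilon) k^* \rfloor$. Writing
\[
\E Z_k^2 \;=\; \sum_{s=0}^{k} T_s\, p_n^{2k-s},
\]
where $T_s$ counts the ordered pairs of length-$k$ paths sharing exactly $s$ edges, the $s=0$ term already contributes $(1+o(1))(\E Z_k)^2$. I would then stratify the pairs with $s \geq 1$ by the number $m \geq 1$ of maximal shared sub-paths and the composition $s_1 + \cdots + s_m = s$; the union of the two paths then has $2(k+1) - s - m$ vertices, so the count in each stratum is at most $\binom{n}{2(k+1)-s-m}$ times a combinatorial factor polynomial in $k$ (involving $\binom{s-1}{m-1}$ and the number of ways to interleave shared and unshared segments). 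Comparing with $T_0 \approx \binom{n}{k+1}^2$ and using $p_n^{-s} \sim (n/(\gamma\log n))^s$, the $(s,m)$-contribution to $\E Z_k^2/(\E Z_k)^2$ is of order $n^{-m} (\log n)^{-s} \cdot \mathrm{poly}(\log n)$; since $m \geq 1$ whenever $s \geq 1$, each term carries at least a factor $n^{-1}$, and summing over all valid $(s,m)$ still produces $o(1)$.

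The main obstacle is precisely this second-moment estimate. Although heuristically the $n^{-m}$ gain (with $m \geq 1$) clearly dominates the growth of $p_n^{-s}$, making the argument rigorous requires a uniform combinatorial bound on the number of overlap patterns that holds at the critical scale $k \sim c^* \log n$, simultaneously for all $1 \leq s \leq k$ and $1 \leq m \leq s$. One must verify that the polynomial-in-$\log n$ combinatorial factors never accumulate enough to defeat the $n^{-m}$ gain, and in particular that no intermediate value of $s$ produces a non-negligible contribution. Once this bookkeeping is complete, combining the first-moment upper bound and the Paley--Zygmund lower bound gives $L_n(p_n)/(np_n) \to \e A(\gamma)$ in probability, concluding the proof.
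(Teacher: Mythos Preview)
Your proposal is correct and follows precisely the approach the paper indicates: the paper does not give a detailed proof of Theorem~\ref{thm:longest3} but states that it is obtained ``using similar methods again'' as Theorems~\ref{thm:longest1} and~\ref{thm:longest2}, namely the first- and second-moment method applied to $Z_k(n,p_n)$, which is exactly what you outline (with your first-moment computation correctly locating the threshold at $c^*=\e\gamma A(\gamma)$). Your sketch in fact supplies more detail than the survey does; for the full second-moment bookkeeping one is implicitly referred to Newman's original paper~\cite{NEWM92}.
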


Finally, when $n p_n \gg \log n$, the length of the longest path and the longest
path starting from $1$ have similar orders of magnitude.
\begin{theorem}
\label{thm:longest4}
Let $(p_n)$ be a null sequence such that $\lim_{n \to \infty} n p_n/\log n = \infty$, then $$\lim_{n \to \infty} L_n(p_n)/np_n = \e.$$
\end{theorem}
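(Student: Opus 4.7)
The plan is to split into a lower and an upper bound. The lower bound is essentially for free from Theorem~\ref{thm:start1}, since the overall longest path is at least as long as the longest path issued from vertex $1$. The upper bound will come from a straightforward first-moment argument that becomes sharp (up to the constant $e$) precisely in the regime $np_n/\log n \to \infty$.

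For the lower bound, note that $L_n(p_n) \geq L^{(1)}_n(p_n)$ by definition, since any path starting at $1$ is in particular a path in $\vec G(\{1,\ldots,n\},p_n)$. Our hypothesis $np_n/\log n \to \infty$ implies $np_n \to \infty$, so by Theorem~\ref{thm:start1}(2), $L^{(1)}_n(p_n)/(np_n) \to e$ in probability, yielding $\liminf_{n\to\infty} L_n(p_n)/(np_n) \geq e$ in probability.

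For the upper bound, let $Z_k(n,p)$ denote the number of increasing $k$-paths in $\vec G(\{1,\ldots,n\},p)$, namely strictly increasing tuples $(i_0 < i_1 < \cdots < i_k)$ of elements of $\{1,\ldots,n\}$ such that all edges $(i_j, i_{j+1})$ are present. Then $\E[Z_k(n,p)] = \binom{n}{k+1} p^k$. Using $(k+1)! \geq ((k+1)/e)^{k+1}$ and Markov's inequality,
\[
\P(L_n(p) \geq k) \leq \binom{n}{k+1} p^k \leq \frac{1}{p}\left(\frac{enp}{k+1}\right)^{k+1}.
\]
Fix $\epsilon > 0$ and set $k_n = \lceil e(1+\epsilon) n p_n \rceil$. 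Then $enp_n/(k_n+1) \leq 1/(1+\epsilon)$, so
\[
\log \P(L_n(p_n) \geq k_n) \leq \log(1/p_n) - e(1+\epsilon)(k_n+1) \log(1+\epsilon).
\]
The hypothesis $np_n/\log n \to \infty$ together with $p_n \to 0$ forces $p_n \geq (\log n)/n$ eventually, so $\log(1/p_n) \leq \log n = o(np_n)$. The right-hand side therefore tends to $-\infty$, hence $\P(L_n(p_n) \geq k_n) \to 0$. Letting $\epsilon \to 0$ yields $\limsup_{n\to\infty} L_n(p_n)/(np_n) \leq e$ in probability, and combining with the lower bound completes the proof.

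The only subtlety is why the hypothesis $np_n/\log n \to \infty$ is genuinely needed for this elementary argument: it is exactly the regime in which $\log(1/p_n) = o(np_n)$, so that the prefactor $1/p$ on the right-hand side of the first-moment bound does not spoil the exponential decay. For sparser regimes the logarithmic correction actually dominates, which is why Theorem~\ref{thm:longest3} produces a constant $eA(\gamma) > e$ instead of $e$.
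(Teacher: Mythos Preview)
Your proof is correct and follows exactly the strategy sketched in the paper: a first-moment estimate on the number of $k$-paths for the upper bound, and the inequality $L_n(p_n)\ge L^{(1)}_n(p_n)$ combined with Theorem~\ref{thm:start1} for the lower bound. There is a small slip in the displayed logarithmic bound --- the factor $e(1+\epsilon)$ in front of $(k_n+1)$ should not be there; you simply want $-(k_n+1)\log(1+\epsilon)$, which is at most $-e(1+\epsilon)\,np_n\,\log(1+\epsilon)$ after using $k_n+1\ge e(1+\epsilon)np_n$ --- but this does not affect the argument.
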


To prove that  $\lim_{n \to \infty} \P(L_n(p_n) > n p_n \e) = 0$, we use again first moment estimates. Using that $L_n(p_n)
\geq L_n^{(1)}(p_n)$ on the other hand and Theorem \ref{thm:start1}, we conclude
to Theorem \ref{thm:longest4}.

\subsection{Shortest path}
\label{subsec:shortestPath}

Based on similar computations with the previous section, we are able to compute
the asymptotic behavior of the length of the \emph{shortest} path between $1$
and $n$ in the Barak-Erd\H{o}s graph $\overrightarrow{\mathcal{G}}(n,p)$.
The results in this section are based on \cite{MT22}.
In
this section, we write
\[
  S_n(p) = \min\left\{ k \in \N : \exists i_1 < \cdots < i_{k-1} : (1,i_1),\ldots, (i_{k-1},n) \in E(\overrightarrow{\mathcal{G}}(n,p)) \right\}.
\]
the length of the shortest path linking $1$ and $n$. By convention, $S_n =
\infty$ if there is no path between $1$ and $n$.

Observe that for a fixed value of $p$, the edge $(1,n)$ is present with
probability $p$, and if this edge is not present, the events
\[(\{(1,k),(k,n)
\in E(\overrightarrow{\mathcal{G}}(n,p))\}, 2 \leq k \leq n)
\]
are independent of one another, and with same probability $p^2$ of occurrence.
As a result, we have
\[
  \P(S_n(p)= 1) = p, \quad \P(S_n(p)= 2) = (1 - p) (1 - (1 - p^2)^{n-2}).
\]
Therefore, $S_n(p)$ converges in distribution, as $n \to \infty$, to a random
variable $S_\infty(p)$ with $\P(S_\infty(p) = 1) = 1 - \P(S_\infty(p) = 2) = p$.
With similar computations, it appears that if $p_n \to 0$ with $n p_n^2 \to
\infty$, then $S_n(p_n) \to 2$ in probability as $n \to \infty$.

The above computation extends to larger paths, and we obtain the following
extension of the results of Section~\ref{subsec:longestPath}.
\begin{theorem}
\label{thm:sparseGraphShortest}
Let $(p_n)$ be a null sequence.
\begin{enumerate}
  \item If there exists an integer $m \in \N$ such that
  \[\lim_{n \to \infty} p_n n^{1 - 1/(m+1)} = \infty \quad \text{with} \quad \lim_{n \to \infty} p_n n^{1 - 1/m} = 0,\]
  then $S_n(p_n) \to m+1$ in probability.
  \item If there exists an integer $m \in \N$ such that
  \[\lim_{n \to \infty} p_n n^{1 - 1/m} = \theta \in \R_+,\]
  then $\displaystyle \lim_{n \to \infty} \P(S_n(p_n) = m) = 1 - \lim_{n \to \infty} \P(S_n(p_n) = m+1) = e^{-\theta^m/(m-1)!}$
  \item If $\lim_{n \to \infty} n^{1-\epsilon} p_n = \infty$ for all $\epsilon>0$, then $\lim_{n\to\infty} S_n(p_n)/\ell_n = 1$ in probability, where $\ell_n$ is the same quantity as in Theorem~\ref{thm:longest2}.
\end{enumerate}
\end{theorem}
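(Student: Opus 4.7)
Proof plan. Throughout, write
\[
Z_k = Z_k(n,p_n) := \#\big\{\,1 = i_0 < i_1 < \cdots < i_k = n \,:\, (i_{j-1},i_j) \in E\big(\vec G(n,p_n)\big) \text{ for every } j\,\big\}
\]
for the number of open increasing paths from $1$ to $n$ of length exactly $k$. Each such path is determined by its $k-1$ intermediate vertices in $\{2,\ldots,n-1\}$ and has probability $p_n^k$ of being open, so
\[
\E Z_k = \binom{n-2}{k-1} p_n^k \sim \frac{(np_n)^{k-1} p_n}{(k-1)!}.
\]
The fundamental identity I will use is $\{S_n \le k\} = \bigcup_{j=1}^{k} \{Z_j \ge 1\}$. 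The whole proof will then be a first-moment/second-moment argument in the style of the proof of Theorem~\ref{thm:longest1}.

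For \textbf{assertion 1}, Markov's inequality gives $\P(S_n \le m) \le \sum_{j=1}^m \E Z_j$. Under $p_n n^{1-1/m} \to 0$, a direct check (splitting on whether $np_n$ is bounded or not) shows $n^{j-1} p_n^j \to 0$ for every $1\le j \le m$; this will yield $\P(S_n \ge m+1) \to 1$. For the matching upper bound $\P(S_n \le m+1) \to 1$, I would apply Janson's inequality to the family of events $\{P \text{ open}\}$ indexed by paths $P$ of length $m+1$ from $1$ to $n$, in the form $\P(Z_{m+1} = 0) \le \exp(-\mu^2/(\mu + \Delta))$, where $\mu = \E Z_{m+1}$ and $\Delta = \sum_{P \neq Q,\, E(P) \cap E(Q) \neq \emptyset} \P(P,Q \text{ both open})$. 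Under $p_n n^{1-1/(m+1)} \to \infty$ one has $\mu \to \infty$, and the quantity $\Delta$ will be controlled via the crude bound $\Delta \le p_n^{2m+1} \sum_e |\{P : e \in P\}|^2$, where the inner sum is evaluated with Vandermonde's identity; this will give $\mu^2/\Delta$ of order $np_n$, which tends to infinity under the hypothesis.

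For \textbf{assertion 2}, under $p_n n^{1-1/m} \to \theta$ the first moment satisfies $\E Z_m \to \lambda := \theta^m/(m-1)!$. I would argue via the factorial moment method that $Z_m$ converges in distribution to a Poisson$(\lambda)$ random variable: verifying $\E \binom{Z_m}{r} \to \lambda^r/r!$ for each fixed $r \ge 1$ reduces to showing that the dominant contribution to this expectation comes from $r$-tuples of pairwise intermediate-vertex-disjoint paths, the remaining configurations being lower order by a power of $(np_n)^{-1}$. Since simultaneously $\E Z_j \to 0$ for every $j < m$ and $\P(Z_{m+1} \ge 1) \to 1$ by the same Janson argument as in assertion 1 (the hypothesis of assertion 1 with $m$ replaced by $m$ still applies to the threshold $m+1$), the events $\{Z_j = 0, j \le m-1\}$, $\{Z_m = 0\}$, and $\{Z_{m+1} \ge 1\}$ decouple asymptotically, yielding $\P(S_n = m+1) \to e^{-\lambda}$ and $\P(S_n = m) \to 1-e^{-\lambda}$.

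For \textbf{assertion 3}, the same two-sided strategy is applied at the diverging scale $\ell_n$. For any fixed $\delta > 0$, Stirling together with the defining property of $\ell_n$ gives $\E Z_{\lfloor (1-\delta)\ell_n\rfloor} \to 0$, hence $\P(S_n > (1-\delta)\ell_n) \to 1$ by Markov; and Janson's inequality applied at $k = \lfloor (1+\delta)\ell_n\rfloor$ gives $\P(S_n \le (1+\delta)\ell_n) \to 1$. Letting $\delta \downarrow 0$ yields $S_n/\ell_n \to 1$ in probability.

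\textbf{Main obstacle.} The technical heart is the second-moment/correlation estimate at the critical scale. The naive bound on the probability that two paths sharing $r$ intermediate vertices are both open, $p_n^{2k - r - 1}$, is too loose when $r$ is small, and it produces a contribution of order $1/(p_n(np_n)^r)$ relative to $(\E Z_k)^2$, which need not vanish under the hypotheses. The point is that the configurations achieving the maximum of $r+1$ shared edges are combinatorially rare, as they force the shared vertices (together with the endpoints $1$ and $n$) to form contiguous runs in both paths. Using Janson's inequality circumvents this by requiring only $\mu^2/(\mu + \Delta) \to \infty$ rather than $\E Z_k^2 / (\E Z_k)^2 \to 1$; this is why the crude bound $\Delta \lesssim p_n^{2k-1} \sum_e |\{P: e \in P\}|^2$ is actually sufficient. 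For assertion 3 the additional difficulty is that $k = \ell_n \to \infty$ with $n$, so all combinatorial constants (in particular $k!$ and binomial coefficients $\binom{k}{r}$) must be tracked uniformly, which will be the most delicate piece of the argument.
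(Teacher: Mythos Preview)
The paper does not actually prove this theorem; it only says the result follows from ``similar computations'' to the proof of Theorem~\ref{thm:longest1} (first and second moment methods for the number of paths of a given length), and refers to \cite{MT22}. Your plan is precisely in that spirit. The tools you substitute---Janson's inequality in place of the Paley--Zygmund/Cauchy--Schwarz lower bound, and the factorial-moment route to the Poisson limit in place of the covariance argument---are legitimate and arguably cleaner alternatives; they buy you a weaker requirement on the correlation sum ($\mu^2/(\mu+\Delta)\to\infty$ rather than $\E Z_k^2/(\E Z_k)^2\to 1$), which is exactly the point you make in your ``main obstacle'' paragraph.

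One technical correction: your stated inequality $\Delta \le p_n^{2k-1}\sum_e |\{P:e\in P\}|^2$ is not an upper bound on $\Delta$, because pairs of paths sharing $s\ge 2$ edges contribute $p_n^{2k-s}>p_n^{2k-1}$ each. What does work is to stratify by the number $r$ of shared \emph{intermediate} vertices and to observe that two \emph{distinct} length-$k$ paths from $1$ to $n$ sharing $r$ intermediate vertices can share at most $r$ edges (if all $r+1$ gaps between consecutive shared anchors were direct in both paths, the paths would coincide). This yields
\[
\frac{\Delta}{\mu^2}\ \lesssim\ \sum_{r=1}^{k-2}\binom{k-1}{r}\frac{(k-1)!}{(k-1-r)!}\,(np_n)^{-r}\ =\ O\!\big((np_n)^{-1}\big),
\]
so your conclusion $\mu^2/\Delta\asymp np_n$ is correct once the stratification is put in place. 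With this fix, all three assertions go through as you outline.
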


For a graph with too small value of $p$, the vertices $1$ and $n$ will no longer
be connected with high probability. It is well-known for Erd\H{o}s-Rényi graphs
that as long as $p_n > (1+\epsilon)/n$, vertices $1$ and $n$ will be in the same
connected component with positive probability. However, this does not implies
the existence of a directed path from $1$ to $n$ in the graph, and in fact the
coupling with the branching random walk shows that as long as $\limsup_{n \to
\infty} np_n < \infty$, the probability that $1$ and $n$ are in the same
connected component goes to $0$ as $n \to \infty$.

The critical decay rate at which the probability that $1$ and $n$ are connected
in $\overrightarrow{\mathcal{G}}(n,p_n)$ with positive probability is currently
unknown, as well as the asymptotic behavior of $S_n(p_n)$ for $p_n \sim (\log
n)^\alpha/n$ for some $\alpha > 0$.

\section{Regenerative properties of directed random graphs}
\label{secreg}
We now return to the setup of Section \ref{ergosec}
and consider the case where we are given a sequence $p_1, p_2, \ldots$
of numbers in $[0,1]$ such that
\[
0<p_1<1.
\]
The case $p_1=1$ is excluded because it uninteresting.
We assume that $p_1>0$ so that the regenerative methods work:
as will be shown in Section \ref{break}, we can then break the graph into i.i.d.\
pieces.
For the $p_1=0$ case see Remark \ref{jdepend} below.

We then consider the random graph on $\Z$ such
that the pair $(i,j) \in \Z \times \Z$ is a vertex if $i<j$ and with probability
$p_{j-i}$, independently from pair to pair.
Let us denote this graph by $\vec G(\Z, (p_j))$.
The reason for introducing probabilities that depend on the physical
distance between vertices is twofold: from the point of view of applications,
it is more natural; mathematically, the case of constant $p_j$, insofar as
regenerative properties are concerned, is not much different from the more
general case.
Let
\[
q_j := 1-p_j, \quad Q_j := q_1 \cdots q_j, \quad Q_0:=1.
\]

Our goal is to prove a functional central limit theorem for the
quantity $L_{0,n}$, the maximum length of all pathr in $\vec G(\Z, (p_j))$
with endpoints between $0$ and $n$.
This is Theorem \ref{momthm}.
To do this, we first exhibit some regenerative properties of the graph
and then estimate moments of the distances between two typical skeleton points.

\subsection{Breaking the graph into independent cycles}
\label{break}
We showed in Lemma \ref{lela},
that if the sequence $p_j$ does not convergence to $0$ too fast,
in the  sense that condition \eqref{summa} of Lemma \ref{lela},
\[
\sum_{n=1}^\infty Q_n < \infty,
\]
holds, then the skeleton set $\SSS$
of all vertices $v$ such that $u \leadsto v \leadsto w$
for all $u < v < w$ (where $i \leadsto j$ stands for ``there is
a path from $i$ to $j$'')
is infinite in both directions a.s.\
and has strictly positive rate
\begin{equation} \label{lambdatoo}
\lambda  = \prod_{j=1}^\infty (1-Q_j)^2 > 0,
\end{equation}
as in \eqref{lprod}.
We label the elements of $\SSS$ by random integers $\Gamma_k$, $k \in \Z$,
so that
\begin{equation}
\label{gammapoints}
\cdots < \Gamma_{-1} < \Gamma_0 \le 0 < \Gamma_1 < \Gamma_2 < \cdots
\end{equation}
We therefore know that $\P(0 \in \SSS)=\lambda>0$ and
\[
\E(\Gamma_{k+1}-\Gamma_k|0\in \SSS) =
\E(\Gamma_{k+1}-\Gamma_k|\Gamma_0=0) = 1/\lambda < \infty,
\quad k \in \Z.
\]
Consider now the random sequence
\[
\alpha^{(0)} := (\alpha^{(0)}_1, \alpha^{(0)}_2, \ldots),
\]
consisting of independent entries with
\[
\P(\alpha^{(0)}_j=1)=1-\P(\alpha^{(0)}_j=-\infty) =p_j, \quad j \in \N,
\]
and let $\alpha^{(n)}$, $n \in \Z$, be i.i.d.\ copies of $\alpha^{(0)}$.
Then we can construct $\vec G(\Z, (p_j))$ as the graph on $\Z$
with edge set
\[
\{(i,j) \in \Z\times \Z:\, i < j,\, \alpha^{(i)}_{j-i}=1\}.
\]
(Note that the random variable $\alpha^{(i)}_{j-i}$ was denoted by
$\alpha_{i,j}$ in \eqref{GZxi}.)
We refer to the random object
\[
\CC_k :=(\alpha^{(n)},\, \Gamma_k \le n < \Gamma_{k+1})
\]
as the $k$th ``cycle'', $k \in \Z$.
Let $\vec G_k$ be the induced subgraph of $\vec G(\Z,(p_j))$
on the set of vertices $\{v \in \Z:\, \Gamma_k \le v \le \Gamma_{k+1}\}$,
that is, keep only those edges with endpoints in this set.
The following was proved in \cite{DFK12}.
\begin{lemma}
\label{indg}
Assume that \eqref{summa} holds. Then,
conditional on $\{0 \in \SSS\}$, the cycles $\CC_k$, $k \in \Z$, are i.i.d.\
and the random graphs $\vec G_k$, $k \in \Z$, are i.i.d.
\end{lemma}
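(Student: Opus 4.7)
The plan is to exploit the factorization already used in the proof of Lemma~\ref{lela}: for each $v \in \Z$, the event $\{v \in \SSS\} = A_v^- \cap A_v^+$ splits into an event on edges with both endpoints $\le v$ and one on edges with both endpoints $\ge v$, measurable with respect to independent $\sigma$-algebras (generated by disjoint subsets of the i.i.d.\ collection $\{\alpha_{i,j}\}$). Consequently, conditioning on $\{v\in\SSS\}$ preserves past-future independence at $v$, each side being independently conditioned on its own half of the event. In particular, under $\P(\cdot\mid 0\in\SSS)$ the forward process $(\alpha^{(n)})_{n\ge 0}$ has law $\P(\cdot\mid A_0^+)$, independent of the past.

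First I would prove the key decomposition: for each $m\ge 1$,
\[
\{0\in\SSS,\ \Gamma_1=m\}\ =\ A_0^-\cap I_m\cap A_m^+,
\]
where $I_m\in\sigma(\alpha^{(n)}_k:0\le n,\ n+k\le m)$ (edges internal to $[0,m]$), $A_m^+\in\sigma(\alpha^{(n)}:n\ge m)$, and the three $\sigma$-algebras are mutually independent. The point is that on $\{0\in\SSS,\ m\in\SSS\}$ the conditions simplify: $A_0^+$ becomes $\{0\leadsto j:0<j\le m\}$ (since for $j>m$ one has $0\leadsto m\leadsto j$ automatically), $A_m^-$ becomes $\{i\leadsto m:0\le i\le m\}$, and for $v\in(0,m)$ the condition $v\notin\SSS$ becomes the failure of $\{i\leadsto v:0\le i<v\}\cap\{v\leadsto j:v<j\le m\}$. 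All of these lie in $I_m$. Crucially, this decomposition involves no ``cross-edges'' $\alpha^{(n)}_k$ with $0\le n<m\le n+k$, so these stay i.i.d.\ and independent of the conditioning with their original marginal laws.

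Next, on $\{\Gamma_1=m\}$ the cycle $\CC_0$ is measurable with respect to $\sigma(\alpha^{(n)}:0\le n<m)$, while $(\CC_1,\CC_2,\ldots)$ is measurable with respect to $\sigma(\alpha^{(n)}:n\ge m)$. Under $\P(\cdot\mid 0\in\SSS,\Gamma_1=m)$, the first $\sigma$-algebra carries a conditional law (governed by $I_m$ on internal edges, with cross-edges still free), the second is conditioned on $A_m^+$, and the two sides are independent. Shift-invariance of the i.i.d.\ sequence identifies the law of $(\alpha^{(n+m)})_{n\ge 0}$ given $A_m^+$ with that of $(\alpha^{(n)})_{n\ge 0}$ given $A_0^+$, i.e.\ the forward marginal of $\P(\cdot\mid 0\in\SSS)$. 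Hence the conditional law of $(\CC_1,\CC_2,\ldots)$ given $\{0\in\SSS,\Gamma_1=m\}$ does not depend on $m$ and equals the law of $(\CC_0,\CC_1,\ldots)$ under $\P(\cdot\mid 0\in\SSS)$. Summing over $m$ yields simultaneously $\CC_0\indep(\CC_1,\CC_2,\ldots)$ and equality in law of the shifted sequence with the original. Iterating gives the i.i.d.\ property of $(\CC_k)_{k\ge 0}$; the analogous argument applied to the past, combined with the independence of past and future at $0$, extends this to $(\CC_k)_{k<0}$ and makes the two one-sided sequences independent. Finally, $\vec G_k$ is a deterministic function of $\CC_k$ (the induced subgraph on $[\Gamma_k,\Gamma_{k+1}]$), so it inherits the i.i.d.\ property.

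The main obstacle will be the bookkeeping behind the decomposition $\{0\in\SSS,\Gamma_1=m\}=A_0^-\cap I_m\cap A_m^+$: one must carefully check that every reduction from a ``one-sided infinite'' condition ($A_0^+$, $A_m^-$, or $A_v^{\pm}$ for $0<v<m$) to a ``one-block'' condition relies on \emph{both} cycle endpoints lying in $\SSS$, and that the cross-edges between $[0,m)$ and $[m,\infty)$ are genuinely irrelevant to the conditioning event (they contribute to $\CC_0$ purely as independent randomness). Once this is in place, the rest is a clean application of independence of $\sigma$-algebras and shift-invariance of the i.i.d.\ sequence.
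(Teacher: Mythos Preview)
Your proposal is correct and follows essentially the same approach as the paper's sketch: both exploit the factorization $\{v\in\SSS\}=A_v^-\cap A_v^+$ into events measurable with respect to independent one-sided $\sigma$-algebras, so that conditioning on $\{0\in\SSS\}$ preserves past/future independence and shift-invariance delivers identical distribution. The paper phrases this via auxiliary variables $\widehat\Gamma_{\pm 1}$ that become $\FF^\mp$-measurable on $\{\Gamma_0=0\}$, whereas you condition on $\{\Gamma_1=m\}$ and write out the three-block decomposition $A_0^-\cap I_m\cap A_m^+$ explicitly; these are two presentations of the same argument.
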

\begin{proof}[Sketch of proof]
It suffices to show that $\CC_0$ is independent of
$(\CC_{-1}, \CC_{-2}, \ldots)$ conditional on $0 \in \SSS$.
Let $\FF^+:= \sigma(\alpha^{(n)}, n > 0)$,
$\FF^-:=\sigma(\alpha^{(n)}, n < 0)$.
As in the proof of Lemma \ref{lela},
for each $j \in \Z$, consider the largest of the vertices $u<j$ such that
$(u,j)$ is an edge, letting $\ell(j) = j-u$ be its distance from $j$.
Similarly, consider the smallest of the vertices $v>j$ such that $(j,v)$
is an edge, setting $r(j)=v-j$.
Note that
\begin{equation}
\label{0isskeleton}
\{0\in \SSS\}= \{\Gamma_0=0\}
=
\{r(-1) \le 1, r(-2) \le 2, \ldots; \ell(1) \le 1, \ell(2) \le 2,\ldots\}
\end{equation}
Define the random variables
\begin{align*}
\widehat \Gamma_{-1} &:= \max\{n<0: r(n-1) \le 1, r(n-2) \le 2,\ldots;
\ell(n+1) \le 1, \ldots, \ell(0)\le |n|\}
\\
\widehat\Gamma_1&:=\min\{n > 0:~ r(0)\le n, \ldots, r(n-1)\le 1;
\ell(n+1) \le 1, \ell(n+2) \le 2, \ldots\}
\end{align*}
and observe that
\[
\text{ if } \Gamma_0=0 \text{ then }
\Gamma_{-1} = \widehat \Gamma_{-1} , \, \Gamma_1=\widehat\Gamma_1.
\]
Whereas $\Gamma_{-1}$ is not $\FF^-$--measurable,
the random variable $\widehat \Gamma_{-1}$ is.
Also, $\widehat \Gamma_{1}$ is $\FF^+$ measurable.
Similarly, each cycle $\CC_k$ with negative index $k$
becomes equal to an $\FF^-$--measurable random object.
This argument shows that $\CC_0$ is independent of
$(\CC_{-1}, \CC_{-2}, \ldots)$, conditional on $\{\Gamma_0=0\}$.
That all the $\CC_k$, $k \in \Z$,
are identically distributed, under the same conditioning, follows from stationarity.
For the last claim
observe that $\vec G_k$ is a function of $\CC_k$ for all $k \in \Z$.
\end{proof}

\begin{remark}
\label{palmrem}
Let $\alpha$ denote the map $n \mapsto \alpha^{(n)}$.
For all $k \in \Z$, denote by $\theta^k \alpha$
the map $n \mapsto \alpha^{(n+k)}$.
Note that the probability measure $\P$ relates to $\P(\cdot|0 \in \SSS)$
as follows. Let $\Phi(\alpha)$ be a measurable bounded function of $\alpha$.
Denote by $\theta^k \Phi$ the function $\alpha \mapsto \Phi(\theta^k\alpha)$.
Then
\[
\E(\Phi) = \lambda
\E\bigg\{\sum_{\Gamma_0 \le k < \Gamma_1} \theta^k \Phi
\bigg| 0 \in \SSS\bigg\}.
\]
For a simple proof of this, see \cite{KZ95}.
This implies that (without conditioning on $0 \in \SSS$), the cycles
$\CC_k$, $k \in \Z \setminus \{0\}$, are i.i.d.\ each
with the same distribution as the conditional distribution of $\CC_0$
given $0 \in \SSS$. Moreover, $\CC_0$ is independent of the rest of the
cycles, and its law can be found by the last formula.
In particular, $\E(\Gamma_{k+1}-\Gamma_k) =
\E(\Gamma_{k+1}-\Gamma_k|\Gamma_0=0)= 1/\lambda$ for all $k\neq 0$.
For $k=0$, letting $\Phi=\Gamma_1-\Gamma_0$ in the above formula,
we have $\E(\Gamma_1-\Gamma_0)=\lambda \E(\Gamma_1^2|\Gamma_0=0)$
may be equal to $\infty$
unless a condition stronger that \eqref{summa} is assumed.
\end{remark}

\subsection{Moments of auxiliary  stopping times}
We aim at studying the moments of the auxiliary random vertices
$\mu$ and $\nu$, defined in \eqref{mupoint} and \eqref{nupoint},
respectively. These random vertices are positive integers
and stopping times with respect to the filtration, in the index $n$, generated
by the ($\alpha^{(k)}$, $k \le n$).
The stopping time property is crucial in constructing,
in Section \ref{moske}, certain iterates of $\mu$ and $\nu$,
used for identifying the least positive skeleton point of the graph.

We need some notation, partially introduced in the proof of Lemma \ref{lela}.
Recalling that $i \leadsto j$ means that there is a path
in $\vec G(\Z,\alpha)$ from $i$ to $j$, we let $J \subset \Z$
and write $i \leadsto J$ if $i \leadsto j$ for all $j \in J$.
Similarly, $J \leadsto i$ means $j \leadsto i$ for all $j \in J$.
We will need the events
\begin{align}
\label{A+-}
\begin{split}
A^+_{u,v} &= \{u \leadsto [u+1, v]\}, \\
A^-_{u,v} &= \{[u,v-1] \leadsto v\},
\end{split}
\end{align}
where $u <v$ are integers.
By $[a,b]$ when $a, b$ are integers, $a<b$, we mean
the set $\{a,a+1, \ldots, b\}$.
We have
$\ell(j) = \max\{k>0:\, \alpha_{j-k,j}=1\}$
and $r(j) = \min\{k>0:\, \alpha_{j,j+k}=1\}$ for all $j \in \Z$.
Hence $j-\ell(j)$ is the first predecessor of $j$ in $\vec G(\Z,(p_j))$
and $j+r(j)$ its first successor.
The $\ell(j)$, $j \in \Z$ are i.i.d.\ with common distribution
determined by
\[
\P(\ell(0)>k) = Q_k = \P(r(0)>k).
\]
In particular, $\E \ell(0)=\E r(0) = \sum_{k=0}^\infty Q_k$.
A few moments of reflection show that, for $d \in \N$,
\begin{align*}
A^+_{u,u+d} &= \{\ell(u+1)\le 1, \ldots, \ell(u+d)\le d\}
\\
A^-_{u-d,u} & = \{r(u-1)\le 1, \ldots, r(u-d)\le d\}.
\end{align*}
Both events decrease as $d$ increases.
Define the random variable
\begin{equation} \label{mupoint}
\mu:=\inf\{k \in \N:\, A_{0,k}^+ \text{ fails}\}
\end{equation}
(where ``$A$ fails'' stands for ``$A^c$ occurs'', that is, $\1_{A^c}=1$)
Since $A^+_{0,k}$ decreases as $k$ increases, we have
\[
\P(\mu > k) = \P(A_{0,k}^+) = \P(\ell(1) \le 1, \ldots, \ell(k) \le k)
= (1-Q_1) \cdots (1-Q_k),
\]
and this implies that $\mu$ is defective:
\[
\P(\mu=\infty) = \lim_{k\to \infty} (1-Q_1) \cdots (1-Q_k) = \sqrt{\lambda}>0.
\]
We also have
\begin{align}
\P(n<\mu < \infty)
&= \P\left(A_{0,n}^+ \cap \bigcup_{k=1}^\infty (A_{0,k}^+)^c\right)
= \P\left(A_{0,n}^+ \cap \bigcup_{k=n+1}^\infty (A_{0,k}^+)^c\right)
= \P\left(A_{0,n}^+ \cap
\bigg(\bigcap_{k=n+1}^\infty A_{0,k}^+\bigg)^c\right)
\nonumber
\\
&=\P(\ell(1) \le 1,\ldots, \ell(n)\le n;
\{\ell(1) \le 1, \ldots, \ell(n) \le n, \ell(n+1) \le n+1, \ldots\}^c)
\nonumber
\\
&=\P(\ell(1) \le 1,\ldots, \ell(n)\le n;
\{\ell(n+1) \le n+1, \ell(n+2)\le n+2, \ldots\}^c)
\nonumber
\\
&=\P(\ell(1) \le 1,\ldots, \ell(n)\le n)\,
\P(\exists k > n~ \ell(k)>k)
\label{theone}
\\
&= \P(\ell(1) \le 1,\ldots, \ell(n)\le n)\,
\left[1-\P(\ell(n+1) \le n+1, \ell(n+2) \le n+2, \ldots)\right]
\nonumber
\\
&=
(1-Q_1) \cdots (1-Q_n)\,
\left[1-(1-Q_{n+1})(1-Q_{n+2}) \cdots\right]
\label{thetwo}
\\
&= (1-Q_1) \cdots (1-Q_n)-\sqrt{\lambda}.
\nonumber
\end{align}
We obtain an upper bound of this easily from \eqref{theone}:
\[
\P(n<\mu < \infty)
\le \P(\exists k > n~ \ell(k) > k)
\le \sum_{k=n+1}^\infty \P(\ell(k)>k)
= \sum_{k=n+1}^\infty \P(\ell(0)>k)
= \sum_{k=n+1}^\infty Q_k,
\]
and this upper bound converges to $0$ as $n \to \infty$ by
the assumption that the sequence $(Q_n)$ is summable.
We also have an asymptotic lower bound by using \eqref{thetwo},
the inequality $(1-Q_1)\cdots(1-Q_n) \ge \sqrt{\lambda}$, for all $n$, and the
fact that $(1-e^{-x})/x$ decreases as $x$ increases:
\begin{gather*}
\P(n<\mu < \infty) \ge
\sqrt{\lambda} \left[1-e^{-(Q_{n+1}+Q_{n+2}+\cdots)}\right]
\ge\sqrt{\lambda}\,\frac{1-e^{-\E \ell(0)}}{\E \ell(0)}\, \sum_{k=n+1}^\infty Q_k.
\end{gather*}
We have thus proved
\begin{lemma}
\[
\frac{\sqrt{\lambda}}{1-\sqrt{\lambda}}\,
\frac{1-e^{-\E \ell(0)}}{\E \ell(0)}\,
\sum_{k=n+1}^\infty Q_k
\le \P(\mu>n|\mu<\infty)
\le \frac{1}{1-\sqrt{\lambda}}\,
\sum_{k=n+1}^\infty Q_k.
\]
\end{lemma}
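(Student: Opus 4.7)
The plan is to use the two-sided estimates on the unconditional probability $\P(n<\mu<\infty)$ that have already been derived in the displays preceding the lemma, and then simply renormalize by dividing through by $\P(\mu<\infty)$.

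First I would identify the normalising constant: since $\P(\mu>k)=(1-Q_1)\cdots(1-Q_k)$, letting $k\to\infty$ gives
\[
\P(\mu=\infty)=\prod_{j=1}^\infty(1-Q_j)=\sqrt\lambda,
\]
by the product formula \eqref{lambdatoo}. Hence $\P(\mu<\infty)=1-\sqrt\lambda$, and therefore
\[
\P(\mu>n\mid\mu<\infty)=\frac{\P(n<\mu<\infty)}{1-\sqrt\lambda}.
\]

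For the upper bound, I would start from the factorisation \eqref{theone}, namely $\P(n<\mu<\infty)=\P(A^+_{0,n})\,\P(\exists k>n:\ell(k)>k)$, bound the first factor trivially by $1$, and bound the second by a union bound: $\P(\exists k>n:\ell(k)>k)\le\sum_{k>n}\P(\ell(k)>k)=\sum_{k>n}Q_k$. Dividing by $1-\sqrt\lambda$ yields the right-hand inequality. For the lower bound, I would start from \eqref{thetwo}, which reads $\P(n<\mu<\infty)=(1-Q_1)\cdots(1-Q_n)\bigl[1-\prod_{j>n}(1-Q_j)\bigr]$. The first factor is decreasing in $n$ and exceeds its infinite-product limit $\sqrt\lambda$. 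For the bracket, use $1-x\le e^{-x}$ to obtain $1-\prod_{j>n}(1-Q_j)\ge 1-\exp(-\sum_{j>n}Q_j)$, and then exploit the elementary fact that $x\mapsto(1-e^{-x})/x$ is decreasing on $(0,\infty)$, combined with $\sum_{j>n}Q_j\le\sum_{j\ge 1}Q_j=\E\ell(0)$, to conclude
\[
1-e^{-\sum_{j>n}Q_j}\ge\frac{1-e^{-\E\ell(0)}}{\E\ell(0)}\sum_{j>n}Q_j.
\]
Dividing the resulting bound by $1-\sqrt\lambda$ produces the left-hand inequality.

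There is no real obstacle: all the genuine content—the identity \eqref{thetwo}, the union-bound estimate arising from \eqref{theone}, and the exponential approximation together with monotonicity of $(1-e^{-x})/x$—is either established in the preceding paragraph or is a one-line calculus fact. The lemma is thus essentially a repackaging step, making explicit what \eqref{theone} and \eqref{thetwo} imply once one passes from the unconditional law of $\mu$ to its conditional law given $\{\mu<\infty\}$.
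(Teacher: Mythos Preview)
Your proposal is correct and follows essentially the same argument as the paper: the upper bound via the union bound from \eqref{theone}, the lower bound via \eqref{thetwo} combined with $(1-Q_1)\cdots(1-Q_n)\ge\sqrt\lambda$ and the monotonicity of $(1-e^{-x})/x$, and then division by $\P(\mu<\infty)=1-\sqrt\lambda$. As you rightly note, the lemma is a repackaging of the estimates already established in the preceding displays.
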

Since $Q_k=\P(\ell(0)>k)$, the above inequality says that
the tail of the distribution of $\mu$ conditional on it being finite
is comparable to the ``integrated'' tail of the distribution of
$\ell(0)$.
We say that a positive random variable $Z$ has $p$-moment, for $p\ge1$,
if $\E Z^p < \infty$; we say that it has an exponential moment
of $\E e^{\theta Z}<\infty$ for some $\theta>0$.
From the above lemma we conclude the following.

\begin{corollary}	\label{mumom}
Conditional on $\{\mu < \infty\}$, $\mu$ has a $p$-moment
(respectively,  exponential moment) if and only if $\ell(0)$
has a $(p+1)$-moment (respectively, exponential moment).
\end{corollary}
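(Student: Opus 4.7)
The plan is to read Corollary \ref{mumom} as an immediate consequence of the two-sided tail estimate in the preceding (un-numbered) lemma, combined with a Fubini interchange of summation. Write $R_n := \sum_{k>n} Q_k$. The preceding lemma gives constants $c_1(\lambda), c_2(\lambda) > 0$ (depending on $\E\ell(0)$ and $\lambda$) such that
\[
c_1 R_n \le \P(\mu > n \mid \mu < \infty) \le c_2 R_n \qquad \text{for all } n \ge 0.
\]
So everything reduces to showing that the tail sum $R_n$ encodes integrability of $\ell(0)$ raised to one higher power (or at the same exponential rate).

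First I would handle the polynomial case. Recall the standard identity, valid for any nonnegative integer-valued random variable $Z$: for $p \ge 1$,
\[
\E Z^p < \infty \iff \sum_{n \ge 1} n^{p-1} \P(Z > n) < \infty,
\]
so applying this to $Z = \mu$ (under $\P(\cdot \mid \mu < \infty)$) and using the bound above, $\mu$ has a $p$-moment iff $\sum_{n \ge 1} n^{p-1} R_n < \infty$. Now interchange the order of summation:
\[
\sum_{n \ge 1} n^{p-1} R_n \;=\; \sum_{n \ge 1} n^{p-1} \sum_{k > n} Q_k \;=\; \sum_{k \ge 2} Q_k \sum_{n=1}^{k-1} n^{p-1}.
\]
Since $\sum_{n=1}^{k-1} n^{p-1}$ is bounded above and below by constant multiples of $k^p$ (for $p \ge 1$, with $p=1$ giving a $\log$-free $\asymp k$), this double sum converges iff $\sum_{k \ge 1} k^p Q_k < \infty$. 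But $Q_k = \P(\ell(0) > k)$, and by the same tail-sum identity applied to $\ell(0)$, this last series is finite iff $\E \ell(0)^{p+1} < \infty$. This establishes the polynomial statement.

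Next I would handle the exponential case in exactly the same manner. For $\theta > 0$, $\E(e^{\theta\mu} \mid \mu<\infty) < \infty$ iff $\sum_n e^{\theta n}\,\P(\mu>n \mid \mu<\infty) < \infty$, hence (by the sandwich bound) iff $\sum_n e^{\theta n} R_n < \infty$. Fubini gives
\[
\sum_{n \ge 0} e^{\theta n} R_n \;=\; \sum_{k \ge 1} Q_k \sum_{n=0}^{k-1} e^{\theta n} \;=\; \frac{1}{e^\theta-1}\sum_{k\ge 1} (e^{\theta k}-1)\, Q_k,
\]
which is finite iff $\sum_k e^{\theta k} Q_k < \infty$, i.e.\ iff $\E e^{\theta \ell(0)} < \infty$ (by the identity $\E e^{\theta Z} = 1 + (e^\theta-1)\sum_{k\ge 0}e^{\theta k}\P(Z>k)$ for nonnegative integer $Z$).

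There is no real obstacle here: the whole argument is book-keeping once the two-sided bound on $\P(\mu>n\mid\mu<\infty)$ is in hand. The only mild technical point is tracking constants in the equivalence $\sum_{n=1}^{k-1} n^{p-1} \asymp k^p$ uniformly in $k$, and noting that both directions of the equivalence (moment of $\mu$ $\Leftrightarrow$ moment of $\ell(0)$) use only the two-sided comparison $\P(\mu>n\mid\mu<\infty) \asymp R_n$ and the Fubini rearrangement, which is term-by-term positive and therefore unconditional.
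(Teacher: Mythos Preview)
Your proof is correct and is precisely the argument the paper has in mind: the corollary is stated without proof, immediately after the remark that the lemma makes $\P(\mu>n\mid\mu<\infty)$ comparable to the integrated tail $\sum_{k>n}Q_k$ of $\ell(0)$, and your Fubini computation is the routine way to convert that tail comparison into the moment equivalence.
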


We similarly wish to examine the moments of the random variable
\begin{equation} \label{nupoint}
\nu := \inf\{k \in \N:\, A_{0,k}^- \text{ occurs}\}.
\end{equation}
Since $A_{0,k}^-$ is not monotonic in $k$, we need to use an
argument different than before.
First observe that
\[
\nu = \inf\{k \in \N\, [0,k-1] \leadsto k\}
= \inf\{k \in \N:\, r(k-1) \le 1, \ldots, r(0) \le k\}.
\]
Define a sequence of nonnegative integer-valued random variables
$x_n$, $n=0,1,\ldots$, by $x_0=0$ and
\[
x_n= \max\{r(0)-n, r(1)-(n-1), \dots, r(n-1)-1\}.
\]
Then
\[
\nu=\inf\{n \in \N:\, x_n=0\}.
\]
But the $x_n$ satisfy
\[
x_{n+1} = \max(x_n,r(n))-1,\quad n \ge 0,
\]
and since the $r(n)$ are i.i.d., the sequence $(x_n)$ is Markovian.
For any integer $K>0$,
if $x_n\ge K$, then
\begin{align*}
x_{n+1}-x_n = (r(n)-x_n)^+-1\le (r(n)-K)^+-1.
\end{align*}
Since $r(0)$ has finite expectation,
we have that $\E(r(0) -K)^+ <1$, for $K$ large enough.
Therefore, after the Markov chain
leaves the interval $[0,K]$, it is upper-bounded by a random walk
with increments distributed like $(r(0) -K)^+-1$ whose mean is negative.
By standard properties of random walks this implies that the
finiteness of the $p$-th moment of the return time $T_K$
to the set $[0,K]$ is eqiuvalent to the finiteness of the $p$-th moment
of the positive part of the increments and, in turn, to
the finiteness of the $p$-th moment of $r(0)$ which is
the same as the $p$-th moment of $\ell(0)$. Similar conclusions
are made for exponential moments.
We have thus proved:

\begin{lemma}	\label{numom}
$\nu$ has a $p$-moment (respectively, exponential moment) if and only if
$\ell(0)$ has a $p$-moment (respectively, expomential moment).
\end{lemma}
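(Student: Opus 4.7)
My plan is to prove the two implications separately, handling the $p$-moment and exponential-moment cases in parallel since the argument is structurally identical. The forward direction (moments of $\nu$ imply moments of $\ell(0)$) will be essentially a one-line observation, while the reverse direction builds on the random-walk domination already sketched in the paper for the return time $T_K$ to $[0,K]$.

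For the easy direction I would note that since $x_0=0$ and $x_1=r(0)-1$, the recursion $x_{n+1}=\max(x_n,r(n))-1\ge x_n-1$ implies $x_n\ge r(0)-n$ for every $n\ge 1$. Hence $x_n=0$ forces $n\ge r(0)$, so $\nu\ge r(0)$; any finite moment of $\nu$ then immediately forces the corresponding moment of $r(0)$, equivalently of $\ell(0)$.

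For the reverse direction, fix $K$ large enough that $\delta:=1-\E(r(0)-K)^+>0$. While $x_n>K$, the increments satisfy $x_{n+1}-x_n\le (r(n)-K)^+-1=:\eta_n$, an i.i.d.\ sequence with mean $-\delta$, so starting from any $y>K$ the first passage time $T_K$ to $[0,K]$ is stochastically dominated by the first passage time of the random walk $y+\eta_1+\cdots+\eta_n$ to $(-\infty,K]$. Classical moment bounds on first passage times of random walks with negative drift (Chow's theorem for the $p$-moment case, Wiener--Hopf factorisation for the exponential case) give that $T_K$ inherits the relevant moment from $(\eta_0)^+$, and hence from $r(0)$, with in fact $\E T_K^p=O((1+(y-K)^p))$. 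To pass from $T_K$ to $\nu$ I would use an excursion decomposition: the chain alternates between sojourns in $[0,K]$ and excursions into $(K,\infty)$, and on the event that the next $K$ consecutive values of $r$ all equal $1$ (of probability $p_1^K>0$) the chain starting from any $y\in[0,K]$ descends monotonically to $0$ within $K$ steps. Hence the number of excursions above $K$ before $\nu$ is dominated by a geometric random variable, while each excursion contributes an i.i.d.\ copy of $1+T_K$ initiated at a random exit state bounded by $r(\cdot)-1$. A Wald-type bound for the $p$-th moment of a random sum (and its exponential-moment analogue) then yields the desired bound on $\nu$.

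The main obstacle will be the bookkeeping of this excursion decomposition: propagating moment bounds from the increments $\eta_n$ to the passage time $T_K$, to the length of a single excursion (which depends on the random entry state), and finally to $\nu$ via a geometric number of excursions. A cleaner self-contained alternative that bypasses this tower of random-sum estimates is a Foster--Lyapunov approach with test function $V(x)=(x+1)^p$ (respectively $V(x)=e^{\theta x}$ for $\theta>0$ sufficiently small): using $\E r(0)^p<\infty$ (resp.\ the exponential-moment hypothesis), a direct computation shows that $\E[V(x_{n+1})-V(x_n)\mid x_n=x]\le -cV(x)^{1-1/p}$ (resp.\ $\le -cV(x)$) outside a finite set, and standard Meyn--Tweedie-type moment bounds on return times of positive recurrent chains then yield the conclusion.
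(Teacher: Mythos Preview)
Your proposal is correct and follows essentially the same route as the paper: both represent $\nu$ as the hitting time of $0$ for the Markov chain $x_{n+1}=\max(x_n,r(n))-1$ and dominate this chain above a level $K$ by a negative-drift random walk to control $T_K$. Your write-up is in fact more complete than the paper's sketch: you make the easy direction explicit via $\nu\ge r(0)$, and you address the passage from $T_K$ (return to $[0,K]$) to $\nu$ (hitting $0$) via the geometric-excursion decomposition or the Foster--Lyapunov alternative, both of which the paper leaves to the reader under the heading ``standard properties of random walks''.
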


\begin{remark}	\label{qmom}
Since $\P(\ell(0)>k) = Q_k$, $k \ge 0$, we see that $\ell(0)$ has
$p$-th moment if and only iff $\sum_{k=1}^\infty k^{p-1} Q_k < \infty$.
and $\ell(0)$ has an exponential moment if and only if
$\sum_{k=1}^\infty z^k Q_k < \infty$ for some $z>1$.
In particular, these conditions hold  if $p_j=p \in (0,1)$ for all $j$,
\end{remark}

\subsection{Moments of skeleton points and a central limit theorem}
\label{moske}
We show that not only skeleton points exist but that they can also
be constructed recursively and causally.
This will be done by means of iterates of the
the stopping times $\mu$ and $\nu$.
We define two interlaced sequences of stopping times
\[
\nu[1] < \mu[1] < \nu[2] < \mu[2] < \nu[3] < \mu[3] < \cdots,
\]
as follows.
\begin{align}
\nu[1] &:=\nu
\nonumber\\
\mu[1] &:= \nu +  \theta^{\nu}\mu
= \inf\{j > \nu:~ A^+_{\nu,j}  \text{ fails} \},
\label{r1}
\end{align}
and, recursively, for $k \ge 2$,
\begin{align}
\nu[k] &:=
\inf\{j > \mu[k-1]: A_{\nu[k-1],j }^-  \text{ occurs} \}
\nonumber\\
\mu[k] &:= \nu[k] + \theta^{\nu[k]}\mu
= \inf\{j > \nu[k]: A_{\nu[k],j}^+  \text{ fails} \}.
\label{r2}
\end{align}
It is understood that if for some $k$
we have $\mu[k]=\infty$ then $\nu[j]=\mu[j]=\infty$ for all $j \ge k+1$.
In fact, since $\mu$ is a defective random variable, with $\P(\mu=\infty)
=\sqrt{\lambda}$, it follows that the recursion terminates in finitely
many steps.
Let
\begin{equation}
\label{K}
K:= \inf\{k\ge 1:~ \mu[k]=\infty\}.
\end{equation}
It is easy to see that $K$ is a geometric$(\sqrt{\lambda})$
random variable, that is,
$\P(K>k) = (1-\sqrt{\lambda})^k$, $k \ge 0$.
By construction, we have $\nu[K]<\infty$ a.s.

\begin{remark}
It may be instructive to provide intuition regarding the construction above.
We wish to find a vertex $K$ that has the property $K \leadsto j$ for all $j>K$
{\em and} $i \leadsto K$ for all $0 \le i < K$.
(This is not quite saying that $K$ is a skeleton point because we don't
care whether negative vertices lead to $K$ via a path.)
Keep in mind that $\nu$ is a.s.\ finite but $\mu$ has positive probability
that it be infinity.
We first have that , $A^-_{0,\nu}$ holds. that is, $i \leadsto \nu$ for all
$0 \le i < \nu$.
Now define $\mu[1]$ such that $\mu[1]-\nu[1]$ has law of $\mu$.
Suppose it so happens that $\mu[1]=\infty$.
This means, by definition, that $A^+_{\nu, j}$ holds for all $j > \nu$,
that is, $\nu \leadsto j$ for all $j > \nu$.
And so, the vertex $K=\nu$ has the required properties.
If $\mu[1]$ is not $\infty$, then we repeat the procedure
as many times as is required until we obtain one that is infinity.
\end{remark}

Using the observation that $\nu[k]$ and $\mu[k]$ are stopping times
with respect to the filtration $(\FF^-_n)_{n \ge 0}$,
where $\FF^-_n = \sigma(\alpha^{(k)}, k \le n)$,
we easily obtain that
\[
\nu[K] \eqdist \sum_{i=1}^\kappa \nu_i + \sum_{i=1}^{\kappa-1} \mu_i,
\]
where $\kappa, \nu_1, \nu_2, \ldots, \mu_1, \mu_2,\ldots$
are independent, with $\kappa \eqdist K$, hence geometric$(\sqrt{\lambda})$,
$\nu_i \eqdist \nu$, and $\P(\mu_i \in \cdot) = \P(\mu \in \cdot |\mu<\infty)$.
\begin{lemma}
\label{l67}
$\nu[K]$ has a $p$-moment (respectively, exponential moment) if and only if
$\ell(0)$ has a $(p+1)$-moment (respectively, exponential moment).
\end{lemma}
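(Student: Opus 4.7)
The plan is to read all the moment information out of the distributional identity
\[
\nu[K] \eqdist \sum_{i=1}^\kappa \nu_i + \sum_{i=1}^{\kappa-1} \mu_i
\]
stated just above the lemma, in which $\kappa$, $(\nu_i)$ and $(\mu_i)$ are mutually independent, $\kappa$ is geometric$(\sqrt{\lambda})$, $\nu_i \eqdist \nu$, and the $\mu_i$ have the law of $\mu$ conditioned on $\{\mu<\infty\}$. The whole argument then reduces to feeding this identity into Lemma \ref{numom} and Corollary \ref{mumom}.

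\emph{``If'' direction (polynomial case).} Assume $\ell(0)$ has a $(p+1)$-th moment. Corollary \ref{mumom} gives $\E \mu_1^p < \infty$, and Lemma \ref{numom} (together with $\E \ell(0)^{p+1}<\infty \Rightarrow \E \ell(0)^p<\infty$) gives $\E \nu_1^p < \infty$. For nonnegative i.i.d.\ $X_i$ with $\E X_1^p<\infty$ and an independent integer-valued $N$ with $\E N^p<\infty$, the power-mean inequality $\bigl(\sum_{i=1}^n x_i\bigr)^p \le n^{p-1}\sum_{i=1}^n x_i^p$ for $p\ge 1$ yields, after conditioning on $N$,
\[
\E\Bigl(\sum_{i=1}^N X_i\Bigr)^p \le \E[N^p]\,\E[X_1^p].
\]
Since $\kappa$ is geometric it has all polynomial moments, so each of the two sums on the right of the decomposition has a finite $p$-th moment; the inequality $(a+b)^p\le 2^{p-1}(a^p+b^p)$ then gives $\E \nu[K]^p <\infty$. (For $0<p<1$, subadditivity of $x\mapsto x^p$ makes the same argument even easier.)

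\emph{``Only if'' direction (polynomial case).} Suppose $\E \nu[K]^p <\infty$. On $\{\kappa=1\}$ (which has probability $\sqrt{\lambda}>0$) the identity reduces to $\nu[K]=\nu_1$, so $\E \nu_1^p <\infty$, and Lemma \ref{numom} yields $\E \ell(0)^p<\infty$. On $\{\kappa=2\}$ (probability $(1-\sqrt{\lambda})\sqrt{\lambda}>0$) we have $\nu[K]=\nu_1+\mu_1+\nu_2 \ge \mu_1$, so $\E \mu_1^p<\infty$ as well, and Corollary \ref{mumom} upgrades this to $\E \ell(0)^{p+1}<\infty$.

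\emph{Exponential moments.} The ``only if'' direction is identical after conditioning on $\{\kappa=1\}$ and $\{\kappa=2\}$. For the ``if'' direction, by Lemma \ref{numom} and Corollary \ref{mumom} both $\nu_1$ and $\mu_1$ have some exponential moment. For an independent geometric $\kappa$ with parameter $\sqrt{\lambda}$ and i.i.d.\ nonnegative $X_i$ independent of $\kappa$,
\[
\E\, e^{\theta \sum_{i=1}^\kappa X_i}
= \E\bigl[(\E e^{\theta X_1})^\kappa\bigr]
= \frac{\sqrt{\lambda}\,\E e^{\theta X_1}}{1-(1-\sqrt{\lambda})\E e^{\theta X_1}},
\]
which is finite as soon as $(1-\sqrt{\lambda})\E e^{\theta X_1}<1$; by continuity this holds for all sufficiently small $\theta>0$. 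Applying this to both the $\nu_i$- and $\mu_i$-sums and using Cauchy--Schwarz on their (independent) product yields an exponential moment of $\nu[K]$.

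The only genuinely delicate point is making the reverse implication for $\mu_1$ work, since $\mu$ itself is defective; it is essential to reason with the proper law of $\mu_1 = \mu\mid\{\mu<\infty\}$, which is precisely why the decomposition was written in terms of i.i.d.\ copies rather than iterates of stopping times. Once this is set up, both directions reduce to textbook facts about random sums with geometric summation index, and the asymmetry ``$(p+1)$-th vs.\ $p$-th moment'' of the statement is inherited entirely from Corollary \ref{mumom}.
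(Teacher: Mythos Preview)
Your proof is correct and follows essentially the same route as the paper: both use the decomposition $\nu[K]\eqdist\sum_{i=1}^\kappa \nu_i+\sum_{i=1}^{\kappa-1}\mu_i$ together with Lemma~\ref{numom} and Corollary~\ref{mumom}, and the fact that a geometric random sum of nonnegative i.i.d.\ summands inherits the moment properties of a single summand. Your version is simply more explicit than the paper's two-line sketch, in particular your conditioning on $\{\kappa=1\}$ and $\{\kappa=2\}$ spells out the ``only if'' direction that the paper leaves implicit.
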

\begin{proof}
From Lemma \ref{numom}, we have that $\sum_{i=1}^\kappa \nu_i$
has an exponential (respectively, $p$-th) moment if and only if
$\ell(0)$ has an exponential (respectively, $p$-th) moment.
From Corollary \ref{numom}, we have that $\sum_{i=1}^{\kappa-1} \mu_i$
has an exponential (respectively, $p$-th) moment if and only if
$\ell(0)$ has an exponential (respectively, $(p+1)$-th) moment.
\end{proof}
\begin{theorem} \label{momthm}
Let $p\ge 1$.
\\
(i) $\E((\Gamma_1-\Gamma_0)^p|\Gamma_0=0) <\infty$
if and only if $\sum_{k=1}^\infty k^p Q_k < \infty$
\\
(ii) The distribution of $\Gamma_1-\Gamma_0$ conditional on $\Gamma_0=0$
has an exponential moment if and only if $\sum_{k=1}^\infty z^k Q_k< \infty$
for some $z > 1$.
\end{theorem}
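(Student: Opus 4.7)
My plan is to identify $\Gamma_1$ with the quantity $\nu[K]$ constructed in \eqref{r1}--\eqref{r2}, whose moments are already characterized by Lemma \ref{l67} and Remark \ref{qmom}. The main pathwise claim is that $\Gamma_1 = \nu[K]$ almost surely on the event $\{\Gamma_0 = 0\}$. I will establish the inequality $\Gamma_1 \leq \nu[K]$ by verifying that $\nu[K]$ is a skeleton point on this event: its forward reach condition $\nu[K] \leadsto j$ for all $j > \nu[K]$ is simply $\mu[K] = \infty$, while backward reachability $i \leadsto \nu[K]$ for every $i < \nu[K]$ follows by chaining $A_0^-$ (given by $\{\Gamma_0 = 0\}$), $A^-_{0,\nu[1]}$, and $A^-_{\nu[k-1],\nu[k]}$ for $k = 2, \ldots, K$. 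For the reverse $\nu[K] \leq \Gamma_1$, I rule out candidate skeleton points $m \in (0, \nu[K])$: each $m = \nu[k]$ with $k < K$ fails because $\mu[k] < \infty$ gives $\nu[k] \not\leadsto \mu[k]$; any $m \in (\nu[k], \mu[k])$ would force $\nu[k] \leadsto m \leadsto \mu[k]$, again contradicting $\nu[k] \not\leadsto \mu[k]$; $m = \mu[k]$ is impossible for the same reason; and any $m \in (\mu[k], \nu[k+1])$ as a skeleton would satisfy $A^-_{\nu[k], m}$, contradicting the minimality of $\nu[k+1]$.

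Next, since the backward sigma-algebra $\FF^-$ and the forward one $\FF^+ = \sigma(\alpha^{(n)}, n \geq 0)$ are independent under $\P$, and since $\{\Gamma_0 = 0\} = A_0^+ \cap A_0^-$ with $A_0^- \in \FF^-$ and $\nu[K], A_0^+ \in \FF^+$, I will deduce
\[
\E((\Gamma_1 - \Gamma_0)^p \mid \Gamma_0 = 0) = \E(\nu[K]^p \mid A_0^+),
\]
with the analogous identity for exponential moments. Combined with Lemma \ref{l67} and Remark \ref{qmom}, which give that $\E \nu[K]^p < \infty$ iff $\ell(0)$ has a $(p+1)$-th moment iff $\sum_k k^p Q_k < \infty$ (and similarly for exponential moments), the theorem would follow immediately if the conditional and unconditional $p$-th (resp.\ exponential) moments of $\nu[K]$ were simultaneously finite.

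The forward implication is trivial: $\E(\nu[K]^p \mid A_0^+) \leq \E \nu[K]^p / \sqrt{\lambda}$. The reverse implication is the main obstacle. To handle it, I will combine the representation $\nu[K] \eqdist \sum_{i=1}^\kappa \nu_i + \sum_{i=1}^{\kappa-1} \mu_i$ from the proof of Lemma \ref{l67} with the strong Markov property at the stopping time $\nu[1]$. On $\{K \geq 2\}$ one has $\nu[K] \geq \theta^\nu \mu$ where $\theta^\nu \mu$ has the law of $\mu$ conditional on $\mu < \infty$ and is independent of $\FF_\nu$. A careful conditioning argument, decomposing $A_0^+$ into its $\FF_\nu$-measurable part and the tail part controlling only whether shifts of $\mu$ are finite, shows that $\P(K \geq 2 \mid A_0^+) > 0$ and $\E(\mu^p \mid \mu < \infty) \leq C \cdot \E(\nu[K]^p \mid A_0^+)$ for a finite constant $C$. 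By Corollary \ref{mumom}, finiteness of $\E(\mu^p \mid \mu < \infty)$ is equivalent to $\ell(0)$ having a $(p+1)$-th moment, i.e.\ $\sum_k k^p Q_k < \infty$, completing the proof of (i). The exact same argument, with $z^{\nu[K]}$ in place of $\nu[K]^p$ and exponential moments throughout, yields (ii).
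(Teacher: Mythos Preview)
Your overall approach matches the paper's: identify $\Gamma_1$ with $\nu[K]$ on $\{\Gamma_0=0\}$, then apply Lemma~\ref{l67} and Remark~\ref{qmom}. You are in fact more careful than the paper in two places: you verify both inequalities $\Gamma_1\le\nu[K]$ and $\Gamma_1\ge\nu[K]$ (the paper merely asserts that $\nu[K]$ is the least $j\ge1$ with $[0,j-1]\leadsto j\leadsto[j+1,\infty)$), and you notice that Lemma~\ref{l67} speaks of the \emph{unconditional} moments of $\nu[K]$, whereas the theorem concerns the moments of $\nu[K]$ under $\P(\,\cdot\mid A_0^+)$. The paper passes over this distinction in silence.

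That gap is real, but your proposed repair does not close it. You claim to decompose $A_0^+$ at the stopping time $\nu$ into an $\FF_\nu$-measurable part and a ``tail part controlling only whether shifts of $\mu$ are finite''. No such decomposition exists: for $j>\nu$ the event $\{\ell(j)\le j\}$ asks for an edge from $[0,j-1]$ into $j$, and this depends on $\alpha^{(0)},\dots,\alpha^{(\nu-1)}$ just as much as on $\alpha^{(\nu)},\dots,\alpha^{(j-1)}$. So $A_0^+$ does not factor across $\nu$, and $\theta^\nu\mu$ remains correlated with $A_0^+$ even after conditioning on the pre-$\nu$ $\sigma$-field; the inequality $\E(\mu^p\mid\mu<\infty)\le C\,\E(\nu[K]^p\mid A_0^+)$ is therefore left unproved. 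A more promising route is to decompose at $\nu[K]$ rather than at $\nu$: since $\nu[K]\leadsto[\nu[K]+1,\infty)$, on $\{\nu[K]=m\}$ one has $A_0^+=A^+_{0,m}$, and writing $\{\nu[K]=m\}=D_m\cap\{\theta^m\mu=\infty\}$ with $D_m:=\{m\in\{\nu[k]:k\ge1\}\}\in\sigma(\alpha^{(0)},\dots,\alpha^{(m-1)})$ yields the clean identity $\P(\Gamma_1=m\mid\Gamma_0=0)=\P(D_m\cap A^+_{0,m})$. The ``only if'' direction then reduces to a uniform lower bound $\P(A^+_{0,m}\mid D_m)\ge c>0$, which still requires its own argument.
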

\begin{proof}
By the definition of $\nu$ and $\mu$ we easily see that
\[
\big[0,\nu[K]-1\big] \leadsto \nu[K] \leadsto
\big[\nu[K]+1,\infty \big) \text{ a.s.}
\]
In fact, $\nu[K]$ is the least $j\ge 1$ such that
$[0,j-1] \leadsto j \leadsto [j+1,\infty)$.
Note that
\[
(-\infty, -1] \leadsto 0 \text{ and } \big[0,\nu[K]-1\big] \leadsto \nu[K]
\text{ implies } (-\infty, \nu[K]-1] \leadsto \nu[K].
\]
Hence, conditional on $0$ being a skeleton point ($\Gamma_0=0$),
we have that $\nu[K]$ is the next skeleton point:
\[
\P(\nu[K]=\Gamma_1-\Gamma_0|\Gamma_0=0) =1.
\]
We now use Lemma \ref{l67} and Remark \ref{qmom} to conclude.
\end{proof}

The results above are used in the proof of the following
functional central limit theorem.

\begin{theorem}
\label{barbarossa}
Consider the random graph $\vec G(\Z, (p_j))$, assuming $0<p_1<1$,
and
\[
\sum_{k=1}^\infty k^2 (1-p_1) \cdots (1-p_k)<\infty.
\]
For integers $i<j$, let $L_{i,j}$ be the maximum length of all paths with endpoints
between $i$ and $j$. Let $\Gamma_k$, $k \in \Z$, be the skeleton points,
assuming $\Gamma_0\le 0 < \Gamma_1$, and $C$ the constant appearing
in the law of large numbers for $L_{0,n}$ as in Lemma \ref{lemmix}. Then
\begin{equation}
\label{varla}
0< \sigma^2 := \var(L_{\Gamma_1,\Gamma_2} - C (\Gamma_2-\Gamma_1)) < \infty
\end{equation}
Define
\[
\ell_n(t) := \frac{L_{0,[nt]}-C n t}{\sigma \sqrt{\lambda n}}, \quad
t \ge 0, \quad n \in \N.
\]
Considering the sequence $\ell_n$, $n \in \N$, as a sequence of
random elements of the Skorokhod space $D[0,\infty)$ (see \cite{BILL68}),
equipped with the topology of local uniform convergence,
converges in distribution to a standard Brownian motion.
\end{theorem}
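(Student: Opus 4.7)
The plan exploits the regenerative structure from Lemma \ref{indg}. Set $\xi_k := \Gamma_{k+1}-\Gamma_k$ and $\zeta_k := L_{\Gamma_k,\Gamma_{k+1}}$ for $k\ge 1$; then $(\xi_k,\zeta_k)_{k\ge 1}$ is i.i.d.\ by Lemma \ref{indg} together with Remark \ref{palmrem}. Theorem \ref{momthm} converts the hypothesis $\sum_k k^2 Q_k<\infty$ into $\E\xi_1^2<\infty$, and $\zeta_k\le\xi_k$ then gives $\E\zeta_1^2<\infty$. Renewal-reward identifies $C = \E\zeta_1/\E\xi_1 = \lambda\E\zeta_1$ (matching the LLN constant of Lemma \ref{lemmix}), so the centered variables $\eta_k := \zeta_k - C\xi_k$ are i.i.d., mean zero, with finite variance $\sigma^2$.

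Next I would prove the deterministic identity, for skeleton indices $j<l$,
\[
L_{\Gamma_j,\Gamma_l} \;=\; \sum_{k=j}^{l-1}\zeta_k.
\]
The inequality $\ge$ follows by concatenating max paths between successive skeleton points (which exist by the defining property of $\SSS$). For $\le$, a maximum-length path $P$ in $[\Gamma_j,\Gamma_l]$ must (i) begin at $\Gamma_j$ and end at $\Gamma_l$, since the skeleton property permits extending $P$ by a prefix or suffix of length $\ge 1$ whenever an endpoint is interior, and (ii) visit every intermediate $\Gamma_k$, since otherwise $P$ uses an edge $(u,v)$ with $u<\Gamma_k<v$, which may be replaced by the concatenation of the two max paths $u\leadsto\Gamma_k\leadsto v$ of total length $\ge 2$, strictly lengthening $P$. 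Letting $N(t) := |\{k\ge 1 : \Gamma_k\le t\}|$, two applications of the subadditive inequality $L_{a,c}\le L_{a,b}+L_{b,c}+1$ from the proof of Lemma \ref{lemmix} (with $b=\Gamma_1$ and then $b=\Gamma_{N([nt])}$) yield
\[
\Bigl|L_{0,[nt]} \;-\; \sum_{k=1}^{N([nt])-1}\zeta_k\Bigr|
\;\le\;
\Gamma_1 \;+\; \bigl([nt]-\Gamma_{N([nt])}\bigr) \;+\; 2,
\]
which is bounded uniformly in $t\in[0,T]$ by $\Gamma_1 + \max_{k\le N([nT])+1}\xi_k$.

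The rest is a standard random-index functional CLT. From the decomposition
\[
L_{0,[nt]} - C[nt]
\;=\;
\sum_{k=1}^{N([nt])-1}\eta_k
\;+\; C\bigl(\Gamma_{N([nt])}-[nt]\bigr)
\;-\; C\Gamma_1
\;+\; (\text{remainder}),
\]
Donsker's theorem applied to the i.i.d.\ sequence $(\eta_k)$, combined with the uniform-on-compacts renewal LLN $N([n\cdot])/n\to\lambda\cdot$ and the classical random-time-change argument, delivers $n^{-1/2}\sum_{k=1}^{N([n\cdot])}\eta_k \Rightarrow \sigma\sqrt\lambda\,B_\cdot$ in $D[0,\infty)$. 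The boundary and renewal remainders are all dominated by $\Gamma_1 + \max_{k\le N([nT])+1}\xi_k$; under $\E\xi_1^2<\infty$ we have $n\,\P(\xi_1>\epsilon\sqrt n)\to 0$, so a union bound gives $o_p(\sqrt n)$ and these terms vanish after the $\sigma\sqrt{\lambda n}$ scaling. The main obstacle I anticipate is the strict positivity $\sigma^2>0$ required in \eqref{varla}: one must rule out $\zeta_1\equiv C\xi_1$ almost surely. I would argue this by conditioning on a small value of $\xi_1$ of positive probability (e.g.\ $\xi_1=2$, which is accessible thanks to $0<p_1<1$) and then toggling a single edge within that cycle---one whose presence changes $\zeta_1$ but does not disturb the skeleton-termination event fixing $\xi_1$---to produce two realisations with the same $\xi_1$ but different $\zeta_1$; positivity of the relevant $p_k$ then forces $\var(\zeta_1\mid\xi_1=2)>0$ and hence $\sigma^2>0$.
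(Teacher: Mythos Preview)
Your approach is essentially identical to the paper's: regenerative decomposition over skeleton points, Donsker's theorem for the i.i.d.\ increments $\eta_k=\zeta_k-C\xi_k$, and a random time change via $N([n\cdot])/n\to\lambda\cdot$. Your write-up is in fact more detailed than the paper's sketch on the exact decomposition $L_{\Gamma_j,\Gamma_l}=\sum_{k=j}^{l-1}\zeta_k$ and on the boundary terms.

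There is one concrete slip in your $\sigma^2>0$ argument: the event $\{\xi_1=2\}$ has probability \emph{zero}. Indeed, if $\Gamma_1=0$ and $\Gamma_2=2$ were both skeleton points, then by the skeleton property of $0$ the edge $(0,1)$ must exist and by the skeleton property of $2$ the edge $(1,2)$ must exist; but then every $i<1$ reaches $1$ via $0$ and $1$ reaches every $j>1$ via $2$, forcing $1\in\SSS$ and contradicting $\Gamma_2=2$. (The paper records exactly this: $\P(\Gamma_2-\Gamma_1=2)=0$.) So you cannot condition on $\xi_1=2$.

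The fix is immediate and cleaner than a toggling argument. From $\P(H_{0,1})=p_1>0$ we have $\P(\xi_1=1)>0$, and on $\{\xi_1=1\}$ necessarily $\zeta_1=1$, so $\eta_1=1-C$. Since $p_1<1$ forces $C<1$, the centered variable $\eta_1$ takes the strictly positive value $1-C$ with positive probability; together with $\E\eta_1=0$ this rules out $\eta_1$ being a.s.\ constant, whence $\sigma^2=\var\eta_1>0$.
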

\begin{proof}[Sketch of proof]
We have $\lambda>0$ because $p_1>0$.
It is clear that $\sigma^2>0$. The reason that $\sigma^2$ is finite
follows $L_{\Gamma_1,\Gamma_2} \le \Gamma_2-\Gamma_1$ and Theorem \ref{momthm}
which guarantees that $\E[(\Gamma_1-\Gamma_2)^2]
= \E[(\Gamma_1-\Gamma_0)^2 |\Gamma_0=0] <\infty$.
From Remark \ref{palmrem} we also have that $\E\Gamma_1<\infty$.
Let $N_n = \max\{j \ge 1:\, \Gamma_j \le n\}$.
By the definition of $\SSS$, if $v \in \SSS$ then the maximal path from
some $u<v$ to some $w>v$ must necessarily include $v$.
Hence
\[
L_{0,n} = L_{0,\Gamma_1}+L_{\Gamma_1,\Gamma_2} + \cdots + L_{\Gamma_{N_n,n}}.
\]
We can then write
\begin{equation}	\label{ldec}
\ell_n(t) = \frac{L_{0,\Gamma_1}-C\Gamma_1}{\sigma\sqrt{\lambda n}}
+\frac{1}{\sigma\sqrt{\lambda n}}
\sum_{i=2}^{N_{nt}} \left[L_{\Gamma_{i-1},\Gamma_i}-C(\Gamma_i-\Gamma_{i-1})\right]
+\frac{L_{\Gamma_{nt},nt}-C(nt-\Gamma_{N_{nt}})}{\sigma\sqrt{\lambda n}}
\end{equation}
Define also
\[
\widehat \ell_n(t) := \frac{1}{\sigma\sqrt{\lambda n}}
\sum_{i=2}^{nt} \left[L_{\Gamma_{i-1},\Gamma_i}-C(\Gamma_i-\Gamma_{i-1})\right]
,\quad
\phi_n(t) := \frac{N_{nt}}{n}.
\]
The first and third terms of \eqref{ldec} converge to $0$ in distribution,
as random elements of $D[0,\infty)$.
The middle term is simply equal to
$\widehat \ell_n \comp \phi_n$.
Hence the limit of $\ell_n$ exists if and only if the limit of
$\widehat \ell_n \comp \phi_n$ exists, in which case the limits are equal.
But $\widehat \ell_n$ converges in distribution to $\frac{1}{\sqrt{\lambda}} B$,
where $B$ is a standard Brownian motion, and $\phi_n$ converges in distribution
to the function $\phi$, where $\phi(t)=\lambda t$.
Therefore, by the continuity in $D[0,\infty)$, of the composition
operator, we have $\widehat \ell_n \comp \phi_n$ converges in distribution
to the process $\frac{1}{\sqrt{\lambda}} B(\lambda t)$, $t \ge 0$,
which is equal in distribution to $B$.
Hence $\ell_n$ converges in distribution to $B$.
\end{proof}

\begin{remark}
\label{jdepend}
We made the assumption that $p_1>0$ in order
that the set $\mathscr S$ serve as a skeleton set over which
the graph regenerates, that is, we have independent pieces.
If $p_1=0$ then $\mathscr S$ is useless; for example, $\lambda=0$ in this case.
However, if $p_1=\cdots=p_j=0$ but $p_{j+1} >0$
then we can establish a 1-dependent structure.\footnote{We say that
a random sequence $X_i$, $i \in \Z$, is 1-dependent if, for each $i \in \Z$,
the sequences $(X_j, j < i)$ and $(X_j, j > i)$ are independent.}
One can still deal with this case, but one needs a somewhat different technique
based on weakly regenerative processes.
\end{remark}

\begin{remark}
At this stage, we have no estimate for the variance \eqref{varla}.
Estimating this is probably a complex problem even for the
case when all the $p_j$ are equal.
\end{remark}

\begin{remark}
Observe that in $\vec G(\Z,p)$, with positive probability, there are many maximal length
paths between successive skeleton points.
Since $\lambda > 0$, this
implies that the number of linear extensions of the random partial order
induced by the edges  on $n$ vertices
increases exponentially fast,  and this allows one to obtain directly the CLT
for the logarithm number of linear extensions \cite{ABBJ94}.
\end{remark}

\begin{remark}
Note that the events appearing in the definition of $\mathscr S$
depend on the entire random sequence of edges but, nevertheless,
produce a regenerative structure.
This phenomenon, in a more general context, has been studied in \cite{FZ}.
\end{remark}

\section{Directed random graphs on partially ordered sets}
\label{sec:posets}
A direction towards generalizations of the Barak-Erd\H{o}s graph
on the set of integers, is to consider a set of vertices $V$
equipped with a partial order. The directed random graph
must respect the partial order of $V$.
We will consider two cases below, that of $V=\Z \times I$ where
$I$ is a finite set and that of $V=\Z\times \Z$.
We shall deal with law of large numbers and functional central
limit theorems and see that asymptotic normality fails.
There is an interesting connection with Brownian last passage
percolation \cite{GTW2001,BAR2001}. The Brownian last passage
percolation process is the one defined by \eqref{BLPPP} below.
The results of this section are taken from \cite{DFK12} and \cite{KT13}.

\subsection{Brownian last passage percolation}
Consider $V=\Z \times I$ where, for simplicity,
let $I=\{0,1,\ldots,M\}$ for a positive integer $M$.
Elements of $V$ are denoted by $(u,i)$, $(v, j)$, etc.
Define the standard partial order on $V$, denoted by $\ll$ by
\[
(u,i) \ll (v,j) \text { if } (u,i) \neq (v,j) \text{ and }
u \le v, i \le j.
\]
Consider the random graph
$\vec G(V, p)$, with $0<p<1$ with edges defined as follows.
A pair of vertices $(u,i)$, $(v,j)$, with $(u,i) \ll (v,j)$,
form an edge directed from $(u,i)$ to $(v,j)$
with probability $p$, independently from pair to pair.
We consider all directed paths from $(u,i)$ to $(v,j)$ and denote
by $L_{(u,i),(v,j)}$ the maximum length of all such paths.
We also let
\[
\text{
$L^*_{u,v} = $   maximum length of all paths with endpoints in the
$[u,v] \times I$}, \quad L^*_n := L_{n}^*.
\]
We are interested in the LLN and CLT for $L^*_{0,n}$ as $n \to \infty$.

Regarding the LLN the ergodic arguments of Section \ref{ergosec} go through,
provided we consider the shift $\theta$ that acts in the
horizontal direction only.
We can then easily obtain that, for the same constant $C=C(p)$ corresponding to
the Barak-Erd\H{o}s graph $\vec G(\Z, p)$, we have
\[
L^*_{n}/n \to C,  \text{ as $n \to \infty$, a.s.\ and in $L^1$},
\]
where $C$ is the constant for the Barak-Erd\H{o}s graph
$\vec G(\Z,p)$.

The CLT is more interesting.
It involves a process  that appears e.g.\ in \cite{OCONYOR2002}.
\begin{theorem}
\label{thmslab}
Consider the random graph $\vec G(\Z \times I, p)$, $I=\{1,\ldots,M\}$
and let $L^*_{n}$
be the maximum length of all paths in $\{0,1,\ldots, n\} \times I$.
Then, with $C, \sigma$ as in Theorem \ref{barbarossa},
\[
\ell_n(t) := \frac{L^*_{[nt]}-Cnt}{\sigma \sqrt{\lambda n}},
\quad t \ge 0, \quad n \in \N,
\]
as a sequence of random elements of $D[0,\infty)$ with the local
uniform topology, converges in distribution to the process
\begin{equation}
\label{BLPPP}
Z_M(t) := \max_{0=t_0\le t_1 \le \ldots \le t_M=t}\,
\sum_{i=1}^M \big(B^{(i)}(t_i) - B^{(i)}(t_{i-1})\big),
\end{equation}
where $B^{(1)}, \ldots, B^{(M)}$ are independent standard Brownian motions.
\end{theorem}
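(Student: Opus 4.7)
The plan is to decompose each directed path in $\vec G(\Z \times I, p)$ into its intra-layer segments, reducing $L^*_{[nt]}$ to a maximum over partitions of independent Barak-Erd\H{o}s LPP contributions in each layer, and then to apply Theorem~\ref{barbarossa} layer by layer.

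First I would observe that, because any directed path visits the layers $I = \{1, \ldots, M\}$ in non-decreasing order of the second coordinate, it decomposes into at most $M$ intra-layer segments plus at most $M-1$ transition edges. Writing $L^{(\ell)}_{[a,b]}$ for the Barak-Erd\H{o}s LPP on layer $\ell$ with endpoints in $\{a, \ldots, b\}$ (the random graph on layer $\ell$ is itself a $\vec G(\Z,p)$), and using that $L^{(\ell)}_{[a,b]}$ is non-decreasing in $[a,b]$ so that the optimizer can always be taken with $s_{\ell+1} = t_\ell$, this immediately yields the upper bound
\begin{equation}
\label{thmslab:up}
L^*_n \le (M - 1) + \max_{0 = t_0 \le t_1 \le \cdots \le t_M = n} \sum_{\ell=1}^M L^{(\ell)}_{[t_{\ell-1}, t_\ell]}.
\end{equation}

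To match this from below I would introduce a \emph{global skeleton point}, namely a $u \in \Z$ that is a skeleton point of every layer and at which all $M-1$ vertical edges $(u,\ell) \to (u, \ell+1)$ are present. Since the intra-layer edges of distinct layers and the vertical edges are mutually independent, the set of global skeleton points is stationary ergodic with positive rate $\lambda^M p^{M-1}$ and, by the arguments of Section~\ref{secreg}, has gaps with exponential moments. Given a (deterministic, suitably discretized) partition $0 = t_0 \le \cdots \le t_M = n$, one selects global skeleton points $u_\ell$ within $O(\log n)$ of $t_\ell$ with high probability; chaining the longest layer-$\ell$ paths between consecutive $u_{\ell-1}, u_\ell$ through the vertical transition edges at each $u_\ell$ produces a path of length $\sum_\ell L^{(\ell)}_{[u_{\ell-1}, u_\ell]} + (M-1)$. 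Because $L^{(\ell)}$ has mean $C$ times the interval length with fluctuations of order $\sqrt{n}$, perturbing the endpoints by $O(\log n)$ changes the sum by $o(\sqrt{n})$, so the lower bound matches \eqref{thmslab:up} up to this error.

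Since the $M$ Barak-Erd\H{o}s graphs on the layers are mutually independent, Theorem~\ref{barbarossa} together with independence will give joint convergence in $D([0,\infty))^M$ of the normalized processes $\hat L^{(\ell)}_n(t) := (L^{(\ell)}_{[0, [nt]]} - Cnt)/(\sigma\sqrt{\lambda n})$, $\ell = 1, \ldots, M$, to $M$ independent standard Brownian motions $B^{(1)}, \ldots, B^{(M)}$. Dividing the two-sided estimate for $L^*_{[nt]}$ by $\sigma\sqrt{\lambda n}$ yields
\[
\ell_n(t) = \max_{0 = s_0 \le \cdots \le s_M = t} \sum_{\ell=1}^M \big(\hat L^{(\ell)}_n(s_\ell) - \hat L^{(\ell)}_n(s_{\ell-1})\big) + o(1),
\]
uniformly on compact sets. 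The max-over-partitions functional is continuous on $C([0,T])^M$ for every $T > 0$, so the continuous mapping theorem delivers $\ell_n \Rightarrow Z_M$ in $D([0,\infty))$ with the local uniform topology. The hard part will be the lower bound, where one must construct an explicit path through well-chosen transition vertices realizing the supremum in \eqref{thmslab:up} up to $o(\sqrt{n})$ additive error; this calls for a uniform-in-partition argument blending the regenerative techniques of Section~\ref{secreg} with the density of global skeleton points.
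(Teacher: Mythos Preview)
Your approach is correct and coincides with the paper's sketch: both arguments rest on the set $\SSS^I$ of global skeleton points (points that are skeleton on every layer with the requisite vertical edges present), note that a maximal path must visit each of them and may switch layers there, and then pass to $M$ independent one-dimensional FCLTs combined through the last-passage functional.

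One step you assert without argument needs a sentence of justification. Your sandwich gives
\[
L^*_{[nt]} = \max_{0=t_0\le\cdots\le t_M=[nt]} \sum_\ell L^{(\ell)}_{[t_{\ell-1},t_\ell]} + o(\sqrt n),
\]
but the displayed formula you then write involves the increments $\hat L^{(\ell)}_n(s_\ell)-\hat L^{(\ell)}_n(s_{\ell-1})$, i.e.\ $L^{(\ell)}_{[0,b]}-L^{(\ell)}_{[0,a]}$ rather than $L^{(\ell)}_{[a,b]}$. These are different random variables; $L^{(\ell)}$ is only subadditive, not additive, so ``dividing the two-sided estimate'' does not by itself give the increment form. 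The fix is immediate with the machinery you already have: if $\gamma$ is the first layer-$\ell$ skeleton point $\ge a$, then both $L^{(\ell)}_{[a,b]}$ and $L^{(\ell)}_{[0,b]}-L^{(\ell)}_{[0,a]}$ differ from $L^{(\ell)}_{[\gamma,b]}$ by at most the skeleton gap containing $a$, and the maximal such gap in $[0,n]$ is $O(\log n)$ uniformly w.h.p.\ by the exponential-moment bound of Section~\ref{secreg}. Once that is said, your continuous-mapping argument goes through.
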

\begin{proof}[Sketch of proof]
If $\SSS^{(i)}$ denotes the set of skeleton points of the restriction of
$\vec G(\Z\times I,p)$ on the line $\Z \times\{i\}$,
we have that the $\SSS^{(i)}$ are independent and all equal in distribution to
the set of skeleton points of the Barak-Erd\H{o}s graph $\vec G(\Z,p)$.
Hence all the $\SSS^{(i)}$ are stationary renewal processes on $\Z$
and also aperiodic (in the sense that the greatest common divisor
of the positive integers in the support of the distance between
successive skeleton points is $1$).
Therefore the set $\cap_{i \in I} \SSS^{(i)}$ is also a renewal process
with positive rate \cite{LIN92} and the even smaller set
\[
\SSS^I =\big\{x \in \bigcap_{i \in I} \SSS^{(i)}:\, \text{ for all $i, j \in I$,
$i<j$, there is an edge between $(x,i)$ and $(x,j)$}\big\}
\]
is still a renewal process as it is obtained by thinning.
Moreover, $\SSS^I$ is stationary and ergodic (with respect to $\theta)$
and has positive rate. Hence $\SSS^I$ is an random subset of $\Z$ that is
infinite in both directions.
Let $\Gamma_k^I$, $k \in \Z$, be an enumeration of the elements of $\SSS^I$
with $\Gamma_k^I < \Gamma_{k+1}^I$ for  all $k \in \Z$, and $\Gamma^I_0\le 0
< \Gamma_1^I$.
Then, as in the Barak-Erd\H{o}s case (see Lemma \ref{indg}) is we let
$\vec G_k$ be the induced subgraph of $\vec G(\Z\times I, p)$
on $\{(u,i) \in \Z \times I:\, \Gamma_k^I \le u \le \Gamma_{k+1}\}$,
we have that, conditional on $\{0 \in \SSS^I\}$,
the $\vec G_k$, $k \in \Z$, are i.i.d.
Therefore, a maximal-length path on $\{0,\ldots,n\} \times I$
is necessarily a path from $(0,1)$ to $(n,M)$ such that if
$0 \le \Gamma_k^I \le n$ then such a path passes via a vertex whose
horizontal coordinate is $\Gamma_k^I$; see Figure \ref{maxpass}.
\begin{figure}[ht]
\centering
\includegraphics[height=4cm]{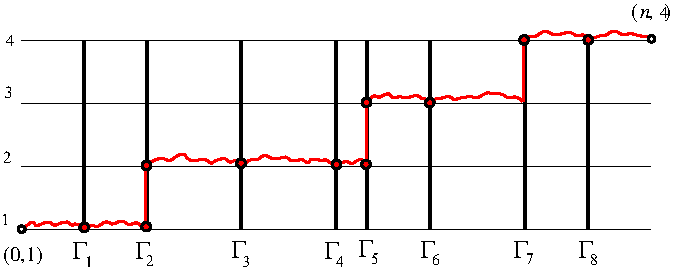}
\caption{A maximal length path on $[0,n]\times I$ passes through all
intermediate skeleton points and switches line at some of them.}
\label{maxpass}
\end{figure}
Using this and breaking $\ell_n(t)$ as in \eqref{ldec},
we arrive at the result by using Donsker's theorem.
Details can be found in \cite{DFK12}.
\end{proof}

\subsection{Generalization to partially ordered vertex sets and distance-dependent probabilities}
In the above, we can take $I$ to be a finite partially ordered set $(I, \preceq)$
with a bottom element called $1$ and a top element called $M$.
We then equip $\Z\times I$ with a strict partial order $\ll$
defined by
\[
(u,i) \ll (v,j) \text { if } (u,i) \neq (v,j) \text{ and }
u \le v, i \preceq j.
\]
A pair of vertices $(u,i)$, $(v,j)$, with $(u,i) \ll (v,j)$,
form an edge directed from $(u,i)$ to $(v,j)$
with probability $r_{v-u,i,j}$, independently from pair to pair.
As before, we let $L_{(u,i),(v,j)}$ the maximum length of paths
from $(u,i)$ to $(v,j)$.
A convenient set of assumptions for the probabilities $r_{n,i,j}$ is:
\begin{equation}   \label{rass}
\forall i \in I~ r_{n,i,i} =: p_n,
\quad 0<p_1<1, \quad \sum_{n=1}^\infty n \prod_{m=1}^n (1-p_m) < \infty.
\end{equation}
Let as denote by $\vec G(\Z\times I, (r_{n,i,j}))$ the resulting
random graph on $\Z \times I$ and let $L^*_{0,n}$ be the
maximum length of all paths with endpoints in $[0,n] \times I$.

Then Theorem \ref{thmslab} remains the same in form,
that is, we normalize $L^*_{0,[nt]}$ in the same way
and obtain that the normalized sequence of processes
converge in distribution to a process akin to \eqref{BLPPP}
but with an additional maximization since the presence of partial
order allows for more flexibility.

To express the limiting process we need the Hasse diagram $\mathbb H(I)$ of
the partially ordered set $(I,\preceq)$, which is the directed graph
on $I$ obtained by declaring there is an edge from $i$ to $j$
if
\[
\text{$i \preceq j$ and there
is no $k$, distinct from $i$ and $j$, such that $i \preceq k \preceq j$}.
\]
See \cite{DP90}.
Let $B^{(i)}$, $i \in I$, be independent standard Brownian motions.
For any path $\iota=(\iota_0,\ldots,\iota_r)$ in $\mathbb H(I)$ we write
\[
Z^{(\iota)}(t) := \max_{0 = t_0 \le t_1 \le \cdots \le t_r=t}
\big\{
B^{(\iota_0)}(t_1) + [B^{(\iota_1)}(t_2) - B^{(\iota_1)}(t_1)]
+ \cdots + [B^{(\iota_r)}(t_r) - B^{(\iota_r)}(t_{r-1})]\big\},
\]
and then let
\begin{equation}
\label{BLPPP2}
Z_I(t) := \max_\iota Z^{(\iota)}(t),
\end{equation}
where the maximum is taken over all paths $\iota$ in $\mathbb H (I)$
from $0$ to $M$.
Then $Z_I$ is the limit of the sequence of normalized processes.

\subsection{Convergence to the Tracy-Widom distribution}
\paragraph{Self-similarity.}
It is clear that the process $Z_I$ of \eqref{BLPPP2} is not Gaussian,
but is is continuous and self-similar:
\[
(Z_I(ct))_{t \ge 0} \eqdist c^{1/2} (Z_I(t))_{t \ge 0},
\]
for any $c>0$.
In particular, the process $Z_M$ of \eqref{BLPPP} is
a special case of \eqref{BLPPP2},  when $I=\{1,\ldots,M\}$, has well-known
connection with random matrix theory; see below.

\paragraph{Queueing theory.}
Glynn and Whitt \cite{GW} considered an infinite
number single-server FCFS infinite-buffer queues connected in series.
At time $0$ there are $M$ customers in the first one and none in the others.
Denote by $\sigma_{m,n}$ the service time of customer $m$ in queue $n$.
It is assumed that the $\sigma_{m,n}$, are i.i.d.\ random
variables with finite variance.
One quantity of interest in queueing theory is the time $L_{m,n}$
that customer $m$ departs from queue $n$. For this, we have an obvious recursion,
namely,
\[
L_{m,n} = \max(L_{m-1,n}, L_{m,n-1}) + \sigma_{m,n},
\]
because if customer $m$ finds, upon arrival, queue $n$ occupied,
it has to wait until the previous customer departs
from queue $n$ at time $L_{m,n-1}$, so $L_{m,n}= L_{m,n-1}+\sigma_{m,n}$;
and if it finds the queue empty then
$L_{m,n} = L_{m,n-1}+\sigma_{m,n}$.
We can easily solve the recursion and express it as follows.
Consider $\N\times \N$ as a directed graph where each vertex $(i,j)$
has two outgoing edges, one to $(i,j+1)$ and one to $(i+1,j)$.
Equip each $(i,j) \in \N \times \N$ with weight $\sigma_{i,j}$.
Let $\pi$ be a path in this graph. Then $w(\pi)$ is the sum
of the weights of the vertices of this path.
We obtain
\[
L_{m,n} = \max\{w(\pi):\, \pi \text{ is path from $(1,1)$ to $(m,n)$}\},
\quad (m,n) \in \N \times \N.
\]
Consider now the normalized departure times,
\[
\ell_m^{(n)} := \frac{L_{m,n} - n \E \sigma}{\sqrt{n \var\sigma}},
\]
where $\sigma$ has the same law as (any of) the $\sigma_{m,n}$.
Then \cite{GW} show that
\[
\ell_M^{(n)} \to Z_M(1), \text{ as } n \to \infty, \text{ in distribution},
\]
where $Z_M$ is as in \eqref{BLPPP}.
For subsequent work on the symmetry, duality, and other
quantities of interest, see \cite{BBM}.

\paragraph{The GUE.}
Let $H$ be an $M\times M$ GUE (Gaussian Unitary Ensemble) random matrix \cite{MEHTA04}.
This is a random element of the set of complex Hermitian $M\times M$ matrices
such that each diagonal element has standard complex Gaussian distribution
and each off-diagonal element has standard real Gaussian distribution.
(A standard complex Gaussian distribution is the law of $Z=X+\sqrt{-1} Y$
where $X, Y$ are
i.i.d.\ Gaussian random variables with $\E X = \E Y =0$,
$\E X^2 = \E Y^2 = 1/2$.)
Moreover, all elements $H_{k,\ell}$, $k \le \ell$, are independent
(and $H_{\ell,k} = \overline H_{k,\ell}$ for $k \le \ell$).
The eigenvalues of $H$ are real random variables and the largest of them
is denoted by $\lambda_M$.
It is clear that $H$ satisfies $H \eqdist UHU^{-1}$ for any unitary
($U^{-1} = U^*$) complex matrix $U$.
It was shown independently by Baryshnikov \cite{BAR2001} and Gravner et al. \cite{GTW2001}, thus answering an open question by
Glynn and Whitt \cite{GW}, that
$Z_M(1)$ satisfies
\begin{equation}
\label{Zlambda}
Z_M(1) \eqdist \lambda_M,
\end{equation}
thus establishing a connection between Brownian last passage percolation
and random matrix theory.

\paragraph{CLT for the largest GUE eigenvalue.}
The law of $\lambda_m$
satisfies $\lambda_m/\sqrt{m} \to 2$.
The CLT for $\lambda_m$ is
\begin{equation}
\label{Horatio}
m^{1/6} (\lambda_m - 2 \sqrt{m})
\xrightarrow{\text{d}} F_2,
\end{equation}
where the distribution function $F_2$ a determinantal form:
\[
F_2(x) = \det(I-A_K)_{L^2[0,x]},
\]
where $A_K$ is the operator on $L^2[0,x]$ with kernel $K$ defined
\[
K(x,y) = \frac{\Ai(x) \Ai'(y) - \Ai'(x) \Ai(y)}{x-y},
\]
and where  $\Ai(x)$ is the principal Airy function,
defined as the solution $y=Ai(x)$ to the linear ODE $y'' - xy=0$
with boundary condtion $y \to 0$ as $x \to \infty$.
Fourier-transforming the ODE, at least formally, easily yields
that $\Ai(x) := \pi^{-1} \int_0^\infty \cos(\tfrac13 \omega^3+\omega x) d\omega  =
\pi^{-1} \lim_{\Omega \to \infty}\int_0^\Omega  \cos(\tfrac13 \omega ^3+\omega x) d\omega $.
Note that $K(x,x)$ is defined as a limit when $y \to x$.
The determinant above is to be understood as the Fredholm determinant
of the operator $A_K$ acting on $L^2[0,x]$.
This was established by Tracy and Widom \cite{TWfred94} and the
distribution $F_2$ is known as the Tracy-Widom law, see also Anderson, Guionnet and Zeitouni \cite{AGZ}.

\paragraph{Barak-Erd\H{o}s graph on $\N \times \N$.}
Consider the graph $\vec G(\N \times \N, p)$, with $0<p<1$.
We equip $\N \times \N$ with the natural partial order;
$(i,j)$ is before $(i',j')$ if $i\le i'$, $j\le j'$ and $(i.j) \neq (i',j')$.
If $(i,j)$ is before $(i,j')$ we put and edge between them with probability $p$,
independently from pair to pair of comparable vertices.
Paths in this graph move in a ``northeast'' direction (including the north and
the east).
We let $L_{n,m}$ be the maximum length of all paths from $(1,1)$ to $(n,m)$.
The induced subgraph of $\vec G(\N \times \N, p)$ on any horizontal
line is a $\vec G(\N, p)$.
Consider the $m$-th line and apply Theorem \ref{barbarossa}.
We obtain
\[
\left(\frac{L_{[nt].m}-Cnt}{\sigma \sqrt{\lambda n}}\right)_{t \ge 0}
\xrightarrow{\text{d}} Z_m, \text{ as } n \to \infty.
\]
On the other hand, by self-similarity and \eqref{Zlambda}, we have
\[
Z_m(t) \eqdist \sqrt{t} \lambda_m,
\]
and $m^{1/6} (\lambda_m - 2\sqrt{m}) \xrightarrow{\text{d}}  F_2$.
It is then natural to if we can obtain a limit for a normalized
$L_{n,m}$ when $n$ and $m$ tend to infinity simultaneously.
To see what kind of scaling we can expect, write \eqref{Horatio} as
\[
m^{1/6} \left(\frac{Z_m(t)}{\sqrt{t}}-2\sqrt{m}\right) \xrightarrow{\text{d}}  F_2,
\text{ as } m \to \infty.
\]
A statement of the form $X(t,m) \xrightarrow[m \to \infty]{\text{(d)}}  X$,
where the distribution of $X(t,m)$ does not depend on the choice of $t>0$,
implies the statement
$X(t, m(t)) \xrightarrow[t \to \infty]{\text{d}}  X$, for any function $m(t)$
such that $m(t) \xrightarrow[t \to \infty]{} \infty$. Hence,
upon setting $m=[t^a]$, we have
\begin{equation*}
\label{Zdef}
t^{a/6} \bigg(\frac{Z_{[t^a]}(t)}{\sqrt{t}}-2\sqrt{t^a}\bigg)
\xrightarrow[t \to \infty]{\text{d}}
F_2.
\end{equation*}
Therefore, it is reasonable to guess that an analogous limit theorem
holds for a centered scaled version of the largest
length $L_{n,[n^a]}$, namely that
\begin{equation}
\label{Llim}
n^{a/6} \bigg(\frac{L_{n,[n^a]}-c_1 n}{c_2 \sqrt{n}}-2 \sqrt{n^a}\bigg)
\xrightarrow[t \to \infty]{\text{d}}
F_2,
\end{equation}
where $c_1, c_2$  are appropriate constants.
Indeed, \eqref{Llim} holds, with $c_1=C=C(p)$ and $c_2 = \lambda \sigma^2$
 and $a$ sufficiently small.
This was proved in \cite[Theorem 6.1]{KT13}
for the more general case of graphs where the edge probabilities
may not be constant, as in Section \ref{secreg}. The smallness of $a$
depends on a condition that involves the edge probabilities.

That paper used the idea of strong coupling with Brownian motions
(Koml\'os-Major-Tusn\'ady \cite[Theorem 4]{KMT75,KMT76}),
as in \cite{BM2005}, a paper dealing with last passage percolation
on $\N \times \N$ with random weights on the vertices.
The difficulty in the proof of the main theorem in \cite{KT13} is that
if we consider the intersection of the sets of skeleton points corresponding
to each line then their intersection is empty.

We also refer the reader to the seminal paper of Johansson \cite{JOHANSSON00}
for last passage percolation on a lattice (with i.i.d.\ exponential weights
on vertices).
There are also exciting connections with processes fundamental
to modern probability theory and mathematical physics, such as the
Kardar-Parizi-Zhang (KPZ) universality class.
We refer to the recent works by Dauvergne {\em et al} \cite{DAUV22,DAUV24} on
the convergence of the Brownian last passage percolation
to the central object in the KPZ class, namely the {\em directed landscape}.

\section{Weighted Barak-Erd\H{o}s graphs}
\label{sec:wei}
We now sketch results on last passage percolation on the Barak-Erd\H{o}s
graph with random weights on its edges; we refer
to \cite{FMS14} and \cite{FK18} for details.
Recall that existence
of an edge $(i,j)$ is encoded by a random variable $\alpha_{i,j}$
that takes value $1$ with probability $p$ or $-\infty$
with probability $1-p$.
Let $u$ be a positive random variable with
distribution function $F(x)=\P(u\le x)$,
and let $u_{i,j}$, $i<j$, be a collection of i.i.d.\ copies of $u$.
If $\alpha_{i,j}=1$ then edge $(i,j)$ exists and has weight $u_{i,j}$.
We now have a weighted random graph that we will denote as $\vec G(\Z, p, u)$.
A path from $i$ to $j$ of length $\ell$ and weight $w$ is
a sequence $(i=i_0 < i_1 < \cdots < i_{\ell-1} <i_\ell=j)$
such that $\alpha_{i_0,i_1} = \cdots = \alpha_{i_{\ell-1},i_\ell}=1$
and $u_{i_0,i_1}+\cdots + u_{i_{\ell-1},i_\ell}=w$.
A {\em geodesic} from $i$ to $j$ is a maximum weight path from $i$ to $j$.

Different phenomena appear depending on whether $\E u^2$ is finite
or not.

\subsection{Finite variance weights}
Assume that $\E u^2 < \infty$.
We study
\[
W_{i,j} :=
\max_{\substack{i= i_0 < i_1 < \cdots < i_\ell = j\\\ell\in \N}}
\left(\sum_{k=1}^\ell \alpha_{i_{k-1},i_k} u_{i_{k-1}, i_k}\right)^+,
\]
the maximum weight of all paths from $i$ and $j$.
If we further define the larger quantity
$\widetilde W_{i,j}
:= \max_{\substack{i\le i_0 < i_1 < \cdots < i_\ell \le j\\\ell\in \N}}
\left(\sum_{k=1}^\ell \alpha_{i_{k-1},i_k} u_{i_{k-1}, i_k}\right)^+$,
the maximum weight of all paths with endpoints between $i$ and $j$
we obtain the subadditive inequality
\[
\widetilde W_{i,k} \le \widetilde W_{i,j} + \widetilde W_{j,k}
+ \max_{i \le x \le j \le y \le k}
u_{x,y}.
\]
We can then see that $\lim_{n \to \infty} \widetilde W_{0,n}/n$ exists
iff the expectation of the latter maximum is finite
which requires that the second moment of $u$ be finite.
Hence $\E u^2 < \infty$ is necessary and sufficient
for the above limit to exist.

To understand this, and to prepare the ground for the central
limit theorem, we consider two random subsets of the integers.
The first is the usual skeleton set $\SSS$ and the second is the set $\RRR_c$ of
{\em $c$-renewal points} where $c$ is a positive constant.
To define this, we first define the events
\begin{align*}
A_i^+  &= \{W_{i,i+n} > cn \text{ for all } n \ge 1\},
\\
A_i^-  &= \{W_{i-n,i} > cn \text{ for all } n \ge 1\},
\\
A_i^{-+}  &= \{\alpha_{i-m, i+n} u_{i-m, i+n}  < c(m+n)
\text{ for all } m, n \ge 1\},
\end{align*}
and then let
\[
\RRR_c = \{i \in \Z:\, A_i^+ \cap A_i^-\cap A_i^{-+}  \text{ occurs}\}.
\]
Clearly, $\RRR_c$ is a stationary and ergodic random set with density
\[
\mu(c,p) = \P(A_0^+ \cap A_0^-\cap A_0^{-+}).
\]
Since
\[
\RRR_c \subset \SSS,
\]
we have $\mu(c,p) \le \lambda(p)$, where $\lambda(p)$ is the
density of $\SSS$; see \eqref{ellq} and Remark \ref{elleuler}.
\begin{lemma}
Assume that $\E u^2 < \infty$ and $0< c < (\E u) (\E W_{\Gamma_1, \Gamma_2})$.
Then $\mu(c,p) > 0$.
Moreover, $\vec G(\Z, p, u)$ regenerates over $\RRR_c$.
\end{lemma}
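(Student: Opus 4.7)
The plan is to exploit a disjointness structure. The events $A_0^+$, $A_0^-$, and $A_0^{-+}$ are determined by three pairwise disjoint collections of edges: only edges $(i,j)$ with $0\le i<j$ enter $A_0^+$; only edges with $i<j\le 0$ enter $A_0^-$; and only edges with $i<0<j$ enter $A_0^{-+}$. Since distinct edges carry independent $(\alpha,u)$-variables, the three events are mutually independent, and the reflection $(i,j)\mapsto(-j,-i)$ gives $\P(A_0^+)=\P(A_0^-)$. Thus
\[
\mu(c,p)=\P(A_0^+)^2\,\P(A_0^{-+}),
\]
so it is enough to prove that $\P(A_0^+)>0$ and $\P(A_0^{-+})>0$.

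For $\P(A_0^{-+})$, I would note that this event is an intersection over $(m,n)\in\N^2$ of independent events $\{\alpha_{-m,n}u_{-m,n}<c(m+n)\}$, each of probability $1-p\,\P(u\ge c(m+n))$. The product is positive iff $\sum_{m,n\ge 1}\P(u\ge c(m+n))<\infty$, and grouping by $k=m+n$ reduces this to a constant multiple of $\sum_{k\ge 2}k\,\P(u\ge ck)$, which is comparable to $\E(u^2)/c^2$ and hence finite by hypothesis.

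For $\P(A_0^+)$, I would use the skeleton regeneration of Section~\ref{secreg}. Conditional on $\{0\in\SSS\}$, the pairs $(\xi_k,\eta_k):=(\Gamma_k-\Gamma_{k-1},\,W_{\Gamma_{k-1},\Gamma_k})$ for $k\ge 1$ (with $\Gamma_0=0$) form an i.i.d.\ sequence. Because each $\Gamma_j$ is a skeleton point, block geodesics can be concatenated into a single path from $0$ to $\Gamma_k$, giving the superadditivity $W_{0,\Gamma_k}\ge \sum_{j=1}^k\eta_j$. The random walk $S_k:=\sum_{j=1}^k(\eta_j-c\xi_j)$ then has strictly positive mean under the hypothesis on $c$, and the classical positive-drift fluctuation theorem yields $\P(S_k>0\text{ for every }k\ge 1)>0$. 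To promote the inequality from the skeleton subsequence to every $n\ge 1$, I would fix $c'\in(c,\E\eta_1/\E\xi_1)$, run the ladder argument at level $c'$, and absorb the slack $(c'-c)n$ against the fluctuation of $W_{0,\cdot}$ inside an inter-skeleton window; this is where the finite second moment of $\xi_1$ (Theorem~\ref{momthm}) and of $\eta_1$ (inherited from $\E u^2<\infty$) is used.

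For the regeneration of $\vec G(\Z,p,u)$ over $\RRR_c$, fix $i\in\RRR_c$ and take $m$ far below $i$ and $n$ far above; I claim that every geodesic from $(-\infty,m]$ to $[n,\infty)$ must pass through $i$. Indeed, any competing path avoiding $i$ uses some crossing edge $(m',n')$ with $m'<i<n'$, whose weight is strictly less than $c(n'-m')$ by $A_i^{-+}$, so the total weight of such a path is at most $c(n-m)$ up to boundary corrections controlled by $\E u^2<\infty$; by contrast, concatenating a geodesic from $m$ to $i$ with one from $i$ to $n$ yields, via $A_i^-\cap A_i^+$, weight strictly exceeding $c(i-m)+c(n-i)=c(n-m)$. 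The strict inequality forces the geodesic through $i$, after which the conditional-independence scheme of Lemma~\ref{indg} applies verbatim to the weighted blocks between consecutive points of $\RRR_c$. The main obstacle is the bridging step in $\P(A_0^+)>0$: converting the ladder-height statement along the skeleton subsequence into a strict inequality valid for \emph{every} integer $n\ge 1$ is what forces the second-moment assumption on $u$ to be invoked essentially, rather than, say, a mere first-moment one.
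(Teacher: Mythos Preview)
Your decomposition $\mu(c,p)=\P(A_0^+)^2\,\P(A_0^{-+})$ via disjoint edge families, together with your treatment of each factor and the geodesic-through-$\RRR_c$ argument, is exactly the route the paper sketches (it refers to \cite{FK18} for the details you supply). One small misattribution: the hypothesis $\E u^2<\infty$ is used \emph{only} to make the infinite product for $\P(A_0^{-+})$ positive; neither the bridging step for $\P(A_0^+)>0$ nor the geodesic comparison requires it, so your last sentence overstates its role. For the geodesic claim you can dispense with ``boundary corrections'' entirely: if $\pi$ avoids $i$ via the crossing edge $(m',n')$, writing $\pi=\pi_1\cdot(m',n')\cdot\pi_2$, then
\[
W_{m,i}+W_{i,n}\ \ge\ w(\pi_1)+W_{m',i}+W_{i,n'}+w(\pi_2)\ >\ w(\pi_1)+c(n'-m')+w(\pi_2)\ >\ w(\pi),
\]
where the first strict inequality uses $A_i^-$ and $A_i^+$ at the intermediate points $m',n'$ and the second uses $A_i^{-+}$; no appeal to moments is needed.
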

\begin{proof}[Sketch of proof]
We first observe that $\P(A_0^+) = \P(A_0^-)$.
If we choose $0< c < (\E u) (\E W_{\Gamma_1, \Gamma_2})$
then $\P(A_0^+)>0$.
The finiteness of $\E u^2$
implies the positivity of $\P(A_0^{-+})$.
It can be shown that $A_0^+, A_0^-, A_0^{-+}$ are independent
and so $\mu(c,p) = \P(A_0^+) \P(A_0^-) \P(A_0^{-+})>0$.
For details see
\cite[Lemma 2]{FK18}.
For the last assertion see \cite[Lemma 3]{FK18}.
\end{proof}
This lemma is responsible for the law of large numbers:
\begin{theorem}
\label{CLTW}
Assume that $\E u^2 < \infty$ and $0 < p \le 1$.
Then there is a constant $C$ depending on
$p$ and the law of $u$ such that
\[
\lim_{n \to \infty} \frac{W_{0,n}}{n} =
\lim_{n \to \infty} \frac{\widetilde W_{0,n}}{n} = C
\text{ a.s. and in $L^1$}.
\]
\end{theorem}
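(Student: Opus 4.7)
The plan is to exploit the regeneration structure on $\RRR_c$ established in the preceding lemma, extracting the LLN from the elementary renewal theorem together with the strong law for i.i.d.\ sums. The point is that although $W_{0,n}/n$ and $\widetilde W_{0,n}/n$ are not directly subadditive in a useable way (the extra $\max u_{x,y}$ term in the subadditive bound is only $O(n)$ under $\E u^2<\infty$, not $o(n)$), the renewal structure lets us bypass Kingman-type arguments entirely.

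The central geometric fact I would establish first is a \emph{rigidity} property of maximum-weight paths at a $c$-renewal point $R$: if $a<R<b$, then every maximum-weight path from $a$ to $b$ passes through $R$, and in particular $W_{a,b}=W_{a,R}+W_{R,b}$. Indeed, if such a path used an edge $(x,y)$ with $x<R<y$, setting $m=R-x$ and $n=y-R$, the event $A_R^{-+}$ forces $u_{x,y}<c(m+n)$, while $A_R^-\cap A_R^+$ produces a strictly heavier $x\to R\to y$ path of weight $>cm+cn=c(m+n)$. Existence of a path through $R$ is guaranteed by $A_R^+\cap A_R^-$, so the maximizer must use $R$.

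Next, enumerate $\RRR_c=\{T_k:k\in\Z\}$ with $T_0\le 0<T_1<T_2<\cdots$. By the regeneration, the pairs $(T_{k+1}-T_k,\,W_{T_k,T_{k+1}})_{k\ge 1}$ are i.i.d., with $\E(T_{k+1}-T_k)=1/\mu(c,p)$. One verifies $\E W_{T_1,T_2}<\infty$ by dominating $W_{T_1,T_2}$ by the total edge weight inside the block and using $\E u<\infty$ together with tail estimates on $T_2-T_1$ coming from the product structure of $\mu(c,p)$. Writing $N(n)=\max\{k:T_k\le n\}$, the elementary renewal theorem gives $N(n)/n\to\mu(c,p)$ a.s., and the rigidity fact yields the exact decomposition $W_{T_1,T_{N(n)}}=\sum_{k=1}^{N(n)-1}W_{T_k,T_{k+1}}$. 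The strong law (and a renewal-theoretic uniform integrability argument for $L^1$) then gives $W_{T_1,T_{N(n)}}/n\to C:=\mu(c,p)\,\E W_{T_1,T_2}$.

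It remains to sandwich. For the lower bound, using paths from $0$ to $T_1$ and from $T_{N(n)}$ to $n$ supplied by $A_{T_1}^-$ and $A_{T_{N(n)}}^+$, one gets $W_{0,n}\ge W_{T_1,T_{N(n)}}+\text{(boundary)}$. For the upper bound on $\widetilde W_{0,n}$, an optimizing path with endpoints $(i^*,j^*)\subset[0,n]$ is split at every $T_k$ strictly between $i^*$ and $j^*$ by the rigidity property, and its weight is bounded by $\widetilde W_{0,T_1}+\sum_{k=1}^{N(n)-1}W_{T_k,T_{k+1}}+\widetilde W_{T_{N(n)},n}$. The two boundary terms $\widetilde W_{0,T_1}$ and $\widetilde W_{T_{N(n)},n}$ are controlled by the total weight of edges inside those end-blocks, and since $T_1$ and $n-T_{N(n)}$ are tight, both are $o(n)$ a.s.\ and in $L^1$. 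This squeezes $W_{0,n}/n$ and $\widetilde W_{0,n}/n$ between the same limit $C$.

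The main obstacle, and the place where the hypothesis $\E u^2<\infty$ really bites, is in controlling the two boundary end-blocks together with the moment condition $\E W_{T_1,T_2}<\infty$: a single very heavy edge crossing the boundary could dominate everything and spoil both the LLN and the argument that the maximum-weight path passes through renewal points (the excess-over-$c$ margin in the bound $u_{x,y}<c(m+n)$ would not hold). The second moment on $u$ is precisely what ensures that the size-biased overshoots at the endpoints and within one regeneration block have finite expectation, so that these boundary corrections vanish after dividing by $n$.
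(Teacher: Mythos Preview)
Your regeneration-based route is exactly the mechanism the paper invokes: the preceding lemma on $\RRR_c$ is stated to be ``responsible for the law of large numbers'', and the remark after the theorem says the equality of the two limits is due to the positive density of $c$-renewal points. Your rigidity argument (any optimal path from $a$ to $b$ must visit every intermediate $R\in\RRR_c$) is correct and is precisely what makes the block decomposition exact.

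Two points, however, need correction.

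First, your dismissal of the subadditive route is based on a wrong estimate. Under $\E u^2<\infty$ the correction term $\max_{0\le x\le n\le y\le 2n} u_{x,y}$ is $o(n)$ in $L^1$, not merely $O(n)$: for any $\epsilon>0$,
\[
\E\Big[\max_{k\le n^2} u_k\Big]\le \epsilon n + n^2\int_{\epsilon n}^\infty \P(u>t)\,dt
\le \epsilon n + \frac{n}{\epsilon}\int_{\epsilon n}^\infty t\,\P(u>t)\,dt = \epsilon n + o(n),
\]
since $\int_0^\infty t\,\P(u>t)\,dt=\tfrac12\E u^2<\infty$. So an almost-subadditive ergodic theorem applies directly to $\widetilde W$, and this is indeed what the paper sketches before introducing $\RRR_c$.

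Second, and more seriously, your verification of $\E W_{T_1,T_2}<\infty$ has a gap. You propose to dominate by the total positive edge weight in the block and invoke $\E u<\infty$ plus tail bounds on $T_2-T_1$. But the block length and the internal edge weights are \emph{dependent} (the absence of an intermediate renewal point, which determines $T_2-T_1$, is an event involving all the $u_{i,j}$ through $A^+$, $A^-$, $A^{-+}$), so one cannot simply multiply $\E u^+$ by $\E\binom{T_2-T_1+1}{2}$. Moreover, the first moment of $u$ alone cannot suffice here: when $\E u^2=\infty$ the paper shows $W_{0,n}/n\to\infty$, which forces $\E W_{T_1,T_2}=\infty$ via your own decomposition. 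The cleanest fix is to use the subadditive argument for $\widetilde W$ first (giving $\E\widetilde W_{0,n}=O(n)$), and then deduce $\E W_{T_1,T_2}\le \E\widetilde W_{T_1,T_2}<\infty$ from that linear bound together with moment control on $T_2-T_1$. In other words, the two ingredients you separated---Kingman for $\widetilde W$ and regeneration for the equality $W=\widetilde W$ asymptotically---are both needed, which is how the paper organizes the argument.
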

We remark that the equality of the two limits is because of the
existence of the $c$-renewal points that have positive density.
The CLT holds provided that the third moment of $u$ is finite:
\begin{theorem}\label{CLT8}
Assume that $\E u^3 < \infty$ and $0 < p \le 1$.
Then the sequence of processes
\[
\left\{\frac{W_{0,[nt]}-C n t}{\sqrt{\lambda n}}, \, t \ge 0\right\}
\]
converges in distribution, as $n \to \infty$, to a
zero mean Brownian motion.
\end{theorem}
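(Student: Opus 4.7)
The plan is to run the regenerative/Donsker argument that gave Theorem~\ref{barbarossa}, but with the skeleton $\mathscr{S}$ replaced by the $c$-renewal set $\mathscr{R}_c$ introduced just before Theorem~\ref{CLTW}. Fix $c\in(0,C)$ for which the preceding lemma yields $\mu:=\mu(c,p)>0$, and enumerate $\mathscr{R}_c$ as $\cdots<\Gamma^c_0\le 0<\Gamma^c_1<\Gamma^c_2<\cdots$, setting $N_t:=\max\{k\ge 1:\Gamma^c_k\le t\}$. The point of the defining events $A^+_i$, $A^-_i$, $A^{-+}_i$ is that any edge straddling a renewal point $i$ carries weight strictly less than what is accumulated by passing through $i$ on both sides; hence every $W$-geodesic visits every renewal point it straddles, $W_{\cdot,\cdot}$ is additive across $\mathscr{R}_c$, and the cycles $(\Gamma^c_k-\Gamma^c_{k-1},\,W_{\Gamma^c_{k-1},\Gamma^c_k})$, $k\ge 2$, are i.i.d.\ under the Palm law, as in Lemma~\ref{indg}.

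Having this, one decomposes
\[
W_{0,[nt]} - C[nt] = \big(W_{0,\Gamma^c_1} - C\Gamma^c_1\big) + \sum_{k=2}^{N_{nt}}\big[W_{\Gamma^c_{k-1},\Gamma^c_k} - C(\Gamma^c_k-\Gamma^c_{k-1})\big] + \big(W_{\Gamma^c_{N_{nt}},[nt]} - C([nt]-\Gamma^c_{N_{nt}})\big),
\]
where by the LLN of Theorem~\ref{CLTW} the summands of the middle sum have zero mean, since $\E W_{\Gamma^c_1,\Gamma^c_2} = C\,\E(\Gamma^c_2-\Gamma^c_1)$. Granted a finite variance $\tau^2:=\Var(W_{\Gamma^c_1,\Gamma^c_2}-C(\Gamma^c_2-\Gamma^c_1))$, Donsker's invariance principle handles the middle sum, the elementary renewal theorem ($N_{nt}/n\to\mu t$ a.s.\ uniformly on compact sets) performs the time change $s\mapsto\mu t$, and the boundary terms are $O_{\P}(1)$ and thus negligible after dividing by $\sqrt{n}$; this is exactly the composition argument used at the end of Theorem~\ref{barbarossa}, and it yields convergence to a zero-mean Brownian motion whose variance matches the stated normalization once $\lambda$ is identified with $\mu\tau^2$.

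The decisive analytic step is therefore the bound $\tau^2<\infty$, which requires both $\E(\Gamma^c_2-\Gamma^c_1)^2<\infty$ and $\E(W_{\Gamma^c_1,\Gamma^c_2})^2<\infty$, and this is where the hypothesis $\E u^3<\infty$ enters. The tail of the gap is governed by the event that no site in an interval is a renewal point; single-site failure splits into the three sub-events above, of which only $A^{-+}_i$ is long-range, and its failure probability is dominated by $\sum_{m,n\ge 1}p\,\P(u\ge c(m+n))\asymp\sum_k k\,\P(u\ge ck)$, which is summable under $\E u^2<\infty$. A careful Palm-theoretic accounting then shows that promoting the moment assumption on $u$ from second to third order upgrades finiteness of the Palm mean of $\Gamma^c_2-\Gamma^c_1$ (known already in \cite{FK18}) to finiteness of its second moment. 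The bound on $\E(W_{\Gamma^c_1,\Gamma^c_2})^2$ follows by sandwiching $W_{\Gamma^c_1,\Gamma^c_2}$ between $c(\Gamma^c_2-\Gamma^c_1)$ and a sum of $u$-weights along a geodesic of length at most $\Gamma^c_2-\Gamma^c_1$, combining the third-moment bound on $u$ with the gap moment just obtained.

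The main obstacle is precisely that moment estimate: ``no site of a long interval is a renewal point'' is a highly correlated intersection for which the naive union bound is useless, so one must peel off the three defining sub-events separately and exploit, for the $A^{-+}$ piece, that the extra moment of $u$ is exactly what turns a summable one-sum into a doubly summable tail -- producing the additional power needed for a second-moment, rather than merely first-moment, renewal computation. Once this is in place, the rest of the argument is the verbatim analogue of the ergodic/Donsker composition at the end of Section~\ref{secreg}.
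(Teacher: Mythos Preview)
The paper does not give a proof of this theorem; it is stated and referred to \cite{FMS14} and \cite{FK18}. Your outline is exactly the approach implicit in the surrounding text: replace the skeleton $\mathscr S$ of Theorem~\ref{barbarossa} by the $c$-renewal set $\mathscr R_c$, use the regeneration lemma preceding Theorem~\ref{CLTW} to get i.i.d.\ cycles and additivity of $W$ across renewal points, then run Donsker plus the random time change $N_{nt}/n\to\mu t$ exactly as in \eqref{ldec}. You have also correctly isolated the single nontrivial ingredient --- that $\E u^3<\infty$ is what upgrades the first-moment bound on the cycle length (already available under $\E u^2<\infty$) to a second-moment bound, via the long-range event $A_0^{-+}$.

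One point to tighten: your upper bound ``$W_{\Gamma^c_1,\Gamma^c_2}\le$ sum of $u$-weights along a geodesic of length $\le\Gamma^c_2-\Gamma^c_1$'' is correct but the geodesic edge weights are not independent of the cycle length, so turning this into $\E W_{\Gamma^c_1,\Gamma^c_2}^2<\infty$ needs a short argument (e.g.\ bound $W_{\Gamma^c_1,\Gamma^c_2}$ by $\sum_{\Gamma^c_1\le i<\Gamma^c_2}\max_{i<j\le\Gamma^c_2}u_{i,j}^+$ and use Cauchy--Schwarz with the second moment of the gap and a second-moment bound on the inner maxima). Also, the constant $\lambda$ in the displayed normalization is the skeleton rate from \eqref{ellq}, not your $\mu\tau^2$; since the limiting Brownian motion is not asserted to be standard, this only affects its variance parameter and not the validity of your argument.
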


Theorems \ref{CLTW} and \ref{CLT8} may be complemented by a result
describing the behavior of the weight of the heaviest edge on
a geodesic path. Assuming that $F$ is continuous ensures that
there is a unique geodesic path from $0$ to $n$. Then we can define
\[
h_n := \text{ maximum weight of all edges on the geodesic path from
$0$ to $n$.}
\]
We also assume that the edge weight $u$ is regularly varying
with index $s$, in the sense that
\begin{equation}
\label{Frv}
\frac{1-F(tx)}{1-F(x)}\to t^{-s},
\text{ as } x\to\infty, \text{ for every } t>0.
\end{equation}
Of course, $s>2$ is needed in order that $\E u^2$ be finite.
When $2<s<3$, one can deduce
that the fluctuations of $W_{0,n}$ are of order larger than
$\sqrt{n}$, and so the central limit theorem cannot be extended to this
case.

\begin{theorem}
\label{longestedge}
Let the edge weight $u$ be a continuous random variable
that is also regularly varying with index $s>2$
in the sense of \eqref{Frv}.
Then we have
\begin{equation} \label{upperlower}
\frac{\log h_n}{\log n} \rightarrow \frac{1}{s-1}
\text{ in probability as } n \rightarrow \infty.
\end{equation}
In particular, if $2<s<3$ then
\[
\frac{\var W_{0,n}}{n} \rightarrow \infty,
\]
and a central limit theorem such as that in Theorem \ref{CLT8} cannot hold.
\end{theorem}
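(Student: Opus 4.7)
The plan is to prove $\log h_n/\log n\to 1/(s-1)$ in probability via matching upper and lower bounds on $h_n$, after which the variance-divergence corollary for $2<s<3$ follows. For the \emph{upper bound}, fix $\varepsilon>0$ and set $x_n=n^{1/(s-1)+\varepsilon}$. The key observation is that if an edge $(i,j)$ lies on the geodesic from $0$ to $n$, then $u_{i,j}\ge W^{\mathrm{alt}}_{i,j}$, where $W^{\mathrm{alt}}_{i,j}$ is the maximum weight of paths from $i$ to $j$ avoiding $(i,j)$; by Theorem \ref{CLTW} applied to $W^{\mathrm{alt}}$, this forces $u_{i,j}\ge C(j-i)(1-o(1))$ with probability $1-o(1)$ whenever $j-i$ is large. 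A union bound split at span $x_n/C$ gives
\begin{equation*}
\P(h_n>x_n)\lesssim \sum_{k\le x_n/C}(n-k)p\bar F(x_n)+\sum_{k>x_n/C}(n-k)p\bar F\bigl(Ck(1-o(1))\bigr),
\end{equation*}
and regular variation with index $-s$ makes both contributions of order $nx_n\bar F(x_n)=nx_n^{1-s}L(x_n)=n^{-(s-1)\varepsilon}L(x_n)\to 0$.

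For the \emph{lower bound}, set $y_n=n^{1/(s-1)-\varepsilon}$. Step 1: using first and second moments I establish that with probability $1-o(1)$ there is a ``useful heavy'' edge $(i^\star,j^\star)$ with $u^\star:=u_{i^\star,j^\star}\ge y_n$ and span $j^\star-i^\star\le y_n/(3C)$; the expected count is $\sim ny_np\bar F(y_n)\sim n^{(s-1)\varepsilon}L(y_n)\to\infty$, and independence of $u_{i,j}$'s across disjoint pairs yields concentration. Step 2: from such an edge, the concatenation $0\to i^\star\to j^\star\to n$ (two sub-geodesics joined by the heavy edge) has weight
\begin{equation*}
W_{0,i^\star}+u^\star+W_{j^\star,n}\ge Cn+u^\star-C(j^\star-i^\star)-o(n)\ge Cn+\tfrac{2y_n}{3}-o(n),
\end{equation*}
so $W_{0,n}\ge Cn+2y_n/3-o(n)$. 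If the geodesic used only edges of weight $<y_n$, then $W_{0,n}$ would equal the max path weight $W^{(y_n)}_{0,n}$ in the capped graph obtained by deleting all edges of weight $\ge y_n$; since $C-C^{(y_n)}=O\bigl(\int_{y_n}^\infty\bar F(x)\,dx\bigr)=O(y_n^{1-s}L(y_n))$ and the capped weights have finite third moment (so Theorem \ref{CLT8} gives fluctuations $O(\sqrt n)$), one has $W^{(y_n)}_{0,n}\le Cn+O(ny_n^{1-s})+O_{\P}(\sqrt n)$. For $s<3$ and $\varepsilon<1/(s(s-1))$, both $ny_n^{1-s}=n^{(s-1)\varepsilon}$ and $\sqrt n$ are $o(y_n)$, yielding a contradiction; hence $h_n\ge y_n$ with probability tending to $1$.

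For the \emph{variance corollary} (for $2<s<3$), decomposing $W_{0,n}=W_{0,i^\star}+h_n+W_{j^\star,n}$ with $(i^\star,j^\star)$ the heaviest geodesic edge (whose span is whp at most $y_n/(3C)$) and applying the LLN to the flanks shows $\var W_{0,n}\gtrsim \var h_n\gtrsim y_n^2=n^{2/(s-1)-2\varepsilon}$. Since $2/(s-1)>1$, $\var W_{0,n}/n\to\infty$, which precludes any $\sqrt n$-normalised CLT extension of Theorem \ref{CLT8}.

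The \emph{main obstacle} is the quantitative LLN for the capped process, $W^{(y_n)}_{0,n}\le Cn+o(y_n)$. My plan is to use the regenerative decomposition of Section~\ref{secreg} to write $W^{(y_n)}_{0,n}$ as a sum of i.i.d.\ block contributions — with block moments controlled by the standing assumption $\sum_k k^2(1-p_1)\cdots(1-p_k)<\infty$ — and then to show that truncating weights at $y_n$ perturbs the per-block mean by only $O(y_n^{1-s}L(y_n))$. The complementary regime $s\ge 3$ (needed for the main limit but not for the variance corollary) will require additional care, since there the ambient $\sqrt n$ fluctuations already dominate $y_n$ and the capping argument above fails; a sharper regeneration-based estimate at scale $y_n$ is needed.
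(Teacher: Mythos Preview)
The paper itself does not prove this theorem; after the statement it writes only ``The proof of this can be found in \cite{FMS14}.'' So there is no in-paper argument to compare against. Your overall strategy --- upper bound via the observation that a geodesic edge $(i,j)$ must satisfy $u_{i,j}\ge W^{\mathrm{alt}}_{i,j}\sim C(j-i)$, lower bound via a detour through a heavy short-span edge and a contradiction with the truncated process --- is sound and is in line with the approach of \cite{FMS14}. Your identification of the ``main obstacle'' (a deviation bound $W^{(y_n)}_{0,n}\le Cn+o(y_n)$ for the process with heavy edges deleted) is accurate and is indeed where the technical work lies.

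Two gaps are worth naming. First, your variance claim ``$\var W_{0,n}\gtrsim\var h_n$'' does not follow from the decomposition $W_{0,n}=W_{0,i^\star}+h_n+W_{j^\star,n}$: the three summands are correlated through the random location $(i^\star,j^\star)$, so the variance of the sum need not dominate that of one term. A cleaner route is to exhibit two events, each of probability bounded away from zero, on which $W_{0,n}-Cn$ differs by order $n^{1/(s-1)}$ --- one where a short-span edge of weight $\ge 2cn^{1/(s-1)}$ exists (forcing $W_{0,n}\ge Cn+cn^{1/(s-1)}$ via your detour), and one where no such edge exists (on which your truncated-fluctuation bound gives $W_{0,n}\le Cn+o(n^{1/(s-1)})$). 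Second, as you already acknowledge, your lower-bound argument is incomplete for $s\ge 3$, where $y_n=n^{1/(s-1)-\varepsilon}\ll\sqrt n$ and the capped process's Gaussian-scale fluctuations swamp the heavy-edge gain; the limit \eqref{upperlower} is stated for all $s>2$, and your proposal does not yet supply the finer estimate needed in this regime.
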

The proof of this can be found in \cite{FMS14}.

\subsection{Infinite variance weights}
\label{infsec}
Assume now that $\E u^2  = \infty$.
Under this condition, $W_{0,n}$ grows faster than linearly.
This can be seen by considering the contribution of the single heaviest
edge in $[0,n]$, and noting that
the expectation of the maximum of
$n^2$ i.i.d.\ random variables with infinite variance
has expectation that grows faster than $n$.
Since $W_{0,n}$ is at least as large as the weight of this single edge,
we have that $\E W_{0,n}/n \to \infty$ as $n\to\infty$, and
from Kingman's subadditive ergodic
theorem we can conclude that in fact $W_{0,n}/n \to\infty$ a.s.

As before, we assume that $u$ is a continuous random variable
such that the regular variation condition \eqref{Frv} holds.
We need $s< 2$ in order that $\E u^2$ be infinite.

New phenomena occur in the infinite variance case. In order to
describe them succinctly and avoid technicalities, we shall
further assume that $p=1$. That is, we only present results
for the $\vec G(\Z, 1, u)$ case.

\paragraph{Finite model}
Let $\vec G_n$ be a graph on $V_n=\left\{0, \frac1n, \ldots, \frac{n-1}{n},
1\right\}$,
edges $E_n = \{(i/n,j/n):\, 0 \le i < j \le n\}$,
and weights $u^{(n)}_e$, $e \in E_n$, that are i.i.d.\ copies of $u$.
We can think of a path $\pi$ of $\vec G_n$ as a collection
of edges $e_1, \ldots, e_\ell$ where the ending point of $e_i$ is the
starting point of $e_{i+1}$ for all $1 \le i < \ell$.
Let $\Pi_n$ be the set of all paths  in $\vec G_n$ from $0$ to $n$
(a set of size $2^{n-1}$).
The maximum weight of all paths in $\Pi_n$ can be written as
\[
W_{0,n} = \max_{\pi \in \Pi_n} \sum_{e \in \pi} u^{(n)}_e.
\]
The latter maximum will not increase if we throw in all {\em admissible} subsets
of the set of edges $E_n$, where we say that a set $A \subset E_n$ is admissible
if every pair of elements of $A$ are non-overlapping edges
in the sense that the endpoints of one are $\le$ the endpoints of the other.
If we let $\mathcal C_n$ be the set of all admissible sets of edges then
\begin{equation}
\label{w0n1}
W_{0,n} = \max_{\pi \in \mathcal C_n} \sum_{e \in A} u^{(n)}_e.
\end{equation}

We next introduce another way to construct $\vec G_n$.
This second construction can be used to define
a corresponding model on a continuous set of vertices
with an appropriately defined maximum path weight $W$
in such a way that a scaled version of $W_{0,n}$
converges to $W$ in distribution.
Note that $E_n$ has size
\[
N=N_n = \binom{n+1}{2}.
\]

Let $E_n = \{e_1, \ldots, e_N\}$ be an enumeration of the edges.
Let $M_{e_1}^{(n)}, M_{e_2}^{(n)}, \ldots, M_{e_N}^{(n)}$  be the order statistics
of the $u^{(n)}_{e_1}, u^{(n)}_{e_2}, \ldots, u^{(n)}_{e_N}$
That is,
$\{M_{e_1}^{(n)}, M_{e_2}^{(n)}, \ldots, M_{e_N}^{(n)}\} =
\{u^{(n)}_{e_1}, u^{(n)}_{e_2}, \ldots, u^{(n)}_{e_N}\}$
(as sets) and $M_{e_1}^{(n)} > M_{e_2}^{(n)} > \cdots > M_{e_N}^{(n)}$.

Let $Y_1^{(n)}, Y_2^{(n)}, \ldots, Y_{N}^{(n)}$ be a random
ordering of $\{e_1, \ldots, e_N\}$ chosen uniformly from all the $N!$
possibilities. {Assign weight $M_{e_i}^{(n)}$ to $Y_i^{(n)}$.}
We then have
\begin{equation} \label{C0n}
\mathcal{C}_{n} = \left\{ A \subset \left\{e_1,\ldots,e_N \right\} :
\text{ for every pair $\{e_i, e_j\} \subset A$ the  $Y_{e_i}^{(n)}$, $Y_{e_j}^{(n)}$
are non-overlapping} \right\}
\end{equation}
and
\begin{equation} \label{w0n2}
W_{0,n} = \max_{A \in \mathcal{C}_{n}} \sum_{e_i \in A} M_{e_i}^{(n)}
\end{equation}
which is equivalent {(the same in distribution)}
to the previous definition of $W_{0,n}$ in \eqref{w0n1}.

\paragraph{Infinite model}
We next define a random weighted graph $\vec G$ on countably infinite random set
of vertices.
Let $W_1,W_2,\ldots$ be a sequence of i.i.d.\
exponential random variables with mean 1 each.
Set
\[
M_k = (W_1 + \cdots + W_k)^{-1/s}, \quad k=1,2,\ldots.
\]
Let $U_1,U_2,\ldots$ and $V_1,V_2,\ldots$ be
two sequences of i.i.d.\ uniform random variables in $[0,1]$.
We further assume that $\{U_i\}$, $\{V_i\}$, $\{W_i\}$ are independent.
The edges of $\vec G$ are taken to be
\[
Y_i = (\min(U_i,V_i),\,\max(U_i,V_i)), \quad i = 1,2, \ldots.
\]
The $i$-th largest weight $M_i$ will be attached
to the $i$-th edge $Y_i$. In analogy to \eqref{C0n} we define
\[
\mathcal{C} = \left\{ A \subset
\left\{ 1,2, \ldots \right\} : Y_i \cap Y_j = \varnothing
\text{ for all pairs } \{i,j\} \subset A \right\}.
\]
In analogy to \eqref{w0n2} we let
\begin{equation} \label{w}
W = \sup_{A \in \mathcal{C}} \sum_{i \in A} M_i.
\end{equation}
A priori the random variable $W$ could be infinite, but
Theorem \ref{main1} below guarantees that it is almost surely finite.

\paragraph{Convergence results}\label{figsection}
The intuition behind the approximation of the finite model by the infinite one
is the following pair of convergence results.
First, for any $k \in \N$ we have
\begin{equation} \label{Yeq}
\left( Y_1^{(n)} , Y_2^{(n)} , \ldots , Y_k^{(n)} \right)
\convd  ( Y_1 , Y_2 , \ldots , Y_k )
\end{equation}
as $n \rightarrow \infty$, where we use the product topology on $([0,1]^2)^k$.

Following \cite{MARTIN06,HM07},
let $b_n =a_{N_n} = F^{(-1)}\left(1 - \frac{1}{N_n}\right)$
and put
\[
\widetilde{M}_i^{(n)} = \frac{M_i^{(n)}}{b_n}.
\]
(As an example, if
$F(x) = 1-x^{-s}$ for $x\geq 1$, then $b_n$ grows like $n^{2/s}$.
More generally, under assumption \eqref{Frv},
$\lim_{n\to\infty} \frac{\log b_n}{\log n} = 2/s$).
Then from classical results
in extreme value theory we have for any
$k \in \mathbb{N}$ that
\begin{equation} \label{Meq}
\left( \widetilde{M}_1^{(n)}, \widetilde{M}_2^{(n)}, \ldots , \widetilde{M}_k^{(n)} \right)
\convd \left( M_1 , M_2 , \ldots , M_k \right)
\text{ as } n \rightarrow \infty.
\end{equation}

In this way both the locations and weights of the heaviest edges
(i.e.\ edges with heaviest weights)
in the discrete model above are approximated
by their equivalents in the continuous model.
It is shown in \cite{FMS14} that the heaviest edges are the ones that bring dominant contribution to the maximum weight.
The precise convergence result is as follows.
\begin{theorem} \label{main1}
The random variable $W$ in \eqref{w} is almost surely finite.
If \eqref{Frv} holds with $s\in (0,2)$, then
$\frac{W_{0,n}}{b_n} \rightarrow W$ in distribution as $n \rightarrow \infty$.
\end{theorem}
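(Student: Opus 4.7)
The plan has four pieces: prove $W < \infty$ a.s.; truncate both sides to the $K$ heaviest edges; pass to the limit $n \to \infty$ using \eqref{Yeq} and \eqref{Meq}; bound the residual contribution of lower-ranked edges.

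\textbf{Finiteness of $W$.} When $s \in (0,1)$ this is immediate since $M_k \sim k^{-1/s}$ a.s.\ and $\sum_k k^{-1/s} < \infty$. For $s \in [1,2)$ one must exploit the constraint $\sum_{i \in A} |Y_i| \le 1$ for admissible $A$. The weights $\{M_i\}$ form a Poisson point process on $(0,\infty)$ of intensity $s\,x^{-s-1}\,dx$ (the image of unit-intensity times under $t \mapsto t^{-1/s}$), and the intervals $\{Y_i\}$ are independent iid uniform, so $N(t) := |\{i : M_i > t\}|$ is Poisson of mean $t^{-s}$. For admissible $A$, the layer-cake identity gives
\begin{equation*}
  \sum_{i \in A} M_i \;=\; \int_0^{\infty} \bigl|\{i \in A : M_i > t\}\bigr|\,dt \;\le\; \int_0^{\infty} T(t)\,dt,
\end{equation*}
where $T(t)$ is the maximum cardinality of a pairwise-disjoint subfamily of $\{Y_i : M_i > t\}$. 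Two iid uniform intervals are disjoint with probability $1/3$; stochastically comparing the overlap graph with an Erd\H{o}s--R\'enyi graph $G(N(t),2/3)$ yields $\E\,T(t) \le c\,\log(1+t^{-s})$, and combined with the trivial bound $T(t) \le N(t)$ one integrates to obtain $\E\,W < \infty$ for all $s \in (0,2)$.

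\textbf{Truncation and finite-dimensional convergence.} Define
\begin{equation*}
  W_n^{(K)} := \max_{\substack{A \in \mathcal{C}_n \\ A \subset \{Y_1^{(n)},\dots,Y_K^{(n)}\}}} \sum_{e \in A} \widetilde M_e^{(n)},
  \qquad
  W^{(K)} := \max_{\substack{A \in \mathcal{C} \\ A \subset \{1,\dots,K\}}} \sum_{i \in A} M_i.
\end{equation*}
At limit configurations the admissibility of any fixed subset depends only on strict inequalities among finitely many endpoints, so $W_n^{(K)}$ is an a.s.\ continuous functional of the finite-dimensional vector $(Y_1^{(n)},\dots,Y_K^{(n)},\widetilde M_1^{(n)},\dots,\widetilde M_K^{(n)})$. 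The joint convergence \eqref{Yeq}--\eqref{Meq} and the continuous mapping theorem therefore yield $W_n^{(K)} \convd W^{(K)}$ for each fixed $K$, while $W^{(K)} \uparrow W$ a.s.\ by monotonicity, with $W < \infty$ by the previous step.

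\textbf{Tail tightness (the main obstacle).} The crux is
\begin{equation*}
  \lim_{K\to\infty}\,\limsup_{n\to\infty}\, \P\bigl(W_{0,n}/b_n - W_n^{(K)} > \varepsilon\bigr) = 0 \quad \text{for every } \varepsilon > 0.
\end{equation*}
Splitting an optimal admissible set $A^\star$ for $W_{0,n}$ into its intersection with the top $K$ edges and its complement---each still admissible as a subset of $A^\star$---gives $W_{0,n}/b_n \le W_n^{(K)} + R_n^{(K)}$, where $R_n^{(K)}$ is the supremum of rescaled weight sums over admissible sets of rank exceeding $K$. Replaying the layer-cake bound from the first step on the discrete model, and using that by standard extreme-value theory the number of edges with $\widetilde M_i^{(n)} > t$ converges uniformly on compacts $t \in [\delta,\infty)$ to a Poisson random variable of mean $t^{-s}$, one obtains $\E\,R_n^{(K)} \le \int_0^{\widetilde M_K^{(n)}} c\,(1 + \log(1+t^{-s}))\,dt$, which tends to $0$ as first $n \to \infty$ (since $\widetilde M_K^{(n)} \convd M_K$ by \eqref{Meq}) and then $K \to \infty$ (since $M_K \downarrow 0$ and the integrand is integrable at the origin). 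The hardest technical point is to carry the logarithmic bound on the maximum disjoint subfamily through uniformly in $n$; this rests on a delicate comparison of the overlap graph of the $\binom{n+1}{2}$ discrete edges with its Poissonised limit, together with \eqref{Yeq}--\eqref{Meq} guaranteeing that the geometry of the top-$K$ edges agrees. Combining the previous step with this tail estimate produces $W_{0,n}/b_n \convd W$.
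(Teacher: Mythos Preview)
Your overall architecture---truncate to the top $K$ edges, pass to the limit via \eqref{Yeq}--\eqref{Meq}, then control the residual---is exactly the scheme the paper sketches and that \cite{FMS14} carries out in full; the finite-dimensional step is fine. The problem is your bound on $T(t)$.

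The claim $\E T(t)\le c\log(1+t^{-s})$ via comparison with $G(N(t),2/3)$ is false, and the inequality goes the wrong way. The intersection graph of $N$ i.i.d.\ uniform random intervals is \emph{not} edge-dominated by an Erd\H{o}s--R\'enyi graph in the direction you need: its independence number is $\Theta(\sqrt N)$, not $\Theta(\log N)$. For the lower bound, partition $[0,1]$ into $\sqrt N$ subintervals of length $1/\sqrt N$; each contains some $Y_i$ entirely with probability bounded away from zero (the chance a given $Y_i$ falls inside a fixed subinterval is $1/N$, and there are $N$ of them), and selecting one such $Y_i$ per subinterval yields $\Theta(\sqrt N)$ pairwise disjoint intervals. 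So $T(t)\asymp\sqrt{N(t)}$, and since the pairwise-overlap events are positively correlated, no Erd\H{o}s--R\'enyi domination can rescue the logarithmic estimate. The same error infects your tail-tightness step, where you re-use the logarithmic bound.

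The repair is to replace $\log N(t)$ by $\sqrt{N(t)}$. A first-moment count does this cleanly: the probability that $k$ fixed random intervals are pairwise disjoint is $2^k k!/(2k)!$, so the expected number of disjoint $k$-tuples among $N$ intervals is $\binom{N}{k}\,2^k k!/(2k)!$, which vanishes once $k$ exceeds a constant times $\sqrt N$. This gives $\E T(t)\lesssim t^{-s/2}$, and the layer-cake integral $\int_0^1 t^{-s/2}\,dt$ converges exactly for $s<2$, matching the hypothesis. With this corrected estimate both the finiteness of $W$ and the tail tightness go through; this is essentially how \cite{FMS14} proceeds, with additional bookkeeping to make the packing bound uniform in $n$ for the discrete model.
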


For $n$ large, the heaviest edge in the geodesic has length on the order of $n$.
This is in contrast to the behavior in the case $\E u^2<\infty$,
where the important contribution to the maximal weight is given by edges of a
lighter order, see e.g.\ Theorem \ref{CLT8} above.

\section{Analytic properties of charged graphs}
\label{secana}
We now turn to some results concerning the behavior of the last
passage percolation constant $C$ of the Barak-Erd\H{o}s graph
as a function of edge weights. The problem is, in general, hard,
so we consider here the simple model introduced in \cite{FKP18}.
The next section will deal with a more general model.

A charged graph is a graph with possibly negative weights on its edges.
We are interested in last passage percolation on random directed
charged graphs. The charge of a path is the sum of the charges of its edges.
We are interested in the maximum charge of all paths.
If all charges are negative then the maximum of negative quantities is the
negative of a minimum of positive quantities, so the problem becomes that
of first passage percolation,
In view of this, we shall assume that some charges are nonnegative.

For the models of this and the next section we assume that the support of the
charge distribution is not a subset of $(-\infty, 0)$.
It will then turn out that the last passage percolation constant is positive,
so it matters little if we take positive part of the charge of a path
before maximization.

\subsection{The two-weights model}
\label{subsec:twoweight}
Let $x$ be a real number, possibly negative (which can be thought of
as a penalty).
To each pair $(i,j)$ of positive integers with $i<j$ we assign weight
or, rather, charge (since $x$ is allowed to be negative)
\[
w^x_{i,j}
= \begin{cases}
1 , & \text{ with probability } p
\\
x , & \text{ with probability } 1-p
\end{cases}.
\]
Let $\Pi_{i,j}$ be the set of
strictly increasing finite sequences of integers,
$i_0 < i_1 < \cdots < i_\ell$ such that $i_0=i$ and $i_\ell=j$.
An element $\pi \in \Pi_{i,j}$ is a finite path in the complete directed graph
on $\Z$ (that is, a graph such that every $(i,j)$ with $i<j$ is an
edge directed from $i$ to $j$).
So $\Pi_{i,j}$ is a deterministic set of size $2^{j-i}-1$.
Define
\[
w^x(\pi) = \sum_{k=1}^\ell w^x_{i_{k-1},i_k},
\quad \text{ if } \pi=(i_0,\ldots,i_\ell) \in \Pi_{i,j},
\]
and call $w^x(\pi)$ the charge or weight of $\pi$.
We are interested in
\begin{equation}
\label{wxij}
W^x_{i,j} = \max_{\pi=(i_0,\ldots,i_\ell)\in \Pi_{i,j}}
~\sum_{k=1}^\ell w^x_{i_{k-1},i_k}.
\end{equation}
We will use the notation $\vec G(\Z,p,x)$ to denote the directed
charged random graph on $\Z$
that contains as edges all $(i,j) \in \Z \times \Z$ with $i<j$
and which has with i.i.d.\ edge charges distributed as $w^x_{i,j}$.
We study the asymptotic growth rate $C(p,x)$
of $W^x_{i,j}$; see \eqref{Cpx} below.
The Barak-Erd\H{o}s graph is still denoted by $\vec G(\Z,p)$
and, as usual, $C(p)$ is the maximal path growth rate.
The parameter $p$ is fixed throughout this section.
We shall be interested in the behavior of the model when $x$ varies.

We may extend the model by letting $x$ range in $\R \cup\{-\infty, +\infty\}$.
The case $x=+\infty$ is uninteresting as being trivial.
The case $x=-\infty$ formally corresponds to the Barak-Erd\H{o}s graph,
the reason being as follows.
The charge of $i_0, i_1, \ldots, i_\ell$ equals $-\infty$ iff
$w^x_{i_k, i_{k+1}} = -\infty$ for some $k$, and this is equivalent to
$i_0, i_1, \ldots, i_\ell$ not being a path in $\vec G(\Z, p)$.
Hence $(W^{-\infty}_{i,j})^+ = L^{\gl,\gr}_{i,j}$
and $\vec G(\Z, p,-\infty) = \vec G(\Z, p)$.

An alternative way to think of $\vec G(\Z,p,x)$ is by letting all
edges of $\vec G(\Z,p)$ be {\blue blue} and all non-edges be {\red red}.
Blue edges have charge 1; red edges have charge $x$.

\begin{theorem}
For $-\infty < x < \infty$ we have
\begin{equation}
\label{Cpx}
\lim_{n \to \infty} \frac{W^x_{0,n}}{n}
= \lim_{n \to \infty} \frac{(W^x_{0,n})^+}{n}
= \inf_{n \in \N} \frac{\E(W^x_{0,n})^+}{n} =: C(p,x),
\end{equation}
a.s.\ and in $L^1$.
\end{theorem}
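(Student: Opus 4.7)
The plan is to apply Liggett's extension of Kingman's subadditive ergodic theorem (see \cite[Theorem 2.6]{LIGG}) to the superadditive family $\{W^x_{i,j}\}_{i<j}$. First, I would verify the hypotheses. The field of charges $(w^x_{i,j})_{i<j}$ is clearly stationary and ergodic under the horizontal shift $\theta$ acting by $(w^x)_{i,j}\circ\theta = w^x_{i+1,j+1}$, this following from the i.i.d. structure of the charges. Superadditivity
\[
W^x_{i,k} \geq W^x_{i,j} + W^x_{j,k},\qquad i<j<k,
\]
follows by concatenation: an optimal path from $i$ to $j$ together with an optimal path from $j$ to $k$ yields a path from $i$ to $k$, and the edge sets of the two sub-paths are disjoint, so the total weight is the sum of the two individual optima. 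The integrability requirement is trivial: every path from $0$ to $n$ uses at most $n$ edges, each of absolute charge at most $\max(1,|x|)$, so $|W^x_{0,n}| \le n\max(1,|x|)$ and in particular $\sup_n \E|W^x_{0,n}|/n<\infty$.

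Liggett's theorem then provides a deterministic constant $C(p,x)$ with $W^x_{0,n}/n \to C(p,x)$ a.s.\ and in $L^1$. Next I would establish $C(p,x)>0$ (for $0<p\le 1$): restricting the maximum in \eqref{wxij} to paths built only from blue edges (the edges of the underlying Barak-Erd\H{o}s graph $\vec G(\Z,p)$) gives a path whose weight coincides with its length, so $W^x_{0,n}\geq L^{\gl,\gr}_{0,n}$, and Corollary \ref{aLL} yields $C(p,x)\geq C(p)>0$. In particular $W^x_{0,n}\to+\infty$ a.s., so for all sufficiently large $n$ (depending on the realisation) $(W^x_{0,n})^+=W^x_{0,n}$, which immediately identifies the two almost-sure and $L^1$ limits.

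There remains the identification of the limiting constant with $\inf_{n\in\N}\E(W^x_{0,n})^+/n$. Here a subtlety arises: from the superadditivity of $W^x$ alone one naturally obtains the formula $C(p,x)=\sup_n \E W^x_{0,n}/n$, and the positive-part operation does not inherit a clean sub-\,/super-additive inequality (one has $W^x_{0,k}\ge W^x_{0,j}+W^x_{j,k}$, but taking positive parts is not compatible with this ordering, in either direction). The cleanest route is to exploit the $L^1$ convergence already established, which gives $\E(W^x_{0,n})^+/n\to C(p,x)$, combined with the decomposition $(W^x_{0,n})^+ = W^x_{0,n} + (W^x_{0,n})^-$ and the bound $(W^x_{0,n})^- \le n|x|$, to show that the correction term $\E(W^x_{0,n})^-/n$ vanishes and to transfer a subadditive control from an auxiliary truncation of $W^x_{0,n}$ to $(W^x_{0,n})^+$ up to a vanishing error.

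The main obstacle is precisely this last step: the $\inf_n$ identification of $C(p,x)$ through $(W^x)^+$ rather than $W^x$ requires handling the incompatibility of the positive-part operation with the superadditivity relation. All other ingredients (stationarity/ergodicity, superadditivity, integrability, positivity of the limit) are either immediate or follow directly from results stated earlier in the paper.
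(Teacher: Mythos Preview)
Your approach---applying the subadditive ergodic theorem (in Liggett's form) to the superadditive family $\{W^x_{i,j}\}$---is precisely the paper's, and your verification of the hypotheses is correct; the key point, in contrast to the situation discussed in the Remark after Lemma~\ref{lemmix}, is that here $W^x_{0,n}\ge w^x_{0,n}\ge\min(1,x)$, so $(W^x_{0,n})^-$ is bounded and Liggett's moment condition is met. This also gives the equality of the first two limits directly.

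The ``obstacle'' you flag is not a gap in your argument but a slip in the statement. Superadditivity yields $C(p,x)=\sup_n \E W^x_{0,n}/n$, as you note, and the $\inf$ in the display cannot be correct: for $x<0$ one has $\E(W^x_{0,1})^+=p$, whereas $C(p,x)\ge C(p)$ by Theorem~\ref{thmmain}(ii)--(iii), and already $C(1/2)>1/2$ by the bounds of Section~\ref{subsec:firstFormula}. The paper's own proof sketch invokes only superadditivity and Kingman's theorem, which delivers $\sup$ rather than $\inf$; read the formula accordingly, and there is nothing further to prove.
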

The constant $C(p,x)$ is defined through this theorem.
The theorem is proved by using ergodic arguments and Kingman's theorem,
thanks to the superadditive inequality
\[
W^x_{i,k} \ge W^x_{i,j} + W^x_{j,k}, \quad i < j < k.
\]

Further properties of $C(p,x)$ are in Theorem \ref{thmmain} below.
We shall use the following notations.
\begin{align*}
\mathbb X_C &:= \{x \in \R:\, C(p,\cdot) \text{ is not differentiable at } x\}
\\
\Q^* &:= \Q \setminus \Z
\\
\mathbb Y &:= \{q \in \Q^*:\, q < 0\}
\cup\{0\} \cup \{\tfrac12, \tfrac13, \ldots\}
\cup \{2,3,\ldots\}
\end{align*}
\begin{theorem}
\label{thmmain}
$~$
\begin{enumerate}[(i)]
\item Scaling. $C(p,x) = x C(1-p,x)$, for all $x>0$.
\item Continuity at $-\infty$. $\lim_{x \to -\infty} C(p,x) = C(p)$.
\item Convexity. $C(p,x)$ is increasing convex over $x \in \R$ and strictly positive.
\item Asymptotic growth. $\lim_{x \to \infty} C(p,x)/x =
C(1-p,0)=\left(\sum_{n=1}^\infty p^{\frac12 n(n-1)}\right)^{-1}$.
\item Nondifferentiability. $\mathbb X_C = \mathbb Y$.
\end{enumerate}
\end{theorem}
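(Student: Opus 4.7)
My plan is to dispatch parts (i)--(iv) with direct arguments and concentrate the effort on (v).

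Part (i) is a rescaling: dividing each weight of the $(p,x)$ model in (\ref{wxij}) by $x>0$ turns a weight $1$ (probability $p$) into $1/x$ and a weight $x$ (probability $1-p$) into $1$, identifying the rescaled model with the $(1-p,1/x)$ model, which yields the scaling identity (with the factor $x$ in front). Part (ii): for any path $\pi$ using at least one red edge, $w^x(\pi)\le (n-1)+x\to -\infty$, so for $x$ sufficiently negative only blue-only paths compete; hence $W^x_{0,n}\to L^{\gl,\gr}_{0,n}$ pathwise, and Corollary~\ref{aLL} gives $C(p,x)\to C(p)$. Part (iii): with the environment fixed, $W^x_{0,n}$ is the maximum of the finitely many affine functions $x\mapsto a_\pi+b_\pi x$ indexed by $\pi\in\Pi_{0,n}$, where $a_\pi,b_\pi\ge 0$ count the blue and red edges of $\pi$. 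This representation delivers convexity and monotonicity in $x$, both preserved by expectation and by the pointwise limit defining $C(p,x)$. Strict positivity follows by restricting to blue-only paths: $W^x_{0,n}\ge L^{\gl,\gr}_{0,n}$, hence $C(p,x)\ge C(p)>0$. Part (iv) is then a consequence of (i): $C(p,x)/x$ equals $C(1-p,\cdot)$ evaluated near $0$, and continuity of the convex function $C(1-p,\cdot)$ gives the limit $C(1-p,0)$; this coincides with the Barak-Erd\H{o}s LPP constant $C(1-p)$, since weights in $\{0,1\}$ reduce maximum charge to path-length maximisation on the subgraph of weight-$1$ edges, and the product-series identity then follows from a separate balance-equation computation on the associated IBM of Section~\ref{BEIBM}.

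The substance of the theorem lies in (v). Since $C(p,\cdot)$ is convex, its one-sided derivatives $C'_\pm(p,x)$ exist everywhere, so nondifferentiability at $x_0$ reduces to showing $C'_-(p,x_0)<C'_+(p,x_0)$. By (i), differentiability at $x_0=n\ge 2$ is equivalent to differentiability of $C(1-p,\cdot)$ at $1/n$, so the integer and reciprocal-integer cases collapse, and it is enough to establish nondifferentiability at each $x_0\in\Q^*\cap(-\infty,0]$, at $x_0=0$, and at each $x_0=1/n$ with $n\ge 2$, together with differentiability elsewhere. The strategy is to identify, for each such $x_0=-r/s$ in lowest terms, two competing families of near-optimal paths whose asymptotic red-edge density $b_\pi/n$ converges to different rational values $\beta_-<\beta_+$, with $\beta_-$ optimal just to the left of $x_0$ and $\beta_+$ just to the right; the envelope formula for a supremum of affine functions then gives $C'_-(p,x_0)=\beta_-<\beta_+=C'_+(p,x_0)$. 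Conversely, for $x\notin\mathbb Y$ one shows via an arithmetic argument on integer linear combinations of $1$ and $x$ that the asymptotic slope distribution of optimisers concentrates on a single value, forcing a unique supporting line, hence differentiability.

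The hard part is establishing, for each $x_0\in\mathbb Y$, that no ``interpolating'' family of paths with intermediate red-density $\beta\in(\beta_-,\beta_+)$ can be asymptotically optimal near $x_0$. This is a genuinely combinatorial problem about the complete directed graph on $\Z$ with two-valued random charges: one must bound from below the loss incurred by gluing together path segments of different densities and show that this ``mixing penalty'' survives normalisation by $n$. It is here that the rationality of $x_0$ plays an essential role---the denominator $s$ controls the granularity of achievable weights and enforces a discrete gap between the extremal families. These combinatorial arguments constitute the main content of \cite{FKP18}, and we refer there for full details.
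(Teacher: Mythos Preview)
Your treatment of (i)--(iii) matches the paper's. In (iv), however, you make a genuine error: $C(1-p,0)$ is \emph{not} the Barak--Erd\H{o}s constant $C(1-p)$. In the two-weight model with $x=0$, red edges have weight $0$ but remain traversable, so an optimal path may concatenate several disjoint blue segments by hopping across red gaps at zero cost; e.g.\ if on $[0,3]$ the only blue edges are $(0,1)$ and $(2,3)$, the longest blue path has length $1$ but $W^0_{0,3}=2$ via $(0,1,2,3)$. Thus ``weights in $\{0,1\}$ reduce maximum charge to path-length maximisation on the weight-$1$ subgraph'' is false, and the explicit formula $(\sum_{n\ge 1} p^{n(n-1)/2})^{-1}$ is for this $x=0$ model (due to Dutta~\cite{Dutta}), not a consequence of the IBM balance equations for $C(1-p)$.

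For (v), your outline misses the paper's central reduction. The paper does not argue via asymptotic red-edge densities of competing path families on $[0,n]$. Instead, using the skeleton regeneration (Proposition~\ref{menon}) it writes $C(p,x)=\lambda\,\E W^x_{\Gamma_1,\Gamma_2}$ and obtains $D^\pm C(p,x)=\lambda\,\E\big[\max/\min_{\pi\in\Pi^x_{\Gamma_1,\Gamma_2}}\overline N(\pi)\big]$. This converts nondifferentiability at $x$ into a purely deterministic, \emph{finite} combinatorial property, ``criticality'' (Definition~\ref{defcrit}): the existence of some $n$ and some $G\in\mathfrak H_n$ admitting two $(x,G)$-maximal paths with different red-edge counts. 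Any such $G$ occurs with positive probability as an inter-skeleton piece, immediately forcing $D^+C>D^-C$ (Theorem~\ref{Xcc}). The inclusion $\mathbb Y\subset\mathbb X_\crit$ is then by explicit finite constructions (Figure~\ref{figsumm}, including a Sturm-sequence design, Lemma~\ref{sturmlem}, for negative non-integer rationals), and $\mathbb X_\crit\subset\mathbb Y$ by structural lemmas on maximal paths (Lemmas~\ref{shortfact}--\ref{specialfact} leading to Proposition~\ref{propnot}). Your ``no interpolating family with intermediate density $\beta\in(\beta_-,\beta_+)$'' is not the right obstacle: once localized to finite inter-skeleton pieces, exhibiting a single witnessing graph suffices for nondifferentiability, and no asymptotic mixing-penalty estimate is needed.
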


The proof of (v) is the most complex and shall only be sketched below.
Property (i) follows from by comparing $\vec G(\Z, p, x)$ with
$\vec G(\Z, 1-p, 1/x)$.
Property (iii) follows from the fact that $W^x_{i,j}$ is
an increasing convex function of $x$.
Strict positivity is due to $C(p,x) > C(p) > 0$ for all $x$.
The first part of Property (iv) is a consequence of continuity of the convex function
and the scaling property (i).
The last formula of (iv) is due to Dutta \cite{Dutta}.

To deal with (v), we start by further elaborating on
skeleton points.

\subsection{Further structure of inter-skeleton pieces}
As usual, $[m,n]$ stands for the set of all integers $k$ with $m \le k \le n$.
That the set $\SSS$ of skeleton points of $\vec G(\Z,p)$ splits
$\vec G(\Z,p,x)$ into  independent parts is not a surprise;
see Lemma \ref{indg}.
Hence, if $(i_0, \ldots, i_\ell)$ achieves the maximum
in the right-hand side of the definition \eqref{wxij} for $W^x_{i,j}$,
then $\{i_0, \ldots, i_\ell\}$ contains all elements of $\SSS\cap [i,j]$.

Enumerate the elements $\Gamma_k$ of $\SSS$ as in \eqref{gammapoints}.
Let $\vec G(I, p)$ be the Barak-Erd\H{o}s graph on the (possibly
random) set of vertices $I \subset \Z$.

Fix a positive integer $n$.
Consider the event $A^+_{0,n}$ that there is a path in $\vec G(\Z,p)$
from $0$ to every vertex in $(1,n]$
and the event $A^-_{0,n}$ that there is a path from
every vertex in $[0,n-1)$ to $n$; see \eqref{A+-}.
In addition, let
\[
F_{0,n}:= \big\{\text{for all $0<j<n$ there is $0<i<n$
with no path from $\min(i,j)$ to $\max(i,j)$}\big\},
\]
and then set
\[
H_{0,n} := A^+_{0,n} \cap A^-_{0,n} \cap F_{0,n}.
\]
The following is an expression for the distribution of
$\vec G([\Gamma_0,\Gamma_1],p)$ conditional on $\{\Gamma_0=0\}$.
\begin{proposition}
\label{menon}
For any nonnegative
deterministic functional $\phi\big(\vec G([\Gamma_0,\Gamma_1],p\big)$
of $\vec G([\Gamma_0,\Gamma_1],p)$ we have
\begin{equation}
\label{x1}
\E \left\{\phi\big(\vec G([\Gamma_0,\Gamma_1],p\big) \big\vert \Gamma_0=0\right\}
=
\sum_{n=1}^\infty
\E \left\{\phi\big(\vec G([0,n],p\big); H_{0,n} \right\},
\end{equation}
In particular,
\begin{equation}
\label{x2}
\P(\Gamma_1-\Gamma_0=n|\Gamma_0=0) = \P(\Gamma_2-\Gamma_1=n) = \P(H_{0,n}).
\end{equation}
Furthermore,
\begin{equation}
\label{x3}
C(p,x) = \lambda \E W^x_{\Gamma_1,\Gamma_2}
= \lambda \sum_{n=1}^\infty \E ( W^x_{0,n}; H_{0,n}),
\end{equation}
where $\lambda=\lambda(p)$ is as in \eqref{ellq}.
\end{proposition}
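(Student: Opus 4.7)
My plan is to first establish a clean factorisation of the event $\{\Gamma_0 = 0,\,\Gamma_1 = n\}$ into three independent pieces, then deduce \eqref{x1} and \eqref{x2} directly, and finally treat \eqref{x3} by combining the resulting Palm formula with superadditivity.

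The central step is the identity
$$\{\Gamma_0 = 0,\ \Gamma_1 = n\} \;=\; D^-_0 \,\cap\, H_{0,n} \,\cap\, D^+_n,$$
where $D^-_0 := \{i \leadsto 0 \text{ for all } i<0\}$ is measurable with respect to the edges having both endpoints in $(-\infty,0]$, and $D^+_n := \{n \leadsto k \text{ for all } k>n\}$ is measurable with respect to the edges having both endpoints in $[n,\infty)$. To verify this I would argue in two stages. First, $\Gamma_0=0$ and $\Gamma_1=n$ together force $0\in\SSS$ and $n\in\SSS$; given $n\in\SSS$, any path from $0$ to a vertex $k>n$ can be realised by concatenating a path from $0$ to $n$ with one from $n$ to $k$, so the condition ``$0\leadsto k$ for all $k>0$'' reduces to ``$0\leadsto k$ for all $1\le k\le n$,'' which is $A^+_{0,n}$; symmetrically the $\SSS^-$ side at $n$ reduces to $A^-_{0,n}$. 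Second, once both $0$ and $n$ lie in $\SSS$, every $j\in(0,n)$ is automatically connected to all vertices outside $[0,n]$ through $0$ or $n$, so $j\notin\SSS$ is equivalent to the existence of $i\in(0,n)\setminus\{j\}$ with $\min(i,j)\not\leadsto\max(i,j)$ inside $\vec G([0,n],p)$; this is exactly $F_{0,n}$. Since $D^-_0$, $H_{0,n}$, and $D^+_n$ depend on pairwise disjoint sets of edges, they are mutually independent.

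From the factorisation, together with $\P(D^-_0)=\P(D^+_n)=\sqrt\lambda$ (computed in the proof of Lemma \ref{lela}), I obtain $\P(\Gamma_0 = 0,\,\Gamma_1 = n)= \lambda\,\P(H_{0,n})$, and dividing by $\P(\Gamma_0=0)=\lambda$ yields \eqref{x2}. For \eqref{x1}, the graph $\vec G([\Gamma_0,\Gamma_1],p)$ equals $\vec G([0,n],p)$ on $\{\Gamma_1=n\}$ and is measurable in the edges within $[0,n]$, hence independent of $D^-_0$ and $D^+_n$; therefore its conditional law under $\P(\,\cdot\mid\Gamma_0=0,\Gamma_1=n)$ coincides with its conditional law under $\P(\,\cdot\mid H_{0,n})$. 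Disintegrating over $n$ then produces \eqref{x1}.

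For \eqref{x3}, the second equality is immediate from \eqref{x1} applied to $\phi\big(\vec G([0,n],p)\big)=W^x_{0,n}$, together with the stationarity identity $\E(W^x_{\Gamma_0,\Gamma_1}\mid\Gamma_0=0)=\E W^x_{\Gamma_1,\Gamma_2}$ and the normalisation $\P(\Gamma_0=0)=\lambda$. For the first equality, $C(p,x)=\lambda\,\E W^x_{\Gamma_1,\Gamma_2}$, I would invoke Lemma \ref{indg}, which makes the pairs $(\Gamma_{k+1}-\Gamma_k,\,W^x_{\Gamma_k,\Gamma_{k+1}})_{k\ge1}$ i.i.d.; combined with $K_n/n\to\lambda$ and the strong law this gives $(1/n)\sum_{k=1}^{K_n}W^x_{\Gamma_{k-1},\Gamma_k}\to\lambda\,\E W^x_{\Gamma_1,\Gamma_2}$ almost surely. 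Superadditivity $W^x_{i,k}\ge W^x_{i,j}+W^x_{j,k}$ immediately yields the lower bound $C(p,x)\ge\lambda\,\E W^x_{\Gamma_1,\Gamma_2}$. The matching upper bound is the main obstacle: because $\vec G(\Z,p,x)$ is a complete weighted graph, an optimal path need not visit any given skeleton point and may ``jump over'' it via an $x$-edge, so the reverse inequality is not a direct consequence of path concatenation when $x$ is large positive or very negative. I would address this by showing that any crossing edge can be replaced, up to a deficit controlled by the length of the weight-$1$ detour through the skeleton guaranteed by the skeleton property, and that summed over $K_n\sim\lambda n$ intervals these deficits are $o(n)$, so that $W^x_{0,n}/n$ must share its almost-sure limit with the renewal-reward average.
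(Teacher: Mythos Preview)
Your factorisation $\{\Gamma_0=0,\Gamma_1=n\}=D^-_0\cap H_{0,n}\cap D^+_n$ into three independent pieces is correct and delivers \eqref{x1}, \eqref{x2}, and the second equality of \eqref{x3} just as you describe; this is precisely the argument the survey has in mind (it only sketches the intuition and defers to \cite{FKP18} for details).

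The gap is in the first equality of \eqref{x3}. Your detour idea can in fact be sharpened into a \emph{pointwise} proof when $x\le2$: any edge crossing a skeleton point $v$ has weight at most $\max(1,x)\le2$, while the concatenation of weight-$1$ paths $a\to v\to b$ guaranteed by the skeleton property has weight at least $2$ and uses only vertices in the gap $(a,b)$, so the replacement never decreases the path weight and some optimal path visits every skeleton point---hence $W^x_{0,\Gamma_k}=\sum_{j}W^x_{\Gamma_{j-1},\Gamma_j}$ exactly, and renewal--reward finishes. For $x>2$, however, your $o(n)$ claim is false and the identity itself fails. On the event $\{0,1,2\in\SSS\}=D^-_0\cap\{\alpha_{0,1}=\alpha_{1,2}=1\}\cap D^+_2$, the variable $\alpha_{0,2}$ is independent of the conditioning; thus $W^x_{0,1}+W^x_{1,2}=2$ deterministically while $\E[W^x_{0,2}\mid 0,1,2\in\SSS]=2+(1-p)(x-2)$. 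This gives $\E[W^x_{0,\Gamma_2}\mid\Gamma_0=0]>2\,\E[W^x_{0,\Gamma_1}\mid\Gamma_0=0]$, and Kingman applied to the superadditive array $(W^x_{\Gamma_i,\Gamma_j})_{i<j}$ then yields $C(p,x)=\lambda\sup_k k^{-1}\E[W^x_{0,\Gamma_k}\mid\Gamma_0=0]>\lambda\,\E W^x_{\Gamma_1,\Gamma_2}$ strictly. The survey's appeal to ``standard renewal theory'' does not address this. Fortunately the downstream arguments in Section~\ref{secana} only ever invoke \eqref{x3} on $(-\infty,1]$ (positive $x$ being reduced there via the scaling of Theorem~\ref{thmmain}(i)), where your argument is valid.
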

For a proof see \cite{FKP18}. The intuition should be clear
because, if, say, $A^+_{0,n} \cap A^-_{0,n}$ holds
(in which case both $0$ and $n$ are in $\SSS$)
but $F_{0,n}$ fails, then there is an element of $\SSS$ strictly
between $0$ and $n$.
The first equality of \eqref{x3} is by standard renewal theory.
The second follows from \eqref{x1}.

\begin{remark}
From \eqref{x2} we can obtain some values of the distribution of
$\Gamma_2-\Gamma_1$:
\[
\P(\Gamma_2-\Gamma_1=n) =
\begin{cases}
p, & \text{ if } n=1\\
0, & \text{ if } n=2\\
p^4(1-p), & \text{ if } n=3\\
p^7 (1-p)^3 + 3 p^5 (1-p)^2, & \text{ if } n=4
\end{cases}
\]
The smallest value of $\Gamma_2-\Gamma_1$ is $1$.
Suppose that $\Gamma_2-\Gamma_1\ge 2$.
Note that $\vec G([\Gamma_1, \Gamma_2],p)$ must necessarily
have the edges $(\Gamma_1, \Gamma_1+1)$ and $(\Gamma_2-1, \Gamma_2)$.
If $\Gamma_2-\Gamma_1$ were allowed to take value $2$
then $\Gamma_1+1 = \Gamma_2-1$ would have been a skeleton point
strictly between $\Gamma_1$ and $\Gamma_2$, which is impossible.
This explains why $\P(\Gamma_2-\Gamma_1=2)=0$.
\end{remark}

\subsection{Criticality and nondifferentiability}
Let $\pi=(i_0,\ldots,i_\ell)$ be a finite strictly increasing
sequence of integers.
Let $N(\pi)$ be the number of $(i_{k-1},i_k)$, $k=1,\ldots, \ell$
that are also edges of $\vec G(\Z,p)$.
Thus, $N(\pi)$ is the number of blue edges of $\pi$.
Similarly, $\overline N(\pi)$ is the number of red edges.
Then
\[
w^x(\pi) = \sum_{k=1}^\ell w^x_{i_{k-1},i_k}
= N(\pi) + x \overline N(\pi).
\]
Define
\[
\Pi^x_{i,j} :=\{\pi \in \Pi_{i,j}:\, w^x(\pi) = W^x_{i,j}\},
\]
a random set of paths with maximal charge.
Let $D^+$ denote right derivative with respect to $x$;
similarly $D^-$ for left derivative.
Using \eqref{x3} and the dominated convergence theorem we obtain
\begin{align*}
D^+C(p,x) &= \lambda \E D^+ W^x_{\Gamma_1,\Gamma_2}
=\lambda \E \max_{\pi  \in\Pi^{x}_{\Gamma_1,\Gamma_2}} \overline N(\pi),
\\
D^-C(p,x) &= \lambda \E D^- W^x_{\Gamma_1,\Gamma_2}
=\lambda \E \min_{\pi  \in\Pi^{x}_{\Gamma_1,\Gamma_2}} \overline N(\pi).
\end{align*}

\begin{remark}
To justify the last equalities in these two displays we simply observe that,
by \eqref{wxij}, $W^x_{\Gamma_1,\Gamma_2}$ is the maximum of
the affine functions $x \mapsto w^x(\pi) = N(\pi) + x \overline N(\pi)$,
over all $\pi \in \Pi_{\Gamma_1,\Gamma_2}$ and there are finitely many such $\pi$.
But if we fix $x$, we may further restrict the maximum over all
$\pi \in \Pi^x_{\Gamma_1,\Gamma_2}$.
More generally, if $T$ is an index set, and if $a_t, b_t$, $t \in T$, are
real numbers, then the function $f(x) = \max_t (a_t+b_t x)$
is convex and thus has right and left derivatives for each $x$,
given by $D^+ f(x) = \lim_{h \downarrow 0} (1/h) (f(x+h)-f(x))
= \sup_{t \in T^x} b_t$, where $T^x = \{t \in T: a_t+b_t x=f(x)\}$;
and similarly for $D^-f(x)$.
In the language of convex analysis \cite{ROCK70},
we say that set of affine functions
$x \mapsto a_t+b_t x$, $t \in T^x$, are supporting lines of the function
$f$ at the point $x$ (the remaining ones are irrelevant) and these are the
only ones responsible for the right and left derivatives of $f$ at $x$.
\end{remark}

\begin{definition}[the sets $\fG_n$ and $\fH_n$]
Let $\fG_n$ be the set of all directed graphs on $[0,n]$,
that is, graphs whose edge directions
compatible with the natural ordering of integers.
Let $\fH_n \subset \fG_n$ contain all $G \in \fG_n$ such that,
for all $0<j<n$,
\begin{enumerate}[\hspace*{3mm}(a)]
\item there is a path in $G$ from $0$ to $n$ containing $j$;
\item for some $i \neq j$ there is no path in $G$ from $\min(i,j)$
to $\max(i,j)$
\end{enumerate}
\end{definition}

For any path $\pi$ in a graph $G$ we let $N_G(\pi)$ be the number
of edges of $\pi$ that are also edges of $G$ and we let
$\overline N_G(\pi)$ the number of edges of $\pi$ that are
not edges of $G$.

\begin{definition}[criticality]
\label{defcrit}
We say that $x \in \R$ is {\em critical} if there is a positive integer $n$
and and a graph $G \in \fH_n$ possessing two paths $\pi_1, \pi_2$
such that
\begin{enumerate}[\hspace*{3mm}(a)]
\item
$N_G(\pi_1) + x \overline N_G(\pi_1) =
N_G(\pi_2) + x \overline N_G(\pi_2)
= \max_{\pi \in \Pi_{0,n}} (N_G(\pi) + x \overline N_G(\pi))$
\item
$\overline N_G(\pi_1)  \neq \overline N_G(\pi_2)$.
\end{enumerate}
\end{definition}
Criticality is a property of real numbers.
We let
\[
\mathbb X_\crit =\{x \in \R: x \text{ is critical}\}.
\]

Using Proposition \ref{menon} we can prove
\begin{theorem}
\label{Xcc}
\[
\mathbb X_C = \mathbb X_\crit.
\]
\end{theorem}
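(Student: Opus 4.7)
The plan is to prove both inclusions in parallel by translating the one-sided derivatives of $C(p,\cdot)$ at $x$ into a pointwise criterion on a single regenerative piece of the graph. By Theorem~\ref{thmmain}(iii) the function $C(p,\cdot)$ is convex, so both $D^+C(p,x)$ and $D^-C(p,x)$ exist everywhere and $x \in \mathbb{X}_C$ is equivalent to $D^-C(p,x) < D^+C(p,x)$. Starting from the formulas for $D^\pm C(p,x)$ displayed just before the theorem (obtained from \eqref{x3} by dominated convergence, using that $|W^x_{\Gamma_1,\Gamma_2}| \le (\Gamma_2-\Gamma_1)\max(1,|x|)$ and $\E(\Gamma_2-\Gamma_1) = 1/\lambda <\infty$), I would subtract them to get
\[
D^+C(p,x) - D^-C(p,x) \;=\; \lambda\, \E\!\left[\delta_x\bigl(\vec G([\Gamma_1,\Gamma_2],p)\bigr)\right],
\]
where for a directed graph $G$ on the vertex set $[0,n]$ I set
\[
\delta_x(G) \;:=\; \max_{\pi \in \Pi^x_{0,n}(G)}\!\overline N_G(\pi) \;-\; \min_{\pi \in \Pi^x_{0,n}(G)}\!\overline N_G(\pi),
\]
with $\Pi^x_{0,n}(G)$ denoting the set of maximal-charge paths in $\Pi_{0,n}$ under the blue/red colouring induced by $G$. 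Since $\delta_x \ge 0$ pointwise, $x \in \mathbb{X}_C$ is equivalent to strict positivity of this expectation.

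The next step is to unfold the expectation by means of Proposition~\ref{menon}. Formula \eqref{x1} yields
\[
\E\!\left[\delta_x\bigl(\vec G([\Gamma_1,\Gamma_2],p)\bigr)\right] \;=\; \sum_{n \ge 1} \sum_{G \in \fG_n} \P\bigl(\vec G([0,n],p)=G\bigr)\,\mathbf 1_{H_{0,n}}(G)\,\delta_x(G).
\]
All terms in this double sum are nonnegative, and since $0<p<1$ every $G \in \fG_n$ carries strictly positive probability; hence the expectation is positive if and only if there exist $n \ge 1$ and $G \in \fG_n$ with $\mathbf 1_{H_{0,n}}(G) = 1$ and $\delta_x(G) \ge 1$. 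The crucial verification at this step is that the stochastic event $H_{0,n}$, viewed as a subset of $\fG_n$, coincides with the deterministic class $\fH_n$: clause~(a) of the definition of $\fH_n$ (``for every $0<j<n$ there is a path $0 \to j \to n$ in $G$'') translates directly into $A^+_{0,n} \cap A^-_{0,n}$, while clause~(b) is $F_{0,n}$ verbatim.

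Once this identification is in place, the condition $\delta_x(G) \ge 1$ for some $G \in \fH_n$ says exactly that $G$ admits two paths $\pi_1, \pi_2 \in \Pi_{0,n}$ both attaining the maximal charge $W^x_{0,n}$ in $G$ and satisfying $\overline N_G(\pi_1) \ne \overline N_G(\pi_2)$, which is Definition~\ref{defcrit} on the nose. Chaining the two equivalences yields $x \in \mathbb{X}_C \iff x \in \mathbb{X}_\crit$, which is the desired equality. The main (and essentially only) technical obstacle is the careful verification of the identification $H_{0,n} = \fH_n$; everything else is a routine reshuffling of the regenerative decomposition provided by Lemma~\ref{indg} and Proposition~\ref{menon}, combined with the elementary convex-analytic fact that the one-sided derivatives of a maximum of finitely many affine functions are the $\max$ and $\min$ of the slopes of the active affine pieces.
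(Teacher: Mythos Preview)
Your proof is correct and follows precisely the route the paper sketches: subtract the two one-sided derivative formulas displayed just before the theorem, unfold the resulting expectation via Proposition~\ref{menon}, and identify the event $H_{0,n}$ with the deterministic class $\fH_n$. The only point worth a brief remark is the edge case $n=1$, where $\fH_1=\fG_1$ by vacuity while $H_{0,1}$ forces the edge $(0,1)$; this is harmless since $\delta_x(G)=0$ for every graph on $\{0,1\}$, so neither side of the equivalence is ever witnessed there.
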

In view of this, Theorem \ref{thmmain}(v) is equivalent to
\begin{equation}
\label{YYY}
\mathbb X_\crit = \mathbb Y,
\end{equation}
which is a deterministic problem of graph-theoretic/combinatorial nature.
Proving this is rather complicated. We shall sketch some aspects of it
below.

\subsection{\texorpdfstring{$\mathbb  Y \subset \mathbb X_\crit$}{Y C Xcrit}}
Let $x \in \R$, $G \in \fG_n$ and $\pi \in \Pi_{0,n}$.
Think of the common edges of $\pi$ and $G$ as blue and the remaining
edges of $\pi$ as red. There are $N_G(\pi)$ blue and $\overline N_G(\pi)$
red edges. The path $\pi$ is called {\em $(x,G)$-maximal} if it
achieves the maximum in
$\max_{\pi \in \Pi_{0,n}} (N_G(\pi) + x \overline N_G(\pi))$.

Showing that $x \in X_\crit$ one proceeds by constructing a $G \in \fH_n$
possessing at least two $(x,G)$-maximal paths with
different number of red edges.
Figure \ref{figsumm} summarizes how this is done for
each element $x$  of $\mathbb Y$, except $x=2,3,\ldots$,
since if we know that $1/k \in \mathbb X_\crit$ ($k=2,3,\ldots$)
then we also know that $k \in \mathbb X_\crit$ by the scaling property.
For example, Figure \ref{figsumm}(a) shows that $0 \in \mathbb X_\crit$.
Indeed, consider the paths $\pi_1 = (0,1,3)$ and $\pi_2 = (0,1,2,3)$.
They both have weight $2$ (which is maximal), however, the first
has no red edges but the second has one red edge.
This means that $0$ satisfies the definition of criticality.

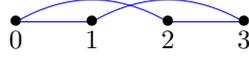
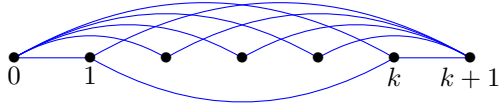
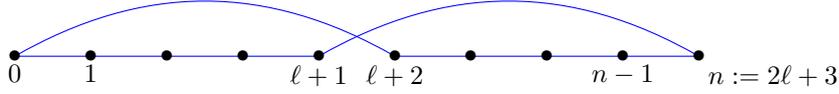
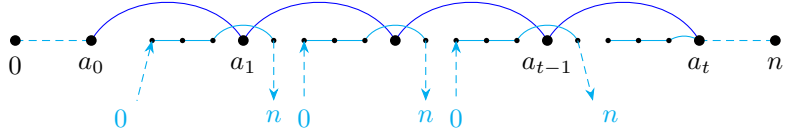
\begin{figure}[ht]
\centering
\begin{subfigure}{.8\textwidth}
\centering
\begin{tikzpicture}
  \node (a) at (0,0) {};
  \node (b) at (1,0) {};
  \node (c) at (2,0) {};
  \node (d) at (3,0) {};

  \path[-,blue]
  (a.center) edge (b.center)
  (a.center) edge[bend left] (c.center)
  (b.center) edge[bend left] (d.center)
  (c.center) edge (d.center);

  \draw (a) node{$\bullet$} node[below]{0};
  \draw (b) node{$\bullet$} node[below]{1};
  \draw (c) node{$\bullet$} node[below]{2};
  \draw (d) node{$\bullet$} node[below]{3};
\end{tikzpicture}
\caption{Each red edge (i.e.\ each non-edge) has charge $0$.
This graph shows that $0 \in \mathbb X_\crit$.}
\end{subfigure}

\begin{subfigure}{.8\textwidth}
\centering
\begin{tikzpicture}
  \node (a) at (0,0) {};
  \node (b) at (1,0) {};
  \node (c) at (2,0) {};
  \node (d) at (3,0) {};
  \node (e) at (4,0) {};
  \node (f) at (5,0) {};
  \node (g) at (6,0) {};

  \path[-,blue]
  (a.center) edge (b.center)
  (f.center) edge (g.center)
  (a.center) edge[bend left] (c.center)
  (a.center) edge[bend left] (d.center)
  (a.center) edge[bend left] (e.center)
  (a.center) edge[bend left] (f.center)
  (b.center) edge[bend left] (g.center)
  (c.center) edge[bend left] (g.center)
  (d.center) edge[bend left] (g.center)
  (e.center) edge[bend left] (g.center)
  (b.center) edge[bend right] (f.center);

  \draw (a) node{$\bullet$} node[below]{0};
  \draw (b) node{$\bullet$} node[below]{1};
  \draw (f) node{$\bullet$} node[below]{$k$};
  \draw (g) node{$\bullet$} node[below]{$k+1$};
  \draw (c) node{$\bullet$};
  \draw (d) node{$\bullet$};
  \draw (e) node{$\bullet$};
\end{tikzpicture}
\caption{Let $k \in \{2,3,\ldots\}$. Each non-edge has charge $1/k$. This graph shows that $1/k \in \mathbb X_\crit$.}
\end{subfigure}

\begin{subfigure}{.8\textwidth}
\centering
\begin{tikzpicture}
  \node (a) at (0,0) {};
  \node (b) at (1,0) {};
  \node (c) at (2,0) {};
  \node (d) at (3,0) {};
  \node (e) at (4,0) {};
  \node (f) at (5,0) {};
  \node (g) at (6,0) {};
  \node (h) at (7,0) {};
  \node (i) at (8,0) {};
  \node (j) at (9,0) {};

  \path[-,blue]
  (a.center) edge (b.center)
  (b.center) edge (c.center)
  (c.center) edge (d.center)
  (d.center) edge (e.center)

  (f.center) edge (g.center)
  (g.center) edge (h.center)
  (h.center) edge (i.center)
  (i.center) edge (j.center)

  (a.center) edge[bend left] (f.center)
  (e.center) edge[bend left] (j.center);

  \draw (a) node{$\bullet$} node[below]{0};
  \draw (b) node{$\bullet$} node[below]{1};
  \draw (e) node{$\bullet$} node[below] {$\ell+1$};
  \draw (f) node{$\bullet$} node[below]{$\ell+2$};
  \draw (i) node{$\bullet$} node[below] {$n-1$};
  \draw (j) node{$\bullet$} node[below right]{$n := 2\ell + 3$};
  \draw (c) node{$\bullet$};
  \draw (d) node{$\bullet$};
  \draw (g) node{$\bullet$} ;
  \draw (h) node{$\bullet$} ;
\end{tikzpicture}
\caption{Let $-\ell \in \{-1,-2,\ldots\}$.
Each non-edge has charge $-1/\ell$.
This graph shows that $-\ell \in \mathbb X_\crit$.}
\end{subfigure}

\begin{subfigure}{.8\textwidth}
\centering
\begin{tikzpicture}[> = Stealth]
  \node (0) at (0,0) {};
  \node (a0) at (1,0) {};
  \node (a01) at (1.8,0) {};
  \node (a02) at (2.2,0) {};
  \node (a03) at (2.6,0) {};

  \node (a1) at (3,0) {};
  \node (a10) at (3.4,0) {};
  \node (a11) at (3.8,0) {};
  \node (a12) at (4.2,0) {};
  \node (a13) at (4.6,0) {};

  \node (abis) at (5,0) {};
  \node (abis0) at (5.4,0) {};
  \node (abis1) at (5.8,0) {};
  \node (abis2) at (6.2,0) {};
  \node (abis3) at (6.6,0) {};

  \node (atm) at (7,0) {};
  \node (atm0) at (7.4,0) {};
  \node (atm1) at (7.8,0) {};
  \node (atm2) at (8.2,0) {};
  \node (atm3) at (8.6,0) {};

  \node (at) at (9,0) {};
  \node (n) at (10,0) {};

  \path[-,blue]
  (a0.center) edge[bend left=60] (a1.center)
  (a1.center) edge[bend left=60] (abis.center)
  (abis.center) edge[bend left=60] (atm.center)
  (atm.center) edge[bend left=60] (at.center);

  \draw [cyan,densely dashed] (0.center) -- (a0.center);
  \draw[cyan,densely dashed] (at.center) -- (n.center);

  \path[cyan]
    (a01.center) edge (a02.center)
    (a02.center) edge (a03.center)
    (a03.center) edge[bend left=60] (a10.center);

  \path[cyan]
    (a11.center) edge (a12.center)
    (a12.center) edge (a13.center)
    (a13.center) edge[bend left=60] (abis0.center);

  \path[cyan]
    (abis1.center) edge (abis2.center)
    (abis2.center) edge (abis3.center)
    (abis3.center) edge[bend left=60] (atm0.center);

  \path[cyan]
    (atm1.center) edge (atm2.center)
    (atm2.center) edge (atm3.center)
    (atm3.center) edge[bend left] (at.center);

\fill (0) circle[radius=2pt];
\fill (a0) circle[radius=2pt];
\fill (a1) circle[radius=2pt];
\fill (abis) circle[radius=2pt];
\fill (atm) circle[radius=2pt];
\fill (at) circle[radius=2pt];
\fill (n) circle[radius=2pt];

\fill (a01) circle[radius=1pt];
\fill (a02) circle[radius=1pt];
\fill (a03) circle[radius=1pt];
\fill (a10) circle[radius=1pt];
\fill (a11) circle[radius=1pt];
\fill (a12) circle[radius=1pt];
\fill (a13) circle[radius=1pt];
\fill (abis0) circle[radius=1pt];
\fill (abis1) circle[radius=1pt];
\fill (abis2) circle[radius=1pt];
\fill (abis3) circle[radius=1pt];
\fill (atm0) circle[radius=1pt];
\fill (atm1) circle[radius=1pt];
\fill (atm2) circle[radius=1pt];
\fill (atm3) circle[radius=1pt];

\draw (0) + (0,-.1) node[below] {0};
\draw (a0)+ (0,-.1) node[below] {$a_0$};
\draw (a1)+ (0,-.1) node[below] {$a_1$};
\draw (atm)+ (0,-.1) node[below] {$a_{t-1}$};
\draw (at)+ (0,-.1) node[below] {$a_{t}$};
\draw (n)+ (0,-.1) node[below] {$n$};

\draw[<-,cyan,densely dashed] (a01.center) -- +(-.2,-.8) node[below left] {0};
\draw[->,cyan,densely dashed] (a10.center) -- +(.0,-.8) node[below] {$n$};

\draw[<-,cyan,densely dashed] (a11.center) -- +(-.0,-.8) node[below] {0};
\draw[->,cyan,densely dashed] (abis0.center) -- +(.0,-.8) node[below] {$n$};
\draw[<-,cyan,densely dashed] (abis1.center) -- +(-.0,-.8) node[below] {0};
\draw[->,cyan,densely dashed] (atm0.center) -- +(.2,-.8) node[below right] {$n$};
%
%
%
\end{tikzpicture}
\caption{Let $x <0$, rational, but not an integer. Each non-edge has charge $x$.
Write $x=-\ell+(s/t)$, $\ell, s, t \in \N$, $t>1$, $\gcd(s,t)=1$, $s<t$.}
\end{subfigure}
\caption{Graphs exhibiting that $\mathbb Y \subset \mathbb X_\crit$.}
\label{figsumm}
\end{figure}

The most complex case is that of the last line of Figure \ref{figsumm}.
We explain the construction of the graph.
We first need the following number-theoretic lemma that can be thought of as
placing $n$ balls in $N$ bins ``as uniformly as possible''.
Let $\lfloor x \rfloor$, $\lceil x \rceil$ be the lower, respectively upper,
parts of $x$.
\begin{lemma}[Corollary to Sturm's lemma \cite{LOT}]
\label{sturmlem}
Let $N, n$ be positive integers,  $N\ge n$. Then
there exists a unique finite sequence $v=(v_1,\ldots,v_N)$ of elements
of $\{0,1\}$ such that, for all $0\le i<j\le N$,
the following hold:
\begin{align*}
\sum_{i < k \le j} v_k \in \bigg\{ \bigg\lfloor (j-i) \frac{n}{N}
\bigg\rfloor, \,  \bigg\lceil (j-i) \frac{n}{N} \bigg\rceil \bigg\} , \quad
 \sum_{i < k \le N} v_k = \bigg\lfloor (N-i) \frac{n}{N}  \bigg\rfloor,\quad
 v_1=1.
\end{align*}
\end{lemma}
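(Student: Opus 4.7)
The plan is to construct $v$ explicitly via a ceiling-difference formula, verify that all three conditions hold, and then deduce uniqueness from the second condition alone.

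First I would set
\[
v_k := \lceil k n/N \rceil - \lceil (k-1)n/N \rceil, \qquad k=1,\ldots,N.
\]
Since $0 < n/N \le 1$, consecutive ceilings of the arithmetic progression $\bigl(k n/N\bigr)_{k\ge 0}$ differ by $0$ or $1$, so $v_k \in \{0,1\}$. Evaluating at $k=1$ gives $v_1 = \lceil n/N \rceil = 1$ (because $1 \le n \le N$), which is the third condition. Telescoping yields
\[
\sum_{i < k \le j} v_k \;=\; \lceil jn/N \rceil - \lceil in/N \rceil \qquad \text{for every } 0 \le i < j \le N.
\]

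For the first condition I would invoke the elementary identity
\[
\lceil x+y \rceil - \lceil x \rceil \in \{\lfloor y \rfloor, \lceil y \rceil\},
\]
valid for all $x,y \in \R$ (verified by a short case split on whether the fractional parts vanish), applied with $x=in/N$ and $y=(j-i)n/N$. For the second condition, specialising the telescoped sum to $j=N$ and using $\lceil n \rceil = n$ together with the identity $\lfloor m-z \rfloor = m - \lceil z \rceil$ for $m \in \Z$ gives
\[
\sum_{i<k\le N} v_k \;=\; n - \lceil in/N \rceil \;=\; \lfloor (N-i)n/N \rfloor,
\]
as required.

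For uniqueness, observe that the second condition alone already pins the sequence down. Indeed, if $v'$ satisfies it, then for $1 \le k \le N-1$ subtracting the relation at $i=k$ from the one at $i=k-1$ yields
\[
v'_k \;=\; \lfloor (N-k+1)n/N \rfloor - \lfloor (N-k)n/N \rfloor,
\]
while the case $k=N$ follows from the relation at $i=N-1$ (giving $v'_N = \lfloor n/N \rfloor$). In each case the right-hand side agrees with our $v_k$, as one checks via $\lceil (N-j)n/N \rceil = n - \lfloor jn/N \rfloor$. Hence $v'=v$.

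The argument is essentially mechanical; the only points requiring care are the two floor/ceiling identities invoked above, and checking that they interlock consistently with the $\{0,1\}$-valued constraint. There is no substantive obstacle — the genuine content of the lemma is the assertion that the three numerical constraints can be simultaneously realised by a $\{0,1\}$-valued word, and the explicit ceiling-difference formula accomplishes this in one stroke.
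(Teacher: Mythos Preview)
Your proof is correct and self-contained. The paper does not actually prove this lemma; it simply states it as a corollary to Sturm's lemma and cites Lothaire's \emph{Algebraic Combinatorics on Words} for the result. So there is no ``paper's own proof'' to compare against.

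Your explicit construction $v_k=\lceil kn/N\rceil-\lceil(k-1)n/N\rceil$ is the standard one for mechanical (Sturmian) words, and the three verifications all go through as you say. The only mildly delicate step is the identity $\lceil x+y\rceil-\lceil x\rceil\in\{\lfloor y\rfloor,\lceil y\rceil\}$, which you correctly flag. The uniqueness argument via the second condition alone is clean: once $\sum_{i<k\le N}v_k$ is prescribed for every $i$, successive differences recover each $v_k$, and the floor/ceiling conversion $\lfloor n-z\rfloor=n-\lceil z\rceil$ shows this matches your construction.
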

We say that the sequence $(v_1, \ldots, v_N)$ defined
through this lemma is the {\em $(N,n)$--balanced} sequence.
For example, with $N=7$ $n=4$ we find
$(v_1, \ldots, v_7) = (1,1,0,1,0,1,0)$

Next, given $x<0$, $x \in \Q^*$, write
\[
x=-\ell+\frac{s}{t},
\]
uniquely, where $\ell, s, t$ are positive integers, $t>1$,
$\gcd(s,t)=1$, $s<t$,
and let $(v_1, \ldots, v_t)$ be the $(t,t-s)$--balanced sequence.
Construct a graph $G$ on $[0,n=3m]$ where
\begin{align*}
m &: = t(\ell+3)-(s+1),
\end{align*}
by letting the first and last parts consist of edges joining consecutive
integers and making the middle part as shown in the figure.
The vetices $a_0, \ldots, a_t$ are defined by
\begin{align*}
a_0 &:=m, \\
a_1 &:= a_0+ (\ell+1) + v_1, \\
a_j &:= a_{j-1} + (\ell+2) + v_j, \quad j=2,\ldots, t.
\end{align*}
Finally, $0$ is connected by an edge to some vertices as show in
the figure and some vertices are connected to $n$ directly.
For example, with $x=-11/7$ we have $x=-2+(3/7)$,
that is, $\ell=2$, $s=3$, $t=7$, $m=31$, $n=93$.
The $(7,4)$ balanced sequence is $(v_1, \ldots, v_7) = (1,1,0,1,0,1,0)$.
We also have
$a_0=31$,
$a_1-a_0= \ell+1+v_1=4$,
$a_2-a_1 = \ell+2+v_2 = 5$,
$a_3-a_2=4$,
$a_4-a_3=5$,
$a_5-a_4=4$,
$a_6-a_5=5$,
$a_7-a_6=4$.
That this graph belongs to $\fH_n$ is not obvious, neither it is obvious
that $x \in \mathbb X_\crit$. The details are omitted.

\subsection{\texorpdfstring{$\mathbb X_\crit \subset \mathbb Y$}{Xcrit C Y}}
We will show that $\mathbb Y^c \subset \mathbb X_\crit^c$.
Note that all irrationals are elements of $\mathbb Y^c$.

First assume that $x \in \mathbb Y^c$ is irrational.
Since, by Proposition \ref{Xcc}, $\mathbb X_\crit = \mathbb X_C$,
we show that $x \in \mathbb X_C^c$.
The set of points at which $x \mapsto W^x_{\Gamma_1,\Gamma_2}$ fails
to be differentiable
is included in the set of points $x$ for which there are two paths $\pi_1, \pi_2$
from $\Gamma_1$ to $\Gamma_2$
such that $w^x(\pi_1)=w^x(\pi_2)$ with
$\overline N(\pi_1) \neq \overline N(\pi_2)$.
This implies that $(\overline N(\pi_2)-\overline N(\pi_1)) x
=  N(\pi_1)- N(\pi_2)$, which means that $x$ is rational.
Hence $W^x_{\Gamma_1,\Gamma_2}$ is differentiable at rational points $x$
and, by the dominated convergence theorem, the same is true for $C(p.x)$.
We thus proved that if $x$ is irrational then $x \in \mathbb X_C^c$.

Next let $x \in \mathbb Y^c$ but not irrational.
Showing that $x \in \mathbb X_\crit^c$ one proceeds by showing that
for every $n \ge 3$ and every $G \in \fH_n$ there is a unique
$(x,G)$-maximal path or that every $(x,G)$-maximal path has the same
$\overline N_G(\pi)$ (=number of red edges).

If $x=1$ then
every edge and every non-edge of a graph $G\in \fG_n$ has charge $1$.
Hence the maximum length of all paths is $n$ and this is achieved
exactly one path, the path $(0,1,2,\ldots,n)$.
So $1 \in \mathbb X_\crit^c$.

We know that negative rationals and the number $0$ are critical.
So assume that $x>0$. We know that $1$ is not critical.
So assume that $x\in (0,1) \cup (1,\infty)$.
By the scaling property, we only need to consider $x \in (0,1)$.
Since $1/2,1/3,\ldots$ are critical, we assume that $x \in (0,1) \setminus
\{1/2, 1/3, \ldots\}$.

\begin{proposition}
Every $0<x<1$ that is not the reciprocal of an integer is not critical.
\label{propnot}
\end{proposition}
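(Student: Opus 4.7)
By Theorem~\ref{Xcc}, showing $x \notin \mathbb X_\crit$ is equivalent to showing that for every $G \in \fH_n$, all $(x,G)$-maximal paths share the same number of red edges. Write $x = a/b$ in lowest terms with $\gcd(a,b) = 1$; the hypothesis $x \notin \{1/k : k \ge 2\}$ gives $a \ge 2$. I would argue by contradiction: suppose $G \in \fH_n$ admits two $(x,G)$-maximal paths $\pi_1, \pi_2 \in \Pi_{0,n}$ with $\overline N_G(\pi_1) < \overline N_G(\pi_2)$. Equality of weights forces
\[
N_G(\pi_1) - N_G(\pi_2) = am, \qquad \overline N_G(\pi_2) - \overline N_G(\pi_1) = bm
\]
for some positive integer $m$.

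The first step is to reduce to a \emph{minimal local swap}. Decompose $\pi_1, \pi_2$ along their common vertices: between two consecutive common vertices $u, v$, both sub-paths are $(x,G)$-maximal among $u$-to-$v$ paths. Among all counterexamples, choose one minimising first $n$, then $m$, then the number of common vertices of $\pi_1, \pi_2$; after these reductions, the entire discrepancy is concentrated on a single sub-interval $[u,v]$ on which $\pi_1$ and $\pi_2$ share only $u$ and $v$, the intermediate vertices used by the two sub-paths are pairwise disjoint, and the differences in blue and red counts are exactly $a$ and $b$ respectively. (The verification that the induced subgraph on $[u,v]$, after relabelling, still lies in the appropriate $\fH_{n'}$ is itself a check that must not be skipped, using that $u$ and $v$ are the only common vertices.)

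The second step applies condition~(a) of $\fH_n$: for each of the $b-1$ intermediate vertices $w$ of $\pi_2^{u\to v}$, there exists an all-blue $G$-path $P_w \in \Pi_{0,n}$ through $w$. Splicing $P_w$ with $\pi_2$ at $w$ produces two $0$-to-$n$ paths
\[
\sigma_1 = P_w^{0\to w} \cup \pi_2^{w\to n}, \qquad \sigma_2 = \pi_2^{0\to w} \cup P_w^{w\to n},
\]
and maximality of $\pi_1$ gives the two integer inequalities
\[
|P_w^{0\to w}| \le w^x(\pi_1) - w^x(\pi_2^{w\to n}), \qquad |P_w^{w\to n}| \le w^x(\pi_1) - w^x(\pi_2^{0\to w}).
\]
Since the left-hand sides are positive integers and the right-hand sides are rationals with denominator $b$, integrality introduces a ``rounding loss'' of at least $1/b$ at each $w$ whenever the right-hand side is not already an integer. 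The hypothesis $\gcd(a,b)=1$ with $a \ge 2$ is exactly what ensures that these rounding losses accumulate to at least $1$ as $w$ ranges over the intermediate vertices of $\pi_2^{u\to v}$; patching the blue paths $P_w$ then yields a blue $0$-to-$n$ $G$-path of length strictly greater than $w^x(\pi_1)$, contradicting maximality.

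The \textbf{main obstacle} is the patching in Step~2: each $P_w$ is guaranteed only to pass through a single prescribed vertex, so assembling them into a single long blue path requires a careful combinatorial argument using condition~(b) of $\fH_n$ (which forbids any intermediate vertex from being an interior skeleton point) to rule out the degenerate configurations that would absorb the accumulated rounding slack. The contrast with $a=1$ is sharp: when $a=1$ the denominator-$b$ arithmetic has just enough play to permit the single $1$-blue-for-$b$-red swap that makes $1/b$ genuinely critical, and that play is lost once $a \ge 2$. Executing this patching rigorously is the technical core of the argument in~\cite{FKP18}.
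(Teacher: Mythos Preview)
The paper's proof is entirely different from your plan and much shorter. It invokes Lemma~\ref{specialfact} directly: given two $(x,G)$-maximal paths $\pi_1,\pi_2$ with $\overline N_G(\pi_1)\neq\overline N_G(\pi_2)$, that lemma (valid for all $0<x<2$) produces a $(\pi_1,\pi_2)$-special interval $I$ on which the restrictions have equal weight and on which the \emph{blue} difference satisfies $N_G(\pi_1|_I)-N_G(\pi_2|_I)\in\{-1,1\}$. Writing out the weight equality on $I$ immediately gives $x=1/|\overline N_G(\pi_1|_I)-\overline N_G(\pi_2|_I)|$, a reciprocal integer. The substance is entirely in Lemmas \ref{shortfact}--\ref{specialfact}, which exploit the range $0<x<2$ to pin down the structure of maximal paths (every short blue edge is mandatory, every long edge is blue, blue edges cannot nest); once those are established the proposition is a one-line arithmetic consequence.

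Your Step~2 is not what \cite{FKP18} does, and as written it does not work. Using $w^x(\pi_1)=w^x(\pi_2)$, your inequality $|P_w^{0\to w}|\le w^x(\pi_1)-w^x(\pi_2^{w\to n})$ collapses to $|P_w^{0\to w}|\le w^x(\pi_2^{0\to w})$, which is just the trivial statement that a blue path to $w$ has weight at most the maximum; any ``rounding slack'' merely records that this blue path falls short of the optimum, which is expected and yields no contradiction. There is no mechanism here by which these per-vertex observations assemble into a single blue $0$-to-$n$ path of weight exceeding $w^x(\pi_1)$. Your Step~1 also stops short of the crucial conclusion: minimality over $n$ and $m$ gives at best differences $(a,b)$ on a special interval, not $(\pm1,\cdot)$, and the reduction to the induced subgraph is genuinely problematic since conditions (a) and (b) defining $\fH_n$ are global to $[0,n]$ and need not survive restriction. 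The real technical core in \cite{FKP18} is the proof of Lemma~\ref{specialfact}, and it is precisely that lemma which forces the numerator of $x$ to be $1$.
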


The proof of this is rather lengthy and relies on special properties
of maximal paths that can be found in Lemmas 4.5, 4.6, 4.7, 4.8, 4.9 and
4.10 of \cite{FKP18}.
To state them, we adopt some terminology.
Call an edge {\em short} if
its endpoints are successive integers; otherwise, call it {\em long}.
Say that edge $e=(i,j)$ is {\em  nested} in $e'=(i',j')$
if $i' \le i < j \le j'$ and $e\ne e'$.
Let $\pi, \pi' \in \Pi_{0,n}$.
We say that the interval $[i,j] \subset [0,n]$ is {\em $(\pi,\pi')$-special}
if the set of vertices $k \in [i,j]$ that belong to both $\pi$ and $\pi'$
consists of $i$ and $j$ only.

\begin{lemma}
\label{shortfact}
If $0<x<2$ then every maximal path contains all short blue edges.
\end{lemma}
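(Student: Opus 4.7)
The plan is a proof by contradiction: assume $\pi=(0=i_0<i_1<\cdots<i_\ell=n)$ is an $(x,G)$-maximal path in $\Pi_{0,n}$ that avoids some short blue edge $(i,i+1)$ of $G$, and produce an alternative $\pi'\in\Pi_{0,n}$ satisfying $w^x(\pi')>w^x(\pi)$, contradicting maximality.

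The key initial observation is that $i$ and $i+1$ cannot both lie on $\pi$: consecutive integers in the strictly increasing sequence $i_0<i_1<\cdots<i_\ell$ would necessarily be adjacent in that sequence, which would force $(i,i+1)$ to belong to $\pi$. Hence at least one of $i,i+1$ is missing from $\pi$, and three cases remain, in each of which I build $\pi'$ by insertion:
\begin{enumerate}[(a)]
\item $i=i_k\in\pi$ with $i+1\notin\pi$ (so $k<\ell$ since $i<n$, and $i_{k+1}>i+1$): insert $i+1$ between $i_k$ and $i_{k+1}$, so that one edge of charge $c_1\in\{1,x\}$ is replaced by the short blue edge (charge $1$) together with the edge $(i+1,i_{k+1})$ of charge $c_2\in\{1,x\}$, giving $w^x(\pi')-w^x(\pi)=1+c_2-c_1$;
\item $i+1=i_k\in\pi$ with $i\notin\pi$: symmetric;
\item neither $i$ nor $i+1$ lies on $\pi$, so $i_k<i<i+1<i_{k+1}$ for some $k$: insert both $i$ and $i+1$, so that an edge of charge $c_1$ is replaced by three edges of charges $c_2,1,c_3\in\{1,x\}$, with gain $1+c_2+c_3-c_1$.
\end{enumerate}
A case check over the finitely many possibilities for the $c_j$'s shows that both differences are strictly positive when $0<x<2$: the binding configuration is $c_1=x$ with every other charge equal to $1$, yielding $2-x>0$ in case (a) and $3-x>1$ in case (c). Each case thus contradicts maximality of $\pi$.

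The only real obstacle is the routine bookkeeping ensuring that the inserted vertices truly lie strictly between two consecutive elements of $\pi$, so that $\pi'$ remains a valid path in $\Pi_{0,n}$; this is immediate from $0,n\in\pi$ together with the elementary inequalities above. The role of the hypothesis $x<2$ is precisely to keep the margin $2-x$ positive in case (a), and the argument shows that the bound is sharp in the sense that it breaks down exactly at $x=2$.
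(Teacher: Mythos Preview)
The paper omits the proof of this lemma, referring instead to \cite{FKP18}, so there is no in-paper argument to compare against. Your insertion argument is correct and is the natural approach: in each of cases (a)--(c) the gain $1+c_2-c_1$ (respectively $1+c_2+c_3-c_1$) is strictly positive for every choice of $c_j\in\{1,x\}$ when $0<x<2$. One minor clarification: the configuration $c_1=x$, $c_2=1$ is ``binding'' only in the sense that it is the sub-case where the hypothesis $x<2$ is actually used; when $x<1$ the minimum gain in case (a) is $x$ (coming from $c_1=1$, $c_2=x$), which is where the hypothesis $x>0$ is needed. Your case check covers this as well, so the argument stands.
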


\begin{lemma}
\label{longfact}
If $0<x<2$
then every long edge of a maximal path $\pi$ is blue
(in other words, every red edge of $\pi$ must be short).
\end{lemma}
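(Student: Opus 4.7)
The plan is to argue by contradiction: assume some maximal path $\pi$ contains a long red edge $e=(i,j)$ with $j-i\ge 2$, and construct a strictly heavier path $\pi'$ by locally replacing $e$ with the chain of $j-i$ short edges through the consecutive integers $i, i+1, \ldots, j$.

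First I would verify that $\pi'$ is a well-defined path in the complete directed graph on $\Z$ on which the charged model is built. Since $\pi$ passes directly from $i$ to $j$, none of $i+1,\ldots,j-1$ lie on $\pi$, so splicing them in between $i$ and $j$ yields a strictly increasing sequence of vertices, hence a valid path from the same start to the same end as $\pi$. All short edges $(k,k+1)$ exist because the underlying graph is complete; only their colors (and therefore their charges, either $1$ or $x$) are random.

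Next I would compare weights. The contribution of $e$ to $w^x(\pi)$ is exactly $x$ (since $e$ is red), whereas the contribution of the replacement segment to $w^x(\pi')$ is
\[
\sum_{k=i}^{j-1} w^x(k,k+1) \;\geq\; (j-i)\,\min(1,x) \;\geq\; 2\min(1,x),
\]
because each short edge contributes either $1$ (blue) or $x$ (red). Since all other edges of $\pi$ and $\pi'$ agree, the weight gain is at least $2\min(1,x)-x$, which is $x>0$ when $0<x\le 1$ and $2-x>0$ when $1<x<2$. In either case $w^x(\pi')>w^x(\pi)$, contradicting the maximality of $\pi$.

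There is no real obstacle in this argument; the only thing to be careful about is that the substitution does not introduce vertex repetitions or depend on which particular edges happen to be red in $G$. Both points are handled by the observations that $i+1,\ldots,j-1$ are not vertices of $\pi$ and that the lower bound $\min(1,x)$ on a short edge's charge holds regardless of its color. The condition $x<2$ enters precisely so that $2\min(1,x)>x$; for $x\ge 2$ a long red edge could in principle be cheaper than two short blue edges and the swap would fail, which is consistent with the fact that the statement is stated only on $(0,2)$.
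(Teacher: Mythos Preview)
Your argument is correct. The paper omits the proof of this lemma entirely (referring the reader to \cite{FKP18}), but your local-replacement argument---swapping a long red edge $(i,j)$ for the chain of $j-i\ge 2$ short edges and bounding the gain below by $2\min(1,x)-x>0$---is the natural proof and almost certainly the intended one; the only points that need care (no vertex repetition, and the lower bound holding regardless of edge color) are exactly the ones you address.
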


\begin{lemma}
\label{nestfact}
If $x>0$ then no blue edge of a maximal path can be nested in a
different blue edge of another
maximal path.
\end{lemma}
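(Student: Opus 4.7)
The plan is to derive a contradiction by producing a path strictly better than $\pi'$ using the nested blue edge from $\pi$ as a detour.

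Suppose, for contradiction, that $\pi$ is an $(x,G)$-maximal path containing a blue edge $e = (i,j)$ that is nested in a blue edge $e' = (i', j') \neq e$ of an $(x,G)$-maximal path $\pi'$, so $i' \le i < j \le j'$ with at least one of the inequalities strict. The idea is to build a competitor $\tau_1$ to $\pi'$ by keeping $\pi'$ intact outside the span $[i', j']$ and replacing the single long jump via $e'$ by the walk $i' \to i \to j \to j'$, where the middle step uses the blue edge $e$ and the outer steps $(i', i)$ and $(j, j')$ are inserted only when they are non-trivial (i.e., when $i'<i$ or $j<j'$). Concretely, first I would define $\tau_1$ as $\pi'$ restricted to $[0,i']$, followed by the at most three edges $(i',i)$, $e=(i,j)$, $(j,j')$ (skipping any degenerate one), followed by $\pi'$ restricted to $[j',n]$.

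Next I would compute the charge balance. Since both $e$ and $e'$ are blue, each carries charge $1$, and the identity
\[
w_G(\tau_1) = w_G(\pi') - w_G(e') + w_G(e) + w_G((i',i)) + w_G((j,j'))
\]
collapses to $w_G(\tau_1) = w_G(\pi') + w_G((i',i)) + w_G((j,j'))$, with the convention that a degenerate bridging edge contributes $0$. Maximality of $\pi'$ then forces $w_G((i',i)) + w_G((j,j')) \le 0$.

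The final step exploits the assumption $x>0$: every genuine bridging edge contributes a strictly positive charge (either $1$ if blue or $x$ if red), while degenerate ones contribute $0$. Since $e \ne e'$, at least one of $(i',i)$ and $(j,j')$ is genuinely present, so the sum is strictly positive, contradicting the previous display. This yields the desired conclusion.

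I do not expect any serious obstacle here; the only item that requires a moment of care is the bookkeeping when one of $i'=i$ or $j=j'$ holds, so that the bridging path degenerates to a single extra edge. The proof goes through uniformly in the three cases ($i'<i<j<j'$, $i'=i<j<j'$, $i'<i<j=j'$) because the contribution of the non-degenerate bridging edge is always strictly positive under the standing hypothesis $x>0$, which is precisely where the sign assumption is used.
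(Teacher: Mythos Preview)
Your argument is correct. The paper omits the proof of this lemma (referring to \cite{FKP18}), so a direct comparison is not possible, but the detour construction you give---replacing the single blue hop $e'=(i',j')$ in $\pi'$ by $i'\to i\to j\to j'$ and observing that the net gain is the sum of the charges of the (at most two) bridging edges, which is strictly positive when $x>0$---is the natural and expected argument. Note that your proof actually establishes the slightly stronger statement that \emph{no} blue edge (whether or not it lies on a maximal path) can be nested in a blue edge of a maximal path; the hypothesis that $e$ belongs to a maximal path $\pi$ is never used.
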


\begin{lemma}
\label{specialfact}
Let $0<x<2$. Then for every pair $\pi, \pi' \in \Pi_{0,n}$ of maximal paths such that
$\overline N_G(\pi)\neq \overline N_G(\pi')$
there is a $(\pi,\pi')$-special interval $I$ such that
\[
\overline N_G(\pi|_{I})
\neq \overline N_G(\pi'|_{I})
\]
and
\[
N_G(\pi|_{I})-N_G(\pi'|_{I}) \in\{-1,1\}.
\]
\end{lemma}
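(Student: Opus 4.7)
The plan is to partition $[0,n]$ into $(\pi,\pi')$-special intervals using the common vertex set of $\pi$ and $\pi'$, then to work inside a single such interval using the structural lemmas. First I would write $V(\pi) \cap V(\pi') = \{0 = c_0 < c_1 < \cdots < c_m = n\}$ and set $I_k = [c_{k-1}, c_k]$, so each $I_k$ is $(\pi,\pi')$-special. Writing $\sigma_k = \pi|_{I_k}$, $\sigma'_k = \pi'|_{I_k}$, $n_k = N_G(\sigma_k)$, $\bar n_k = \overline N_G(\sigma_k)$, and similarly $n'_k, \bar n'_k$, the global maximality of $\pi$ and $\pi'$ forces $w^x(\sigma_k) = w^x(\sigma'_k)$ for every $k$. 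Indeed, if $w^x(\sigma_k) > w^x(\sigma'_k)$ for some $k$, splicing $\sigma_k$ into $\pi'$ in place of $\sigma'_k$ (the two segments meet the rest of $\pi'$ only at $c_{k-1},c_k$, both common) would yield a path of strictly greater weight than $\pi'$, a contradiction; the reverse inequality is excluded symmetrically. The weight equation rearranges to
\begin{equation}\label{localeq}
n_k - n'_k = x(\bar n'_k - \bar n_k),
\end{equation}
so $\bar n_k \neq \bar n'_k$ iff $n_k \neq n'_k$, with matching signs since $x>0$.

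Next, since $\overline N_G(\pi) - \overline N_G(\pi') = \sum_k (\bar n_k - \bar n'_k) \neq 0$, there exists at least one $k$ with $\bar n_k \neq \bar n'_k$. Fix such a $k$ and write $[a,b] = I_k$; the interval is non-trivial, i.e.\ $b - a \geq 2$, for otherwise $\sigma_k$ and $\sigma'_k$ both reduce to the single edge $(a,b)$ and $\bar n_k = \bar n'_k$. Now Lemma \ref{shortfact} forces every short edge $(\ell,\ell+1)$ with $a \leq \ell < \ell+1 \leq b$ to be red: were it blue, both maximal paths $\pi$ and $\pi'$ would contain it, producing a common intermediate vertex in $(a,b)$ and contradicting speciality. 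Together with Lemma \ref{longfact}, which asserts that every long edge on a maximal path is blue, this yields the clean dichotomy that on $I_k$ each edge of $\sigma_k$ and $\sigma'_k$ is either short red or long blue; hence $\bar n_k$ counts the short edges and $n_k$ the long edges of $\sigma_k$, and similarly for $\sigma'_k$.

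The heart of the argument is to show $|n_k - n'_k| \leq 1$, which combined with \eqref{localeq} and $\bar n_k \neq \bar n'_k$ gives $|n_k - n'_k| = 1$. Here Lemma \ref{nestfact} is decisive: the long blue edges of $\sigma_k$ and of $\sigma'_k$ form two families of pairwise non-overlapping intervals in $[a,b]$, such that no member of one family is properly contained in a member of the other. The plan is to exploit this interlocking via a sweep on $[a,b]$, tracking the signed indicator $\Delta(y) = O_{\sigma_k}(y) - O_{\sigma'_k}(y)$, where $O_\sigma(y)=1$ if $y$ lies in some long blue edge of $\sigma$ and $0$ otherwise. Non-nesting forbids $\Delta$ from returning to $0$ and then revisiting the same nonzero value while still inside a single blue arc of one family, so the sign profile of $\Delta$ is tightly constrained; counting its up- and down-transitions and relating them to $n_k,n'_k$ yields the bound.

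The hard part will be turning this sweep heuristic into a clean combinatorial argument. Non-nesting still permits intricate overlap patterns in which edges of the two families zig-zag across one another, and the short-red gaps inside each path must fit consistently along $[a,b]$. If the direct accounting proves too delicate, a viable alternative is strong induction on $b-a$ (or on $n_k + n'_k$): strip off a leftmost long blue edge of $\sigma_k$ or $\sigma'_k$ and apply the inductive hypothesis to the resulting smaller special configuration. An exchange argument can also be used to reach a contradiction if one assumes $n_k \geq n'_k + 2$, by splicing together long blue edges from both families into a new path from $a$ to $b$ whose weight exceeds $w^x(\sigma_k)$, contradicting the local maximality established in the first step. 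Once $|n_k - n'_k| \leq 1$ is proved, choosing $I = I_k$ for any index $k$ with $\bar n_k \neq \bar n'_k$ delivers the conclusion.
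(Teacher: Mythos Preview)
The paper does not actually prove this lemma: it lists it among Lemmas \ref{shortfact}--\ref{specialfact} and then writes ``The proof of the lemmas are omitted,'' deferring to \cite{FKP18}. So there is nothing in the paper to compare your argument against directly.

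Your setup is correct. The decomposition into special intervals via the common vertex set, the splicing argument giving $w^x(\sigma_k)=w^x(\sigma'_k)$ on each piece, and the deduction from Lemmas \ref{shortfact} and \ref{longfact} that on a nontrivial special interval every short edge is red and every edge of either restricted path is short-red or long-blue, are all sound and are exactly the ingredients one needs.

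The genuine gap is precisely where you flag it: you assert $|n_k-n'_k|\le 1$ but do not prove it, and none of the three approaches you list is carried out. The clean way to finish is an interleaving argument. First, speciality forces \emph{exactly one} of $\sigma_k,\sigma'_k$ to start with a long blue edge at $a$: if neither does, $a+1$ is a common interior vertex; if both do, non-nesting forces the two first blue edges to coincide, again giving a common interior vertex. Say it is $\sigma_k$, so $l_1=a$. Now show by induction that the left endpoints of the blue edges strictly alternate, $l_1<l'_1<l_2<l'_2<\cdots$. At each step the mechanism is the same: if the next blue edge of one path started beyond the current blue edge of the other, the right endpoint of the latter would fall in a short-red run of the former and be a common interior vertex; and once the starts interleave, non-nesting forces the ends to leapfrog. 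Running this to the boundary $b$ (where, by the symmetric argument, exactly one path ends with a blue edge) yields either $p=q$ or $p=q+1$. Since your local weight equation and $x>0$ give $n_k\ne n'_k$ whenever $\bar n_k\ne\bar n'_k$, you conclude $|n_k-n'_k|=1$ on every special interval with $\bar n_k\ne\bar n'_k$, which is stronger than the lemma requires.
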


The proof of the lemmas are omitted, but we give the proof of
Proposition \ref{propnot}:

\begin{proof}[Proof of Proposition \ref{propnot}]
We prove the contrapositive: if $0<x<1$ is critical then $x=1/m$ for
some integer $m$.
So suppose that $x$ is critical and $0<x<1$.
Then there is $n\ge 3$ and $G \in \fH_{n}$
(edges of $G$ are called blue and non-edges red) and two maximal paths $\pi_1$,
$\pi_2$ with different number of red edges: $\overline N_G(\pi_1)\neq \overline N_G(\pi_2)$.
By the Lemma \ref{specialfact}, there is a $(\pi_1, \pi_2)$-special interval $[i,j]$
such that  $N_G(\pi_1|_I)-N_G(\pi_2|_I)\in \{1,-1\}$.
Since
\[
0=w^x_G(\pi_1|_I)-w^x_G(\pi_2|_I) = (\overline N_G(\pi_1|_I)- \overline N_G(\pi_2|_I))x
+ (N_G(\pi_1|_I)-N_G(\pi_2|_I)),
\]
it follows that
\[
x=\frac{1}{|\overline N_G(\pi_1|{[i,j]})- \overline N_G(\pi_2|{[i,j]})|},
\]
and hence the reciprocal of a positive integer.
\end{proof}

\begin{remark}
We considered here differentiability properties
for a last passage percolation problem.
Differentiability properties of a first passage percolation
model were studied in \cite{SZ}.
\end{remark}

\section{Perfect simulation aspects of charged graphs}
\label{sec:perfectSimu}

We now turn our attention to charged graphs with edge charge distribution $F$:
to every pair of $(i,j)$ of integers, with $i<j$, assign a charge
$w_{i,j}$ with distribution $F$, independently.
For a path $\pi=(i_0, \ldots, i_\ell)$, that is, a finite increasing sequence
of integers, assign charge $w(\pi) = w_{i_0,i_1} +\cdots+w_{i_{\ell-1},i_\ell}$,
and let
\begin{equation}
\label{Wij}
W_{i,j} = \sup \{w(\pi): \text{ $\pi$ is a path from $i$ to $j$}\}.
\end{equation}
We use the notation $\vec G(\Z, F)$ for this random charged graph.
Thus, $\vec G(\Z, p\delta_1+(1-p)\delta_x)$ is another notation
the graph dealt with in
Section \ref{secana}, while $\vec G(\Z, p\delta_1+(1-p)\delta_{-\infty})$
is another notation for the Barak-Erd\H{o}s graph $\vec G(\Z, p)$.
Superadditivity inequality implies that there is a deterministic
$C(F)$ such that
\[
\frac{W_{0,n}}{n} \to C(F), \quad\text{ a.s. as } n \to \infty.
\]
In Section \ref{secana} we studied $C(p,x) \equiv
C(p\delta_1+(1-p)\delta_x)$ as a function of $x$.
To deal with $C(F)$ as a function of $F$ analytically
is beyond the scope of this survey. We will, however, explain how
to approach this problem from an algorithmic view point, one
that allows to draw conjectures by simulating the graph
as accurately as possible.

As explained at the start of Section \ref{secana}, we will assume that
$\esssup w_{i,j} > 0$,
else we are dealing with a first passage percolation problem.
We will also assume, for convenience, that this essential supremum is finite
so, without loss of generality, let
\begin{equation}
\label{esssup}
\esssup w_{i,j} = \inf\{z: F((z,\infty))=0\}=1.
\end{equation}
See Remark \ref{unbsup} below that explains that the finiteness of $\esssup
w_{i,j}$ is not a problem insofar as the perfect simulation is concerned.
Comparing $F$ with the distribution $p \delta_{1/2} + (1-p) \delta_{-\infty}$
with $p=F([1/2,1])$ we obtain that
\[
C(F)>0.
\]
The goal of this section is the construction of a random variable
whose expectation is $C(F)$ and such that this random variable
can be {\em perfectly simulated}.
A survey of perfect simulation
can be found in \cite{KEN05}. Its
relation to the so-called backwards-coupling was studied in \cite{FOSTWE98}. It
belongs to the broader area of coupling methods for stochastic recursions that
may entirely lack the Markovian property \cite{BORFOS92,COMFERFER,FK03}.

To do this we discuss an auxiliary Markovian
 particle system, called Max Growth System (MGS),
that is an analog of the Infinite Bin Model.
Our reference throughout this section is \cite{FKMR23}.

\subsection{The Max Growth System (MGS)}
The content of this subsection is purely deterministic.
Consider point measures on the set $\R\cup\{-\infty\}$ that will
be referred to as ``space'':
\[
\NN:= \{ \text{locally finite point measures $\nu$ on } \R\cup\{-\infty\} \text{ such that }\nu(\R_+) < \infty\}.
\]
For example, $\nu=2 \delta_1 + \delta_{-1.5} + 3 \delta_{-4} + \delta_{-\infty}
 \in \NN$.
We think of $\nu$ as consisting of $7$ particles, such that
two of them are at $1$, one at $-1.5$, three at $-4$ and one at $-\infty$;
so we can equivalently represent $\nu$ by the locations of
its particles $(1 , 1, -1.5, -4,-4, -4, -\infty)$ arranged in decreasing order.
In general, we let $\nu_1 \ge \nu_2 \ge \cdots$
be the locations of the particles of $\nu$.
So the function
\[
\NN \ni \nu \mapsto \nu_k := \text{ location of $k$th largest particle of $\nu$}
\]
is well-defined for each $k$ and $\nu = \sum_{k\ge 1} \delta_{\nu_k}$.
We also let
\[
\|\nu\| := \nu(\R \cup\{-\infty\}),
\quad
\inf \nu := \nu_{\|\nu\|},
\]
with $\inf \nu =-\infty$ if $\|\nu\|=\infty$.
Next let
\[
\WW := \big\{w=(w_1, w_2,\ldots):\,
\sup_{k \ge 1} w_k\leq 1,\;
w_k \in \R \cup\{-\infty\} \text{ for all } k\big\},
\]
and define
\[
\fm(\nu, w) := \sup_{k \ge 1} (\nu_k+w_k), \quad \nu \neq 0, \quad w \in \WW.
\]
The map responsible for the dynamics of the MGS is
defined by
\[
\Psi_w \nu := \nu + \delta_{\fm (\nu,w)}.
\]
Composing these maps, for possibly different $w$ each time,
gives a trajectory in $\NN$.
More precisely, the MGS starting at ``time'' $T$ from state $\nu(T) \in \N$
and driving sequence $w(t)$, $t>T$, is the sequence defined by
\[
\nu(t) = \Psi_{w(t)} \nu(t-1), \quad t > T,
\]
that is,
\[
\nu(t) = \Psi_{w(t)} \comp \Psi_{w(t-1)} \comp \cdots \comp \Psi_{w(T+1)} \nu(T).
\]
Let $\NN_0$ consist of those $\nu \in \NN$ with $\nu_1=0$.
Define the map
\[
\sigma : \NN \to \NN_0, \quad  \sigma \nu = \sum_{k\ge 1} \delta_{\nu_k-\nu_1}
\]
that places the origin of space at $\nu_1$.
We will need the following easily verified properties.
\begin{align*}
&\sigma \Psi_w = \sigma \Psi_w \sigma,
\\
&\fm(\sigma \nu, w)  = \fm(\nu,w) - \nu_1.
\end{align*}

We next consider an MGS $\nu(t)$, $t \ge 0$, and show that under certain conditions
on the driving sequence the quantity
\[
\mathfrak{M}(t) \equiv \fm(\nu(t-1), w(t))
\]
will eventually not
depend on the choice of the initial state $\nu(0)$.
\begin{lemma}[Decoupling property]
\label{lem:triangular}
Fix $\ell \in [0,1)$.
Consider two MGSs, $\nu$, $\tilde \nu$, starting at time $0$
with $\nu(0) \in \NN_0$ and $\tilde \nu(0)=\delta_0$.
Assume that the driving sequence $w(t)$, $t>0$, is the same for both.
Let $\mathfrak{M}(t) = \fm(\nu(t-1), w(t))$, $\tilde \fM(t)=\fm(\tilde{\nu}(t-1), w(t))$.
If
\begin{align}
\nu_2(0) & \le -\ell,
\label{trip-}
\\
\bar{w}(t) &:= \max \{ w_1(t), \ldots, w_t(t)\} \geq 1 - \ell
\quad \text{ for all $1 \leq t \leq n$,}
\label{trip0}
\end{align}
then
\begin{equation}
\label{concl}
\fM(t) =  \tilde\fM(t) \quad \text{ for all $1 \leq t \leq n$.}
\end{equation}
\end{lemma}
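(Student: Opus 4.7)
My plan is to prove \eqref{concl} by induction on $t \in \{0, 1, \ldots, n\}$, strengthening the inductive hypothesis so that it simultaneously controls three features of the pair $(\nu(t), \tilde\nu(t))$. Specifically, at each time $t$ I will maintain:
\begin{itemize}
\item[(a)] every particle of $\tilde\nu(t)$ lies at position $\geq 0$;
\item[(b)] the top $t+1$ particles agree, i.e.\ $\nu_k(t) = \tilde\nu_k(t)$ for $1 \le k \le t+1$;
\item[(c)] the remaining particles of $\nu(t)$ satisfy $\nu_k(t) \le -\ell$ for $k \ge t+2$.
\end{itemize}
The base case $t=0$ is immediate from $\tilde\nu(0)=\delta_0$ together with the hypotheses $\nu(0)\in\NN_0$ and \eqref{trip-}.

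For the inductive step, suppose (a)--(c) hold at time $t-1$, with $1\le t\le n$. By assumption \eqref{trip0} there exists $j^* \in \{1, \ldots, t\}$ with $w_{j^*}(t) \ge 1-\ell$. Since (a) at $t-1$ gives $\tilde\nu_{j^*}(t-1) \ge 0$, I obtain
\[
\fm(\tilde\nu(t-1), w(t)) \ge \tilde\nu_{j^*}(t-1) + w_{j^*}(t) \ge 1-\ell.
\]
On the other hand, (c) at $t-1$ together with $\sup_k w_k(t) \le 1$ gives, for every $k \ge t+1$,
\[
\nu_k(t-1) + w_k(t) \le -\ell + 1 = 1-\ell.
\]
Combined with (b) at $t-1$ and $\tilde\nu_k(t-1) = -\infty$ for $k \ge t+1$, this splits $\fm(\nu(t-1), w(t))$ into a ``head'' contribution (indices $k \le t$) that coincides with $\fm(\tilde\nu(t-1), w(t))$, and a ``tail'' contribution (indices $k \ge t+1$) that is bounded by $1-\ell$ and hence dominated by the head. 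Therefore $\fM(t) = \tilde\fM(t)$, which is the conclusion of the lemma at step $t$.

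It then remains to propagate (a)--(c) to time $t$. Since the common new position $\fM(t) = \tilde\fM(t) \ge 1-\ell > 0$, appending this particle preserves (a); and since the two systems add the same particle at a position strictly above $-\ell$, the $t+1$ particles of $\tilde\nu(t)$ are exactly the top $t+1$ particles of $\nu(t)$, giving (b), while the particles of $\nu(t)$ at lower indices are unchanged and still $\le -\ell$, giving (c).

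The conceptual content is not computational but structural: the natural statement $\fM(t)=\tilde\fM(t)$ is not self-propagating because one needs to know that the $\tilde\nu$-system really attains values of order $1-\ell$, and the obstacle I foresee is precisely to spot that property (a)---namely, that every particle ever produced by $\tilde\nu$ stays nonnegative---is what makes the inductive loop close. Once (a) is part of the inductive hypothesis, the head/tail split using \eqref{trip-}, \eqref{trip0} and the bound $w_k(t)\le 1$ is routine, and no further finiteness assumption on $\|\nu(0)\|$ or the driving sequence is needed.
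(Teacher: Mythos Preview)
Your proof is correct and follows essentially the same approach as the paper: both proceed by induction with a strengthened hypothesis tracking that the top $t+1$ particles of $\nu(t)$ and $\tilde\nu(t)$ coincide and sit in $\R_+$, while the remaining particles of $\nu(t)$ lie at or below $-\ell$. Your conditions (a)--(c) are a mild repackaging of the paper's $\nu(t)|_{\R_+}=\tilde\nu(t)|_{\R_+}$, $\nu(t)(\R_+)=t+1$, $\nu_{t+2}(t)\le -\ell$, and the head/tail split using $\bar w(t)\ge 1-\ell$ and $w_k(t)\le 1$ is identical.
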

\begin{proof}
We have $\nu_1(0)=0 \ge -\ell \ge \nu_2(0)$, by assumption.
We will show by induction that, for all $n \in \N$,
\begin{equation}
\label{indn}
\text{if $\bar{w}(1), \ldots, \bar{w}(n)  \ge 1-\ell$ } \text{ then }
\begin{cases}
\fM(n) =  \tilde\fM(n)
\\
\nu(n)|_{\R_+} = \tilde \nu(n)|_{\R_+}
\\
\nu(n)(\R_+)=n+1
\\
\nu_{n+2}(n) \le -\ell
\end{cases}
\end{equation}
Let $n=1$ and assume that $\bar{w}(1) = w_1(1) \ge 1-\ell$.
We have
$\fM(1) = \max\{\nu_1(0) + w_1(1) , \nu_2(0) + w_2(1), \ldots\}
= w_1(1)$ because $\nu_1(0) + w_1(1) = w_1(1) \ge 1-\ell$
and since $1 \ge w_j(1)$, $-\ell \ge \nu_j(0)$ for all $j \ge 2$,
we have $w_1(1) \ge w_j(1)+\nu_j(0)$ for all $j \ge 2$ and so
only the first term survives in the maximum.
But then $\fM(1) = w_1(1)=\tilde \fM(1)$.
Since $\nu(1) = \nu(0) + \delta_{\fM(1)}$ we have that its
restriction on $\R_+$ equals $\delta_0+\delta_{w_1(1)}$
and $\nu_3(1) = \nu_2(0)\le -\ell$. So \eqref{indn} holds when $n=1$.

Assume next \eqref{indn} holds for some $n$. We show its veracity for $n+1$.
We work under the assumption that $\bar{w}(1), \ldots, \bar{w}(n), \bar{w}(n+1)  \ge 1-\ell$.
By \eqref{indn} we have
$\nu(n)|_{\R_+} = \tilde \nu(n)|_{\R_+}$ (containing $n+1$ particles
on $\R_+$)
and
$\nu_{n+1}(n-1) \le -\ell$.
So
\begin{align*}
\fM(n+1) &= \max_{j \ge 1}\{ \nu_j(n)+w_j(n+1) \}
\\
&= \max_{j \le n+1}\{ \nu_j(n)+w_j(n+1) \}
\vee \max_{j > n+1}\{ \nu_j(n)+w_j(n+1) \}
\\
&= \max_{j \le n+1}\{ \tilde \nu_j(n)+w_j(n+1) \}
\vee \max_{j > n+1}\{ \nu_j(n)+w_j(n+1) \}
\\
&= \max_{j \le n+1}\{ \tilde \nu_j(n)+w_j(n+1) \} = \tilde \fM(n+1).
\end{align*}
The reason that we dropped the second maximum is that
$\max_{j \le n+1}\{ \tilde \nu_j(n-1)+w_j(n) \}
\ge \max_{j \le n+1} w_j(n) \ge 1-\ell$
while $\max_{j > n+1}\{ \nu_j(n-1)+w_j(n) \} \le
\nu_{n+1}(n-1) + \max_{j > n+1} w_j(n) \le -\ell+1$.
We also have $\nu(n+1) = \nu(n) + \delta_{\fM(n+1)}
= \nu(n) + \delta_{\tilde \fM(n+1)}$
so $\nu(n+1)|_{\R_+} = \tilde \nu(n+1)|_{\R_+}$
and they have $n+2$ particles on $\R_+$,
while $\nu_{n+3}(n+1) = \nu_{n+2}(n) \le -\ell$.
\end{proof}

\begin{remark}
Lemma \ref{lem:triangular} and the conclusion \eqref{concl}
can easily be modified if we need to start the MGS
from an arbitrary time $T$ rather than $T=0$.
The quantity that replaces $\overline w(t)$ will be
\begin{equation}
  \label{tripT}
  \bar{w}(T;t) := \max \{ w_1(T+t), \ldots, w_t(T+t)\}.
\end{equation}
This is needed in Theorem \ref{thm1} below.
\end{remark}

\begin{remark}
The decoupling property is essentially responsible for
producing, in a stochastic version of the MGS, the so-called
renovating events that constitute a nice way for exhibiting
stability. The original theory can be found in
\cite{BOR78,BOR80,BOR98,BORFOS92,BORFOS94} and its extension for
functional of stochastic dynamical systems in \cite{FK18}.
\end{remark}

\subsection{The MGS of a random charged graph, coupling and stationarity}
\begin{definition}
Given a probability measure $Q$ on $\WW$
define the random process MGS$(Q)$ by
$\nu(t) = \Psi_{w(t)} \nu(t-1)$, $t \ge 1$, with
$w(1), w(2), \ldots$ being independent with common law $Q$,
with $\nu(0) \in \NN$ arbitrary.
Clearly, MGS$(Q)$ is Markovian.
In the particular case where the components of $w(1)$ are i.i.d.\
with common distribution $F$ we shall be using the notation
MGS$(Q_F)$.
\end{definition}

Let $\vec G(\Z, F)$ be a random charged graph with charge distribution $F$
satisfying \eqref{esssup}.
Let
\[
W_n := W_{0,n},
\]
in the notation of \eqref{Wij}.
In particular, $W_0=0$, $W_1=w_{0,1}$, $W_2=(w_{0,1}+w_{1,2}) \vee w_{1,2}$.
Let
\begin{equation}
\label{nugraph}
\nu^G(n) := \sum_{k=0}^n \delta_{W_k}, \quad n \ge 0.
\end{equation}
Then $\nu^G(n) \in \NN$ and has $n+1$ particles.
Clearly, $\nu^G(n) = \nu^G(n-1) + \delta_{W_n}$,
with $W_n = \max_{0 \le k \le n-1} (W_k+w_{k,n})
= \max_k (\nu^G_k(n-1)+ w_{\pi(k),n})$ where $\pi$ is a
permutation on $\{0,\ldots,n-1\}$ that puts
$W_0, \ldots, W_{n-1}$ in decreasing order.
Hence $W_n \eqdist \max_k (\nu^G_k(n-1)+ w_{k,n})$
and so if we consider the MGS$(Q_F)$ defined by
\[
\nu(n) = \nu(n-1) + \delta_{\fm(\nu(n-1), w(n))},
\]
starting with $\nu(0)=\delta_0$,
we see that
\begin{equation}
\label{numark}
\nu^G \eqdist \nu,
\end{equation}
not only component-wise, but also as processes.
Therefore, letting
\begin{equation}
\label{lppMaxWeight}
M_n = \max_{0 \leq k \leq n} W_k
\end{equation}
we have
\[
(M_n-M_{n-1} ,  n \ge 1)
\eqdist (\fm(\sigma \nu(n-1), w(n))^+, n \ge 1),
\]
which implies that $M_n/n$ is a sample mean:
\[
\frac{M_n}{n} = \frac{1}{n} \sum_{j=1}^n (M_j - M_{j-1})
\eqdist \frac{1}{n} \sum_{j=1}^n \fm(\sigma \nu(j-1),w(j))^+,
\]
where the last equality in distribution is at the level of sequences.
Since
\[
C(F)=\lim_{n \to \infty} \frac{W_n}{n}
= \lim_{n \to \infty} \frac{M_n}{n} \text{ a.s.},
\]
and since $\int_0^\infty x F(dx) < \infty$, we have
\[
C(F)= \lim_{n \to \infty} \frac{\E M_n}{n}
= \lim_{n \to \infty} \frac{1}{n}\sum_{j=1}^n
\E \fm(\sigma \nu(j-1), w(j))^+ .
\]
\begin{remark}
Let us make the reasonable ansatz that the quantity inside the last
expectation converges in distribution to some random variable:
\begin{equation}
\label{claimm}
\fm(\sigma \nu(n-1),w(n))^+ \convd \bar \fm^{+} \text{ as } n \to \infty.
\end{equation}
Then, a poor man's estimate of $C(F)$ would be to simply to
compute sample means $$n^{-1} \sum_{j=1}^n \E \fm(\sigma \nu(j-1), w(j))^+$$
for large $n$.
But the sample means are not from i.i.d.\ copies of the limiting
variable and thus the sampling is ``not perfect''.
To achieve perfect sampling, we must first show that the non-Markovian process
$\fm(\sigma \nu(n-1),w(n))$, $n=1,2,\ldots$, has a stationary
version
(we say that a process has a stationary version if it is eventually
equal to a stationary process; the ``eventually'' means that there
is a finite random time after which the two processes are equal a.s.)
from which we can sample i.i.d.\ copies constructively.
\end{remark}

\begin{theorem}
\label{thm1}
Suppose that $F$ satisfies \eqref{esssup}.
Let $\nu$ be an MGS$(Q_F)$ with driving sequence $w(t)$, $t \in \Z$,
starting from an arbitrary state at an arbitrary, possibly random, time.
Then there exists a stationary process $(\bar{\fm}(t), t \in \Z)$ such that
\[
\fm(\sigma\nu(t-1),w(t)) = \bar{\fm}(t) \text{ for $t$ large enough, a.s.}
\]
In particular $\E( \bar{\fm}(0)^+) = C(F)$.
\end{theorem}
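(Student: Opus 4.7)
The plan is a coupling-from-the-past construction driven by a renovation event built from Lemma~\ref{lem:triangular}. Fix any $\ell\in(0,1)$; by \eqref{esssup}, both $F([\ell,1])>0$ and $F([1-\ell,1])>0$. For each $T\in\Z$ define the renovation event
$$B_T \;:=\; \{w_1(T)\geq \ell\}\;\cap\;\bigcap_{s\geq 1}\bigl\{\bar{w}(T;s)\geq 1-\ell\bigr\},$$
with $\bar{w}(T;s)$ as in \eqref{tripT}. By independence of the driving entries,
$$\P(B_T) \;=\; F([\ell,1])\prod_{s\geq 1}\bigl(1-F((-\infty,1-\ell))^{s}\bigr) \;>\;0,$$
positivity of the infinite product following from $F((-\infty,1-\ell))<1$. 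Stationarity and ergodicity of the i.i.d.\ driving sequence, combined with Poincar\'e recurrence, then force $B_T$ to occur for infinitely many $T\in\Z_-$ (and $\Z_+$) almost surely.

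For each $T\in\Z$ let $\tilde\nu^{(T)}$ denote the MGS driven by $w$ started at time $T$ from $\delta_0$. For $t\in\Z$, set
$$\bar{\fm}(t) \;:=\; \fm\bigl(\sigma\tilde\nu^{(T)}(t-1),\,w(t)\bigr)$$
for any $T\leq t-1$ with $B_T$ occurring; such $T$ exists by the previous step. To show this is independent of the choice of $T$, and to transfer the same identity to the original MGS $\nu$, I exploit that $\Psi_w$ commutes with spatial translations $\tau_c\nu(\,\cdot\,):=\nu(\cdot - c)$, since $\fm(\tau_c\nu,w)=c+\fm(\nu,w)$. It follows that for any MGS $\nu$ and any time $T$ after its start, $\sigma\nu(t)=\sigma\hat\nu(t)$ for all $t\geq T$, where $\hat\nu$ is the MGS started at time $T$ from $\sigma\nu(T)\in\NN_0$. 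At a time $T$ where $B_T$ holds, the clause $w_1(T)\geq \ell$ guarantees that the particle $\Psi_{w(T)}$ appends lies at least $\ell$ above the previous top of whichever configuration it acts on, so $\sigma\hat\nu_2(T)\leq -\ell$, and likewise $\sigma\tilde\nu^{(T')}_2(T)\leq -\ell$ for any $T'<T$ (the case $T'=T$ being vacuous). The triangular clauses $\bar{w}(T;s)\geq 1-\ell$ then supply the remaining hypotheses of Lemma~\ref{lem:triangular}, yielding $\fM_{\hat\nu}(T+s)=\fM_{\tilde\nu^{(T)}}(T+s)$ for all $s\geq 1$. An easy induction on $s$ shows $\hat\nu_1(T+s)=\tilde\nu^{(T)}_1(T+s)=\max\bigl(0,\fM(T+1),\ldots,\fM(T+s)\bigr)$, and consequently $\fm(\sigma\hat\nu(t-1),w(t))=\fm(\sigma\tilde\nu^{(T)}(t-1),w(t))$ for every $t>T$. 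Applied to two renovation times $T_1<T_2$ this gives well-definedness of $\bar{\fm}(t)$; applied to the original $\nu$ at any renovation time $T$ past the start time, it gives the eventual coupling $\fm(\sigma\nu(t-1),w(t))=\bar{\fm}(t)$.

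Stationarity of $(\bar{\fm}(t))_{t\in\Z}$ is immediate: scanning backwards from $t-1$ to the first $T$ with $B_T$ and then running the MGS from $\delta_0$ realises $\bar{\fm}(t)$ as a single measurable functional of $(w(t-k))_{k\geq 0}$, whose law is shift-invariant. For the mean identity, apply the coupling to the specific MGS $\nu^G$ of \eqref{nugraph} attached to $\vec G(\Z,F)$: eventually $M_n-M_{n-1}=\fm(\sigma\nu^G(n-1),w(n))^{+}=\bar{\fm}(n)^{+}$. Since $\sigma\nu_k\leq 0$ and $w_k\leq 1$, we have $0\leq \bar{\fm}(n)^{+}\leq 1$, so Birkhoff's ergodic theorem applied to the stationary, bounded sequence $(\bar{\fm}(n)^{+})$ yields $M_n/n\to \E[\bar{\fm}(0)^{+}]$ almost surely; comparison with the known limit $M_n/n\to C(F)$ finishes.

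The main obstacle is the $\sigma$-bookkeeping highlighted above: Lemma~\ref{lem:triangular} equates only the raw maxima $\fM$ of two coupled systems, whereas the theorem demands agreement of the recentred quantities $\fm(\sigma\nu(t-1),w(t))$, which a priori depend on the two systems' different running tops. The commutation $\Psi_w\circ\tau_c=\tau_c\circ\Psi_w$, together with the extra clause $w_1(T)\geq \ell$ in the renovation event, is precisely what pins both systems to a common running top, promoting raw agreement into normalized agreement.
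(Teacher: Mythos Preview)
Your proof is correct and follows essentially the same route as the paper's: the renovation event $B_T$ coincides with the paper's $R_k$, the coupling-from-the-past via Lemma~\ref{lem:triangular} is identical, and the identification $\E(\bar{\fm}(0)^+)=C(F)$ proceeds by the same ergodic averaging. Your treatment of the $\sigma$-bookkeeping (promoting agreement of raw $\fM$'s to agreement of normalized $\fm(\sigma\nu(\cdot),w(\cdot))$'s via the induction on running tops) is in fact more explicit than the paper's, which simply asserts ``algebraic independence of $\nu(T_i)$'' and leaves this step to the reader; one small imprecision is that ``scanning back to the first $T$ with $B_T$'' technically peeks into the future of $w$, but since the value of $\bar{\fm}(t)$ is independent of the chosen $T$ (as you show), one may equally scan for the finite truncation of $B_T$ up to time $t$, which does lie in $\sigma((w(t-k))_{k\ge 0})$ and yields the same result.
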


\begin{proof}
Let $\ell \in [0,1)$ be such that $p:=F([1-\ell,1]) >0$,
which is possible due to \eqref{esssup}.
For $k \in \Z$, consider the event
\[
R_k:=
\{ w_1(k) \geq \ell\} \cap
\bigcap\limits_{j=1}^\infty
\big\{\max(w_1(k+j),\ldots,w_j(k+j)) \geq 1 - \ell\big\}.
\]
It is clear from its definition that $(R_k,  k \in \Z)$ is
a stationary sequence of events with
\[
 \P(R_k) = \P(R_0) = F([\ell,1])\prod_{j=1}^\infty (1 - (1 - p)^j) > 0.
\]
Consider the stationary random set $J := \{ k \in \Z : R_k \text{ holds}\}$.
Since $\P(R_k)> 0$, we have, by ergodicity (more specifically by the Poincar\'e
recurrence theorem),
$\inf J = -\infty$ and $\sup J = \infty$ a.s. We enumerate the elements of $J$ by
\[
  \cdots < T_{-1} < T_0 \leq 0 < T_1 < T_2 < \cdots
\]
Hence the $R_{T_i}$ are all events of probability $1$.
We define $\tilde \nu(t)$, $t \in \R$, by letting, for all $j$,
\[
\tilde \nu (T_j) = \delta_0, \qquad
\tilde{\nu}(t) = \sigma \sum_{i \in \Z}
\mathbbm{1}_{\{T_i < t < T_{i+1}\}}
\Psi^{t}_{w(t)}
\comp \cdots \comp
\Psi^{T_i+1}_{w(T_i+1)}
\delta_0, \quad t \not = T_j.
\]
On the other hand, thanks to Lemma \ref{lem:triangular}, the
process $\fm(\sigma \nu(t-1), w(t))$, $t > T_i$, is algebraically
independent of $\nu(T_i)$.
This shows, see \cite{FK03}, that the stationary process
$\bar m$ defined by
\[
\bar{\fm}(t) = \fm(\sigma \tilde \nu(t-1),w(t)) , \quad t \in \Z,
\]
satisfies
$\bar{\fm}(t) = \fm(\sigma \nu(t-1),w(t))$ for all $t$ large enough, a.s.
Next, using that
\[
   C(F) = \lim_{n \to \infty} \frac{\E(M_n)}{n} = \lim_{n \to \infty}
\frac{1}{n} \sum_{j=1}^n \E(\fm(\sigma \nu(j-1),w(j))^+) \quad \text{a.s.},
\]
and using the eventual equality between $\fm(\sigma \nu(t-1),w(t))$ and
$\bar{\fm}(t)$, we have
\[
  C(F) = \lim_{n \to \infty} \frac{1}{n} \sum_{j=1}^n \E(\bar{\fm}(j)^+) =
\E(\bar{\fm}(0)^+),
\]
by stationarity and ergodicity of the sequence.
\end{proof}

\subsection{Perfect simulation}
Recall the quantity $\overline w$ defined by \eqref{tripT}.
\begin{theorem}[Perfect simulation]
Define
\[
  T^* := \sup\{ t \le -1:\, {w}_1(t) \geq \ell, \min_{1\leq j \leq |t|} \overline w(t;j) \geq 1-\ell\}.
\]
Then $|T^*|< \infty$ a.s., and
\[
\bar{\fm}(0)
= \fm\left( \sigma
\Psi^{T^*+1}_{w(T^*+1}) \comp \cdots \comp \Psi^{0}_{w(0)}
\delta_0; w(0) \right)
\text{ a.s.}
\]
\end{theorem}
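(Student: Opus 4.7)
The plan is to establish almost-sure finiteness of $|T^*|$ by comparison with the stationary regeneration times from Theorem~\ref{thm1}, and then to apply the decoupling property (Lemma~\ref{lem:triangular}) over the finite backwards horizon $[T^*,0]$ to show that $\bar{\fm}(0)$ is determined by the finite segment $w(T^*),w(T^*+1),\ldots,w(0)$ of the driving sequence.

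First I would show $|T^*|<\infty$ almost surely. Fix $\ell\in(0,1)$ small enough that $F([1-\ell,1])>0$, as in the proof of Theorem~\ref{thm1}. The condition defining $T^*$ at a time $t\le -1$ is strictly weaker than the event $R_t$ (the latter requires $\max(w_1(k+j),\ldots,w_j(k+j))\ge 1-\ell$ for every $j\ge 1$, whereas the former only demands this for $1\le j\le |t|$). Since $(R_t)_{t\in\Z}$ is a stationary sequence with $\P(R_0)>0$, ergodicity yields that $R_t$ occurs for infinitely many $t\le-1$; a fortiori the set in the definition of $T^*$ is a.s.\ non-empty, and $T^*>-\infty$.

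Next I would verify the two hypotheses of Lemma~\ref{lem:triangular}, applied over the time window beginning at $T^*$ and of length $n=|T^*|$. Let $\nu$ be the true MGS producing $\bar{\fm}$ (as constructed in the proof of Theorem~\ref{thm1}) and let $\tilde\nu$ be the auxiliary MGS with the same driving sequence and initial condition $\tilde\nu(T^*)=\delta_0$. Hypothesis (b), namely $\bar{w}(T^*;j)\ge 1-\ell$ for every $1\le j\le |T^*|$, is exactly the second half of the event defining $T^*$. For hypothesis (a) I must show that the normalised configuration $\sigma\nu(T^*)\in\NN_0$ has its second-largest particle at location at most $-\ell$. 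The condition $w_1(T^*)\ge\ell$ gives
\[
\fm(\nu(T^*-1),w(T^*))\;\ge\;\nu_1(T^*-1)+w_1(T^*)\;\ge\;\nu_1(T^*-1)+\ell,
\]
while for every pre-existing particle, $\nu_j(T^*-1)\le \nu_1(T^*-1)$ for $j\ge 1$. Subtracting the new maximum therefore sends all pre-existing particles to positions $\le -\ell$, whereas the newly inserted particle sits at $0$. This is precisely the required hypothesis.

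Finally, Lemma~\ref{lem:triangular} (translated in time so that the starting instant $0$ is replaced by $T^*$) yields
\[
\fm(\sigma\nu(t-1),w(t))=\fm(\sigma\tilde\nu(t-1),w(t)) \qquad \text{for every }T^*<t\le 0.
\]
Specialising to $t=0$ gives $\bar{\fm}(0)=\fm(\sigma\tilde\nu(-1),w(0))$, where $\tilde\nu(-1)$ is the result of applying $\Psi_{w(T^*+1)},\Psi_{w(T^*+2)},\ldots,\Psi_{w(-1)}$ to $\delta_0$ in succession. This identifies $\bar{\fm}(0)$ as a deterministic function of the finite random sample $(T^*,w(T^*+1),\ldots,w(0))$, which can be generated in finite time without truncation, so the identity in the theorem yields perfect simulation. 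The main obstacle I anticipate is bookkeeping the time translation and the $\sigma$-normalisation in transferring Lemma~\ref{lem:triangular} from its stated form (coupling initiated at time $0$ from a state in $\NN_0$) to the present situation where coupling is initiated at the random past index $T^*$ from the unknown state $\nu(T^*)$; the key point is that the jump at time $T^*$ alone, guaranteed by $w_1(T^*)\ge\ell$, supplies hypothesis (a) irrespective of the fine structure of $\nu(T^*-1)$.
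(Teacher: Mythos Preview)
Your proof is correct and follows the same approach as the paper's: bound $T^*\ge T_{-1}$ for finiteness, then invoke Lemma~\ref{lem:triangular} over the window $[T^*,0]$. You are in fact more explicit than the paper in deducing hypothesis~\eqref{trip-} of the lemma from the condition $w_1(T^*)\ge\ell$ (via the one-step gap argument), a point the paper cites without spelling out.
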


\begin{proof}
We recall that $(T_{-j}, j \in \N)$ are the negative elements of the random set $J$, with $T_{-1} > -\infty$. We remark that
\[
  w_1(T_{-1}) \geq \ell, \quad \bar{w}(T_{-1};j) \geq 1 - \ell \text{ for all $j> 0$},
\]
therefore $T_* \geq T_{-1}$, proving its finiteness.

Moreover, since
\[
w_1(T^*) \geq \ell, \min_{1 \leq j \leq |t|} \bar{w}(T^*;j) \geq 1-\ell,
\]
by Lemma \ref{lem:triangular}, the quantity
$\fm\left( \sigma
\Psi^{T^*+1}_{w(T^*+1}) \comp \cdots \comp \Psi^{0}_{w(0)}
\nu; w(0)\right)$
does not algebraically depend on the value of
$\nu \in \NN_0$. As a result, it is equal to $\bar{\fm}(0)$.
\end{proof}

\begin{center}
\begin{algorithm}[ht]
  \caption{Construction of a variable of law $\bar{\fm}(0)$.}
  \label{algo:main}
  \SetAlgoLined
  Fix $t=0$ and $J=1$\;
  Generate the variable $w_1(0)$\;
  Fix Stopping = False\;
  \While{Stopping = False}{
  \While{$\max_{1\le j \leq J} w_j(t) < 1 - \ell$}{Increase $J$ by $1$\;Generate the variable $w_J(t)$\;}
  \While{$J > 1$}{
  Decrease $J$ by $1$ and $t$ by $1$\;
  Generate $w_1(t), \ldots w_J(t)$\;
  \While{$\max_{1\le j \leq J} w_j(t) < 1 - \ell$}{Increase $J$ by $1$\;Generate the variable $w_J(t)$\;}}
  Decrease $t$ by $1$\;
  Generate $w_1(t)$\;
  Fix Stopping = $\{w_1(t) \geq \ell\}$\;
  }
  Fix $\nu = \delta_0$\;
  \For{$s$ from $t+1$ to $-1$}{Generate the variables $w_1(s),\ldots,w_{||\nu||}(s)$ \; Set $\fm = \max \{ \nu_j + w_j(s) \text{ for $1 \leq j\leq\Vert\nu\Vert$}\}$\; Add $\delta_\fm$ to $\nu$\;}
  Set $\fm = \max \{ \nu_j + w_j(0) \text{ for }1 \leq j\leq\Vert\nu\Vert\}$\;
  \textbf{Return:} $\fm - \nu_1$\;
\end{algorithm}
\end{center}

We now describe more precisely the perfect simulation algorithm. Let $F$ be a
probability distribution satisfying \eqref{esssup}, we fix $\ell \in
[0,1)$ such that $F([1-\ell,1]) \in (0,1)$. The algorithm requires the
construction of an array of i.i.d.\ random variables with common distribution $F$
until the random variable $T^*$ can be constructed.

To construct $T^*$ as well as $\bar{\fm}(0)$ from the sequence $\{w_j(t), j \in
\N, t \in \Z\}$, one only needs to consider a.s. finitely many elements of this
set, as $\{T^* = t\}$ is a measurable function of
\[
\{ w_1(t)\}\cup\{w_j(t+k), 1 \leq j \leq k \leq |t| \}
\]
and $\bar{\fm}(0)$ is a measurable function of
\[
\{ w_1(T^*)\}\cup\{ w_{j}(T^*+k), 1 \leq j \leq k \leq |T^*| \}.
\]
Therefore, we can explore triangular arrays of the form
\[
  \{ w_1(t)\}\cup  \{w_j(t+k), 1 \leq j \leq k \leq |t|\},
\]
progressively decreasing $t$ until time $T^*$ is detected. Once this random
variable is known, we construct the random variable $\bar{\fm}(0)$ using the
procedure described in Theorem \ref{thm1} from the previously discovered random
variables. A possible implementation is described in Algorithm~\ref{algo:main}.
We show a graphical representation of a run of Algorithm~\ref{algo:main} in
Figure \ref{fig:triangulararray}.

\begin{figure}[ht]
\centering
\includegraphics[width=6cm]{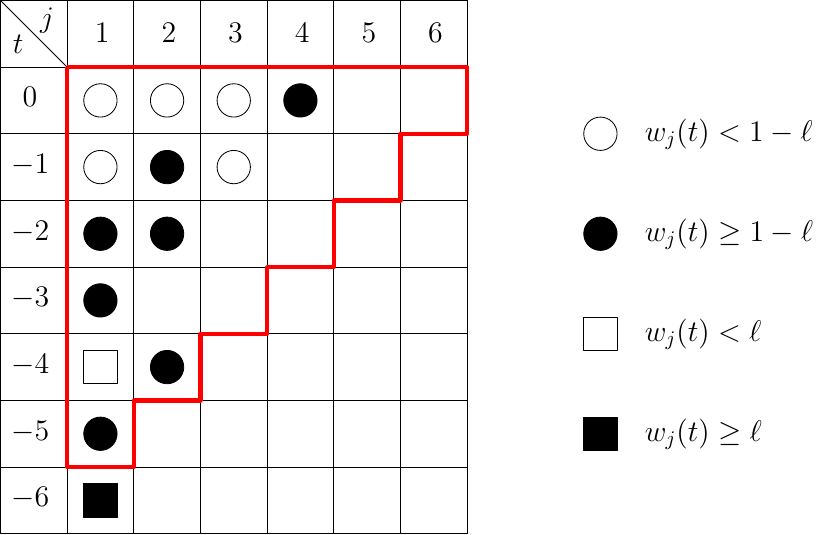}
\caption{Illustration of the execution of Algorithm \ref{algo:main} on an
example, in the case where $\ell<1-\ell$. The variables sampled until the
Boolean variable Stopping becomes True are pictured by black/white squares and
disks. One searches for the first time $T^*$ such that every line of index
$T^*+1\leq t \leq 0$ has at least one black disk between columns $1$ and $t-T^*$
and such that there is a black square in position $(T^*,1)$. The full triangular
array of variables used in the construction of $\nu$ is enclosed by a red
boundary.}
\label{fig:triangulararray}
\end{figure}

We observe that this algorithm has a complexity of $(T^*)^2$, as it is the
number of steps needed to generate the variable $\bar{\fm}(0)$. It is worth
noting that $-T^*$ can be constructed as the first hitting time of $0$ of the
Markov chain $(X_n)$ with initial state
\[
X_0=\min\{j\geq1, w_j(0)\geq 1-\ell\}
\]
and with transition probabilities defined for all $j \geq 2$ and $i \geq j$ by
\[
    P(j,j-1) = 1 - (1-p)^{j-1} \text{ and } P(j,i) = p(1-p)^{i-1}
\]
where $p = \P(w_1(0) \geq 1-\ell)$, with
\[
  P(1,0) = \P(w_1(0) \geq \ell), \quad \P(1,1) = \P(1 - \ell \leq w_1(0) < \ell), \quad P(1,j) = p(1-p)^{j-1} \text{ for }j \geq 2.
\]
The quantity $X_n$ corresponds to the value of the variable $J$ at the end of
the period when $t=-n$ in Algorithm \ref{algo:main}. In the example shown in
Figure \ref{fig:triangulararray}, we have
\[
(X_0,X_{-1},X_{-2},X_{-3},X_{-4},X_{-5},X_{-6})=(4,3,2,1,2,1,0).
\]
Note that $T^*$ has exponential tails.

The choice of the parameter $\ell$ may have an important effect on the behavior
of the average complexity $\E((T^*)^2)$ of the algorithm. We plotted $\ell
\mapsto \E((T^*)^2)$ in Figure \ref{fig:averageComplexity}, when the charge
distribution is given by $F(\mathrm d x) = \ind{x \leq 1 } e^{x-1} \mathrm d x$.
Additionally, as $p \to 0$, the quantity $\E((T^*)^2)$ grows to $\infty$. We
estimated $\E((T^*)^2)$ for $F = p\delta_1+(1-p)\delta_{-\infty}$ and plotted
this quantity as a function of $p$ in Figure \ref{fig:overComplexity}.

\begin{figure}[ht]
\centering
\includegraphics[width=6cm]{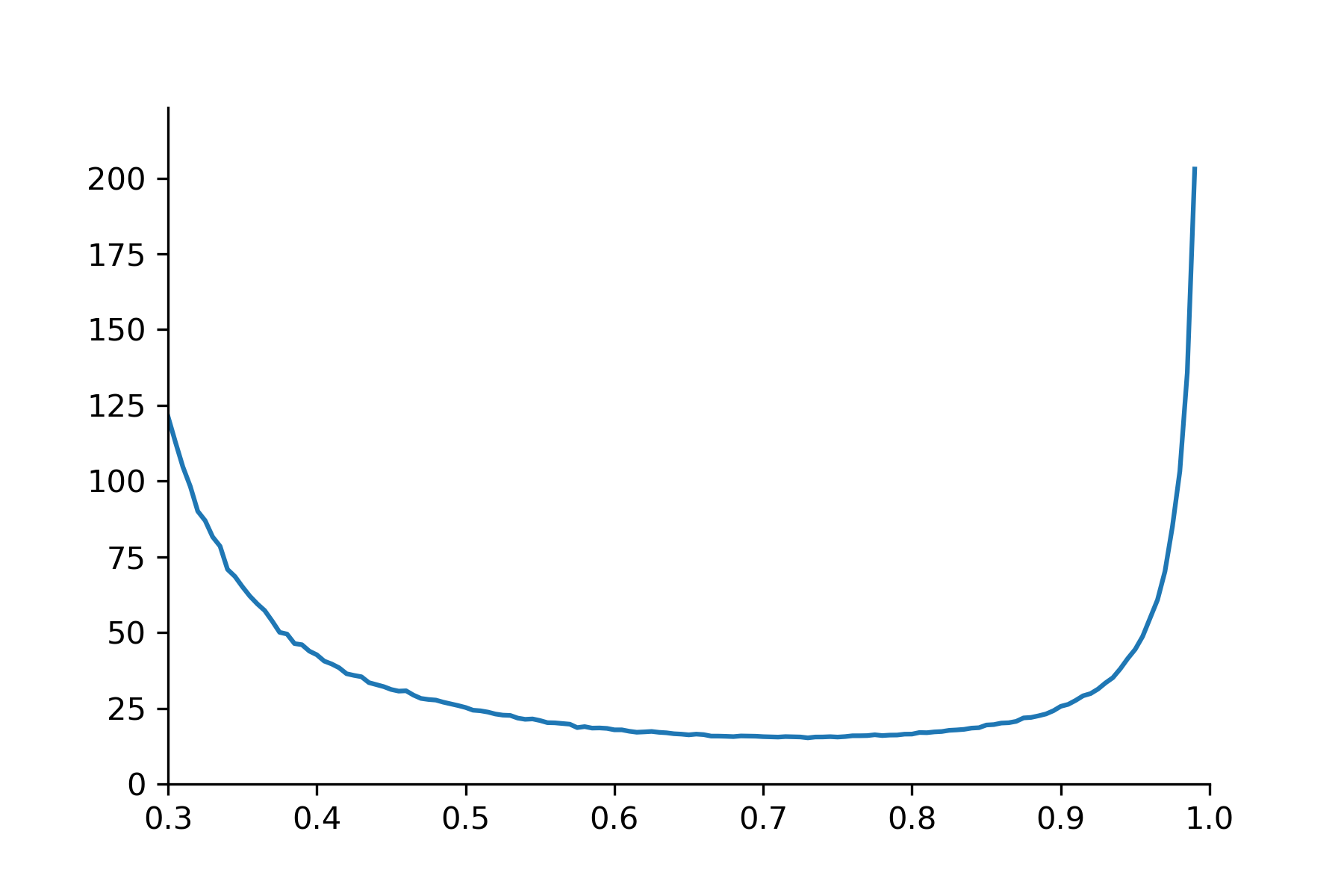}
\caption{Dependency in the parameter $\ell$ of the complexity of Algorithm
\ref{algo:main} with a charge distribution $F(\mathrm d x) = \ind{x \leq 1 }
e^{x-1} \mathrm d x$. The figure was obtained with a Monte Carlo simulation of
$N=10^4$ copies of $T^*$ for $100$ different values of $\ell$. For this charge
distribution, the
Monte Carlo simulations give $C(F) = 0.4432\pm0.0006$.}
\label{fig:averageComplexity}
\end{figure}

We observe in Figure \ref{fig:averageComplexity} that different choices of the
value $\ell$ can have a dramatic impact on the efficiency of Algorithm
\ref{algo:main}. Choosing a value $\ell$ too small has the effect of making the
first appearance of a triangular event too late. On the other hand, if $\ell$ is
too big then with high probability, one will have $w_1(T) \leq \ell$, and thus
the first ``successful'' triangular event will appear much later. For the
distribution $F$ we chose, it appears that an optimal choice of $\ell$ seems to
be around $\ell = 0.7$, which balances between these two extremes.

\begin{figure}[ht]
\centering
\includegraphics[width=6cm]{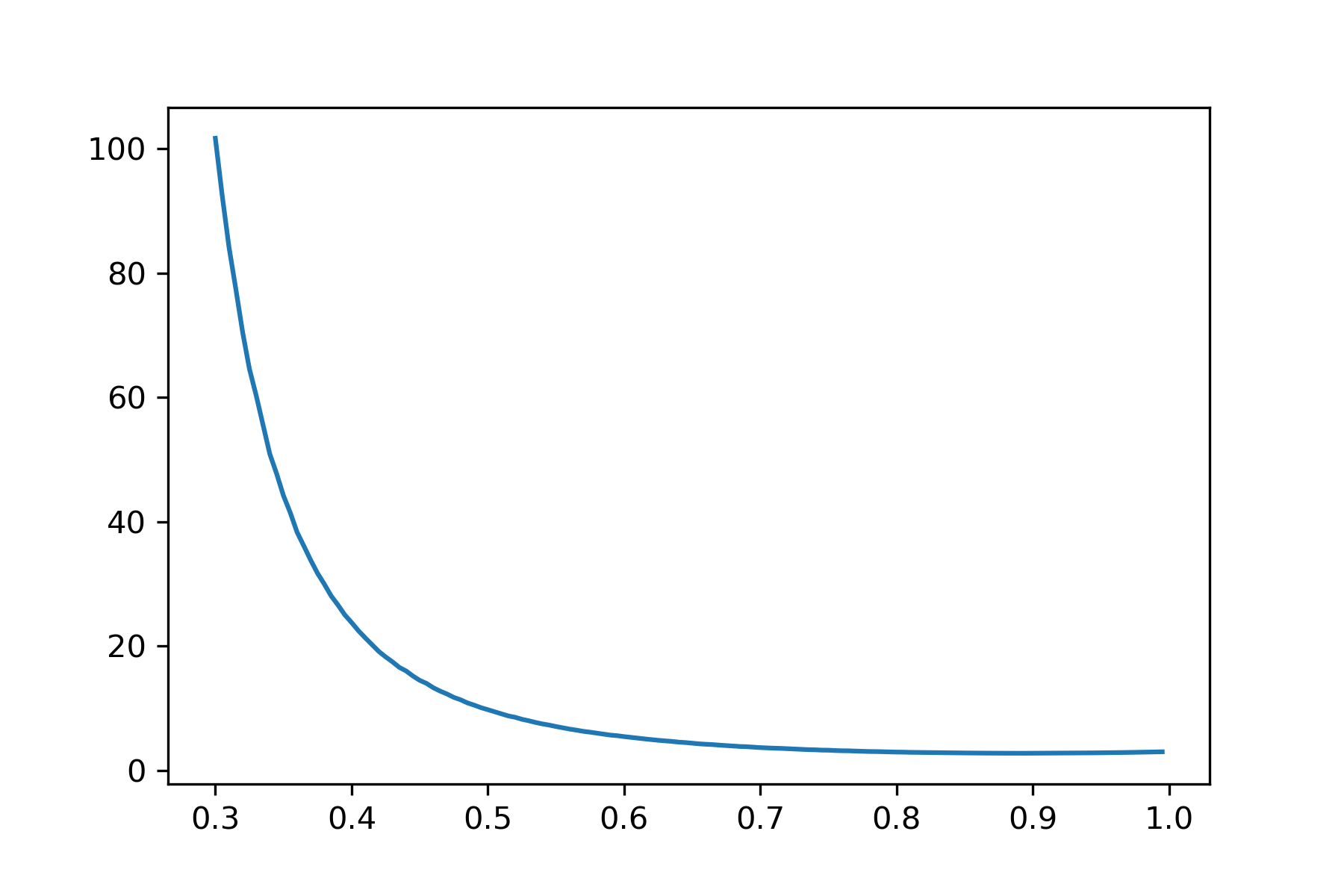}
\caption{Dependency in the parameter $p$ of the complexity of Algorithm
\ref{algo:main} applied to the detection of the longest path in the
Barak-Erd\H{o}s graph with parameter $p$. Figure obtained through Monte Carlo
simulation of $N = 10^5$ copies of $T^*$ for 120 different values of $p$.}
\label{fig:overComplexity}
\end{figure}

We observe in Figure \ref{fig:overComplexity} that if $F$ puts a large mass on
the negative half-line, the complexity of Algorithm \ref{algo:main} can become
quite large. The function $p \mapsto \E(T^*)^2$ grows at least exponentially in
$1/p$ as $p \to 0$ in the Barak-Erd\H{o}s graph, but we were not able to obtain
a good estimate of this rate of increase.

\begin{remark}[Perfect simulation when the charge distribution has an infinite
essential supremum]
\label{unbsup}
We explained how to perfectly simulate the random variable
$\bar{\fm}(0)^+$ whose expectation is  $C(F)$ under the assumption
that the supremum of the support of $F$ is $1$ or any finite number for that
matter.
To perfectly simulate a random variable whose expectation is $C(F)$ when
the supremum of the support of $F$ is  $\infty$, we use an idea by Glynn and
Rhee \cite{GlynnRhee}.
Denote by $X_n$ the variable $\bar{\fm}(0)^+$ when $F=F_n$ has $n$ as
the supremum of its support
and note that $X_n \to X$ a.s.\ and in $L^1$ for some random variable $X$.
Then $C(F_n) = \E X_n \to \E X = C(F)$, as $n \to \infty$.
We may not be able to perfectly simulate $X$. However,
if we let $\nu$ be a positive random integer such that $p_n=\P(\nu=n)>0$
for all $n$ and set $Y= (X_\nu-X_{\nu-1})/p_\nu$ then $Y$ can be perfectly
simulated. Moreover, $\E Y = C(F)$.
This is because $\E Y= \sum_{n=1}^\infty p_n \E (Y|\nu=n)
=  \sum_{n=1}^\infty \E(X_n-X_{n-1}) = \lim_{n \to \infty} \E X_n = \E X$.
\end{remark}

\begin{remark}
For the special case of a Barak-Erd\H{o}s graph, a simpler
perfect simulation algorithm is explained in \cite{FK03}.
\end{remark}

\section{Additional remarks and open problems}
The last passage percolation constant $C(p)$ for $\vec G(\Z, p)$ has been
one of the main concerns in this survey.
We showed how very good approximations can be obtained analytically
by relating $\vec G(\Z, p)$ to the IBM$(p)$ model.
The function $C(p)$ is not convex. We conjecture however that $C(p)/p$ is.

The passage to the IBM enables using branching processes techniques
that help explain the slow rate of convergence of $C(p)/p$ to $e$
as $p \to 0$ and its relation to the Brunet-Derrida behavior.
Such phenomena should be present when the geometric$(p)$, $0<p<1$,
distributions in the IBM
are replaced by a family of more general distributions.

The CLTs are generally available for the length of the longest or
heaviest path but, as explained, some moment conditions are needed
in case random weights are added to the edges.
There is one thing that is not totally clear in the CLTs and this is
the variance parameter. In, e.g., the case of $\vec G(\Z,p)$, we have
that $(L_n-C n)/\sqrt{\sigma^2 \lambda n}$ converges in
distribution to a standard normal with $\sigma^2$ being equal to
the variance of $L_{\Gamma_1, \Gamma_2}-C(\Gamma_2-\Gamma_2)$. This
quantity is unknown and not even bounds are available. The situation becomes
more complex in presence of weights.

CLTs are also available for the IBM and can take various forms.
For example, for the IBM$(p)$ model, it has been shown that
$\sqrt{n} \left(n^{-1} \sum_{k=0}^{[A_n t]} X_n(k)-t\right)$, $0 \le t \le 1$,
converges in law to a Brownian motion if $A_n = nC$ and to a Brownian bridge
if $A_n = L_n$. See \cite[Thm.\ 9.2]{FK03}.

When considering random directed graphs on partially ordered vertex sets,
the CLT limits are not necessarily Gaussian. The Brownian last passage
percolation process $Z(t) = \max_{0 \le s \le t} (X(s) + Y(t)-Y(s))$
pops up for example in the case of the vertex set $\Z \times \{1,2\}$.
We know nothing about the random variable $S=S(t)$ that achieves this maximum.
This is of interest as it would be related to the path that achieves the
maximum.

Concerning longest paths of $\vec G(\Z \times \Z, p)$ on a
window $[0,n] \times [0,m]$, as $n, m=m(n) \to \infty$,
the limit theorem of \eqref{Llim} is not optimal. We conjecture that
the Tracy-Widom limit is also possible when $m(n)$ grows with $n$ linearly.
One can of course ask for analogous results in the
$\vec G(\Z^d , p)$ case, when $d \ge 3$. It is not clear at all what
kind of laws would replace the Tracy-Widom distribution here.

The recent paper of Terlat \cite{terlat} makes progress in the behavior
of $C(F)$ when $F$ is a distribution whose support may include negative
numbers with a possible atom on $-\infty$. Concerning however the
type of limit theorems that one can obtain when $F$ is heavy-tailed,
such as those of Section \eqref{infsec}, the field is open.

Concerning other types of behavior of maximal lengths or weights,
e.g., in the large deviations sense, we mention that,
when weights have a distribution supported on the integers,
normal and moderate large deviations were obtained in \cite{KLMR21}.
In addition local limit theorems were obtained when the weight distribution
is non-lattice.

Distributional results for maximal/heaviest weighted paths are not available
beyond results around CLTs. However, in the sparse case, one can
use the recursive nature of the (possibly weighted) PWIT to come
up with functional equations for such quantities. See
e.g.\ \cite[eq.\ (13)]{FK18}.

Recall that $C(p,x)$ of Section \ref{secana},
is the last passage percolation constant when the weight law is
$p \delta_1 + (1-p) \delta_x$ for all edges $(i,j)$ with $i<j$.
There is only one solvable model we are aware of, that of $C(p,0)$
as in \cite{Dutta}. Can there be other ones?

If we replace $p \delta_1 + (1-p) \delta_x$ above
by $F_{p,x}=p Q+(1-p) \delta_x$ where $Q$ is a probability measure on $(0,\infty)$
we expect that the behavior of $p\mapsto C(F_{p,x})$ can be obtained
by techniques similar to those of \cite{MR16,MR19}, so long as
$\int y^2 Q(dy) < \infty$.
As a function of $x$, the $C(F_{p,x})$ quantity is continuous and
convex and it can be seen that $C(F_{p,x})$ is not differentiable at $x$
iff $Q$ has an atom $y$ such that $x/y \in \mathbb X_\crit$.
See \cite[Sec.\ 5]{FKP18}.

\section*{Acknowledgements}

We thank the referees for their insightful suggestions that helped us improve the exposition.

\end{document}